\newtheorem{theorem}{Theorem}
\newtheorem{lemma}{Lemma} [section]
\newtheorem{proposition}[lemma]{Proposition}
\newtheorem{corollary} [lemma]{Corollary}
\theoremstyle{definition}
\newtheorem{definition}[lemma]{Definition}
\newtheorem{question}{Question}
\newtheorem{remark}{Remark}
\newtheorem{example}{Example}
\newcommand{\eps}{{\epsilon}}
\renewcommand{\phi}{{\varphi}}
\newcommand{\grd}{{Gr_d}}
\newcommand{\rgrd}{{RGr_d}}
\newcommand\dist{\operatorname{dist}}
\newcommand\Sch{\operatorname{Sch}}
\newcommand{\Stab}{\operatorname{Stab}}
\newcommand{\Supp}{\operatorname{Supp}}
\newcommand{\Haus}{\operatorname{Haus}}
\newcommand\diam{\operatorname{diam}}
\newcommand\Sep{\operatorname{Sep}}
\newcommand{\cP}{\mathcal{P}}\newcommand{\cM}{\mathcal{M}} \newcommand{\cF}{\mathcal{F}} \newcommand{\cl}{\mathcal{L}}
\newcommand{\cO}{\mathcal{O}}
\newcommand\R{\mathbb R}
\newcommand\Z{\mathbb Z}
\newcommand\N{\mathbb N}
\newcommand\cG{\mathcal{G}} \newcommand\cT{\mathcal{T}}
\newcommand{\cA}{\mathcal{A}}
\newcommand\file{\phi_{\lambda,\eps}}
\newcommand\pile{P_{\lambda,\eps}}
\newcommand{\actson}{\curvearrowright}
\newcommand{\cC}{\mathcal{C}} \newcommand{\cL}{\mathcal{L}}
\newcommand{\cZ}{\mathcal{Z}}
\newcommand{\cH}{\mathcal{H}}
\newcommand{\cal}[1]{{\mathcal #1}}
\newcommand{\vi}{\vskip 0.1in \noindent}
\newcommand{\case}[2][]{\iftoggle{no_cases}{\left\{\begin{array}{ll}#2 & #1}{\\#2 & #1}\togglefalse{no_cases}}
\newcommand{\esac}{\end{array}\right.\toggletrue{no_cases}}
\newcommand{\Prob}{\operatorname{Prob}}
\newcommand{\Sub}{\operatorname{Sub}}
\newcommand{\Spec}{\operatorname{Spec}}
\newcommand{\Cay}{\operatorname{Cay}}
\newcommand{\rsgd}{RSch_d}
\newcommand{\rcsd}{R\cC Sch_d}
\newcommand{\csgd}{R\cC Sch_d}
\begin{document}
\title[Strong almost finiteness]{Strong almost finiteness}
\subjclass[2020]{43A07, 05C63, 46L35}

\thanks{The first author was partially supported by the KKP 139502 grant, the second author was partially supported by the Icelandic Research Fund grant number 239736-051 and the ERC Consolidator Grant 772466 ``NOISE''}
\author{G\'{a}bor Elek and \'{A}d\'{a}m Tim\'{a}r}
\begin{abstract} A countable, bounded degree graph is almost finite if it has a tiling with isomorphic copies of finitely many F\o lner sets, and we call it strongly almost finite, if the tiling can be randomized so that the probability that a vertex is on the boundary of a tile is uniformly small. We give various equivalents  for strong almost finiteness. In particular, we prove that Property A together with the F\o lner property is equivalent to strong almost finiteness. Using these characterizations, we show that graphs of subexponential growth and Schreier graphs of amenable groups are always strongly  almost finite, generalizing the celebrated result of Downarowicz, Huczek and Zhang about amenable Cayley graphs, based on graph theoretic rather than group theoretic principles. We give various equivalents to Property A for graphs, and show that if a sequence of graphs of Property A (in a uniform sense) converges to a graph $G$ in the neighborhood distance (a purely combinatorial analogue of the classical Benjamini-Schramm distance), then their Laplacian spectra converge to the Laplacian spectrum of $G$ in the Hausdorff distance. We apply the previous theory to construct a new and rich class of classifiable $C^{\star}$-algebras. Namely, we show that for any minimal strong almost finite graph $G$ there are naturally associated simple, nuclear, stably finite $C^{\star}$-algebras that are classifiable by their Elliott invariants. 
\end{abstract}
\maketitle
\noindent
\textbf{Keywords.}  almost finiteness, Property A, amenability, spectra of graphs, classifiable $C^*$-algebras
\newpage
\tableofcontents
\newpage
\section{Introduction}
\subsection{Motivations} Amenability was first introduced by John von Neumann in 1929 \cite{vonNeumann} in the setting of discrete groups. A group is called amenable if it admits an invariant mean. In graph theoretic terms this is equivalent to the existence of F\o lner sets (sets of arbitrarily small relative boundary) in its Cayley graph \cite{Folner}. Amenable groups play an important role in both the theory of von Neumann algebras and measurable equivalence relations. The concept of amenability was also developed for von Neumann algebras, where the analogue of the invariant mean is given by the conditional expectation onto the algebra. 
The group von Neumann algebra turned out to be amenable if and only if the group itself is amenable. 
Similarly, the measurable equivalence relation associated to a free probability measure-preserving (p.m.p.) action of a group is measurably amenable if and only if the group is amenable. 
A concept of finite approximability, later referred to as hyperfiniteness, was introduced for von Neumann algebras in 1943 by Murray and von Neumann \cite{Murray}. 
One of the major 
breakthroughs of the 1970's was Connes’ result \cite{Connes} showing that
for von Neumann algebras with separable preduals, amenability is equivalent to hyperfiniteness. 
Almost at the same time Connes, Feldman and Weiss \cite{CFW} proved a very similar result for the measurale equivalence relation associated to probability measure preserving (p.m.p) group actions: measurable amenability is equivalent to measurable hyperfiniteness, where measurable hyperfiniteness means that the equivalence relation associated to the action can be approximated in measure by finite equivalence relations, as introduced by Dye. 
Weiss conjectured that amenability and hyperfiniteness are also equivalent in the Borel setting. However, this conjecture remains open; in particular, it is still unknown whether free 
Borel actions of amenable groups are hyperfinite—that is, whether the associated Borel equivalence relation is hyperfinite.

\vi The topological setting (when the group is acting on a compact metric space instead of a probability space) is more subtle.
Any free, continuous action of an amenable group on a compact metric space is topologically amenable and admits an invariant probability measure. Moreover, only amenable groups can admit such actions. However, the class of countable groups that have a free, topologically amenable action on a compact metric space, i.e. the so-called Property A (or exact) groups, also contains nonamenable groups. Thus in the topological setting, amenability appears in two forms: classical amenability and Property A. 
The first example of a group that is not Property A was constructed only in 2003 by Gromov \cite{Gromov} (see also \cite{Osajda}). The notion was introduced by Yu \cite{Yu}, who proved that the Novikov conjecture holds for compact manifolds with fundamental group of Property A. It was soon proved that amenable groups, hyperbolic groups \cite{Roe}, linear groups \cite{Guentner} are of Property A. Ozawa \cite{Ozawa} proved that Property A is equivalent to the exactness of the reduced $C^\star$-algebras of the group. If the group is amenable then the reduced $C^\star$-algebra is even nuclear (that is, amenable \cite{Haagerup}).
\vi
What is the topological analogue of hyperfiniteness? Suppose a countable group acts continuously on the Cantor set, with the property that the fixed point set of each group element is clopen (as in the case of free actions). For such topological actions (or more precisely, for the associated étale Cantor groupoid), Matui introduced the notion of almost finiteness \cite{Matui}.  
If a group acts continuously on the Cantor set and the stabilizer map is also continuous, then the resulting groupoid is an ample étale groupoid. This perspective forms the basis for how we approach ample étale groupoids in our paper. As a matter of fact, all the groupoids in our paper are constructed in such a way. In particular, we can talk about their topological amenability through the respective definition for actions.
Almost finiteness for these groupoids means that the Cantor set has a tiling by clopen sets that are F\o lner in each orbit graph, making this property a good candidate for hyperfiniteness in the topological context. This perspective is further supported by two queries posed in (Remark 3.7, \cite{Suzuki}) of Suzuki, which suggest that such clopen F\o lner tilings could indicate a form of amenability: 
\begin{itemize}
\item
Is every minimal, almost finite, étale Cantor groupoid topologically amenable?
\item
Is every minimal, topologically amenable, étale Cantor groupoid that admits an invariant measure necessarily almost finite?
\end{itemize}
\noindent
For groupoids arising from actions of amenable groups, the answer to the first question is affirmative; however, the first author provided a counterexample in the more general setting \cite{Elekquali}.
It was later observed that a slight strengthening of almost finiteness does, in fact, imply amenability. Specifically, suppose a groupoid is not only almost finite, but also satisfies the following condition: for every $\varepsilon > 0$, there exists a collection of almost finite tilings equipped with a probability distribution such that, for each point $x$ in the Cantor set, the probability that $x$ lies on the boundary of a tile is less than $\varepsilon$. Under this strengthened version, which we call strong almost finiteness, the groupoid is topologically amenable. The second question by Suzuki is still open.
\vi
One of the main motivations of our paper is to provide further evidence that strong almost finiteness is, in fact, the appropriate continuous analogue of hyperfiniteness. Here a concept of almost finiteness for infinite graphs will become important.
\vi
Let us assume that an étale Cantor groupoid arises from an action of a finitely generated group. If the groupoid is almost finite, then its orbit graphs must also exhibit almost finiteness in the sense of tileability by F\o lner sets, as defined in the influential paper of Downarowicz, Huczek, and Zhang \cite{Down}, where they established the almost finiteness of Cayley graphs for countable amenable groups. Similarly, if the minimal groupoid is topologically amenable, then its orbit graphs possesses a graph theoretical version of Property A (which is equivalent to the group theoretical definition in case of Cayley graphs, and was defined by Higson and Roe \cite{Higson}). These are straightforward consequences of the definitions and the theorem by Connes, Feldman and Weiss \cite{CFW}.
Moreover, if the groupoid admits an invariant measure, the graphs should contain F\o lner sets in a ubiquitous fashion—an idea previously explored by Ma under the term ubiquitous amenability \cite{Ma}. Hence, in light of Suzuki's questions, it is reasonable to expect that Property A and the ubiquitous presence of F\o lner sets together are equivalent to strong almost finiteness in the context of bounded degree graphs.
We will prove that this equivalence indeed holds, establishing the equivalence of the suitably defined concepts of amenability and hyperfiniteness for bounded degree graphs. Furthermore, we characterize the relationship among other reasonable candidates for the definition of these concepts in this setup.
\vi
An additional motivation for our work was the result of Ma and Wu (\cite{Mawu}, Corollary 9.11) that the reduced $C^*$-algebra of an almost finite and amenable ample étale Cantor minimal groupoid is a simple, nuclear, unital, separable $\cal{Z}$-stable and quasidiagonal $C^\star$-algebra (that satisfies the Universal Coefficient Theorem by a theorem of Tu \cite{Tu1}). Hence, by the seminal result of Tikuisis, White and Winter \cite{quasi}, these $C^\star$-algebras can be classified by their Elliott invariants. Consequently, our results enable the construction of numerous new examples of classifiable, simple, nuclear $C^\star$-algebras arising built from strongly almost finite Schreier graphs.
\subsection{Around amenability. The key concepts of the paper}
\vi
Below,  $\grd$ will denote the set of all countable (not  necessarily connected) graphs of vertex degree bound $d$.
\begin{enumerate}   \setcounter{enumi}{-1}
\item  Let \( G \in \grd \) be a graph, and let \( F \) be a subset of its vertex set \( V(G) \). We denote by $\partial(F)$ the set of vertices in $F$ that are adjacent to a vertex that is not in $F$. If $\frac{|\partial(F)|}{|F|}<\eps$ then $F$ is called an \textbf{$\eps$-F\o lner set.} We call the graph $G\in\grd$ \textbf{amenable} if it contains a F\o lner sequence, that is, a sequence of finite subsets $\{F_n\}^\infty_{n=1}$ such
that $\frac{|\partial(F_n)|}{|F_n|}\to 0$. 
Amenable graphs were studied already in the eighties (see e.g. \cite{Dodziuk}, earlier work of Kesten \cite{Kesten} is frequently viewed as the first 
instance where purely graph theoretical properties were used in the context of amenability) 
and various characterizations of amenability for graphs, 
similar to the one of von Neumann, were given in the eighties and nineties (see e.g. \cite{Block}, \cite{Ceccherini}\cite{Dodziuk}, \cite{Elek1} or \cite{Juschenko} for a survey).
\item The graph $G$ 
is \textbf{F\o lner} if for any $\eps>0$ there exists an $r$ such that any $r$-ball $B^G_r(x)$ of radius $r$ centered around $x$ contains an $\eps$-F\o lner subset with respect to $G$. It is quite clear that the Cayley graphs of amenable groups are F\o lner graphs. Amenable, but non-F\o lner graphs can
easily be constructed by attaching longer and longer paths to the vertices of a tree, or just taking the disjoint union of a $3$-regular tree and an infinite path.
\item A graph $G\in\grd$ is \textbf{setwise F\o lner} if for any $\eps>0$ there is an $r>1$ such that inside the $r$-neighborhood $B_r(L)$ of any finite set $L\subset V(G)$ there exists an $\eps$-F\o lner set with respect to $G$ that contains $L$. As far as we know, this definition is new. 
\item A graph $G\in\grd$  is \textbf{uniformly locally amenable} if
for any $\eps>0$ there exists $k>0$ satisfying the
following condition:  For any finite subset $L\subset V(G)$ there exists a subset $M\subset L$ such that
$M$ is 
$\eps$-F\o lner with respect to L (so $M$ is not
necessarily $\eps$-F\o lner in the graph G) and $|M|\leq k$. The notion of uniform local amenability was introduced in \cite{Brodzki}. 
\item Building on Dye’s definition for measure-preserving actions (\cite{Dye}, see also \cite{Kechris}), the first author extended the concept of hyperfiniteness to classes of finite graphs with bounded degree \cite{Elekcost} in the following manner. A family of finite graphs \( \mathcal{G} \subset \grd \) is said to be \textbf{hyperfinite} if for every \( \varepsilon > 0 \), there exists an integer \( k > 0 \) such that for every \( G \in \mathcal{G} \), there exists a subset \( L \subset V(G) \) with \( |L| \leq \varepsilon |V(G)| \), such that removing \( L \) along with all incident edges results in a graph whose connected components each have at most \( k \) vertices. Note that the class of planar graphs, as well as any class of graphs with uniform subexponential growth, is hyperfinite.  The notion of local hyperfiniteness was introduced in \cite{Elekula}.  A graph $G\in\grd$ is \textbf{locally hyperfinite}, if the family of all its finite induced subgraphs  is hyperfinite.
\item  A graph $G\in\grd$ is \textbf{weighted hyperfinite} if for any $\eps>0$ there exists $k>0$ satisfying the following condition:
For any finitely supported non-negative function $w:V(G)\to \R$ there exists a subset $L\subset V(G)$ of
total weight $w(L)$ that is at most $\eps w(V(G))$, such that if we delete
$L$ with all the adjacent edges then the size of the
the remaining components are at most $k$. The notion of weighted hyperfiniteness  was introduced by the authors of this paper in \cite{Elektimar}. 
\item A subset $Y\in V(G)$ is a $k$-separator of the graph $G\in\grd$ if deleting $Y$ (with all the adjacent edges) the remaining components have size at most $k$.  A graph $G\in\grd$ is \textbf{strongly hyperfinite} if for any $\eps>0$ there exists $k>0$ and a probability measure $\mu$ on the compact set of $k$-separators (with the compact subset topology on $V(G)$) such that for all $x\in V(G)$, 
$$\mu(\{Y\,|\, x\in Y\})< \eps\,.$$
\noindent
The notion of strong hyperfiniteness appeared first in \cite{Zivny} in a somewhat restricted context.
\item An $(\eps,r)$-packing of a graph $G\in\grd$ is a family
of disjoint $\eps$-F\o lner sets of diameter at most r. A graph $G\in\grd$  is \textbf{strongly F\o lner hyperfinite} if for any $\eps>0$ there exists $r>0$ and a probability measure $\nu$ on the compact set of $(\eps,r)$-F\o lner packings $P(\eps,r)$ (we give the precise definition of the topology on $(\eps,r)$-F\o lner packings in Section \ref{tsct}) such that for all $x\in V(G)$, 
$$\nu (\{\cP\,|\, x\in \tilde{\cP}\})>1-\eps\,,$$
\noindent
where $\tilde{\cP}$ denotes the set of vertices contained in the elements of the packing $\cP$.
The notion of strong F\o lner hyperfiniteness is a crucial new notion of our paper.
\item A graph $G$ is \textbf{almost finite} if for any $\eps>0$ there exists an $r>1$ such 
that $V(G)$ can be tiled by $\eps$-F\o lner sets of diameter at
most r, in other words, there exists an $(\eps,r)$-packing $\cP$ such that $\tilde{\cP}=V(G)$. As we mentioned earlier, Downarowicz, Huczek and Zhang \cite{Down} defined almost finiteness (under the name of "tileability") for groups and showed that the Cayley graph of a finitely generated amenable group
is almost finite. Their work was motivated by the monotileability problem studied by Weiss \cite{Weiss}.  The notion of almost finiteness (in the case of free continuous actions of amenable groups) and its relation to $C^\star$-algebras was further developed in the important paper of Kerr \cite{Kerr}. Almost finite graphs, as in our setup, were introduced by Ara et al. \cite{Ara}.
\item A graph $G$ is \textbf{strongly almost finite} if for any $\eps>0$ there exists $r>1$
and a probability measure on the $(\eps,r)$-F\o lner tilings such that for any $x\in G$ the probability that $x$ is on the boundary of the tile containing it is less than $\eps$. This notion was introduced in a significantly weaker form by the first author in \cite{Elekquali}.
\item
\vi
 For graphs (and even for more general metric spaces) Property A was introduced in \cite{Higson}  (see also \cite{Brodzki}): a graph $G\in\grd$ is
of \textbf{Property A} if for any $\eps>0$ there exists $r>1$ and
a function $\Theta:V(G)\to \Prob(G)$ satisfying the following conditions.
\begin{itemize}
\item For every $x\in V(G)$ the support of $\Theta(x)$ is contained in the $r$-ball around $x$.
\item For every adjacent pair $x,y\in V(G)$ we have that
$$\|\Theta(x)-\Theta(y)\|_1<\eps\,.$$
\end{itemize}
\noindent
\item For a graph G the finitely supported non-negative (non-zero) function $f:V(G)\to \R$ is
an \textbf{$\eps$-F\o lner function} if
$$\sum_{x\in V(G)} \sum_{y, x\sim y} |f(x)-f(y)|<
\eps \sum_{x\in V(G)} f(x)\,.$$
\noindent
If $\sum_{x\in V(G)} f(x)=1$ we call such functions $\eps$-F\o lner probability measures.
\noindent
The role of $\eps$-F\o lner functions will be crucial in our paper. Note that in case of Cayley graphs of amenable groups the notions of  F\o lner functions and Reiter functions (see e.g. Theorem 2.16 \cite{Juschenko} for the definition of Reiter functions) are closely related.
A graph $G$ is of \textbf{F\o lner Property A} if it is of Property A and for all $x$ , $\Theta(x)$ can be chosen as a $\eps$-F\o lner function. 
\item The graph $G\in\grd$ is \textbf{fractionally almost finite}
if for any $\eps>0$ there exists $r\geq 1$ and a non-negative function 
$F:F_G(\eps,r)\to \R$, from the set of $\eps$-F\o lner sets
of diameter less than $r$ such that
that
\begin{enumerate}
\item For any $x\in V(G)$
$$\sum_{x\in H, H\in F_G(\eps, r)} F(H)
+c_x=1\,,$$
\noindent
where $0\leq c_x <\eps.$
\item For any $x\in V(G)$
$$\sum_{x\in \partial(H), H\in F_G(\eps, r)} F(H)<\eps.$$
\end{enumerate}
\end{enumerate}
This definition is motivated by Lov\'asz's notion of fractional partition \cite{Lovasz}.
\subsection{The results}
Some of the above properties have long been central in group theory. When one goes beyond Cayley graphs, a more complex scene emerges.
We fully investigate the relationships between properties $(1)-(12)$.  Figure 1 summarizes our results.
\vi
\begin{theorem} [The Long Cycle Theorem]\label{longcycle}
For graphs $G\in \grd$: Property A, Uniform Local Amenability, Local Hyperfiniteness, Weighted Hyperfiniteness and Strong Hyperfiniteness are equivalent. 
\end{theorem}
\noindent
Using some results from \cite{Brodzki} and \cite{Chen}, Sako \cite{Sako} has already established the equivalence between Property A and weighted hyperfiniteness. Building on the results of \cite{Sako}, \cite{Brodzki}, and \cite{Zivny}, the first author \cite{Elekula} has also shown that uniform local amenability is equivalent to Property A, confirming a conjecture proposed in \cite{Brodzki}. Nonetheless, the proof of Theorem \ref{longcycle} is presented in a self-contained manner.
\vi
\begin{theorem}\label{prop1}
For any $\eps>0$ there exists a $\delta>0$ such that if $G\in \grd$ and
$p:V(G)\to \R$ is a $\delta$-F\o lner probability measure, then
there exists an $\eps$-F\o lner subset $H\subset V(G)$ inside
the support of p such that $p(H)>1-\eps$.
\end{theorem}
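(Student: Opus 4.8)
The plan is to extract the Følner set $H$ from the probability measure $p$ via a coarea / layer-cake argument applied to the superlevel sets of $p$. Given $\delta>0$ (to be chosen in terms of $\eps$), write $S=\Supp(p)$ and for $t>0$ let $H_t=\{x\in V(G)\,:\,p(x)>t\}$. The key identity is that integrating the edge-boundary sizes of the layers recovers the Følner-function sum of $p$: for each edge $xy$, with $p(x)\le p(y)$, the edge contributes to $\partial H_t$ (i.e. exactly one of $x,y$ lies in $H_t$) precisely for $t\in[p(x),p(y))$, so $\int_0^\infty |\partial_E H_t|\,dt=\sum_{x\sim y}|p(x)-p(y)| < \delta$, where $\partial_E$ denotes the edge boundary. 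Likewise $\int_0^\infty |H_t|\,dt=\sum_{x}p(x)=1$. Since the vertex boundary $\partial H_t$ is controlled by the edge boundary (each boundary vertex is an endpoint of a boundary edge, and degrees are at most $d$, so $|\partial H_t|\le |\partial_E H_t|$), we get $\int_0^\infty |\partial H_t|\,dt < \delta = \delta\int_0^\infty |H_t|\,dt$.

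First I would run an averaging argument to produce a single good layer. If for \emph{every} $t$ with $H_t\neq\emptyset$ we had $|\partial H_t|\ge \frac{\eps}{2}|H_t|$, then integrating would give $\int_0^\infty|\partial H_t|\,dt\ge\frac{\eps}{2}\int_0^\infty|H_t|\,dt=\frac{\eps}{2}$, contradicting the bound above as soon as $\delta<\frac{\eps}{2}$. Hence there exists $t_0$ with $H:=H_{t_0}\neq\emptyset$ and $|\partial H| < \frac{\eps}{2}|H|$; in particular $H$ is an $\eps$-Følner set contained in $S$. This settles the Følner property of $H$ with room to spare.

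The remaining — and I expect main — point is to guarantee that $H$ can simultaneously be chosen with $p(H)>1-\eps$, i.e. that the good layer is not a negligible sliver near the top of $p$. For this I would restrict the averaging to the range $t\in(0,\eta)$ for a suitable small threshold $\eta$: note $p(H_t)\ge 1-\eta|V(G)\setminus H_t|\cdot$(nothing) is the wrong bound, so instead observe that the mass \emph{outside} $H_\eta$ is $p(S\setminus H_\eta)=\sum_{x\colon p(x)\le\eta}p(x)$. This need not be small for a single $\eta$, but by a second averaging/pigeonhole over a dyadic or continuous range of thresholds one finds $\eta$ with $p(S\setminus H_\eta)$ small — more cleanly, I would integrate the Følner inequality only over $t\in(0,\eta)$ where $\eta$ is chosen so that $\int_0^\eta|H_t|\,dt\ge 1-\eps/2$ (possible since $\int_0^\infty|H_t|\,dt=1$ and the integrand is bounded by $|S|$, forcing the tail $\int_\eta^\infty$ small for small $\eta$... actually the clean statement is $\int_0^\eta p(H_t^c)$-type). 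The honest version: choose $\delta$ small enough that the edge-boundary mass $\delta$ is $\le \frac{\eps^2}{4}$; then $\{t : |\partial H_t|\ge \frac{\eps}{2}|H_t|\}$ has measure (weighted by $|H_t|^{-1}$, or rather) small, and combining with $\int_0^\infty p(\{p>t\})\,dt = \sum_x \tfrac12 p(x)^2$-free identity $\int_0^\infty \frac{|\partial H_t|}{|H_t|}\,|H_t|\,dt<\delta$ one extracts $t_0$ in the bottom range.

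So the structure is: (i) layer-cake identities for $|H_t|$ and $|\partial_E H_t|$; (ii) bound vertex boundary by edge boundary using the degree bound; (iii) a weighted averaging to find a layer that is $\eps$-Følner; (iv) a second averaging, restricted to small thresholds, to ensure that layer carries almost all the $p$-mass — choosing $\delta=\delta(\eps)$ small enough (something like $\delta\le\eps^3/100$) to make both averaging arguments succeed at once. The one subtlety to handle carefully is that $|H_t|$ can be large while $p(H_t^c)$ is also not yet small, so the two averaging ranges must be coordinated; I would phrase (iii)–(iv) as a single averaging of the quantity $|\partial H_t|$ against the measure $dt$ on the interval $(0,\eta_0)$ with $\eta_0$ the largest threshold below which $p(H_{t}^c)\le\eps/2$ fails only on a set of small $dt$-measure, so that a Fubini/Chebyshev argument yields a single $t_0$ that is good in both respects.
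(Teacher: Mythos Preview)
Your layer-cake strategy is sound and, once completed, gives a cleaner and quantitatively sharper proof than the paper's. But as written the proposal has a real gap at step~(iv): you correctly flag that guaranteeing $p(H_{t_0})>1-\eps$ is the crux, yet the several attempts you sketch (restricting to $t<\eta$, Chebyshev-type bounds, ``a second averaging'') never converge to an argument. The missing computation is short. Set $\eta_0=\inf\{t\ge 0:p(H_t^c)\ge\eps\}$; since $t\mapsto p(H_t^c)=\sum_{0<p(x)\le t}p(x)$ is nondecreasing and right-continuous with $p(H_0^c)=0$, one has $0<\eta_0<\infty$ and $p(H_{\eta_0}^c)\ge\eps$. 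Then
\[
\int_0^{\eta_0}|H_t|\,dt \;=\; \sum_{x\in S}\min\bigl(p(x),\eta_0\bigr)\;=\;p(H_{\eta_0}^c)+\eta_0\,|H_{\eta_0}|\;\ge\;\eps.
\]
Combined with $\int_0^{\eta_0}|\partial H_t|\,dt\le\int_0^\infty|\partial_E H_t|\,dt<\delta/2$, the averaging over $(0,\eta_0)$ succeeds as soon as $\delta<2\eps^2$: if every $t<\eta_0$ satisfied $|\partial H_t|\ge\eps|H_t|$ we would get $\delta/2>\eps\cdot\eps$, a contradiction. Any resulting $t_0\in(0,\eta_0)$ gives an $\eps$-F\o lner $H_{t_0}\subset\Supp(p)$ with $p(H_{t_0})>1-\eps$ automatically. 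You almost wrote this down --- ``choose $\delta\le\eps^2/4$'' is exactly the right order --- but never supplied the lower bound on $\int_0^{\eta_0}|H_t|\,dt$ that makes it bite.

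For contrast, the paper's proof takes a genuinely different route. Rather than a single superlevel set, it partitions the range of $p$ into dyadic bands and, after excising thin buffer zones (located via a pigeonhole over $q\approx 16/\eps^2$ multiplicatively shifted grids, Lemma~3.1), forms sets $S_i=\{x:b_i\le p(x)\le c_i\}$ with $c_i\approx 2b_i$. A separate lemma (Lemma~3.2) bounds the $p$-mass of the vertices where $p$ has large multiplicative gradient, which controls $\sum_i p(\partial S_i)$; the $S_i$ that are $\eps$-F\o lner are then kept and their union is the desired $H$. The resulting $\delta$ is of order $\eps^{4}$, so your approach, once the one-line lower bound above is inserted, is both simpler and yields a better dependence.
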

\vi Note (see Remark \ref{reiter}) that in the case of Cayley graphs this theorem is a quantitative strenghtening of Theorem 2.16 in \cite{Juschenko}. By the triangle inequality, the finite sum of $\eps$-F\o lner functions is always an $\eps$-F\o lner function.
So, they behave much better with respect to summation than $\eps$-F\o lner sets do with respect to taking union. Using this advantage of the F\o lner functions, we prove that being a F\o lner graph implies the setwise F\o lner Property (Proposition \ref{uniloc}). 
\vi
\begin{theorem}[The Short Cycle Theorem] \label{shortcycle}
For graphs $G\in \grd$ the following properties are equivalent: Property A plus Setwise F\o lner, Strong F\o lner Hyperfiniteness, Fractionally Almost Finiteness and F\o lner Property A.
\end{theorem}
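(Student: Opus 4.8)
The plan is to prove the cycle of implications: F\o lner Property A $\Rightarrow$ strong F\o lner hyperfiniteness $\Rightarrow$ fractional almost finiteness $\Rightarrow$ Property A together with uniform F\o lnerness $\Rightarrow$ F\o lner Property A. Three of these arrows are soft. The arrow strong F\o lner hyperfiniteness $\Rightarrow$ fractional almost finiteness is just passage to expectations: from the random $(\eps,r)$-F\o lner packing $\cP$ put $\lambda_F:=\Prob[F\in\cP]$, so that $\sum_{F\ni v}\lambda_F=\Prob[v\in\tilde{\cP}]\in(1-\eps,1]$ and $\sum_{F:\,v\in\partial F}\lambda_F=\Prob[v\in\partial Q_v]<\eps$ for every $v$, which is the weighted (fractional) version of a tiling in the sense of Definition \ref{fracal}. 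For fractional almost finiteness $\Rightarrow$ Property A plus uniform F\o lnerness, first note that a ball around any vertex already contains one of the finitely many $\eps$-F\o lner tile-shapes, so $G$ is uniformly amenable and hence uniformly F\o lner by Proposition \ref{uniloc}; and from the weights $\lambda_F$ one manufactures Property A data by averaging the uniform probability measures $u_F$ on the tiles:
\[
\Theta(v):=\frac{1}{Z_v}\sum_{F\ni v}\lambda_F\,u_F,\qquad Z_v:=\sum_{F\ni v}\lambda_F\in(1-\eps,1].
\]
Here $\Theta(v)$ is an $O(\eps)$-F\o lner probability measure with support in $B_r(v)$, and for $u\sim v$ one checks $\|\Theta(u)-\Theta(v)\|_1=O(\eps)$: sets containing both $u$ and $v$ contribute $O(\eps)$ because $|Z_u-Z_v|=O(\eps)$, while the sets containing exactly one of $u,v$ have those vertices on their boundary and so carry total $\lambda$-weight $<2\eps$.

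The third soft arrow, Property A together with uniform F\o lnerness $\Rightarrow$ F\o lner Property A, is a mollification. Given $\eps$, apply uniform F\o lnerness to singletons to obtain a radius $r_1$ and, for each vertex $z$, an $\eps/(2d)$-F\o lner set $F_z$ with $z\in F_z\subseteq B_{r_1}(z)$; then $u_{F_z}$ is an $\eps$-F\o lner function. Take Property A data $\Theta$ with radius $r_2$ and parameter $\eps$ and set
\[
\Psi(x):=\sum_{z\in V(G)}\Theta(x)(z)\,u_{F_z}.
\]
Since convex combinations of $\eps$-F\o lner functions are again $\eps$-F\o lner (the triangle-inequality argument recalled in the introduction applies verbatim), $\Psi(x)$ is an $\eps$-F\o lner probability measure; its support is contained in $B_{r_1+r_2}(x)$; and for adjacent $x,y$,
\[
\|\Psi(x)-\Psi(y)\|_1=\Big\|\sum_{z}\big(\Theta(x)(z)-\Theta(y)(z)\big)u_{F_z}\Big\|_1\le\|\Theta(x)-\Theta(y)\|_1<\eps ,
\]
so $\Psi$ witnesses F\o lner Property A.

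The substance of the theorem --- and where I expect the real difficulty --- is the remaining arrow, F\o lner Property A $\Rightarrow$ strong F\o lner hyperfiniteness: one must upgrade the soft datum of F\o lner functions to an honest Borel probability measure on the compact space of $(\eps,r)$-F\o lner packings (topologized as in Section \ref{tsct}) under which every vertex is covered with probability $>1-\eps$. As a first step one extracts from the F\o lner functions, via the coarea/layer-cake argument behind Theorem \ref{prop1}, a deterministic fractional tiling by $\eps$-F\o lner sets of bounded diameter (each $\Theta(x)$ is a mixture of its superlevel sets $\{\Theta(x)>t\}$, all but an $\eps$-fraction of which --- measured by the layer-cake weight --- are $\eps$-F\o lner of diameter at most $2r_0$). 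The problem is then to round this fractional tiling, with overlaps allowed, to a genuine random packing. Here I would exploit that F\o lner Property A also gives, through Theorem \ref{longcycle}, a random $k$-separator of $G$ with each vertex deleted with small probability, and, through Proposition \ref{uniloc}, uniform F\o lnerness; using these one reassembles the small components left by the separator into disjoint bounded-diameter $\eps$-F\o lner tiles, randomizing the reassembly and taking a weak limit as $\eps\downarrow0$. Making the reassembly simultaneously disjoint, bounded-diameter, F\o lner and near-covering, while controlling the dependence between choices made in different parts of $G$ --- where bounded degree and a Lov\'asz-local-lemma-type argument have to do the work --- is the main obstacle.
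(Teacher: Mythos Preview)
Your three ``soft'' arrows are essentially correct, and your mollification argument for (Property A + uniform F\o lner) $\Rightarrow$ F\o lner Property A is a clean alternative to the route the paper takes (the paper instead proves the \emph{reverse} arrow, F\o lner Property A $\Rightarrow$ (Property A + uniform F\o lner), which follows immediately from Theorem~\ref{prop1} and Proposition~\ref{uniloc}). The paper's cycle runs (A+UF) $\to$ SFH $\to$ FAF $\to$ FPA $\to$ (A+UF), the reverse orientation of yours; your FAF $\to$ (A+UF) construction of $\Theta(v)$ is essentially the paper's Proposition~\ref{short3} (and in fact already gives F\o lner Property A, so your subsequent mollification is redundant in your own cycle).

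Where you go astray is in identifying F\o lner Property A $\Rightarrow$ strong F\o lner hyperfiniteness as a hard step requiring a Lov\'asz-local-lemma-type argument. In the paper this is factored as FPA $\Rightarrow$ (A+UF) $\Rightarrow$ SFH, and the second arrow (Proposition~\ref{short1}) uses exactly the two ingredients you name---a random separator from strong hyperfiniteness (via Theorem~\ref{longcycle}) and uniform F\o lnerness---but with a one-line trick that eliminates all dependence issues. Given a random $l$-separator $Y$ hitting each vertex with probability $<\eps/R_{k+1}$ (where $k$ is the uniform-F\o lner radius and $R_{k+1}$ the maximal $(k{+}1)$-ball size), replace $Y$ by the thickened separator $B_{k+1}(Y)$; now each component $A_i$ of $V(G)\setminus B_{k+1}(Y)$ satisfies $B_k(A_i)\subseteq J_i$, where $J_i$ is the corresponding component of $V(G)\setminus Y$. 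Apply uniform F\o lnerness to each $A_i$ to get an $\eps$-F\o lner set $H_i$ with $A_i\subseteq H_i\subseteq B_k(A_i)\subseteq J_i$. Since the $J_i$ are pairwise disjoint, so are the $H_i$: the packing is automatic, with no randomization of the reassembly and no local lemma, and a vertex $x$ is uncovered only when $x\in B_{k+1}(Y)$, an event of probability $<\eps$ by the union bound. Your layer-cake extraction of a ``deterministic fractional tiling'' from the functions $\Theta(x)$ is unnecessary (and would not directly produce one anyway, since the superlevel sets of $\Theta(x)$ for different $x$ do not fit together into a single system of weighted tiles).
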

\vi
The properties in Theorem \ref{longcycle} are weaker than the ones in Theorem \ref{shortcycle}, since F\o lner Property A trivially implies Property A. The remaining strict inclusions are explained next, and are summarized on the diagram of Figure 1. First, there exist F\o lner graphs that are not almost finite (Proposition \ref{amealm}).
\begin{example}
The $3$-regular tree is the simplest and earliest example of graphs that have Property A, but are not amenable, let alone almost finite. To see that it has Property A, pick an end and for each vertex take the averaged indicator function of the path of length $n$ from the vertex towards the end. 
 \end{example}
\begin{example}
Almost finiteness does not imply Property A. Let $G\in \grd$ be a graph that is not of Property A, say it contains an embedded expander sequence or it is the Cayley graph of a non-exact group. Attach infinite paths to each vertex of $G$. Then, the resulting graph $H$ is clearly almost finite, but it is not of Property A (see \cite{Ara} or the unpublished result of the first author \cite{Elekquali}). Observe that $H$ is a F\o lner graph.
\end{example}
\vi
\begin{theorem} \label{elsotetel}
Strong F\o lner Hyperfiniteness and Strong Almost Finiteness are equivalent properties.
\end{theorem}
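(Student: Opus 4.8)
The plan is to prove the two implications separately; one is essentially free and the other carries all the content.

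\textbf{Strong almost finiteness $\Rightarrow$ strong F\o lner hyperfiniteness.} This is immediate and does not even use the boundary estimate in the definition of strong almost finiteness. An $(\eps,r)$-F\o lner tiling is by definition an $(\eps,r)$-F\o lner packing $\cP$ with $\tilde\cP=V(G)$, so the space of $(\eps,r)$-F\o lner tilings is a clopen subset of the space of $(\eps,r)$-F\o lner packings. Pushing forward a measure witnessing strong almost finiteness along this inclusion gives a probability measure $\nu$ with $\nu(\{\cP:x\in\tilde\cP\})=1>1-\eps$ for every $x$, which is exactly the requirement of strong F\o lner hyperfiniteness.

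\textbf{Strong F\o lner hyperfiniteness $\Rightarrow$ strong almost finiteness.} This direction is not formal: strong F\o lner hyperfiniteness bounds, for each vertex, only the probability of being \emph{covered}, and could even be witnessed by a deterministic near-tiling, for which the probability of lying on a tile boundary equals $1$ on a set of positive density — so the dispersion of boundaries has to be produced by hand. The first move would be to replace strong F\o lner hyperfiniteness by its equivalents from Theorem \ref{shortcycle}: F\o lner Property A together with the uniform F\o lner property. Fix $\eps>0$. The core step is to construct a \emph{random} $(\eps,r)$-F\o lner tiling $\cP$ of $G$, with $r=r(\eps)$, such that $\Prob\big(x\in\partial(\text{tile of }x)\big)<\eps$ for every $x$. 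For the mere existence of a tiling (ignoring the probabilistic boundary estimate for the moment) I would adapt the exact-tiling construction of Downarowicz, Huczek and Zhang, which for Cayley graphs leans on transitivity, to a general amenable graph by using the uniform F\o lner property (it supplies, around any prescribed finite region, an $\eps$-F\o lner set containing it) together with Property A: one packs highly invariant F\o lner pieces maximally, fills the gaps with successively less invariant ones through finitely many levels, and absorbs the remaining $\eps$-small, locally thin leftover into neighbouring tiles, with Theorem \ref{prop1} used to turn the F\o lner \emph{functions} that appear into honest F\o lner \emph{sets} carrying all but an $\eps$-fraction of their mass. Then, to make the construction \emph{random} with dispersed boundary, I would drive every placement choice by random centres sampled according to the F\o lner Property A maps, so that after averaging the event ``$x$ lies on a tile boundary'' has probability of the order of the Property A parameter. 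After a final rescaling of $\eps$ this produces the desired probability measure on $(\eps,r)$-F\o lner tilings with uniformly small boundary probability, that is, strong almost finiteness. (Alternatively, one could build, for each $\eta>0$, a random $\eps$-F\o lner packing with coverage probability $\ge1-\eta$ at a \emph{fixed} bounded diameter and boundary probability $<\eps$, and then take a weak limit as $\eta\to0$, using compactness of the space of packings and the fact that ``$x$ is covered'' and ``$x$ lies on its tile's boundary'' are clopen events.)

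\textbf{The main obstacle} is twofold. First, the nested tiling construction must keep three quantities under simultaneous control through its finitely many levels and the absorption of the leftover: a single diameter bound $r=r(\eps)$ valid for all tiles, the $\eps$-F\o lner property of every tile, and the boundary estimate. Here the amenable geometry is essential — a highly invariant F\o lner tile $F$ automatically satisfies $|B_s(F)|\le(1+o(1))|F|$ — so that the relative cost of absorbing nearby uncovered or straddling vertices into $F$ stays below $\eps$ provided the invariance is improved fast enough at each successive level. Second, and this is the point genuinely beyond Downarowicz--Huczek--Zhang, one must bound the \emph{per-vertex} boundary probability: the pathwise inequality $\sum_{T\in\cP}|\partial T|\le\eps|\tilde\cP|$ says nothing vertex by vertex, and only the averaging built into the F\o lner Property A data (equivalently, the fractional tiling in the definition of fractional almost finiteness) converts it into uniform control of $\Prob\big(x\in\partial(\text{tile of }x)\big)$.
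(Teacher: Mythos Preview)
Your easy direction is fine and matches the paper.

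For the hard direction, your plan diverges from the paper and has a real gap at the point you yourself flag as the main obstacle: you never specify a workable mechanism for producing the per-vertex boundary bound. ``Drive every placement choice by random centres sampled according to the F\o lner Property~A maps'' is a slogan, not a construction --- the maps $\Theta(x)$ are probability measures indexed by vertices, and it is unclear how one samples a global tiling from them, let alone why the resulting boundary event at $x$ would be controlled uniformly in $x$. Your alternative (weak limits of high-coverage random packings) also does not give the boundary bound: the clopen event ``$x$ is on the boundary of its tile'' need not have small probability just because coverage probability tends to $1$.

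The paper's route is two-stage and avoids randomizing a DHZ-type construction altogether.
\begin{enumerate}
\item First, prove that $G$ is \emph{almost finite} (deterministically). This uses an Ornstein--Weiss type lemma: from strong F\o lner hyperfiniteness one extracts a single $(\eps,r)$-packing $\{H_i\}$ with the property that every $\delta$-F\o lner set $T$ is $(1-\eps)$-covered by the $H_i$ contained in $T$. Then, using fractional almost finiteness (from Theorem~\ref{shortcycle}) as a \emph{fractional matching} between the uncovered set and a small reservoir carved out of the $H_i$, Hall's marriage theorem gives an injective absorption $\Phi$ of bounded reach, turning the packing into an honest $(5\eps,2k+r)$-tiling $\{S_i\}$.
\item Second --- and this is where the randomness and the boundary bound actually come from --- fix one such tiling $\{T_i\}$ at scale $t$, and go back to strong F\o lner hyperfiniteness at a much finer parameter to get a random $(\delta,r)$-packing $\cP$ whose pieces are pairwise at distance $\ge 3t$ and which covers each vertex with probability $>1-\delta$. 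For each outcome, merge into every packing piece $Q_j^{\cP}$ all the fixed tiles $T_i$ it meets, and keep the untouched $T_i$ as tiles. The $3t$-separation guarantees this is still a tiling; $\delta$ is chosen so that the merged pieces stay $\eps$-F\o lner. The boundary probability at $x$ is now bounded by the probability that $x$ is \emph{not} covered by $\cP$ (if $x\in\tilde\cP$ it sits in the interior of the merged tile), which is $<\delta$.
\end{enumerate}
So the per-vertex boundary control is obtained not by averaging a randomized tiling procedure, but by reading it off directly from the coverage bound in strong F\o lner hyperfiniteness, once a fixed background tiling is available to absorb the complement of a random packing. This is the idea your proposal is missing.
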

\noindent
As we mentioned earlier, Downarowicz et al. ~ \cite{Down} proved that the Cayley graph of an amenable group is almost finite. The ingenious proof uses in a crucial way the fact that such graphs are based on groups. Putting together Theorem \ref{shortcycle} and Theorem \ref{elsotetel},  we extend this result to much larger graph classes: for Schreier graphs of amenable groups (Proposition \ref{schrei}) and for graphs of subexponential growth (Proposition \ref{subexpclass}).
\begin{figure}[H] \label{figure}
\includegraphics[width=\linewidth]  
{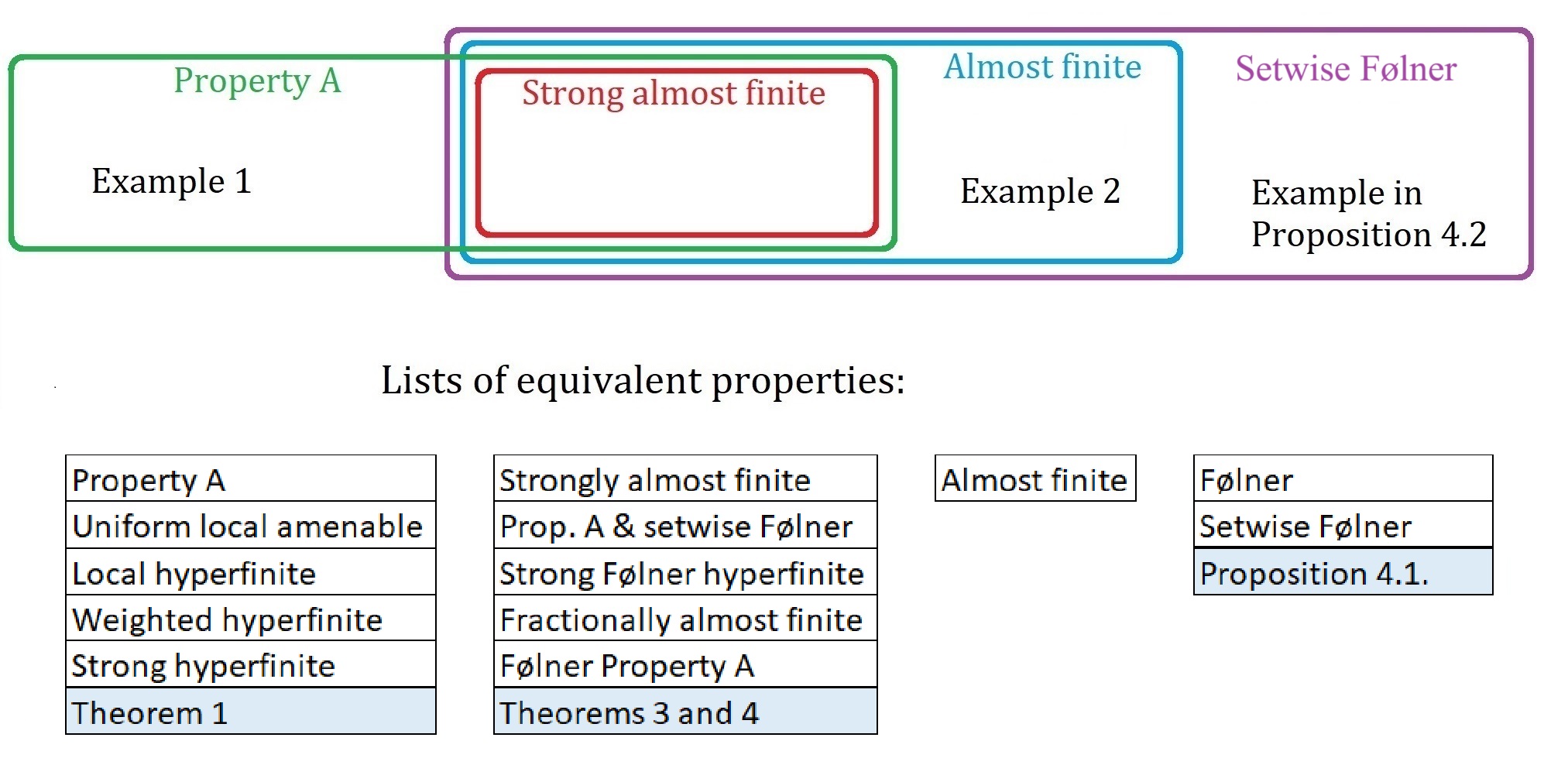}
\begin{center}
\caption{The relationships of the properties we study.}
\end{center}
\end{figure}
\noindent
We apply our results in spectral theory. 
Let $G\in\grd$ be a finite or infinite graph and
$\cal{L}_G:l^2(V(G))\to l^2(V(G))$ is its Laplacian. That is,
\begin{equation}
\cal{L}_G(f)(x)=\deg(x) f(x)-\sum_{x\sim y} f(y)\,. \end{equation}
\noindent
It is well-known that $\cl_G$ is a bounded, positive, self-adjoint operator and $\Spec(\cl_G)\subset [0,2d]$ (see \cite{Moharweiss}). It is well-known (see \cite{Kesten}, \cite{Dodziuk} or \cite{Moharweiss}) that a graph $G\in \grd$ is amenable  if and only if $0$ is in the spectrum of $G$. Note that the spectrum of Cayley graphs of amenable groups can be rather complicated \cite{Grigorchuk}.
It is a well-studied question that if a sequence of finite graphs $\{G_n\}^\infty_{n=1}$ converges to an infinite graph (or some other limit object) in some metric, what sort of convergence we can guarantee for the spectra $\{\Spec(\cL_{G_n})\}^\infty_{n=1}$. If the graphs  
$\{G_n\}^\infty_{n=1}$
are equipped with distinguished roots $\{x_n\in V(G_n)\}^\infty_{n=1}$ and the sequence of rooted graphs $\{(G_n,x_n)\}^\infty_{n=1}$ is convergent (see Proposition \ref{limitalmost}) then there exists
a rooted graph $(G,x)$ which is the limit of the sequence and the KNS-measures on $\{\Spec(\cL_{G_n})\}^\infty_{n=1}$ converge to the KNS-measure of $(G,x)$ in the weak topology (see \cite{Bartholdi}). Similar result is known (\cite{Abert}) if $\{\Spec(\cL_{G_n})\}^\infty_{n=1}$ is convergent in the sense of Benjamini and Schramm.
We will define neighborhood convergence (Section \ref{neigh}), a purely combinatorial version of the Benjamini-Schramm convergence and we prove the following theorem.
\vi
\begin{theorem} \label{spectralconv}
Say that a countable collection of graphs has Property A if their disjoint union is a graph with Property A.
Let $\{G_n\}^\infty_{n=1}\subset \grd$ be a countable set of
graphs of Property A such that $\lim_{n\to\infty}G_n \to G$ in the neighborhood distance. Then, $\Spec (\cL_{G_n})\to \Spec(\cL_G)$ in the
Hausdorff distance.
\end{theorem}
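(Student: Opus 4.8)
The plan is to reduce the assertion to the two one-sided Hausdorff bounds $\sup_{\lambda\in\Spec(\cL_G)}\dist(\lambda,\Spec(\cL_{G_n}))\to 0$ and $\sup_{\mu\in\Spec(\cL_{G_n})}\dist(\mu,\Spec(\cL_G))\to 0$; since all spectra occurring here are closed subsets of the fixed compact interval $[0,2d]$, the two together are precisely $\Spec(\cL_{G_n})\to\Spec(\cL_G)$ in the Hausdorff distance. The engine is a \emph{transplantation principle}: as $\cL_H$ has propagation one for every $H\in\grd$ (its matrix entry $(\cL_H)_{x,y}$ vanishes unless $x\sim y$ or $x=y$), for any vertex $v$, radius $r$ and function $f$ supported on the ball $B^H_r(v)$, the numbers $\|f\|_2$ and $\|(\cL_H-\lambda)f\|_2$ are determined by $\lambda$, by $f$ and by the rooted isomorphism type of the slightly larger ball $B^H_{r+2}(v)$; consequently, a finitely supported function whose support has diameter at most $r$ can be moved between any two graphs sharing that rooted $(r+2)$-ball type, with $\|f\|_2$ and the ratio $\|(\cL-\lambda)f\|_2/\|f\|_2$ preserved. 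By the definition of neighborhood convergence, for each fixed $r$ there is $N(r)$ such that for $n\ge N(r)$ the graphs $G_n$ and $G$ display exactly the same family of rooted $r$-ball isomorphism types.

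First I would treat $\Spec(\cL_G)\subseteq\liminf_n\Spec(\cL_{G_n})$. The spectrum of the bounded self-adjoint operator $\cL_G$ is its approximate point spectrum, and finitely supported vectors are dense in $\ell^2(V(G))$, so for every $\lambda\in\Spec(\cL_G)$ and $\eps>0$ there is a finitely supported unit vector $f$ with $\|(\cL_G-\lambda)f\|_2<\eps$, of support-diameter $D=D(\lambda,\eps)$. Transplanting $f$ into $G_n$ for $n\ge N(D+2)$ gives $\dist(\lambda,\Spec(\cL_{G_n}))<\eps$. Uniformity over $\lambda$ follows by covering the compact set $\Spec(\cL_G)$ with finitely many $\eps$-balls around points $\lambda_1,\dots,\lambda_m$, taking $n$ past all of $N(D(\lambda_i,\eps)+2)$, and using the triangle inequality. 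This half needs nothing beyond neighborhood convergence.

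The content of the theorem is the opposite inclusion, and here Property A -- in the uniform sense of the statement -- enters through its equivalence (Theorem \ref{longcycle} together with the identification of Property A with the operator norm localization property in \cite{Chen}, \cite{Brodzki}, \cite{Sako}) in the form of operator norm localization with a modulus common to all $G_n$. Fix $\eps>0$; I would produce $N$ so that $\dist(\mu,\Spec(\cL_G))\le\eps$ for every $n\ge N$ and every $\mu\in\Spec(\cL_{G_n})$. With $M=2d$ one has $\|\cL_{G_n}-\mu\|\le M$, so the operator $B_n=M^2\,\id-(\cL_{G_n}-\mu)^2$ is positive, has propagation at most $2$, and -- because $\mu\in\Spec(\cL_{G_n})$ -- has operator norm exactly $M^2$. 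Operator norm localization, applied to the positive propagation-$\le 2$ operator $B_n$ with the common modulus, yields a radius $\ka=\ka(\eps,d)$ independent of $n$ and $\mu$, and a unit vector $\xi$ on $G_n$ with $\diam(\Supp\xi)\le\ka$ and $\|B_n\xi\|\ge(1-\eps')M^2$, where $\eps'>0$ may be taken arbitrarily small at the cost of enlarging $\ka$. From $0\le B_n\le M^2\,\id$ one gets $\langle B_n\xi,\xi\rangle\ge\|B_n\xi\|^2/M^2\ge(1-\eps')^2M^2$, hence $\|(\cL_{G_n}-\mu)\xi\|_2^2=M^2-\langle B_n\xi,\xi\rangle\le 2\eps' M^2$; picking $\eps'$ with $2\eps' M^2\le\eps^2$ makes $\|(\cL_{G_n}-\mu)\xi\|_2\le\eps$. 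For $n\ge N(\ka+2)$ we now transplant $\xi$ into $G$, obtaining a unit vector $\tilde\xi$ with $\|(\cL_G-\mu)\tilde\xi\|_2\le\eps$, that is, $\dist(\mu,\Spec(\cL_G))\le\eps$; so $N=N(\ka+2)$ works.

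The hard part is this localization step: extracting, from a possibly very spread-out Weyl sequence on $G_n$, a genuinely bounded-diameter near-null vector whose diameter bound is uniform in $n$. It can also be done concretely, without invoking the operator-localization literature, by a separator cut: given a finitely supported unit $\xi$ with $\|(\cL_{G_n}-\mu)\xi\|_2$ small, apply the uniform weighted hyperfiniteness of $G_n$ (Theorem \ref{longcycle}) to the smeared weight $w(x)=\sum_{y:\,\dist(x,y)\le 1}|\xi(y)|^2$ to find $Y\subset V(G_n)$ of small total weight such that every component $C$ of $G_n\setminus Y$ has at most $k=k(\eps,d)$ vertices; the restrictions $\xi_C=\xi|_C$ are pairwise almost orthogonal, retain almost all of $\|\xi\|_2^2$, and satisfy $\sum_C\|(\cL_{G_n}-\mu)\xi_C\|_2^2$ small, since the interaction between distinct pieces is supported near the $w$-small set $Y$; hence some single $\xi_C$, supported on at most $k$ vertices and therefore of diameter at most $k$, is the bounded near-null vector sought. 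Either way, the moral is that uniform Property A promotes ``$\mu\in\Spec(\cL_{G_n})$'' to ``$\mu$ is witnessed by a uniformly bounded local pattern in $G_n$'', after which neighborhood convergence carries the pattern into $G$.
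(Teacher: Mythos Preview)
Your argument is correct. The easy half (points of $\Spec(\cL_G)$ are approached by $\Spec(\cL_{G_n})$) coincides with the paper's Proposition~\ref{pro32}: both transplant a finitely supported approximate eigenvector along a rooted-ball isomorphism. For the hard half the two arguments diverge in implementation, though not in spirit. The paper works with a polynomial approximation $\pile$ to a bump function $\file$ centred at $\lambda$, uses \emph{strong hyperfiniteness} to produce a random separator $Y$, and shows (Lemmas~\ref{deltabecsles}--\ref{normbecsles}) that $\|\pile(\cL_{G_n})\|$ and $\|\pile(\cL_G)\|$ are both close to a ``diamond norm'' computed over functions supported on single components of $V\setminus B_l(Y)$; since these components have bounded size, the diamond norms agree once the ball-types match. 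You instead invoke the operator norm localization property (equivalent to Property~A by \cite{Chen}, \cite{Brodzki}, \cite{Sako}, as recorded after Theorem~\ref{longcycle}) applied to $B_n=M^2\id-(\cL_{G_n}-\mu)^2$ to extract a bounded-diameter approximate $\mu$-eigenvector directly, then transplant; your alternative via weighted hyperfiniteness of a smeared weight is essentially a deterministic version of the paper's random-separator cut. Your route is shorter and avoids the bump-function apparatus, at the cost of importing ONL (with constant arbitrarily close to~$1$) from the literature rather than deriving the localization step from the Long Cycle Theorem as the paper does; the paper's version is thus more self-contained within the article, while yours makes the mechanism---``Property~A promotes spectral membership to a uniformly local witness''---more transparent.
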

\noindent 
In the final section we establish a connection
between our strong almost finiteness property and the Elliott Classification Program on simple, nuclear $C^*$-algebras. We will show that if a graph $G\in\grd$ is minimal (see Definition \ref{minimal}) and $G$ is both of Property A and almost finite (that is $G$ is strongly almost finite) then some of the \'etale groupoids (see Section \ref{class} for the definition) which are
naturally associated to $G$ are minimal, topologically amenable and almost finite in the sense of Matui \cite{Matui}. Consequently, by the results in \cite{Mawu} we have the following theorem which we explain in Section \ref{class}.
\vi
\begin{theorem} \label{elliott}
For every minimal, strongly almost finite graph $M$ we can associate a stable action $\beta_E:\Gamma_{2d}\actson E$ so that all the orbit graphs are neighborhood equivalent to $M$ and
the simple, nuclear, tracial groupoid $C^*$-algebra $C^*_r(\cG_{\beta_E})$ is
classifiable by its Elliott invariants.
\end{theorem}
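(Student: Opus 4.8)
The plan is to realize the combinatorial data of $M$ as the unit space of an étale groupoid and then feed that groupoid into the classification machinery quoted from \cite{Cast}; the role of the hypotheses on $M$ is to supply exactly the three dynamical conditions (minimality, topological amenability, almost finiteness in Matui's sense) that \cite{Cast} requires.

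\emph{The space and the action.} Since $M$ is strongly almost finite, Theorems \ref{elsotetel} and \ref{shortcycle} show that $M$ is simultaneously of (F\o lner) Property A, uniformly F\o lner, and almost finite. Fix a proper edge-colouring of $M$ by colours $1,\dots,2d$ (possible as the chromatic index of a degree-$d$ graph is at most $d+1\le 2d$); each colour $i$ becomes a partial involution $\phi_i$ of $V(M)$, presenting $M$ as a graphing. Let $E$ be the closure, in the local (neighbourhood) topology on rooted $2d$-edge-coloured graphs of degree bound $d$, of $\{(M,x): x\in V(M)\}$. Then $E$ is compact, metrizable and totally disconnected, with no isolated points since $M$ is infinite; because $M$ is minimal, $(E,\Gamma_{2d})$ is a minimal system and every point of $E$ is a rooted $2d$-edge-coloured graph realizing exactly the neighbourhood types occurring in $M$. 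Extending each $\phi_i$ to a homeomorphism of $E$ by fixing the (clopen) set of points where colour $i$ is absent yields the action $\beta_E\colon\Gamma_{2d}\actson E$; if a stable model is wanted one replaces $E$ by $E\times\mathfrak C$ for a Cantor set $\mathfrak C$ with the obvious product action. In either case the orbit graph of a point $e$, after deleting the ``loop'' edges coming from fixed $\phi_i$'s, is a graph realizing precisely the neighbourhood types of $M$, hence is neighbourhood equivalent to $M$; and minimality of $M$ makes $\mathcal G_{\beta_E}$ a second countable, Hausdorff, minimal étale groupoid with Cantor unit space.

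\emph{Topological amenability and almost finiteness.} All orbit graphs are neighbourhood equivalent to $M$, hence of Property A with the same parameters. For given $\eps, r$, the Property A data $\Theta$ of these graphs depends only on $r$-local information, so $e\mapsto\Theta_e$ varies continuously over $E$; packaging these into a net of continuous, approximately invariant, positive-type functions on $\mathcal G_{\beta_E}$ with controlled support shows that $\mathcal G_{\beta_E}$ is topologically amenable (this is the groupoid incarnation of the equivalence between Property A and amenability of the associated coarse/orbit groupoid), and amenability also furnishes a $\Gamma_{2d}$-invariant probability measure on $E$, hence a trace on $C^*_r(\mathcal G_{\beta_E})$. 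For almost finiteness in Matui's sense one uses that $M$ is strongly (F\o lner) hyperfinite: for each $\eps$, the randomized $(\eps,r)$-F\o lner tiling can be refined to a locally defined rule, i.e.\ a continuous map from a clopen refinement of $E$ to a finite set of tile patterns, whose return bisections generate a clopen elementary subgroupoid $\mathcal K_\eps\subseteq\mathcal G_{\beta_E}$ whose orbits are exactly the tiles; the uniform small-boundary estimate for the tiles then becomes the estimate $|Kx\setminus\mathcal K_\eps x|<\eps\,|\mathcal K_\eps x|$ for every unit $x$ and every prescribed compact $K$, which is Matui's almost finiteness. I expect this last point to be the main obstacle: a single tiling of $M$ does not organize itself into a clopen partition of $E$, and it is precisely the \emph{strong} (randomized/uniform) form of almost finiteness, together with minimality, that lets one upgrade it to a continuous choice over $E$ and transfer the F\o lner estimate from the graph to the groupoid.

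\emph{Classification.} We now have a second countable, Hausdorff, étale groupoid $\mathcal G_{\beta_E}$ with Cantor unit space that is minimal, topologically amenable, almost finite, and carries an invariant trace. The hypotheses of \cite{Cast} — which is built for groupoids arising from not necessarily free actions — are therefore satisfied, so $C^*_r(\mathcal G_{\beta_E})$ is simple, separable, nuclear, stably finite, $\mathcal Z$-stable and satisfies the UCT; by the Elliott classification theorem \cite{quasi,nuclear} it is classified by its Elliott invariant. Combined with the neighbourhood equivalence of the orbit graphs established above, this is exactly the assertion of the theorem. The only point requiring care beyond the black box of \cite{Cast} is simplicity of $C^*_r(\mathcal G_{\beta_E})$ for the non-principal groupoid $\mathcal G_{\beta_E}$, which follows from minimality of $M$ together with the effectiveness condition isolated in \cite{Cast}.
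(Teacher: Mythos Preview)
Your outline has the right shape---build an étale groupoid from $M$, verify minimality, topological amenability, and Matui almost finiteness, then invoke \cite{Cast}---but the bridge from graph properties to \emph{continuous} groupoid data is where the argument breaks, and this is precisely the step the paper does differently.

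The central gap is the sentence ``the Property~A data $\Theta$ of these graphs depends only on $r$-local information, so $e\mapsto\Theta_e$ varies continuously over $E$.'' This is not true in general: a Property~A witness $\Theta$ is a global choice, and two vertices of $M$ with rooted-isomorphic $r$-balls need not be assigned the same probability measure. So there is no reason the map $e\mapsto\Theta_e$ is continuous on the orbit closure. You correctly flag the analogous difficulty for almost finiteness (``a single tiling of $M$ does not organize itself into a clopen partition of $E$'') but do not resolve it, and you do not notice that the \emph{same} obstacle already undermines your topological amenability argument. The appeal to ``the groupoid incarnation of the equivalence between Property~A and amenability of the coarse/orbit groupoid'' does not help here: that circle of results does not say that orbit graphs individually having Property~A forces the transformation groupoid to be topologically amenable.

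The paper's device is to \emph{encode the witnesses into the vertex labels}. One discretizes the Property~A measures into finitely many types $Q_n$ for each $n$, records the type of $\Theta(x)$ as a $\prod_n Q_n$-valued (hence Cantor-valued) label at $x$, and does the same for the almost-finiteness tilings; a further proper Cantor labeling is added to force stability of the $\Gamma_{2d}$-action (your product-with-$\mathfrak{C}$ fix does not achieve this). One then passes to the orbit closure in $\csgd$ and takes a \emph{minimal} closed invariant subspace $E$ inside it---note that the orbit closure of a minimal graph is not itself a minimal system, so your claim ``because $M$ is minimal, $(E,\Gamma_{2d})$ is a minimal system'' also needs this extra step. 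Because the witness conditions are local constraints on labels, they survive passage to limits and to the minimal subspace, and now the partition of $E$ by label is the clopen partition you were looking for: this is how the paper obtains topological amenability and Matui almost finiteness simultaneously (Propositions \ref{propagro}, \ref{almagro}, Corollary \ref{maincorol}). The trace comes not from topological amenability but from amenability of the orbit graphs (a consequence of almost finiteness of $M$), which yields an invariant probability measure on $E$.
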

\vi These examples seem to be significantly different from the known ones. 

\noindent Finally, we give a purely dynamical characterization of strong almost finiteness (Proposition \ref{cstar}) in the case of minimal graphs.
\subsection{An overview of the paper}
Let us finish the introduction with a short overview of the coming sections. In Section \ref{sec2} we prove the five equivalents of Property A for bounded degree graphs, going along the ``long cycle''. Section \ref{sec3} establishes the quantitative connection between Folner functions and Folner sets, while Section \ref{sec4} proves that being a F\o lner graph and being a setwise Folner graph are the same. Property A together with the setwise Folner property can be characterized in four different ways, as shown in Section \ref{sec5}, going along the ``short cycle''. These are also equivalent to strong almost finiteness, as proved in Section \ref{sec6}. In Section \ref{sec7} various classes of graphs are shown to be strong almost finite. In Section \ref{sec8} we define neighborhood equivalence and a metric on the resulting equivalence classes of graphs, which will be the framework for Section \ref{sec9}, where the pointwise convergence of the spectrum is shown for convergent sequences of graphs in this topology. In Section \ref{class}, after the necessary preparations, we associate an {\it \'etale groupoid} to {\it minimal} graphs, and establish its topological amenability and almost finiteness under the assumption that the graph was strongly almost finite. This gives rise to a new and rich class of classifiable $C^*$-algebras.

\section{The Long Cycle Theorem} \label{sec2}
\noindent The goal of this section is to prove Theorem \ref{longcycle}.
The way we prove the theorem is showing that:
\noindent
Property A $\Rightarrow$  Uniform local amenability $\Rightarrow$ local hyperfiniteness
$\Rightarrow$ weighted hyperfiniteness $\Rightarrow$ strong hyperfiniteness $\Rightarrow$
Property A.
\begin{proposition} \label{long1}
Property A implies Uniform Local Amenability.
\end{proposition}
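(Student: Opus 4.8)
The plan is to produce, for any finite $L\subseteq V(G)$, a small subset $M\subseteq L$ that is $\eps$-F\o lner with respect to $L$, directly out of a Property~A family $\Theta$ of sufficiently small parameter $\delta$, by a coarea (layer--cake) argument. The naive first attempt---fix $x\in L$, slice $\Theta(x)$ at a random level $t$, and intersect the superlevel set $\{v:\Theta(x)(v)>t\}$ with $L$---is hopeless, since Property~A says nothing about the ``edge energy'' $\sum_{u\sim v}|\Theta(x)(u)-\Theta(x)(v)|$ of a single measure $\Theta(x)$ (on a tree this is already of the same order as $\|\Theta(x)\|_1$), so these superlevel sets carry no F\o lner information. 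The key idea is instead to transpose the roles of source and target: fix a \emph{target} vertex $v_0\in V(G)$ and consider $g_{v_0}\colon L\to[0,1]$, $g_{v_0}(x)=\Theta(x)(v_0)$. The total variation of $g_{v_0}$ over the induced subgraph $G[L]$ \emph{is} controlled by Property~A, because $|g_{v_0}(x)-g_{v_0}(y)|\le\|\Theta(x)-\Theta(y)\|_1$ and these bounds can be summed over the target $v_0$.

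Concretely, writing $E(L)$ for the edge set of $G[L]$, summing over $v_0$ and over $\{x,y\}\in E(L)$ gives $\sum_{v_0}\sum_{\{x,y\}\in E(L)}|g_{v_0}(x)-g_{v_0}(y)|=\sum_{\{x,y\}\in E(L)}\|\Theta(x)-\Theta(y)\|_1<\delta\,|E(L)|\le\tfrac{d\delta}{2}|L|$, whereas $\sum_{v_0}\sum_{x\in L}g_{v_0}(x)=\sum_{x\in L}\|\Theta(x)\|_1=|L|$. A pigeonhole over $v_0$ therefore produces a single $v_0$ with $g:=g_{v_0}$ not identically zero on $L$ and $\sum_{\{x,y\}\in E(L)}|g(x)-g(y)|<\tfrac{d\delta}{2}\sum_{x\in L}g(x)$; taking $\delta:=\eps/(2d)$, this exhibits $g$ as an $\eps$-F\o lner function on $G[L]$. (That the chosen $v_0$ has $\sum_{x\in L}g(x)>0$ is automatic, since for such a $v_0$ the vanishing of the right side would force $g\equiv 0$ and hence the vanishing of the left side.) A coarea decomposition then extracts the set: with $A(s):=\{x\in L:g(x)>s\}$ we have $\int_0^\infty|A(s)|\,ds=\sum_{x\in L}g(x)$, and since each vertex of the relative boundary $\partial_{G[L]}A(s)$ is the inside endpoint of a distinct edge of $G[L]$ that crosses $A(s)$, $\int_0^\infty|\partial_{G[L]}A(s)|\,ds\le\sum_{\{x,y\}\in E(L)}|g(x)-g(y)|<\tfrac{d\delta}{2}\int_0^\infty|A(s)|\,ds<\eps\int_0^\infty|A(s)|\,ds$. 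Hence some level $s_0$ gives a nonempty $M:=A(s_0)$ with $|\partial_{G[L]}M|<\eps|M|$, i.e.\ $M$ is $\eps$-F\o lner with respect to $L$.

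The uniform size bound then comes for free: $g(x)>0$ forces $v_0\in\operatorname{supp}\Theta(x)\subseteq B_r(x)$, so $M\subseteq\operatorname{supp}g\subseteq B_r(v_0)$ and $|M|\le 1+d+\cdots+d^r=:k$, which depends only on $\eps$ and $d$ (via $\eps\mapsto\delta=\eps/(2d)\mapsto r=r(\delta)$, the radius provided by Property~A) and not on $L$---precisely the uniform constant needed for uniform local amenability. (As in the other hyperfiniteness-type conditions, $L$ is taken finite here; e.g.\ the $3$-regular tree is of Property~A but fails the condition outright if infinite $L$ of positive vertex-isoperimetric constant are allowed.) I expect the transposition step---replacing the superlevel sets of $\Theta(x)$ by the function $x\mapsto\Theta(x)(v_0)$, together with the pigeonhole that converts Property~A into a genuine $\eps$-F\o lner function on every finite induced subgraph---to be the one genuinely non-routine ingredient; the coarea extraction and the support bound afterwards are standard.
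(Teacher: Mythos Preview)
Your proof is correct, and the core idea---the ``transposition'' from the measures $\Theta(x)$ to the dual functions $x\mapsto\Theta(x)(v_0)$, followed by a pigeonhole over $v_0$---is exactly the paper's key step. Two aspects of your argument are genuinely cleaner than the paper's version, though.

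First, the paper begins by projecting each $\Theta(x)$ onto $L$ via a nearest-point map $\tau$, obtaining measures $\Omega(x)$ supported on $L$; only then does it transpose and pigeonhole to find $z_0\in L$. You skip this projection and allow $v_0\in V(G)$ to lie anywhere. This costs nothing (the function $g$ is still defined on $L$, which is all that matters) and in fact gives a sharper support bound: your $M\subseteq B_r(v_0)$, whereas the paper's projection inflates the radius to $2r$. Second, to pass from the F{\o}lner function $g$ to a F{\o}lner set, the paper invokes its Theorem~\ref{prop1} (proved later, in Section~\ref{sec3}), which is a quantitatively stronger and considerably more involved statement than what is needed here. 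Your direct layer-cake/coarea extraction is the standard elementary route and makes the argument self-contained. What the paper's use of Theorem~\ref{prop1} would buy---a F{\o}lner set carrying most of the mass of $g$---is irrelevant for uniform local amenability, so nothing is lost.
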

\proof The proof is a simplified version of Lemma 7.2 in \cite{Eleklocal}. Let $G\in\grd$ be a countably infinite
graph of Property A. Pick a $\delta>0$ in
such a way that we can find an $\eps$-F\o lner set in the support of any $d \delta$-F\o lner function  $\zeta:V(H)\to \R$, where $H$ is an arbitrary  finite induced subgraph of $G$. Such a choice is possible by Theorem \ref{prop1}. Since $G$ is of Property A,
there exists $r>1$ and
a function $\Theta:V(G)\to \Prob(G)$ satisfying the following conditions.
\begin{itemize}
\item For any $x\in V(G)$ the support of $\Theta(x)$ is contained in the $r$-ball around the vertex x.
\item For any adjacent pair $x,y\in V(G)$ we have that
$$\|\Theta(x)-\Theta(y)\|_1<\delta\,.$$
\end{itemize}
\noindent
Now, let $H$ be an arbitrary finite induced subgraph of $G$.
For $x\in V(G)$, pick $\tau(x)\in V(H)$ in such a way
that $d_G(x,\tau(x))=d_G(x,H)$.
For $x\in V(H)$, let $\Omega(x)(z)=\sum_{t\in \tau^{-1}(z)} \Theta(x)(t).$  Note that $\tau^{-1}(z)$ denotes the set of  vertices
mapped to $z$ by $\tau$.
Then by definition, $\Supp \Omega(x)\subset V(H)$ and
for all $z\in V(H)$, $\Omega(x)(z)\geq 0$. Also,
$$\sum_{z\in V(H)}\Omega (x)(z)= \sum_{t\in V(G)} \Theta(x) (t)=1\,,$$
hence $\Omega:V(H)\to \Prob(H)$. Also, if $x,y\in V(H)$ are adjacent vertices,
then
$$\|\Omega (x)-\Omega (y)\|_1 \leq \delta. $$
Indeed,  
$$\|\Omega (x)-\Omega (y)\|_1=\sum_{z\in V(H)} |\Omega (x)(z)-\Omega (y)(z)|= $$
$$=\sum_{z\in V(H)} |\sum_{t\in\tau^{-1}(z)} \Theta(x)(t)-
\sum_{t\in\tau^{-1}(z)} \Theta(y)(t)|\leq $$ $$
\leq \sum_{z\in V(H)} \sum_{t\in\tau^{-1}(z)} |\Theta(x)(t)-\Theta(y)(t)|= $$
$$= \sum_{t\in V(G)} |\Theta(x)(t)-\Theta(y)(t)|=\|\Theta(x)-\Theta(y)\|_1\leq \delta.$$
\noindent
Observe that
\begin{equation} \label{eA1} \Supp (\Omega(x))\subset B^G_{2r}(x),
\end{equation}
\noindent
where $B^G_{2r}$ denotes the ball of radius $2r$ centered around $x$ in the graph $G$. Indeed,
if $\Omega(x)(z)\neq 0 $, then there exists $t\in \tau^{-1}(z)$
such that $\Theta(x)(t)\neq 0$. Hence, $d_G(t,x)\leq r$
and also, $d_G(t,z)\leq r$, since $d_G(t,z)\leq d_G(t,x)$ by the 
definition of $\tau$.  
That is, $d_G(x,z)\leq 2r$, so for any $x\in V(H)$ we have that \eqref{eA1} holds. 

\noindent
The following lemma finishes the proof
of our proposition.
\begin{lemma}  There exists a subset
$L\subset V(H)$ such that $|\partial_H(L)|\leq \frac{d \delta}{2} |L|$
and $|L|\leq R_{2r} $, where
$R_{2r}$ is the maximal size of the $2r$-balls in $G$.
\end{lemma}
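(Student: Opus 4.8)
The plan is to distil a single nonnegative function on $V(H)$ out of the family $\{\Omega(x)\}_{x\in V(H)}$ and then treat it as a F\o lner function on the finite graph $H$. First I would transpose the estimate on $\Omega$. Summing the inequality $\|\Omega(x)-\Omega(y)\|_1\le\delta$ over all edges of $H$ and using $|E(H)|\le\frac d2|V(H)|$ gives
$$\sum_{\{x,y\}\in E(H)}\ \sum_{z\in V(H)}|\Omega(x)(z)-\Omega(y)(z)|\ \le\ \frac{d\delta}{2}\,|V(H)|.$$
For $z\in V(H)$ put $g_z(x):=\Omega(x)(z)\ge 0$. Interchanging the two sums on the left and observing that $\sum_{z\in V(H)}\sum_{x\in V(H)}g_z(x)=\sum_{x\in V(H)}1=|V(H)|$, a simple averaging (pigeonhole) argument produces a vertex $z_0\in V(H)$ for which $g:=g_{z_0}$ is not identically zero and
$$\sum_{\{x,y\}\in E(H)}|g(x)-g(y)|\ \le\ \frac{d\delta}{2}\sum_{x\in V(H)}g(x).$$

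Next I would localize the support of $g$. If $g(x)=\Omega(x)(z_0)\neq 0$, then $z_0\in\Supp(\Omega(x))$, so by \eqref{eA1} we have $z_0\in B^G_{2r}(x)$, i.e. $x\in B^G_{2r}(z_0)$. Hence $\Supp(g)\subset B^G_{2r}(z_0)$, and in particular $|\Supp(g)|\le R_{2r}$.

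Finally I would extract $L$ from $g$ by a layer-cake (coarea) argument. For $t\ge 0$ set $L_t:=\{x\in V(H):g(x)>t\}$. Then $\int_0^\infty|L_t|\,dt=\sum_{x\in V(H)}g(x)$, while $\int_0^\infty |E_H(L_t,V(H)\setminus L_t)|\,dt=\sum_{\{x,y\}\in E(H)}|g(x)-g(y)|$, since an edge $\{x,y\}$ is cut by $L_t$ for $t$ ranging over an interval of length $|g(x)-g(y)|$. Combining these identities with the displayed inequality and averaging over $t$, there is a $t_0$ with $L:=L_{t_0}\neq\emptyset$ and $|E_H(L,V(H)\setminus L)|\le\frac{d\delta}{2}|L|$. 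Every vertex of $\partial_H(L)$ is the $L$-endpoint of at least one edge of $E_H(L,V(H)\setminus L)$, so $|\partial_H(L)|\le|E_H(L,V(H)\setminus L)|\le\frac{d\delta}{2}|L|$; and $L\subset\Supp(g)$ gives $|L|\le R_{2r}$, as required.

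The one conceptual move is the transposition: one must notice that among the ``coordinate slices'' $z\mapsto\Omega(\cdot)(z)$ at least one inherits the F\o lner-type inequality with exactly the constant $\frac{d\delta}{2}$, which is precisely what makes the bound in the conclusion come out. After that the argument is routine bookkeeping, and there is no genuine obstacle. One could alternatively normalize $g$ to a $d\delta$-F\o lner probability measure and invoke Theorem \ref{prop1}, but the direct level-set extraction above is shorter and yields the stated constant without loss.
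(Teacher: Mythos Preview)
Your argument is correct and coincides with the paper's proof in its first half: both transpose $(x,z)\mapsto\Omega(x)(z)$ and use averaging over $z\in V(H)$ to locate a $z_0$ so that $\zeta(\cdot)=\Omega(\cdot)(z_0)$ is a F\o lner-type function on $H$ supported in $B^G_{2r}(z_0)$. The difference lies in the extraction of $L$ from this function. The paper invokes Theorem~\ref{prop1} (whose proof occupies Section~\ref{sec3}) together with the particular choice of $\delta$ made at the start of Proposition~\ref{long1} to find an $\eps$-F\o lner set inside $\Supp(\zeta)$. Your coarea/level-set argument is more elementary and self-contained: it needs no auxiliary theorem, works for any $\delta$, and delivers the precise constant $\frac{d\delta}{2}$ in the lemma's statement directly (the paper's route yields an $\eps$-F\o lner set rather than the literal $\frac{d\delta}{2}$ bound). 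Both approaches suffice for Proposition~\ref{long1}; yours is the shorter path to the lemma as stated, while the paper's route has the virtue of packaging the ``F\o lner function $\Rightarrow$ F\o lner set'' step into a reusable result.
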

\proof
By the inequalities above,
$$\sum_{x\in V(H)} \sum_{x\sim y} \|\Omega(x)-\Omega(y)\|_1\leq
\sum_{x\in V(H)} d \delta =$$
$$= \sum_
{x\in V(H)} d \delta \|\Omega(x)\|_1\,,$$
\noindent
where here and going forward the summand $y$ is required to be in $V(H)$.
Hence,
$$\sum_{z\in V(H)} \sum_{x\in V(H)} \sum_{x\sim y}
|\Omega(x)(z)-\Omega(y)(z)|\leq d\delta \sum_{z\in V(H)}\sum_{x\in V(H)} 
\Omega(x)(z)\,.$$
\noindent
Hence, there exists $z_0\in V(H)$ such that
$$\sum_{x\in V(H)} \sum_{x\sim y}
|\Omega(x)(z_0)-\Omega(y)(z_0)|\leq d \delta \sum_{x\in V(H)} \Omega(x)(z_0)\,.$$
\noindent
Thus, if we define the function $\zeta:V(H)\to\R$ by
$\zeta(x)=\Omega(x)(z_0)$, we have that
\begin{equation}\label{kedd4uj}
\sum_{x\in V(H)} \sum_{x\sim y}
|\zeta(x)-\zeta(y)|\leq d \delta \sum_{x\in V(H)} 
\zeta(x)\,.
\end{equation}
\noindent
That is $\zeta$ is a $d \delta$-F\o lner function on $V(H)$. So, by our assumption on $\delta$, we can find a $\eps$-F\o lner set $L\subset V(H)$
inside the support of $\zeta$ (that is,
inside $B^G_{2r}(z_0)$). Hence, $|L|\leq R_{2r}$, thus our lemma follows. \qed
\vi
The proposition follows from the previous lemma right away. \qed
\begin{proposition}\label{long2}
Uniformly locally amenable graphs are
locally hyperfinite.
\end{proposition}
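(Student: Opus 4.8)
The plan is to extract the local hyperfiniteness constant directly from the uniform local amenability constant via an iterated "peeling" argument inside each finite set $L\subset V(G)$. Fix $\eps>0$; I want to produce $k>0$ so that any finite $L$ can be split into pieces of size $\le k$ by deleting at most $\eps|L|$ vertices. Choose $\delta>0$ small depending on $\eps$ (concretely something like $\delta$ so that $\sum_{i\ge 1}(1-\delta)^{i-1}\delta\cdot(\text{boundary cost per stage})\le\eps$; the exact value is a routine geometric-series bookkeeping), and let $k$ be the bound on $|M|$ furnished by uniform local amenability for this $\delta$: every subset has a $\delta$-F\o lner-with-respect-to-itself subset of size at most $k$.

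First I would set up the greedy decomposition. Given $L$, apply uniform local amenability to $L$ itself to get $M_1\subset L$ with $M_1$ being $\delta$-F\o lner in $L$ and $|M_1|\le k$. Put the interior $M_1\setminus\partial_L(M_1)$ aside as one component of size $\le k$, throw the boundary $\partial_L(M_1)$ into the "deleted" pile, and set $L_2 = L\setminus M_1$. Iterate: at stage $i$ apply uniform local amenability to $L_i$ to get $M_i$, a $\delta$-F\o lner subset of $L_i$ with $|M_i|\le k$, record $M_i\setminus\partial_{L_i}(M_i)$ as a new small component, delete $\partial_{L_i}(M_i)$, and pass to $L_{i+1}=L_i\setminus M_i$. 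Since each $M_i$ is nonempty the process terminates after finitely many steps with $L$ written as a disjoint union of the recorded interiors (each of size $\le k$) together with the union $L'$ of all the removed boundaries.

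Next I would bound $|L'|$. Because $M_i$ is $\delta$-F\o lner in $L_i$, we have $|\partial_{L_i}(M_i)| < \delta|M_i| \le \delta|L_i|$, hence $|L_{i+1}| = |L_i| - |M_i| \le |L_i| - |\partial_{L_i}(M_i)| \le \dots$; more usefully, $|L_{i+1}| \le |L_i| - |M_i|$ and $|\partial_{L_i}(M_i)| < \delta |M_i|$, so summing the removed boundaries gives $|L'| = \sum_i |\partial_{L_i}(M_i)| < \delta \sum_i |M_i| = \delta |L|$, since the $M_i$ partition $L$. Choosing $\delta \le \eps$ at the outset makes $|L'| < \eps|L|$. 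Finally I must check that deleting $L'$ from $L$ really leaves components of size $\le k$: the sets $M_i\setminus\partial_{L_i}(M_i)$ are pairwise disjoint and cover $L\setminus L'$, and crucially no edge of $L$ runs between $M_i\setminus\partial_{L_i}(M_i)$ and $L_{i+1}=L_i\setminus M_i$, because any vertex of $M_i$ adjacent in $L_i$ to something outside $M_i$ lies in $\partial_{L_i}(M_i)$ and was deleted; a short induction on $i$ upgrades this to "no edge between $M_i\setminus\partial_{L_i}(M_i)$ and any later interior," so each surviving component is contained in a single $M_i\setminus\partial_{L_i}(M_i)$ and thus has size $\le k$.

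The one genuinely delicate point is this last separation claim: one has to be careful that "$\delta$-F\o lner with respect to $L_i$" controls only edges staying inside $L_i$, so an edge from $M_i$ to $L\setminus L_i$ is \emph{not} a priori charged to $\partial_{L_i}(M_i)$. The fix is to note that every vertex of $L\setminus L_i$ already belongs to some $M_j$ with $j<i$, hence is either already a deleted boundary vertex of stage $j$ or lies in the interior $M_j\setminus\partial_{L_j}(M_j)$; in the latter case, symmetrically, the vertex of $M_i$ it connects to would have been an exterior-neighbour of $M_j$ at stage $j$, contradicting that it survived into $L_i$. So the interiors are pairwise non-adjacent in $L$ once $L'$ is removed, and the argument closes. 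This gives local hyperfiniteness with constant $k$, as required. \qed
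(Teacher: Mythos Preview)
Your approach is essentially the same greedy peeling argument the paper uses: iterate uniform local amenability inside the shrinking set, collect the inner boundaries $\partial_{L_i}(M_i)$ as the separator $L'$, and bound $|L'|<\delta\sum_i|M_i|=\delta|L|$ since the $M_i$ partition $L$. (The geometric-series bookkeeping you float in the first paragraph is unnecessary and you rightly drop it.)

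There is one slip in your final paragraph. You argue that if $z$ lies in the interior $M_j\setminus\partial_{L_j}(M_j)$ for some $j<i$ and is adjacent to $x\in M_i$, then $x$ ``would have been an exterior-neighbour of $M_j$ at stage $j$, contradicting that it survived into $L_i$.'' But being an exterior neighbour of $M_j$ does \emph{not} prevent survival: at stage $j$ you remove $M_j$ to form $L_{j+1}$, and only $\partial_{L_j}(M_j)\subset M_j$ goes into the deleted pile; exterior neighbours of $M_j$ pass untouched into $L_{j+1}$. The contradiction is on the other side: since $x\in L_i\subset L_j\setminus M_j$ and $z\in M_j$ with $z\sim x$ (an edge in $L_j$, as both endpoints lie there), we get $z\in\partial_{L_j}(M_j)$, contradicting that $z$ was in the interior. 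With this correction the separation claim is sound and the proof closes, matching the paper's argument.
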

\proof First, let us remark that if $G$ is uniformly
locally amenable, then for any $\eps>0$ there exists
$k>0$ such that all finite, induced subgraphs $H\subset G$ contain a \emph{connected induced} subgraph $L$, $|V(L)|\leq k$
such that
\begin{equation} \label{e14A}
|\partial_H(V(L)|\leq \eps |V(L)|.
\end{equation}
\noindent
Indeed, if for a subset $E\subset H$,
$|\partial_H(E)|\leq \eps |E|$, $|E|\leq k$, then at least one of
the induced graphs on the components of $E$
satisfies \eqref{e14A}.

\noindent
So, let $\eps>0$ and let $k>0$ be as above.
Set $H_1:=H$ and let $L_1$ be a connected subgraph
of $H_1$ such that $|V(L_1)|\leq k$ and
$|\partial_{H_1}(V(L_1))|\leq \eps |V(L_1)|\,.$
Now let $H_2$ be the induced graph on $V(H_1)\backslash V(L_1)$. We pick a connected subgraph $L_2\subset H_2$ such that $|V(L_2)|\leq k$ and
$|\partial_{H_2}(V(L_2))|\leq \eps |V(L_2)|\,.$
Inductively, we construct finite induced subgraphs $H_1\supset H_2 \supset\dots$ and connected subgraphs
$L_i\subset H_i$ such that
 $|V(L_i)|\leq k$ and
$|\partial_{H_i}(V(L_i))|\leq \eps |V(L_i)|$
(of course, for large enough $q$, $H_q$ and $L_q$ are empty graphs).

\noindent
Now, let $S:=\cup^\infty_{i=1} \partial_{H_i}(V(L_i))$. Then, if remove $S$ from $H$ together with all the incident edges, the remaining components have size at most $k$.
 \qed
\begin{proposition} \label{long3}
Locally hyperfinite graphs are weighted hyperfinite.
\end{proposition}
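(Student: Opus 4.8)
\proof
The plan is to use the level sets of $w$ to upgrade the cardinality control that local hyperfiniteness provides into the weight control demanded by weighted hyperfiniteness.

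First I would make the routine normalisation. Replacing $w$ by $\lfloor Nw\rfloor$ for a large integer $N$ changes the ratio $w(L)/w(V)$ by an amount tending to $0$ (the support of $w$ is finite), so it suffices to treat finitely supported $w\colon V(G)\to\N$, with the same constant $k$ in the end. Put $S:=\Supp(w)$, $M:=\max_x w(x)$, and for $1\le i\le M$ let $A_i:=\{x\in V(G): w(x)\ge i\}$, a finite subset of $S$; set $A_{M+1}:=\emptyset$. Then $w=\sum_{i=1}^M\mathbf 1_{A_i}$, so $\sum_{i=1}^M|A_i|=w(V)$, and we have the telescoping identity $\sum_{i=1}^M i\,(|A_i|-|A_{i+1}|)=w(V)$, which will do the bookkeeping. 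Given $\eps>0$, fix a small $\delta>0$ (to be chosen in terms of $\eps$) and let $k$ be the constant produced by local hyperfiniteness for $\delta$. For each $i$, applying it to the finite set $A_i$ gives $B_i\subseteq A_i$ with $|B_i|<\delta|A_i|$ such that every component of $G[A_i\setminus B_i]$ has at most $k$ vertices.

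The structural heart of the argument is the following gluing lemma, valid for \emph{any} such choice of the $B_i$: the set $L:=(V(G)\setminus S)\cup\bigcup_{i=1}^M B_i$ is a separator of $G$ all of whose complementary components have size at most $k$. Indeed $G\setminus L=G[S\setminus\bigcup_i B_i]$; if $C$ is one of its components and $m:=\min_{v\in C}w(v)$, then every $v\in C$ lies in $A_m$, so $C\subseteq A_m$, while $C\cap\bigcup_i B_i=\emptyset$ gives $C\cap B_m=\emptyset$; since $C$ is connected in $G[A_m]$ and disjoint from $B_m$ it lies inside one component of $G[A_m\setminus B_m]$, whence $|C|\le k$. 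As the deleted zero-weight vertices cost nothing, $w(L)=w\bigl(\bigcup_i B_i\bigr)$, and the proposition is reduced to arranging that $w\bigl(\bigcup_i B_i\bigr)\le\eps\,w(V)$.

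This last step is the real content, and is where I expect the main obstacle. Choosing the $B_i$ independently is not enough: since a heavy vertex can be picked at every level, one only gets $w(\bigcup_i B_i)\le M\sum_i|B_i|<\delta M\,w(V)$, with a factor $M$ depending on $w$. One must instead select the $B_i$ coherently and with the weights in mind — building them from the top level downwards, $B_1\supseteq B_2\supseteq\cdots\supseteq B_M$, and charging a newly removed vertex of weight $i$ against the term $i\,(|A_i|-|A_{i+1}|)$ in the telescoping identity, so that $w(\bigcup_i B_i)=w(B_1)=\sum_i i\,|B_i\setminus B_{i+1}|$ becomes comparable to $\delta\,w(V)$. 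The delicate point, which I do not expect to be routine, is that in passing from level $i+1$ to level $i$ one wants to enlarge the separator using only cheap (low-weight) vertices of $A_i\setminus A_{i+1}$, yet the bounded ``blobs'' of already-separated heavier vertices — the components of $G[A_{i+1}\setminus B_{i+1}]$, each of size at most $k$ — can still be chained together by those cheap vertices into large components, so one has to invoke local hyperfiniteness a second time, applied to a suitable induced subgraph $G[A_i\setminus B_{i+1}]$ (rerouting, if necessary, any heavier vertex it would delete onto its boundedly many cheap neighbours) or to the bounded-degree graph obtained by contracting the blobs. A superficially different route, running into the same core difficulty, is to replace $G$ by the graph obtained by attaching to each $x\in S$ a pendant path of $w(x)-1$ new vertices — this graph still has bounded degree and is locally hyperfinite with constants independent of $w$, the ``columns'' over $S$ contain exactly $w(V)$ vertices, and one tries to push a small-relative-size cardinality separator of it down to a $w$-light separator of $G$, the analysis of this projection again being where heavy vertices must be controlled. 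In every approach the one genuinely non-formal step is the passage from a purely cardinality-based guarantee to a weighted one, uniformly over $w$; the remainder is the level-set bookkeeping together with the gluing lemma above.
\qed
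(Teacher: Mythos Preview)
Your argument has a genuine gap, and you acknowledge it yourself: after the (correct) gluing lemma, the entire content of the proposition is the bound $w\bigl(\bigcup_i B_i\bigr)\le\eps\,w(V)$, and none of your sketched routes actually achieves it. Nesting the separators $B_1\supseteq B_2\supseteq\cdots$ is not something local hyperfiniteness gives you --- the hypothesis produces \emph{some} small separator of each $A_i$, not one extending a prescribed separator of $A_{i+1}$ by only cheap vertices; and the rerouting/contraction idea takes you outside the class of subsets of $V(G)$ (the contracted graph is no longer an induced subgraph of $G$, so you cannot invoke the hypothesis on it). The pendant-path variant has the same defect: the blown-up graph is not $G$, and local hyperfiniteness is a statement about subsets of $V(G)$, not about all bounded-degree graphs. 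So as written this is a reduction to a problem of the same difficulty as the original.

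The paper's proof bypasses this obstacle by a different decomposition. Instead of integer level sets, it buckets the vertices geometrically by weight, $B_i=\{x:(\eps/3d)^{i+1}<w(x)\le(\eps/3d)^i\}$, and groups these into $L\approx 3/\eps$ residue classes $D_q=\bigcup_i B_{iL+q}$. By pigeonhole one class $D_m$ carries at most $\tfrac{\eps}{3}w(V(H))$; deleting it (together with the light set of edges jumping between the remaining chunks) leaves pieces $C_i$ in each of which the weight ratio $\max w/\min w$ is at most $(3d/\eps)^{L-1}$. On such a piece, a separator of relative \emph{cardinality} $\delta$ has relative \emph{weight} at most $\delta\cdot(3d/\eps)^{L-1}$, so choosing $\delta=(\eps/3d)^{L-1}\cdot\eps/3$ and applying $(\delta,k)$-local hyperfiniteness to each piece finishes the job. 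The key idea you are missing is precisely this: rather than trying to coordinate separators across all weight levels, one first cuts the weight range into bounded-ratio chunks at a cost controlled by pigeonhole, so that afterwards cardinality and weight are interchangeable up to a fixed factor.
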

\proof Our proof is based on the one of Lemma 8.1 \cite{Zivny}. We call a finite graph $H$ $(\delta,k)$-hyperfinite if one can delete not more than $\delta |H|$ vertices of $H$ together with all the
incident edges such that the sizes of the remaining components
are not greater than $k$. Also, we call a finite graph $J$ $(\eps,k)$-weighted hyperfinite, if for all positive weight function $w:V(J)\to \R$ one can delete a set of vertices $S\subset V(J)$ with
total weight at most $\eps w(V(J))$  such that that the sizes
of the remaining components is at most $k$.
It is enough to prove that if $G\in\grd$, then for
any $\eps>0$ there exists $\delta>0$ such that if all the finite induced subgraphs of $G$ 
are $(\delta,k)$-hyperfinite than they are $(\eps,k)$-weighted hyperfinite as well.

\noindent
Fix $\eps>0$ and let $L$ be the smallest integer that is larger than $\frac{3}{\eps}$. Now, assume that
the finite induced subgraphs of the countably infinite graph $G\in \grd$
are $(\delta,k)$-hyperfinite, where
\begin{equation}
\delta=\left(\frac{\eps}{3d}\right)^{L-1}\frac{\eps}{3}\,.
\end{equation}
\noindent
Let $H$ be a finite induced subgraph of $G$.
We partition the vertices of $H$ into
subsets $B_i, i\in \Z$ such that
$$B_i=\{ x\mid \left(\frac{\eps}{3d}\right)^{i+1}<w(x)\leq \left(\frac{\eps}{3d}\right)^i\}\,.$$
For
$0\leq q \leq L-1$ consider the subset
$$D_q=\cup_{i\in \Z} B_{iL+q}\,.$$
Since $L>\frac{3}{\eps}$, there exists
some $0\leq m \leq L-1$ such that
\begin{equation} \label{elsobecslesA}
w(D_m)<\frac{\eps}{3} w(V(H)).
\end{equation}
Now, for $i\in \Z$ set
$$C_i=\cup_{q=1}^{L-1} B_{iL+m+q}\,.$$
\noindent
By our assumption, if $x\in C_i$ and $y\in C_j$, $i<j$, then 
\begin{equation}
\label{A12}
\left(\frac{\eps}{3d}\right) w(x)> w(y)\,.
\end{equation}
\noindent
For $i<j$ let $F_{i,j}$ be the
set of vertices $y$ in $C_j$ such that
there exists $x\in C_i$, $x\sim y$.
By the vertex degree assumption,
$|\cup_{i<j} F_{i,j}|\leq d |C_i|.$ Also, if $y\in \cup_{i<j} F_{i,j}$
and $x\in C_i$, then
$\frac{\eps}{3d} w(x)\geq w(y)\,.$
Therefore, $w(\cup_{i<j} F_{i,j})\leq
\frac{\eps}{3} w(C_i)\,.$ Consequently,
\begin{equation} \label{masodikbecslesA}
w(F)\leq \frac{\eps}{3}
w(V(H))\,,
\end{equation}
\noindent
where $F=\cup_{i\in \Z} \left(\cup _{i<j} F_{i,j}\right)$.
Let us delete the subsets
$F$ and $D_m$ from $V(H)$
together with all the incident edges.
Then, each of the remaining components are
inside of the subsets $C_i$. Let $T$
be such a component. By our assumption,
$T$ is $(\delta,k)$-hyperfinite, thus
one can delete a subset $S\subset V(T)$ so 
that $|S|\leq \delta |V(T)|$ in such a way
that all the remaining components are of
size at most $k$.
By the definition of $C_i$, if $y\in V(T)$,
$$\min_{x\in V(T)} w(x)\geq \left(\frac{\eps}{3d}\right)^{L-1} w(y)\,.$$
\noindent Hence, 
$$w(S)\leq |S| (\min_{x\in V(T)} w(x))\left(\frac{3d}{\eps}\right)^{L-1}\leq
\delta w(V(T)) \left(\frac{3d}{\eps}\right)^{L-1}=\frac{\eps}{3} w(V(T))\,.$$
By \eqref{elsobecslesA} and \eqref{masodikbecslesA},
$$w(F\cup D_m)<\frac{2\eps}{3} w(V(H))\,.$$
\noindent
So, by deleting a set of vertices of weight less than $\eps |V(H)|$, we obtained a graph that has components of size at most k. \qed
\begin{proposition} \label{long4}
If $G\in\grd$ is weighted hyperfinite then $G$ is strongly hyperfinite.
\end{proposition}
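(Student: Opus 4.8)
The plan is to deduce strong hyperfiniteness from weighted hyperfiniteness by a linear‑programming duality argument (von Neumann's minimax theorem) combined with a weak‑$*$ compactness argument that passes from finitely many vertices to all of $V(G)$; informally, strong hyperfiniteness is the ``randomized/fractional'' form of weighted hyperfiniteness, and the latter is exactly its dual.

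Fix $\eps>0$. First I would apply weighted hyperfiniteness with parameter $\eps/2$ to obtain the corresponding $k$. Identifying subsets of $V(G)$ with points of $\{0,1\}^{V(G)}$ — a compact metrizable space, since $V(G)$ is countable — the set $\mathcal K$ of $k$-separators is closed: if $Y$ is not a $k$-separator then $G\setminus Y$ contains a connected subgraph on exactly $k+1$ vertices, and this is witnessed already by the restriction of $Y$ to a finite vertex set. Hence $\mathcal K$ is compact, $\Prob(\mathcal K)$ is weak-$*$ compact and convex, and for every $x$ the evaluation $\mu\mapsto\mu(\{Y:x\in Y\})=\int \mathbf 1[x\in Y]\,d\mu$ is weak-$*$ continuous. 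For finite $W\subseteq V(G)$ set $\cM_W=\{\mu\in\Prob(\mathcal K):\ \mu(\{Y:x\in Y\})\le\eps/2\ \text{ for all }x\in W\}$; this is closed, and $\cM_{W}\cap\cM_{W'}=\cM_{W\cup W'}$, so once each $\cM_W$ is known to be nonempty, the $\cM_W$ form a downward directed family of nonempty closed subsets of the compact set $\Prob(\mathcal K)$. Their intersection is then nonempty, and any $\mu$ in it is the desired measure: for every $x$, taking $W=\{x\}$ gives $\mu(\{Y:x\in Y\})\le\eps/2<\eps$.

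It remains to prove $\cM_W\neq\emptyset$ for each finite $W$, and this is where the actual work sits. The key move is to replace the (infinite) collection $\mathcal K$ by the \emph{finite} set $\mathcal K_W=\{Y\in\mathcal K:\ V(G)\setminus W\subseteq Y\}$ — finite because its members are determined by $Y\cap W\subseteq W$, and nonempty since $V(G)\in\mathcal K$. I claim that for every probability vector $\nu$ on $W$ there is $Y\in\mathcal K_W$ with $\nu(Y)\le\eps/2$: viewing $\nu$ as a finitely supported nonnegative function on $V(G)$ and applying weighted hyperfiniteness yields $L\subseteq V(G)$ with $\nu(L)\le(\eps/2)\,\nu(V(G))=\eps/2$ whose deletion leaves components of size $\le k$; then $Y:=(L\cap W)\cup(V(G)\setminus W)$ lies in $\mathcal K_W$, since $G\setminus Y$ is the subgraph of $G\setminus L$ induced on $W\setminus L$ and hence still has components of size $\le k$, and $\nu(Y)=\nu(L\cap W)\le\nu(L)\le\eps/2$. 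Feeding this into von Neumann's minimax theorem for the finite matrix game with payoff $\mathbf 1[x\in Y]$ over $x\in W$, $Y\in\mathcal K_W$,
\[
\min_{\mu\in\Prob(\mathcal K_W)}\ \max_{x\in W}\ \mu(\{Y:x\in Y\})\ =\ \max_{\nu\in\Prob(W)}\ \min_{Y\in\mathcal K_W}\ \nu(Y)\ \le\ \frac{\eps}{2},
\]
and an optimal $\mu$, which lives on $\mathcal K_W\subseteq\mathcal K$, then belongs to $\cM_W$.

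The step I expect to be the real obstacle is precisely this finitization: weighted hyperfiniteness only produces a possibly infinite separator $L$, and a priori $G$ has infinitely many $k$-separators, so neither finite duality nor a crude limiting argument applies directly. The observation that $L$ may be freely enlarged outside $W$ to a separator containing all of $V(G)\setminus W$ — harmless both for $\nu(L)$ and for the component-size bound — is what collapses the problem onto the finite simplex $\Prob(\mathcal K_W)$, after which everything else (compactness of $\mathcal K$ in $\{0,1\}^{V(G)}$, weak-$*$ continuity of the evaluations, and the finite intersection argument) is routine.
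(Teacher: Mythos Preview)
Your proof is correct and follows the same two-step strategy as the paper: a linear-programming duality argument on finite pieces (you invoke von Neumann minimax where the paper uses the separating-hyperplane theorem applied to the convex hull of separator indicator vectors), followed by a compactness argument to pass to all of $V(G)$ (you use the finite intersection property in $\Prob(\mathcal K)$, the paper takes an exhaustion $H_n\uparrow G$, pushes the measures forward to $\Sep(G,k)$ via $Y_n\mapsto Y_n\cup(V(G)\setminus V(H_n))$, and extracts a weak-$*$ limit --- note this push-forward is exactly your finitization trick $Y:=(L\cap W)\cup(V(G)\setminus W)$). Your use of $\eps/2$ also cleanly secures the strict inequality $<\eps$ required by the definition, a point the paper's proof glosses over.
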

\proof First we prove the proposition for finite graphs. This part is based on the proof of Lemma 4.1 in \cite{Elekula}. A finite graph $G$ is $(\eps,k)$-strongly hyperfinite, if there exists a probability measure $\mu$ on $\Sep(G,k)$ such that for each $x\in V(G)$
$$\mu(\{Y\,\mid x\in Y\})\leq\eps\,.$$
\begin{lemma}\label{weight}
If a finite graph G is $(\eps,k)$-weighted hyperfinite, then it is $(\eps,k)$-strongly hyperfinite as well. 
\end{lemma}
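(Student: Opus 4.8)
The plan is to read this as an instance of linear programming duality --- equivalently, the von Neumann minimax theorem for a finite two-person zero-sum game. Introduce the game in which the first player picks a vertex $x\in V(G)$, the second player picks a $k$-separator $Y\in\Sep(G,k)$, and the payoff (which the separator player wishes to keep small and the vertex player wishes to make large) is $1$ if $x\in Y$ and $0$ otherwise. Since $G$ is finite, $\Sep(G,k)$ is a finite nonempty set --- for instance $V(G)\in\Sep(G,k)$ --- so this is a genuine finite matrix game and the minimax theorem applies: writing $\mu$ for probability measures on $\Sep(G,k)$ and $w$ for probability measures on $V(G)$,
$$\min_{\mu}\ \max_{x\in V(G)}\ \mu(\{Y:x\in Y\})\ =\ \max_{w}\ \min_{Y\in\Sep(G,k)}\ w(Y),$$
where I have used that against a fixed mixed strategy an optimal response can be taken pure on either side, because the payoff is bilinear in $(w,\mu)$.

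The next step is to bound the right-hand side by $\eps$ using the hypothesis. Let $w$ be a probability measure on $V(G)$. If $w$ has full support, then $w$ is a positive weight function with $w(V(G))=1$, so $(\eps,k)$-weighted hyperfiniteness produces a $k$-separator $S$ with $w(S)\le\eps$, i.e. $\min_{Y}w(Y)\le\eps$. For arbitrary $w$, apply this to $w_n:=(1-\tfrac1n)w+\tfrac1n w_0$, where $w_0$ is the uniform measure; each $w_n$ has full support, $w_n\to w$, and $\Sep(G,k)$ is finite, so the inequality $\min_{Y}w(Y)\le\eps$ survives in the limit. Hence the right-hand side of the displayed identity is $\le\eps$, and since the outer minimum over the compact simplex of measures $\mu$ is attained, we obtain a probability measure $\mu$ on $\Sep(G,k)$ with $\mu(\{Y:x\in Y\})\le\eps$ for every $x\in V(G)$.

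Finally I would deal with the cosmetic discrepancy between this ``$\le\eps$'' and the strict ``$<\eps$'' in the definition of $(\eps,k)$-strong hyperfiniteness. This gap is real at the endpoint --- a single vertex with $k=0$ is $(1,0)$-weighted hyperfinite but not $(1,0)$-strongly hyperfinite --- but it is immaterial in every application, since weighted hyperfiniteness is always invoked through the clause ``for every $\eps>0$ there is a $k$'': rerunning the argument with $\eps/2$ in place of $\eps$ yields $\mu(\{Y:x\in Y\})\le\eps/2<\eps$. So in the write-up one either states the lemma with ``$\le$'' or inserts this halving. There is no serious obstacle here; the only points requiring a little care are the correct bilinear set-up of the minimax duality and the passage from positive weight functions to arbitrary probability measures via the approximation above.
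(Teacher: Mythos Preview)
Your proof is correct and is essentially the same LP-duality argument the paper gives: the paper carries out the separating-hyperplane step by hand (it forms the hull $\cH$ of the characteristic vectors of $k$-separators shifted by the nonnegative orthant, and if $(\eps,\dots,\eps)\notin\cH$ it separates by a hyperplane whose normal furnishes a weight function contradicting $(\eps,k)$-weighted hyperfiniteness), which is exactly the proof of the minimax identity you invoke as a black box. The paper's version also only yields $\mu(\{Y:x\in Y\})\le\eps$, so your remark about the cosmetic $\le$ versus $<$ discrepancy applies equally there.
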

\proof
Assume that $G$ is $(\eps,k)$-weighted hyperfinite and $V(G)=n$. Let $m$ be the number of $k$-separators. For a $k$-separator $Y$ let $\{\underline{c}_Y:V(G)\to \{0,1\}\}\in \R^n$ be its characteristic vector.
We define the hull $\cH$ of the $k$-separators as the convex set of vectors $\underline{z}\in \R^n$ which can be written in the form
$$\underline{z}=\sum_{i=1}^m x_i \underline{c}_{Y_i}+\underline{y}\,,$$
\noindent
where $\{x_i\}_{i=1}^m$ are non-negative real numbers, $\sum^m_{i=1} x_i=1$ and $\underline{y}$ is a non-negative vector.
Now, let $\underline{v}=\{\eps,\eps,\dots,\eps\}$.
We have two cases.

\noindent
{\bf Case 1.} $\underline{v}\in \cH$. Then, there exist non-negative real numbers $\{x_i\}^m_{i=1}$, $\sum^m_{i=1} x_i=1$   such that all the absolute values of the coordinates of the vector $\sum^m_{i=1} x_i \underline{c}_{Y_i}$ are less than or equal to $\eps.$
That is, if the probability measure $\mu$ on the $k$-separators is given by $\mu(Y_i)=x_i$, the $(\eps,k)$-strong hyperfiniteness follows.

\noindent
{\bf Case 2.} $\underline{v}\notin \cH$. Since $\cH$ is a closed convex set, there must exist a hyperplane $H\subset \R^n$ containing
$\underline{v}$ such that $\cH$ is entirely on one side of the hyperplane $H$.
That is, there exists a vector $\underline{w}\in \R^n$ such that for any $\underline{c}\in \cH$ we have
$$\langle \underline{w},\underline{v} \rangle < \langle \underline{w},\underline{c} \rangle\,.$$
\noindent
Clearly, for any $1\leq i \leq n$, $w_i\geq 0$, since the $i$-th coordinate of $\underline{c}$, can be
increased arbitrarily, while keeping the other coordinates fixed. We can also assume that $\sum_{i=1}^n w_i=1.$
So, 
$$\eps< \langle \underline{w}, \underline{c}_Y \rangle $$
\noindent
holds for any $k$-separator $Y$, that is, $G$ is not $(\eps, k)$-weighted hyperfinite, leading to a contradiction. \qed
\vi
\begin{lemma}
Let $G$ be a countably infinite graph of vertex degree $d$. Suppose that for any $\eps>0$ there exists $k>0$ such that all the finite subgraphs of $G$ are 
$(\eps,k)$-strongly hyperfinite. Then, $G$ is strongly hyperfinite. 
\end{lemma}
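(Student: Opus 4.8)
The plan is to pass from the finite approximations to the infinite graph by a compactness argument on the space of $k$-separators. First I would fix $\eps>0$ and take the corresponding $k>0$ guaranteed by the hypothesis, so that every finite induced subgraph of $G$ is $(\eps,k)$-strongly hyperfinite. Fix an exhaustion $V(G)=\bigcup_{n} W_n$ by finite subsets with $W_1\subset W_2\subset\cdots$, and let $H_n$ be the induced subgraph on $W_n$. For each $n$ we obtain a probability measure $\mu_n$ on $\Sep(H_n,k)$ with $\mu_n(\{Y: x\in Y\})<\eps$ for every $x\in W_n$.

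The key step is to view each $\mu_n$ as a Borel probability measure on the compact space $X:=\{0,1\}^{V(G)}$ (identifying a subset of $V(G)$ with its indicator function, and a $k$-separator $Y\subset W_n$ with the indicator that is $0$ outside $W_n$), and then extract a weak-$*$ convergent subsequence $\mu_{n_j}\to\mu$, using that the space of probability measures on the compact metrizable space $X$ is weak-$*$ compact. I would then check two things about the limit $\mu$. First, $\mu$ is supported on (a compact subset of) the set of $k$-separators of $G$: the property ``every connected component of $G\setminus Y$ has size at most $k$'' is, for graphs of degree bound $d$, a \emph{closed} condition on $Y\in X$ (it can be violated only by exhibiting a finite connected set of size $>k$ in the complement, which is a clopen constraint, so the set of bad $Y$ is open) — here one uses that on $H_{n_j}$ for $j$ large the relevant finite window already sees the component, together with the fact that a path witnessing a large component in $G$ lies in some $W_n$. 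Second, for each fixed $x\in V(G)$ the set $\{Y: x\in Y\}$ is clopen in $X$, so $\mu(\{Y: x\in Y\})=\lim_j \mu_{n_j}(\{Y: x\in Y\})\le \eps$; strictly, to get the strict inequality one can instead run the argument with $\eps/2$ in place of $\eps$ at the finite stage, so that the limit satisfies the bound $\le \eps/2<\eps$.

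The main obstacle I expect is the closedness of the set of $k$-separators of $G$ inside $X$, i.e. making sure the weak-$*$ limit is not merely a measure on arbitrary vertex subsets but genuinely concentrated on $\Sep(G,k)$. The subtlety is that a separator $Y_{n_j}\subset W_{n_j}$ is extended by $0$ to all of $V(G)$, so a priori the complement $G\setminus Y_{n_j}$ contains the infinite set $V(G)\setminus W_{n_j}$, which has huge components; one must argue that in the weak-$*$ limit this ``spurious'' region disappears. This is where the bounded degree hypothesis and the localization of the defect are used: for any finite connected $U\subset V(G)$ with $|U|>k$, the event ``$U$ lies entirely in the complement of $Y$ and is a full component'' depends only on coordinates in a bounded neighborhood of $U$, hence is clopen; for $j$ large this neighborhood is inside $W_{n_j}$, and $\mu_{n_j}$ gives it measure $0$ because $Y_{n_j}$ is a genuine $k$-separator of $H_{n_j}$. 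Passing to the limit, $\mu$ gives measure $0$ to every such event, so $\mu$-a.e.\ $Y$ is a $k$-separator of $G$. (One should also note the compact set of $k$-separators is nonempty, e.g.\ $Y=V(G)$, so there is no degeneracy.) Finally, restricting $\mu$ to the compact set $\Sep(G,k)$ gives the desired probability measure witnessing strong hyperfiniteness of $G$. \qed
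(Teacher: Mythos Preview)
Your approach is correct and follows the same overall compactness strategy as the paper, but with one technical difference that makes your argument longer than necessary. The paper, instead of extending each finite separator $Y_n\subset W_n$ by $0$ outside $W_n$, extends it by $1$: it pushes $\nu_n$ forward via the map $Y_n\mapsto Y_n\cup(V(G)\setminus W_n)$, which lands \emph{directly} in $\Sep(G,k)$. This makes the support issue you flag as ``the main obstacle'' disappear entirely, since every $\mu_n$ is already a probability measure on the compact set $\Sep(G,k)$, and hence so is any weak-$*$ limit point. The cost for the probability estimate is nil: for each fixed $x$, the clopen set $\{Y: x\in Y\}$ has $\mu_n$-measure at most $\eps$ as soon as $x\in W_n$, exactly as in your argument.

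Your extension-by-$0$ route does work, but the verification that the limit is supported on $\Sep(G,k)$ should be stated a bit more cleanly than you do. You want $\mu(E_U)=0$ for every connected $U\subset V(G)$ of size $k+1$, where $E_U=\{Y:U\cap Y=\emptyset\}$; the extra clause ``and is a full component'' is superfluous and should be dropped (you only need $U\subset V(G)\setminus Y$, not that $U$ is a maximal connected piece). Once $U\subset W_{n_j}$, the set $U$ is connected in $H_{n_j}$ as well and hence meets every $k$-separator of $H_{n_j}$, giving $\mu_{n_j}(E_U)=0$; since $E_U$ is clopen this passes to the limit, and a countable union over all such $U$ finishes. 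So your argument is sound once streamlined, just less economical than the paper's one-line trick of padding with the complement.
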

\proof Let $\{H_m\}^\infty_{m=1}$ be finite induced subgraphs in $G$ such that $V(H_1)\subset V(H_2)\subset\dots$ and $\cup_{m=1}^\infty V(H_n)=V(G)\,.$
Let $\nu_n$ be a probability measure
on $\Sep(H_n,k)$ such that for all $x\in V(H_n)$ we have
$$\nu_n(\{Y\in \Sep(H_n,k)\mid x\in Y\})\leq \eps\,.$$
For all $n\geq 1$ we have an injective
map 
$$\phi_n:\Sep(H_n,k)\to \Sep(G,k)$$
\noindent
mapping the $k$-separator $Y_n$ to 
$Y=Y_n\cup(V(G)\backslash V(H_n))\,.$
Now, let $\mu_n=(\phi_n)_\star(\nu_n)\,.$
Let $\mu_{n_k}\to\mu$ be a weakly convergent
subsequence.
For all $x\in V(G)$, let $U_x\subset \Sep(G,k)$
be the set of $k$-separators containing $x$. Clearly, $U_x$ is a closed-open subset of $\Sep(G,k)$. By our assumptions,
for large enough $k$, $\mu_{n_k}(U_x)\leq \eps$. Therefore, $\mu(U_x)\leq \eps$. Hence, our lemma follows. \qed
\vi
This finishes the proof of our proposition. \qed
\begin{proposition} \label{long5}
If $G\in \grd$ is strongly hyperfinite, then G is of \\Property A.
\end{proposition}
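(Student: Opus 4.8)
The plan is to deduce Property A from strong hyperfiniteness by converting a probability measure on $k$-separators into a Property A function $\Theta$. Fix $\eps>0$. Apply strong hyperfiniteness with a parameter $\eps'$ to be chosen small in terms of $\eps$ and $d$, obtaining $k>0$ and a probability measure $\mu$ on the compact space of $k$-separators $Y\subset V(G)$ with $\mu(\{Y\mid x\in Y\})<\eps'$ for every $x$. Given a separator $Y$, deleting $Y$ breaks $G$ into components of size at most $k$; write $[x]_Y$ for the component of $x$ in $G\setminus Y$ (empty if $x\in Y$). The natural candidate is
\[
\Theta(x) = \int \frac{\mathbf 1_{[x]_Y}}{|[x]_Y|}\, d\mu(Y),
\]
that is, conditioned on $Y$, put the uniform probability measure on the component containing $x$, and average over $\mu$. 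Since components have size at most $k$, the support of $\Theta(x)$ is contained in the $k$-ball around $x$, giving the bounded-support condition with $r=k$.

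**Next I would** verify the $\ell^1$-closeness of $\Theta(x)$ and $\Theta(y)$ for adjacent $x\sim y$. The key observation is that if a $k$-separator $Y$ contains neither $x$ nor $y$, then since $x\sim y$ they lie in the \emph{same} component of $G\setminus Y$, so $[x]_Y = [y]_Y$ and the conditional measures agree exactly. Hence the only contribution to $\|\Theta(x)-\Theta(y)\|_1$ comes from those $Y$ with $x\in Y$ or $y\in Y$, a set of $\mu$-measure at most $2\eps'$. On that set each conditional measure is a probability measure, so the integrand has $\ell^1$-norm at most $2$; therefore $\|\Theta(x)-\Theta(y)\|_1 \le 2\cdot 2\eps' = 4\eps'$. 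Choosing $\eps' = \eps/4$ finishes the estimate. (One should also note $\Theta(x)$ really is a probability measure: on the measure-zero-complement event $x\notin Y$ it has mass $1$, and on $\{x\in Y\}$, of measure $<\eps'$, it has mass $0$, so $\|\Theta(x)\|_1 = 1 - \mu(\{x\in Y\}) \ge 1-\eps'$; if one insists on an exact probability measure one can renormalize or add a point mass at $x$ of weight $\mu(\{x\in Y\})$, which changes the difference bound by at most another $2\eps'$, still absorbed by shrinking $\eps'$.)

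**The one point needing a little care** is measurability: one must check that $Y\mapsto \mathbf 1_{[x]_Y}(z)/|[x]_Y|$ is a Borel function on $\Sep(G,k)$ for each fixed $z$, so that the integral defining $\Theta(x)(z)$ makes sense and $\Theta(x)$ is genuinely a finitely supported function. This is routine: whether $z\in[x]_Y$ and the value of $|[x]_Y|$ depend only on the restriction of $Y$ to the (finite) $k$-ball around $x$, hence only on finitely many coordinates of $Y$ in the product topology on $\{0,1\}^{V(G)}$, so the map is locally constant, in particular continuous. With that observed, the construction above yields the required $r$ and $\Theta$, completing the proof that strong hyperfiniteness implies Property A and closing the long cycle. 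I do not anticipate a serious obstacle here — the argument is a direct averaging/conditioning argument, and the main thing to get right is the bookkeeping on which separators can possibly separate an adjacent pair.
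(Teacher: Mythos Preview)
Your proposal is correct and follows essentially the same route as the paper. The paper writes the same average as a sum $\Theta_x=\sum_{H\ni x} F(H)\,p_H + c_x\delta_x$, where $F(H)=\mu(\{Y:H\text{ is a component of }G\setminus Y\})$ and $p_H$ is the uniform measure on $H$; this is precisely your integral $\int \mathbf 1_{[x]_Y}/|[x]_Y|\,d\mu$ together with the point-mass correction you mention, and the difference estimate is the same observation that adjacent $x,y$ can only land in different components when one of them lies in $Y$.
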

\proof Let $\eps>0$. Since $G$ is strongly hyperfinite,
there exists $k>0$ and a probability measure
$\mu$ on $\Sep(G,k)$ such that for all $x\in V(G)$
\begin{equation} \label{e13A}
\mu({Y\mid x\in Y})\leq \frac{\eps}{4}\,.
\end{equation}
\noindent
We define
a non-negative function $F:I_G(k)\to \R$, where
$I_G(k)$ is the set of induced, connected
subgraphs of $G$ having at most k vertices, in the following way.
Let $F(H)$ be defined as the $\mu$-measure of the set
of $k$-separators $Y$ such that $H$ is a component
in $G\backslash Y$.
 
\noindent
Now, let $p_H$ be the uniform probability measure
on $H$. Then, let 
$$\Theta_x=\sum_{H\in I_G(k), x\in H} F(H) p_H+ c_x\delta_x\,, $$
where $$c_x:=1-\sum_{H\in I_G(k), x\in H} F(H)\leq  \frac{\eps}{4}\,.$$
\noindent
Then, for all $x\in V(G)$, $\|\Theta_x\|_1=1$ and $\Supp(\Theta_x)\subset B^G_k(x)\,.$

\noindent
Now, let $x\sim y$ be adjacent vertices. Then,
$$\|\Theta_x-\Theta_y\|_1\leq
c_x+c_y +\sum_{H, x\in H, y\notin H} F(H) +
\sum_{H, x\notin H, y\in H} F(H)\leq \eps\,.$$
\noindent
Therefore, $G$ is of Property A. \qed
\vi
Now, by Propositions \ref{long1}, \ref{long2}, \ref{long3}, \ref{long4} and \ref{long5}, the Long Cycle Theorem follows. \qed

\section{F\o lner functions} \label{sec3}
\noindent The goal of this section is to prove Theorem \ref{prop1}.
\proof We will prove the existence of $\delta$ for a fixed graph $G\in\grd$. However, this is enough to prove our theorem. Indeed, assume that for some $\eps>0$ there is a sequence of graphs $\{G_n\}^\infty_{n=1}$ such that the
largest $\delta$'s satisfying the condition of our theorem tend to zero. Then, for the disjoint union $\cup^\infty_{n=1} G_n$ one could not pick a $\delta$ that satisfies the condition of our theorem. So, we fix $G\in \grd$ and $\eps>0$.
As follows, let $q$ be the smallest integer that is greater than $\frac{16}{\eps^2}\,.$
Let $\rho>0$ be
such that $(1+\rho)^{2q}=1.1$.  Let us begin the proof with a technical lemma.
\begin{lemma}\label{lemma1}
Let $p$ be a probability measure on the finite set $\{x_i\}_{i=1}^n\subset V(G)$ such that
for any $1\leq i \leq n$ we have $0<p(x_i)<\frac{1}{2}\,.$ Then there exist
positive real numbers $a_1<a_2<\dots< a_m$ satisfying the following conditions:
\begin{enumerate}
\item $a_1<\frac{\eps^2}{16n}$.
\item For any $1\leq i \leq m-1$, $a_{i+1}=2a_i\,.$
\item $\sum_{i=1}^{m-1} p(M_i)>1-\frac{\eps^2}{8}\,,$

\noindent where $M_i=\{x_j\,|\, (1+\rho)a_i<p(x_j)<(1+\rho)^{-1}a_{i+1}\}\,.$
\end{enumerate}
\end{lemma}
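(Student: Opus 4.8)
\emph{Plan.} The plan is to pass to the logarithmic (base $2$) scale. Write $\xi_j := \log_2 p(x_j)$; since $p(x_j) < 1/2$ we have $\xi_j < -1$ for all $j$. A candidate scale $a_i = a_1 2^{i-1}$ corresponds to the arithmetic progression of log-positions $\log_2 a_1,\ \log_2 a_1 + 1,\ \log_2 a_1 + 2,\ \dots$, evenly spaced by $1$, and, writing $\ell := \log_2(1+\rho)$, the set $M_i$ consists of the $x_j$ with $\xi_j$ in the open interval $(\log_2 a_i + \ell,\ \log_2 a_{i+1} - \ell)$. Since $(1+\rho)^{-1} < (1+\rho)$ these open intervals are pairwise disjoint, so $\sum_{i=1}^{m-1} p(M_i) = p\big(\bigcup_{i=1}^{m-1} M_i\big)$, and an $x_j$ lies outside $\bigcup_i M_i$ exactly when either $p(x_j) \le (1+\rho) a_1$ (a \textbf{small} point), or $p(x_j) \ge (1+\rho)^{-1} a_m$ (a \textbf{large} point), or $\xi_j$ lies in one of the closed annuli $[\log_2 a_i - \ell,\ \log_2 a_i + \ell]$ around a dyadic level $a_i$ with $2 \le i \le m-1$ (a \textbf{gap} point). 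So the lemma reduces to choosing the scale so that each of these three classes carries little $p$-mass.

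First I would fix $\alpha$ with $0 < \alpha < \eps^2/(32n)$ and an integer $m$ large enough that $\alpha\, 2^{m-2} \ge 1$. Then for any scale with $a_1 \le \alpha$ the small points are controlled for free: there are at most $n$ of them and each has $p$-mass $\le (1+\rho)\alpha < 2\alpha < \eps^2/(16n)$, so their total mass is $< \eps^2/16$; this also gives condition $(1)$, and condition $(2)$ is built into $a_i = a_1 2^{i-1}$. If moreover $a_m \ge 1$, then $(1+\rho)^{-1} a_m \ge 1/2$, so there are \emph{no} large points at all, as every $p(x_j) < 1/2$. Hence only the gap points remain, and the one real idea is to choose the phase of the dyadic scale correctly.

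For $k = 0, 1, \dots, q-1$ I would take the scale $a_i^{(k)} := \alpha\, 2^{-k/q}\, 2^{i-1}$; each satisfies $a_1^{(k)} \le \alpha$ and, by the choice of $m$, $a_m^{(k)} \ge 1$, so conditions $(1)$, $(2)$ and the absence of large points hold for every $k$. The crucial observation is that the annuli around the dyadic levels of a given scale $a^{(k)}$ all project, modulo $1$, onto a single arc of the circle $\R/\Z$ of length $2\ell$; and since $(1+\rho)^{2q} = 1.1$ we have $2q\ell = \log_2 1.1 < 1$, so this arc has length $2\ell < 1/q$. Consequently $x_j$ is a gap point for the scale $a^{(k)}$ only if $\xi_j + k/q$ falls, modulo $1$, into a fixed arc of length $< 1/q$; but for fixed $j$ the $q$ numbers $\xi_j, \xi_j + \tfrac1q, \dots, \xi_j + \tfrac{q-1}{q}$ are equally spaced by exactly $1/q$ on the circle, so at most one of them can lie in such an arc. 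Thus each $x_j$ is a gap point for at most one value of $k$; letting $g_k$ denote the total $p$-mass of the gap points for the $k$-th scale and averaging over $k$,
$$\frac1q \sum_{k=0}^{q-1} g_k \;\le\; \frac1q \sum_j p(x_j) \;=\; \frac1q \;<\; \frac{\eps^2}{16}.$$
Hence some $k_0$ gives a scale with $g_{k_0} < \eps^2/16$; taking $a_i := a_i^{(k_0)}$, the total mass of $x_j$ outside $\bigcup_{i=1}^{m-1} M_i$ is less than $\eps^2/16 + 0 + \eps^2/16 = \eps^2/8$, which is condition $(3)$.

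The main obstacle I expect is the gap estimate of the third paragraph: recognizing that the forbidden annuli collapse modulo $1$ to a single short arc, and that the constants are exactly calibrated — $(1+\rho)^{2q} = 1.1$ with $1.1 < 2$ makes this arc strictly shorter than the spacing $1/q$ of the $q$ available phases, while $q > 16/\eps^2$ makes $1/q < \eps^2/16$. Everything else — pairwise disjointness of the $M_i$, counting the small points, the vanishing of large points once $a_m \ge 1$, and keeping track of which inequalities are strict so that boundary points (those with $p(x_j)$ exactly on an endpoint of some $M_i$) get absorbed into the ``small'' or ``gap'' classes — is routine bookkeeping.
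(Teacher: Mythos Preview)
Your proof is correct and follows essentially the same pigeonhole/averaging strategy as the paper: bound the ``small'' points trivially by $\eps^2/16$, eliminate the ``large'' points by choosing $m$ large, and then select one of $q$ candidate phases so that the ``gap'' mass is at most $1/q < \eps^2/16$.

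The only genuine difference is in how the $q$ phases are parameterized. The paper places its candidate scales at $a_1 = s(1+\rho)^{2(l-1)}$ for $l=1,\dots,q$ (shifting by the gap width each time), so that the gap intervals $T_1,\dots,T_q$ are pairwise disjoint subsets of $\R$ and pigeonhole gives some $p(T_l)\le 1/q$; it also imposes a genericity condition on $s$ to avoid boundary coincidences. You instead space the phases evenly over a full dyadic period via $a_1 = \alpha\,2^{-k/q}$ and argue on the circle $\R/\Z$: the gap region projects to a single arc of length $2\ell<1/q$, so each $x_j$ is a gap point for at most one $k$. Your parameterization is a bit cleaner --- the circle argument is transparent, and no genericity hypothesis is needed since your closed-arc bound already absorbs boundary points --- but the underlying idea is identical, and neither approach yields a stronger conclusion than the other.
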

\proof Let $s<\frac{\eps^2}{32n}$ be a positive real number
such that for all non-negative integers $k,l,j$,
$2^ks(1+\rho)^j\neq p(x_l)$. Let $m$
be the largest integer such that $s 2^m<\frac{1}{2}\,.$
We define the disjoint subsets $T_1,T_2,\dots, T_q$
by 
$$T_j:= \bigcup_{i=1}^{m-1} \left(
2^{i-1}s(1+\rho)^{2j-2}, 2^{i-1}s(1+\rho)^{2j-1} \right)\cup
\left(  2^i s(1+\rho)^{2j-3}, 2^i s(1+\rho)^{2j-2}   \right)$$
\noindent
So, there exists $1\leq l \leq q$ such that
$$p(T_l)\leq \frac{1}{q}<\frac{\eps^2}{16}\,.$$
\noindent
Set $a_1:=s(1+\rho)^{l-1}<\frac{\eps^2}{16n}$, and define 
$a_i:=2^{i-1} s (1+\rho)^{l-1}$ as in $(2)$ of the lemma. Observe that
$\sum_{i\mid p(x_i)<a_1} p(x_i)< \frac{\eps^2}{16}$, hence
we have that $$\sum_{i=1}^{m-1} p(M_i)>1-p(T_l)-\sum_{i\mid p(x_i)<a_1} p(x_i)
>1-\frac{\eps^2}{8}\,.\quad \qed$$
\vi
We define the constant $\delta$ in the following lemma.
\begin{lemma}\label{lemma2}
Let $\delta=\frac{\eps^2(\sqrt{1+\rho}-1)}{16}$ and let  $p:V(G)\to \R$ be a $\delta$-F\o lner function, such that
$\sum_{x\in V(G)}p(x)=1$. Define $B(G)\subset V(G)$ as the set of vertices $x$ in the support of $p$ such that there exists $y\sim x$, so that $y$ is not in the support of p or there exists $y\sim x$, such that either $\frac{p(x)}{p(y)}>\sqrt{1+\rho}$ or
$\frac{p(y)}{p(x)}>\sqrt{1+\rho}$. Then,
\begin{equation} \label{e1}
\sum_{x\in B(G)} p(x)<\frac{\eps^2}{8}\,.
\end{equation}
\end{lemma}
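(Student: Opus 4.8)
The plan is to read the conclusion straight off the $\delta$-F\o lner inequality $\sum_{x\in V(G)}\sum_{y\sim x}|p(x)-p(y)|<\delta$ (which holds because $\sum_x p(x)=1$), by showing that each vertex of $B(G)$ forces a definite amount of oscillation of $p$ across one of its incident edges, proportional to its own mass $p(x)$.

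First I would set $c:=1-\tfrac{1}{\sqrt{1+\rho}}=\tfrac{\sqrt{1+\rho}-1}{\sqrt{1+\rho}}>0$ and, for each $x\in B(G)$, pick one neighbour $y(x)\sim x$ witnessing that $x\in B(G)$. The key elementary claim is that in every case $|p(x)-p(y(x))|\ge c\,p(x)$. Indeed, $x\in\Supp(p)$, so $p(x)>0$; if $y(x)\notin\Supp(p)$ then $|p(x)-p(y(x))|=p(x)\ge c\,p(x)$ since $c<1$; otherwise $p(y(x))>0$ and either $p(y(x))<p(x)/\sqrt{1+\rho}$, giving $|p(x)-p(y(x))|=p(x)-p(y(x))>p(x)\bigl(1-\tfrac{1}{\sqrt{1+\rho}}\bigr)=c\,p(x)$, or $p(y(x))>\sqrt{1+\rho}\,p(x)$, giving $|p(x)-p(y(x))|=p(y(x))-p(x)>(\sqrt{1+\rho}-1)\,p(x)\ge c\,p(x)$ (the last inequality because $(\sqrt{1+\rho}-1)-(1-\tfrac{1}{\sqrt{1+\rho}})=\tfrac{(\sqrt{1+\rho}-1)^2}{\sqrt{1+\rho}}\ge 0$).

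Then I would sum over $x\in B(G)$. The ordered pairs $(x,y(x))$ with $x\in B(G)$ are pairwise distinct and occur among the ordered adjacent pairs of the double sum, so, since the dropped terms are non-negative,
$$c\sum_{x\in B(G)}p(x)\le\sum_{x\in B(G)}\bigl|p(x)-p(y(x))\bigr|\le\sum_{x\in V(G)}\sum_{y\sim x}|p(x)-p(y)|<\delta=\frac{\eps^2(\sqrt{1+\rho}-1)}{16}.$$
Dividing by $c$ yields $\sum_{x\in B(G)}p(x)<\dfrac{\eps^2\sqrt{1+\rho}}{16}$, and since $(1+\rho)^{2q}=1.1$ with $q\ge 1$ forces $1+\rho<1.1$, hence $\sqrt{1+\rho}<2$, the right-hand side is strictly less than $\eps^2/8$, which is \eqref{e1}.

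This is a single charging estimate, so there is no genuine obstacle. The only points to get right are: packing the three ways a vertex can lie in $B(G)$ into one uniform bound with the \emph{worst} of the three constants (namely $c=1-1/\sqrt{1+\rho}$, coming from the case where $p(x)$ dominates $p(y)$); making sure the comparison with the full edge double sum remains valid even when two vertices of $B(G)$ choose the same incident edge (it does, as we only discard non-negative terms); and the harmless numerical check $\sqrt{1+\rho}<2$ that converts the bound $\eps^2\sqrt{1+\rho}/16$ into the claimed $\eps^2/8$.
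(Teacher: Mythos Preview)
Your proof is correct and follows essentially the same approach as the paper: for each $x\in B(G)$ one exhibits an incident edge across which $|p(x)-p(y)|$ is at least a fixed multiple of $p(x)$, then bounds the total by the $\delta$-F\o lner inequality. The only cosmetic difference is where the numerical simplification $\sqrt{1+\rho}<2$ is applied: the paper absorbs it immediately, replacing your $c=1-1/\sqrt{1+\rho}$ by the smaller constant $\frac{\sqrt{1+\rho}-1}{2}$ so that dividing $\delta$ by it yields $\eps^2/8$ directly, whereas you keep the sharper $c$ and invoke $\sqrt{1+\rho}<2$ at the end.
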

\proof If $\frac{p(x)}{p(y)}>\sqrt{1+\rho}$,  then
$$|p(x)-p(y)|>(1-\frac{1}{\sqrt{1+\rho}}) p(x)>
\frac{\sqrt{1+\rho}-1}{2} p(x)\,.$$
If $\frac{p(y)}{p(x)}>\sqrt{1+\rho}$, then
$$|p(x)-p(y)|>(\sqrt{1+\rho}-1)p(x)\,.$$
\noindent
If $y$ is not in the support of $p$, then we also have that
$$|p(x)-p(y)|=p(x)> (\sqrt{1+\rho}-1)p(x)\,.$$
Hence, we have that
$$\sum_{x\in B(G)} \frac{\sqrt{1+\rho}-1}{2} p(x)
< \sum_{x\in B(G)}\sum_{x\sim y}|p(x)-p(y)|<\delta.$$
\noindent
Therefore, our lemma follows. \qed
\vi
Now let us consider our $\delta$-F\o lner probability measure $p$ and let $n$ be the size of support of $p$. We may assume that all the values of $p$ are smaller than $\frac{1}{2}$.
Pick the numbers $\{a_i\}^m_{i=1}$ to satisfy the conditions of Lemma \ref{lemma1}. For
$1\leq i \leq m-1$, let 
$b_i=\sqrt{1+\rho} a_i$, $c_i=(\sqrt{1+\rho})^{-1} a_{i+1}\,.$ Finally, let
$$S_i:=\{x_j\,\mid\, b_i\leq p(x_j) \leq c_i\}\,.$$\begin{lemma}
Let $Q=\cup_{i\mid\,S_i\,\mbox{is not $\eps$-F\o lner}} S_i$. Then, $p(Q)<\frac{\eps}{2}\,.$
\end{lemma}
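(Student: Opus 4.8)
The plan is to exploit the near-regularity of $p$ on each block $S_i$ (all values there agree up to the factor $c_i/b_i=2/(1+\rho)<2$) together with the fact, furnished by Lemma \ref{lemma2}, that the $p$-mass of $B(G)$ is at most $\eps^2/8$. First I would record three elementary observations. (i) The blocks $S_1,\dots,S_{m-1}$ are pairwise disjoint: since $a_{i+1}=2a_i$ we get $c_i=(1+\rho)^{-1/2}a_{i+1}=2(1+\rho)^{-1/2}a_i<2(1+\rho)^{1/2}a_i=b_{i+1}$, and non-consecutive blocks are separated even more; hence $\sum_i p(S_i)\le 1$. (ii) $M_i\subset S_i$ for every $i$, because $(1+\rho)a_i\ge(1+\rho)^{1/2}a_i=b_i$ and $(1+\rho)^{-1}a_{i+1}\le(1+\rho)^{-1/2}a_{i+1}=c_i$; in particular the $M_i$ are disjoint and, by Lemma \ref{lemma1}(3), $\sum_{i} p(S_i\setminus M_i)=\sum_i p(S_i)-\sum_i p(M_i)<\eps^2/8$. (iii) Every $x\in S_i$ satisfies $b_i\le p(x)\le c_i$, and $b_i/c_i=(1+\rho)/2$.

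The core step is the set-theoretic inclusion
$$\partial(S_i)\subseteq \big(S_i\cap B(G)\big)\cup\big(S_i\setminus M_i\big)\,.$$
To prove it, let $x\in\partial(S_i)\cap M_i$ and suppose $x\notin B(G)$. Then every neighbour $y\sim x$ lies in the support of $p$ and satisfies $(1+\rho)^{-1/2}p(x)\le p(y)\le(1+\rho)^{1/2}p(x)$. Since $x\in M_i$ means $(1+\rho)a_i<p(x)<(1+\rho)^{-1}a_{i+1}$, we get
$$p(y)>(1+\rho)^{-1/2}(1+\rho)a_i=b_i \qquad\text{and}\qquad p(y)<(1+\rho)^{1/2}(1+\rho)^{-1}a_{i+1}=c_i\,,$$
so $y\in S_i$. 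Thus $x$ has no neighbour outside $S_i$, contradicting $x\in\partial(S_i)$. Hence $\partial(S_i)\cap M_i\subseteq B(G)$, and since $\partial(S_i)\subseteq S_i$ the displayed inclusion follows.

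Finally I would combine these. If $S_i$ is not F\o lner then $|\partial(S_i)|\ge\eps|S_i|$, so by (iii),
$$p(\partial(S_i))\ge b_i|\partial(S_i)|\ge \eps b_i|S_i|\ge \eps\frac{b_i}{c_i}p(S_i)=\frac{\eps(1+\rho)}{2}p(S_i)>\frac{\eps}{2}p(S_i)\,,$$
while the inclusion above gives $p(\partial(S_i))\le p(S_i\cap B(G))+p(S_i\setminus M_i)$. Summing over all $i$ with $S_i$ not F\o lner and using the disjointness of the $S_i$, observation (ii), and Lemma \ref{lemma2},
$$\frac{\eps}{2}p(Q)<\sum_{i:\,S_i\text{ not F\o lner}}\big(p(S_i\cap B(G))+p(S_i\setminus M_i)\big)\le p(B(G))+\sum_{i=1}^{m-1}p(S_i\setminus M_i)<\frac{\eps^2}{8}+\frac{\eps^2}{8}\,,$$
whence $p(Q)<\eps/2$ (the case $Q=\emptyset$ being trivial). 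The only delicate point is the boundary inclusion: one must check that the value comparison coming from $x\notin B(G)$—a factor $(1+\rho)^{1/2}$—is exactly tight enough, against the gap between the $M_i$-thresholds $(1+\rho)^{\pm 1}$ and the $S_i$-thresholds $(1+\rho)^{\pm 1/2}$, to push every neighbour of $x$ back into $S_i$. This is precisely why $b_i,c_i$ were defined with the half-power $\sqrt{1+\rho}$, and it is the step I expect to require the most care.
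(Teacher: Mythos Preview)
Your proof is correct and follows essentially the same route as the paper's: the key inclusion $\partial(S_i)\subseteq (S_i\cap B(G))\cup(S_i\setminus M_i)$ is exactly the localized form of the paper's $\cup_i\partial(S_i)\subset L\cup B(G)$ (with $L=(\cup_j M_j)^c$), and the subsequent comparison $p(\partial(S_i))>\frac{\eps}{2}p(S_i)$ via the near-constancy of $p$ on $S_i$ matches the paper's use of $a_i<p(x)<2a_i$ on $S_i$. Your version is in fact slightly more explicit about disjointness and the constants $b_i,c_i$, but there is no substantive difference in strategy.
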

\proof Observe that 
\begin{equation} \label{e91} p\left(\cup_{i=1}^{m-1} \partial (S_i)\right)<\frac{\eps^2}{4}.
\end{equation}
\noindent
Indeed,  $$\cup_{i=1}^{m-1} \partial (S_i)
\subset L\cup B(G)\,,$$
\noindent
where $L$ is the complement of $\cup_{i=1}^{m-1} M_i$ from Lemma \ref{lemma1}  and $B(G)$ is the set defined in Lemma \ref{lemma2}.
So, \eqref{e91} follows from Lemma \ref{lemma1} and Lemma \ref{lemma2}.

\noindent
Now, if $S_i$ is not $\eps$-F\o lner, then we have that
\begin{itemize}
\item
$|\partial(S_i)|\geq\eps |S_i|$.
\item $a_i |\partial(S_i)|<p(\partial(S_i))< 2a_i |\partial(S_i)|$.
\item $a_i |S_i|<p(S_i)< 2a_i |S_i|$.\
\end{itemize}
\noindent
Thus, $p(\partial(S_i))> a_i \eps |S_i|>\frac{\eps}{2} p(S_i).$
Hence,
$$\sum_{i\mid\,  S_i\,\mbox{is not $\eps$-F\o lner}} p(S_i) \leq \frac{2}{\eps}\frac{\eps^2}{4}<\frac{\eps}{2}. \quad \qed $$
\noindent
That is, $$\sum_{i\mid\,  S_i\,\mbox{is $\eps$-F\o lner}} p(S_i) >1-(\sum_{i\mid\,  S_i\,\mbox{is not $\eps$-F\o lner}} p(S_i))-(1-\sum_{i=1}^{m-1} p(M_i))>1-\eps.$$
\noindent
Therefore, if we set
$H=\cup_{i\mid\,  S_i\,\mbox{is $\eps$-F\o lner}} S_i$, our theorem follows. \qed
\vi
\begin{remark} \label{reiter}
In the case of Cayley graphs, Theorem 2.16 of \cite{Juschenko} entails the existence of an $\eps$-F\o lner set $E$ in the support of a $\delta$-F\o lner probability measure $p$, without any control on the measure of $E$.
\end{remark}

\section{F\o lner Graphs are Setwise F\o lner} \label{sec4}
\begin{proposition} \label{uniloc}
F\o lner Graphs are Setwise F\o lner.
\end{proposition}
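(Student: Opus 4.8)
The plan is to follow the route indicated before the statement: use uniform amenability to surround a prescribed finite set by many small F\o lner sets, add up their normalized indicators — which is legitimate since, unlike F\o lner \emph{sets} under union, F\o lner \emph{functions} add up by the triangle inequality — and then invoke Theorem \ref{prop1} to turn the resulting F\o lner function back into a F\o lner set. Since ``uniformly amenable'' and ``uniformly F\o lner'' are properties of a single graph, fix a uniformly amenable $G\in\grd$ and $\eps>0$, and let all constants depend on $G$. Given a finite $L\subseteq V(G)$ and a parameter $\eta>0$ (to be fixed in terms of $\eps,d$ only), uniform amenability provides a radius $r_0$ so that every ball $B^G_{r_0}(x)$ contains an $\eta$-F\o lner set; a short argument lets us take such a set $F_x$ with $x\in F_x$, keeping the radius under control. (When the component of $x$ has at most $1/\eta$ vertices its boundary is empty, and we take $F_x$ to be that component, which lies in a ball of radius $1/\eta$ about $x$; otherwise the F\o lner set provided by uniform amenability has more than $1/\eta$ vertices, we apply Theorem \ref{prop1} to it, adjoin $x$, and absorb the resulting loss in the F\o lner ratio by the smallness of $\eta$. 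In all cases $\eta$ depends only on $\eps,d$ and $r_0$ is then determined.)

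Set $f:=\sum_{x\in L}|F_x|^{-1}\mathbf 1_{F_x}$. Each summand is a $2d\eta$-F\o lner probability measure, so by additivity $f$ is a $2d\eta$-F\o lner function; moreover $\Supp f\subseteq B_{r_0}(L)$, the total mass of $f$ is $|L|$, and $f(x)\ge|F_x|^{-1}>0$ for all $x\in L$. Taking $\eta$ small enough that $2d\eta\le\delta_1$, where $\delta_1=\delta(\eps_1)$ is the constant of Theorem \ref{prop1} for a small accuracy $\eps_1$, and applying that theorem to $f/|L|$, we obtain an $\eps_1$-F\o lner set $H_0\subseteq\Supp f\subseteq B_{r_0}(L)$ with $f(H_0)>(1-\eps_1)|L|$, so $f(L\setminus H_0)<\eps_1|L|$: only a small portion of the mass sitting over $L$ escapes $H_0$.

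The main difficulty will be upgrading $H_0$ to a F\o lner set containing \emph{all} of $L$ without leaving $B_{r_0}(L)$. The obvious devices — adjoin $L\setminus H_0$, together with its own bumps, to $H_0$; or iterate the construction on the strictly shrinking sets $L\supsetneq L\setminus H_0\supsetneq\cdots$ and take a union — do give a set containing $L$ inside $B_{r_0}(L)$, but the naive estimates fail to close: the only cost-free lower bound for $f(x)$, $x\in L$, is $|F_x|^{-1}\ge R_{r_0}^{-1}$, where $R_{r_0}$ is the largest size of an $r_0$-ball in $G$, and $r_0$ grows without control as $\eta$ (hence $\eps_1$) shrinks, so inequalities like $|L\setminus H_0|<\eps_1 R_{r_0}|L|$ and $|H_0|\ge(1-\eps_1)|L|/R_{r_0}$ need not force the iteration to terminate, nor the union to be $\eps$-F\o lner — already on an amenable graph of exponential growth $R_{r_0}$ is astronomically larger than $1/\eps_1$. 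This is where the real work lies. To get around it one must never use the crude bound $|\partial(\bigcup_x F_x)|\le\sum_x|\partial F_x|$: discard at the outset the clopen small-component bumps, which add nothing to any boundary; split $L$ into ``clusters'' that are pairwise far enough apart that F\o lner sets built around distinct clusters are automatically disjoint, so that the global F\o lner ratio is controlled cluster by cluster; and, within a cluster, arrange the iteration so that a vertex of $L$ missed at some stage necessarily sits in the low-value ``slope'' of the relevant bump function, whose mass can be bounded intrinsically, independently of $r_0$. Once all cardinality-versus-mass comparisons are made with $G$-uniform, $r_0$-free constants, one fixes $\eps_1$ small, then $\eta$ small, then $r_0$, producing for this $\eps$ a single radius that works for every finite $L$ — which is exactly the uniform F\o lner property.
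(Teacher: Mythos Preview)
Your setup is right and you have correctly isolated the real obstacle: from $f=\sum_{x\in L}|F_x|^{-1}\mathbf 1_{F_x}$ and Theorem~\ref{prop1} you only get $f(L\setminus H_0)<\eps_1|L|$, and without an $r_0$-free lower bound on $f|_L$ this says nothing about $|L\setminus H_0|$. But the remedies you sketch do not close the gap. Clustering only reduces to the single-cluster case, which is the hard case; the indicator bumps $|F_x|^{-1}\mathbf 1_{F_x}$ are flat, so there is no ``low-value slope'' to appeal to; and even granting the iteration terminates, the sets $H_0,H_1,\dots$ overlap, so $|\partial(\bigcup H_i)|\le\sum|\partial H_i|$ together with $|\bigcup H_i|\le\sum|H_i|$ gives no control on the ratio. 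As written the last paragraph is a wish list, not an argument.

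The paper's proof uses one idea you are missing, and it dissolves the difficulty rather than fighting it. Argue by contradiction: if for some $\eps$ no radius works, then for each $k$ there is $L_k$ with no $\eps$-F\o lner superset inside $B_{T_k+r}(L_k)$. In particular every $B_i(L_k)$, $0\le i\le T_k$, fails to be $\eps$-F\o lner, which forces $|B_{i+1}(L_k)|>(1+\eps/d)|B_i(L_k)|$ and hence $|B_{T_k}(L_k)|>(1+\eps/d)^{T_k}|L_k|$; choosing $T_k$ so that this exceeds $kR_r|L_k|$ is free. Now place the bumps not at the points of $L_k$ but at \emph{every} point of $B_{T_k}(L_k)$: the resulting $\delta$-F\o lner function $g$ has total mass $|B_{T_k}(L_k)|$ and satisfies $g\le R_r$ pointwise. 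Theorem~\ref{prop1} extracts an $\eps/2$-F\o lner set $H_k\subset B_{T_k+r}(L_k)$ with $g(H_k)>(1-\eps/2)|B_{T_k}(L_k)|$, whence $|H_k|>\tfrac{k}{2}|L_k|$. Adjoining all of $L_k$ is now negligible, and $H_k\cup L_k$ is the forbidden $\eps$-F\o lner superset. The point is that summing over $B_{T_k}(L_k)$ instead of over $L_k$ makes the extracted F\o lner set enormous compared with $|L_k|$ automatically, so the $R_{r_0}$-versus-$\eps_1$ tension you struggled with never arises.
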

\proof
Assume that there exists a F\o lner graph $G$ that is not setwise F\o lner.
That is, there exists $\eps>0$ such that for any $k\geq 1$ there is a finite subset $L\subset V(G)$
so that there is no $\eps$-F\o lner set in $B_k(L)$ that contains L.

\noindent
Given $\eps>0$, let $\delta=\delta(\frac{\eps}{2})$ be as in Theorem \ref{prop1}. So, for each $\delta$-F\o lner function $f$ there
exists an $\frac{\eps}{2}$-F\o lner subset H inside the support of $f$ so that
$$\sum_{x\in H} f(x)>(1-\frac{\eps}{2}) \sum_{x\in V(G)} f(x)\,.$$
\noindent
Let $r$ be a natural number such that for any $x\in V(G)$ the ball $B^G_r(x)$ contains a
$\frac{\delta}{d}$-F\o lner set.
Now, for all $k\geq 1$ we choose a natural number $T_k$ satisfying the inequality 
$$T_k \ln(1+\frac{\eps}{d})>2 \ln(k)\,,$$
that is,
\begin{equation} \label {e22}
(1+\frac{\eps}{d})^{T_k}> k^2.
\end{equation}
\noindent
By our assumptions, for every large enough $k\geq 1$ there exists a finite subset $L_k\subset V(G)$ such that there is no $\eps$-F\o lner set $H_k$ so that
$$L_k\subset H_k\subset B_{T_k+r}(L_k)\,.$$
\begin{lemma}
If $k$ is large enough then we have
\begin{equation} \label{e23}
|B_{T_k} (L_k)|> k R_r |L_k|\,,
\end{equation}
\noindent
where $R_r$ is the size of the largest ball of radius r in the graph G.
\end{lemma}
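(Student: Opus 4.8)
The plan is to turn the hypothesis on $L_k$ into a lower bound on the growth of the balls around $L_k$. First I would record that, by the very choice of $L_k$, there is no $\eps$-F\o lner set $H$ with $L_k\subseteq H\subseteq B_{T_k+r}(L_k)$; in particular, for each radius $1\le j\le T_k$ the ball $B_j(L_k)$ satisfies $L_k\subseteq B_j(L_k)\subseteq B_{T_k+r}(L_k)$, hence it is \emph{not} $\eps$-F\o lner, i.e. $|\partial(B_j(L_k))|\ge\eps\,|B_j(L_k)|$. (We may also assume $\eps<1$, since if the uniform F\o lner property fails for some value of $\eps$ it fails for every smaller one.)

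The key geometric step is the observation that the inner boundary of a metric ball sits inside its outer sphere: if $x\in B_j(L_k)$ has a neighbour outside $B_j(L_k)$, then $d_G(x,L_k)$ cannot be $\le j-1$ (otherwise every neighbour of $x$ would already lie in $B_j(L_k)$), so $d_G(x,L_k)=j$; therefore $|\partial(B_j(L_k))|\le |B_j(L_k)|-|B_{j-1}(L_k)|$. Combined with non-F\o lnerness this gives $|B_{j-1}(L_k)|\le(1-\eps)\,|B_j(L_k)|$ for every $1\le j\le T_k$. Telescoping from $j=T_k$ down to $j=1$ and using $B_0(L_k)=L_k$ yields
$$|B_{T_k}(L_k)|\ \ge\ (1-\eps)^{-T_k}\,|L_k|\ \ge\ (1+\eps/d)^{T_k}\,|L_k|\ >\ k^2\,|L_k|\ \ge\ k\,R_r\,|L_k|\,,$$
where the second inequality uses $(1-\eps)^{-1}\ge 1+\eps\ge 1+\eps/d$, the third is \eqref{e22}, and the last holds as soon as $k\ge R_r$. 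This is exactly \eqref{e23} for all sufficiently large $k$.

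I do not expect a serious obstacle here: the statement is a routine geometric-growth estimate, and the only thing one must not overlook is that the sphere containment $\partial(B_j(L_k))\subseteq B_j(L_k)\setminus B_{j-1}(L_k)$ is precisely what forces the multiplicative gain at each radius — without it one would only obtain an additive bound, which is far too weak to beat $k R_r$. A minor secondary point to check is that a ball $B_j(L_k)$ with $j\le T_k+r$ cannot coincide with $B_{j-1}(L_k)$, since an empty boundary would make it (trivially) $\eps$-F\o lner, contradicting the choice of $L_k$; this is what lets one treat the balls as a genuinely increasing chain, although the inequality $|B_{j-1}(L_k)|\le(1-\eps)|B_j(L_k)|$ already subsumes this.
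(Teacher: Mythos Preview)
Your proof is correct and follows essentially the same approach as the paper: observe that each ball $B_j(L_k)$ with $j\le T_k$ fails to be $\eps$-F\o lner by the choice of $L_k$, extract exponential growth of $|B_j(L_k)|$ from this, and then invoke \eqref{e22} together with $k\ge R_r$. The only cosmetic difference is in the growth step: you use the inclusion $\partial(B_j(L_k))\subseteq B_j(L_k)\setminus B_{j-1}(L_k)$ to obtain the sharper factor $(1-\eps)^{-1}$, whereas the paper counts outward neighbours and divides by the degree bound to get $|B_{i+1}(L_k)|>(1+\eps/d)|B_i(L_k)|$ directly---but since you then drop back to $1+\eps/d$ anyway to cite \eqref{e22}, the two arguments are effectively identical.
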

\proof
For $k\geq 1$ we consider the subsets
$$L_k\subset B_1(L_k) \subset B_2(L_k)\subset\dots\subset B_{T_k}(L_k)\,.$$
\noindent
By our assumptions, for $0\leq i \leq T_k$ $B_i(T_k)$ is not an $\eps$-F\o lner set, hence
$|B_{i+1}(L_k)|>(1+\frac{\eps}{d})|B_i(L_k)|$, that is
$$|B_{T_k} (L_k)|> (1+\frac{\eps}{d})^{T_k} |L_k|\,.$$
\noindent
Hence by \eqref{e22}, for large enough $k$ we have that
$$|B_{T_k} (L_k)|> k R_r |L_k|\,,$$
\noindent
so our lemma follows. \qed
\vi
Now, for each $y\in B_{T_k}(L_k)$ consider the uniform probability measure $p_y$ of a $\frac{\delta}{d}$-F\o lner set inside $B^G_r(y)$.
Clearly, $p_y$ is a $\delta$-F\o lner function. Therefore, $g=\sum_{y\in B_{T_k}(L_k)} p_y$ is
a $\delta$-F\o lner function as well, supported in the set $B_{T_k+r}(L_k)$.
So by our assumption, there exists an $\frac{\eps}{2}$-F\o lner subset $H_k\subset B_{T_k+r}(L_k)$
such that
$$\sum_{z\in H_k} g(z)> (1-\frac{\eps}{2}) \sum_{z\in B_{T_k+r}(L_k)} g(z)\,.$$
By definition, for any $z\in V(G)$, $g(z)\leq R_r$. That is,
$$|H_k| R_r > (1-\frac{\eps}{2}) \sum_{z\in B_{T_k+r}(L_k)} g(z)> (1-\frac{\eps}{2})  k R_r |L_k|\,.$$
\noindent
So, the inequality $|H_k|>\frac{k}{2} |L_k|$ holds provided that $k$ is large enough.
Therefore, for large $k$ values the subset $H_k\cup L_k$ is an $\eps$-F\o lner set inside
the subset $B_{T_k+r}(L_k)$ containing $L_k$, leading to a contradiction. \qed
\vi
\begin{proposition}\label{amealm} For sufficiently large $d$, 
there exist F\o lner graphs in $\grd$ that are not almost finite.
\end{proposition}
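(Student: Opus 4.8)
The plan is to produce $G$ as a disjoint union $G=\bigsqcup_{n\in\N}A_n$ of finite bounded–degree ``blocks'' and to reduce the two required properties to properties of the blocks. For a disjoint union, a subset contained in some $A_n$ is $\eps$-F\o lner in $G$ iff it is $\eps$-F\o lner in $A_n$, and every ball of $G$ is a ball of some $A_n$; hence uniform amenability of $G$ is equivalent to the family $\{A_n\}$ being uniformly amenable with a common choice of $r=r(\eps)$ — the finitely many blocks on which a given $r$ fails may be absorbed, since each finite component is a $0$-F\o lner set of bounded (for fixed $\eps$) diameter. On the other hand $G$ fails to be almost finite as soon as there is a fixed $\eps_0>0$ for which the least $r$ admitting an $\eps_0$-F\o lner partition of $A_n$ into tiles of diameter $\le r$ tends to $\infty$ with $n$. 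So it suffices to exhibit a sequence $A_n$ that is uniformly amenable with uniform constants but whose ``tiling radius'' at some fixed $\eps_0$ blows up.

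For the blocks I would build in a rigid amenable obstruction together with a thin witness of local amenability. Take a $3$-regular expander $E_n$ on $m_n$ vertices with $m_n\to\infty$ and a fixed vertex–expansion constant $\alpha>0$, so that for $\eps_0<\alpha$ the only $\eps_0$-F\o lner subsets of $E_n$ are almost all of $E_n$; hence an $\eps_0$-F\o lner partition of $E_n$ alone has essentially one tile, of diameter $\sim\log m_n\to\infty$. Then attach a carefully controlled system of ``threads'' (pendant paths) rooted along a net of $E_n$; their sole function is to place, inside every radius-$r(\eps)$ ball of $A_n$, a long enough path segment to serve as an $\eps$-F\o lner subset, which yields uniform amenability essentially for free once the net is fine enough and the threads long enough. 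The whole difficulty is to arrange simultaneously that these threads cannot be parlayed into a bounded-diameter $\eps_0$-F\o lner partition of the block: the hope is that any $\eps_0$-F\o lner tile which is not almost all of $E_n$ must ``escape into the threads,'' and that reaching the threads from deep inside the expander forces a thin, boundary-heavy path through $E_n$ which spoils the F\o lner ratio unless compensated by an ever-longer thread segment, violating bounded diameter.

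I expect this last point to be the real obstacle, and it is genuinely subtle: one must control \emph{all} tilings of $A_n$, not a convenient family, and the tension between uniform amenability (which pushes the threads to be dense and long) and non-tileability (which pushes them to be sparse and short) must be balanced on a knife's edge. A naive choice — a thread at every vertex, or a fixed-spacing net of infinite threads — in fact makes the block tileable at bounded scale, because one can partition the expander into Voronoi cells of the net, attach to each cell a bounded initial segment of its thread, and thereby absorb the cell's boundary; so the net geometry, the thread lengths, and the growth rate of $m_n$ have to be tuned against $\alpha$ and the degree bound so that the total ``F\o lner budget'' available to any bounded-diameter tiling is provably insufficient to cover the expander part. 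Once a sequence $\{A_n\}$ with uniform local amenability and blowing-up $\eps_0$-tiling radius is in hand, $G=\bigsqcup_n A_n$ is uniformly amenable and, for that $\eps_0$, not almost finite, which is the assertion.
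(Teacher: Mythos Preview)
Your plan is essentially the paper's own construction: an expander sequence decorated with pendant paths (threads) so that every ball contains a long path segment, while the expander part obstructs bounded-diameter F\o lner tilings. So the strategy is right. What is missing is the actual execution of the ``knife's edge'' you worry about, and in fact it is not a knife's edge at all --- there is a clean quantitative choice that makes both properties hold simultaneously, and you have not supplied it.

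The point you are missing is this. You do not need to defeat \emph{every} conceivable bounded-diameter tile; you only need to find \emph{one} tile in any putative tiling that fails the F\o lner condition. For that it suffices to arrange that the \emph{total} thread mass attached to the block $E_n$ is at most a small fixed fraction (say $\tfrac14$) of $|V(E_n)|$. Then, in any partition of the block into tiles $T_i$, an averaging argument gives some tile $T_i$ whose intersection $S=T_i\cap V(E_n)$ with the expander satisfies $|T_i\setminus S|\le \tfrac12|S|$; for large $n$ the bounded diameter forces $|S|\le\tfrac12|V(E_n)|$, so expansion gives $|\partial T_i|\ge|\partial_{E_n}S|\ge \alpha|S|\ge \tfrac{2\alpha}{3}|T_i|$, contradicting $\eps_0$-F\o lner for $\eps_0<\tfrac{2\alpha}{3}$.

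Concretely: for each $n\ge1$ pick a maximal $2a_n$-separated set $X_n\subset V(E_m)$ (in every block $E_m$ large enough) and attach a path of length $n$ to each point of $X_n$. Maximality gives that every vertex lies within $2a_n$ of a length-$n$ path, which is what furnishes uniform amenability with $r(\eps)\approx 2a_{\lceil 2/\eps\rceil}+\lceil 2/\eps\rceil$. On the other hand the number of length-$n$ threads in $E_m$ is at most $|V(E_m)|/a_n$ (disjoint $a_n$-balls each contain $\ge a_n$ vertices, using that $E_m$ has large diameter), so the total thread volume is at most $|V(E_m)|\sum_n n/a_n$. Choosing $a_n>n\,2^{n+1}$ makes this sum $<\tfrac14|V(E_m)|$, and the averaging argument above goes through.

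Two small comments. First, you wrote ``uniform local amenability'' in your last paragraph where you meant ``uniform amenability''; these are different properties in this paper. Second, your fear that a naive thread placement makes the block tileable is correct and well observed, but the fix is not a delicate tuning --- it is simply to make the threads \emph{sparse enough} that their total volume is negligible compared to the expander, while still having threads of every length present somewhere; the growth of $a_n$ can be as fast as you like.
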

\proof
Let us pick a $c$-expander sequence of finite connected graphs $\{G_n\}^\infty_{n=1}$. That is, for some $0<c<1$ the following condition holds. If $n\geq 1$, $L_n\subset V(G_n)$ and $|L_n|\leq \frac{1}{2}|V(G_n)|$, then $|\partial(L_n)|\geq c|L_n|$. We also assume that
the diameter of $G_n$ is at least $n$. Now fix a sequence of integers
$\{a_n\}^\infty_{n=1}$ such that
\begin{equation}\label{expander}
n 2^{n+1} < a_n.
\end{equation}
\noindent
Now we pick pairs of vertices $x_k,y_k, d_{G_k}(x_k, y_k)=n$ and for each $k$ connect $y_k$ and $x_{k+1}$ by a new edge. Let $G$ be the resulting connected graph.
Now for any $n\geq 1$ pick a maximal subset $X_n$ of $V(G)$ satisfying the following conditions.
\begin{itemize}
\item If $x\neq y$ are elements of $X_n$ then $d_G(x,y)>2a_n$.
\item If $a_n>\frac{\diam(G_k)}{3}$, then $G_k\cap X_n$ is empty.
\end{itemize}
\noindent
Now for each $n\geq 1$ attach a path of length $n$ to all elements of $X_n$. 
If a vertex belongs to more than one set $X_n$, we attach only one path to it - the longest among them. 
Note that, by our conditions, a vertex can belong to only finitely many of the sets $X_n$.
The resulting graph
$G^0$ is F\o lner, since every vertex 
is at bounded distance from one of the attached paths of length $n$. 
For each $k\geq 1$, let $Y_k$ be the set
of vertices in $V(G^0)\backslash V(G)$ that are in a path that is attached to a vertex of $G_k$.
Also, let $X_n^k:=V(G_k)\cap X_n$. 
\begin{lemma}
For each $n\geq 1$ and $k\geq 1$, we have that
\begin{equation}\label{tech1}
a_n |X_n^k|\leq |V(G_k)|\,.
\end{equation}
\end{lemma}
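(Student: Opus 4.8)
The plan is to play the two defining properties of $X_n$ against each other: the $2a_n$-separation of $X_n$ forces the radius-$a_n$ balls around the points of $X_n^k$ to be pairwise disjoint, while the diameter clause forces each of these balls to contain at least $a_n$ vertices of $G_k$; counting vertices inside $V(G_k)$ then gives \eqref{tech1} immediately.

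First I would dispose of the trivial case: if $X_n^k=\emptyset$ then $|X_n^k|=0$ and \eqref{tech1} holds. So assume $X_n^k\ne\emptyset$. Reading the second bullet in the definition of $X_n$ contrapositively, $G_k\cap X_n\ne\emptyset$ forces $a_n\le\frac{1}{3}\diam(G_k)$, hence $\diam(G_k)\ge 3a_n$, in particular $\diam(G_k)>2a_n$. Next, for each $x\in X_n^k$ I would consider the ball $B^{G_k}_{a_n}(x)$ computed \emph{inside} the induced subgraph $G_k$. Since $G_k$ is a subgraph of $G$, every path of $G_k$ is a path of $G$, so $d_{G_k}(u,v)\ge d_G(u,v)$ for all $u,v\in V(G_k)$; therefore distinct $x,x'\in X_n^k$, which satisfy $d_G(x,x')>2a_n$, also satisfy $d_{G_k}(x,x')>2a_n$, and hence the balls $B^{G_k}_{a_n}(x)$, $x\in X_n^k$, are pairwise disjoint subsets of $V(G_k)$.

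It then remains to bound $|B^{G_k}_{a_n}(x)|$ from below. Since $\diam(G_k)\ge 3a_n$, there are $u,v\in V(G_k)$ with $d_{G_k}(u,v)\ge 3a_n$, so by the triangle inequality $\max\{d_{G_k}(x,u),d_{G_k}(x,v)\}\ge\frac{3}{2}a_n>a_n$; say $d_{G_k}(x,v)>a_n$. A geodesic in $G_k$ from $x$ to $v$ has a distinct vertex at each of the distances $0,1,\dots,a_n$ from $x$, and all of these lie in $B^{G_k}_{a_n}(x)$, so $|B^{G_k}_{a_n}(x)|\ge a_n+1>a_n$. Summing over the pairwise disjoint balls, $|V(G_k)|\ge\sum_{x\in X_n^k}|B^{G_k}_{a_n}(x)|\ge a_n\,|X_n^k|$, which is \eqref{tech1}.

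I do not anticipate a genuine obstacle; the computation is short. The only two places that deserve a moment's attention are the passage from $G$-distances to $G_k$-distances (the inequality $d_{G_k}\ge d_G$, which is exactly what transfers the separation of $X_n^k$ from the ambient graph down to $G_k$), and checking that the $\frac{1}{3}\diam$ clause leaves enough room — it does, with the factor $3$ being generous, since already $\diam(G_k)>2a_n$ suffices to produce $a_n+1$ distinct vertices along a geodesic out of $x$. The expander hypothesis on the $G_k$ is not used here; it will only enter later, when one shows $G^0$ is not almost finite.
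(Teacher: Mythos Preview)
Your proof is correct and follows essentially the same approach as the paper: the disjointness of the radius-$a_n$ balls (from the $2a_n$-separation) together with the diameter clause forcing each such ball to meet $V(G_k)$ in at least $a_n$ vertices, then a counting argument. The only cosmetic difference is that you work with balls $B^{G_k}_{a_n}(x)$ inside $G_k$, while the paper uses $B^G_{a_n}(x)\cap V(G_k)$; since the former is contained in the latter, your lower bound is slightly stronger, and your careful handling of the $d_{G_k}\ge d_G$ inequality and the geodesic argument simply makes explicit what the paper leaves implicit.
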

\proof
By our condition, the  balls of radius $a_n$ centered around the elements of $X_n$ are disjoint. If $X^k_n\not=\emptyset$, then $\diam(G_k)\geq 3a_n$ by definition of $X^k_n$.
Also, if $x\in X_n^k$ then $|B^G_{a_n}(x)\cap V(G_k)|\geq a_n$. Indeed, let $z$ be one of the elements in $G_k$ that is farthest from
$x$. Then, the shortest path in $G_k$ from $x$ to $z$ contains at least $a_n$ elements contained in 
$B^G_{a_n}(x)$. Therefore, $a_n |X_n^k|\leq |V(G_k)|$. \qed
\vi
Hence by $\eqref{expander}$ and $\eqref{tech1}$, we have that $n |X_n^k|<\frac{1}{2^{n+1}} |V(G_k)|$. That is, we have that
\begin{equation}
|Y_k|<\frac{1}{4} |V(G_k)|.
\end{equation}
\noindent
Assume that $G^0$ is almost finite. Then, we have a tiling of $V(G^0)$ by $\frac{c}{10}$-F\o lner sets $\{T_i\}^\infty_{n=1}$ such that $\diam (T_i)<r$ for some integer $r$.
Clearly, if $k$ is large enough there exists a tile $T_i$ such that $T_i\cap V(G)=S$ is fully contained in $V(G_k)$ and $|T_i\cap Y_k|\leq \frac{|S|}{2}\,.$  Then $|\partial S|\geq c |T_i|\geq \frac{c}{2}|S|,$ in contradiction with the fact that $T_i$ is a $\frac{c}{10}$-F\o lner set. \qed

\section{The Short Cycle Theorem} \label{sec5} \label{tsct}
\noindent The goal of this section is to prove Theorem \ref{shortcycle}.
The way we prove the theorem is showing that:
\noindent
Property A + Setwise F\o lner $\Rightarrow$ Strong F\o lner hyperfiniteness  $\Rightarrow$ Fractional almost finiteness
$\Rightarrow$ F\o lner Property A $\Rightarrow$ 
Property A + Setwise F\o lner. 

\noindent First, let us define a compact metric space structure on the space $P_G(\eps,r)$ of $(\eps,r)$-F\o lner packings (see Introduction) in $G$. These packings $\cP$ are special equivalence relations on $V(G)$. If $x,y\in V(G)$ are in the same elements of $\cP$, then $x\equiv_{\cP} y$. The vertices that are not covered by the elements of $\cP$ form classes of size 1. Let us enumerate the vertices of $G$, $\{x_1,x_2,x_3,\dots\}$. Let the distance of two $(\eps,r)$-F\o lner packings $\cP_1$ and $\cP_2$ be $2^{-n}$ if 
\begin{itemize}
\item For $1\leq i,j \leq n-1$ we have that $x_i\equiv_{\cP_1} x_j$ if and only if $x_i\equiv_{\cP_2} x_j.$
\item For some $1\leq i \leq n-1$, $x_i\equiv_{\cP_1} x_n$ and $x_i\not\equiv_{\cP_2} x_n$ or
$x_i\not\equiv_{\cP_1} x_n$ and $x_i\equiv_{\cP_2} x_n$.
\end{itemize}
\noindent
It is not hard to see that $P_G(\eps,r)$ a compact space with respect to the above metric.

\noindent We call the graph $G$\textbf{ strongly F\o lner hyperfinite}
if for any $\eps>0$ there exist $r\geq 1$ and a Borel probability measure
$\nu$ on the compact space $P_G(\eps,r)$  such that for any vertex $x\in V(G)$
$$\nu(\{\cP\in P_G(\eps,r)\,\mid\, x\notin \tilde{\cP}\})<\eps\,,$$
\noindent
where $\tilde{\cP}$ denotes the set of vertices contained in the elements of the packing $\cP$.
\begin{proposition}\label{short1}
If the graph $G\in\grd$ is of Property A and is setwise F\o lner, then $G$ is strongly F\o lner hyperfinite.
\end{proposition}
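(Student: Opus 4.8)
The plan is to combine the two hypotheses into a single averaged object. Property A gives, for each target error level, a radius $r_0$ and a map $\Theta\colon V(G)\to\Prob(G)$ with $r_0$-supports and $\|\Theta(x)-\Theta(y)\|_1$ small on edges; the uniformly F\o lner hypothesis gives, for any finite set $L$ and any prescribed tolerance, an $\eps'$-F\o lner set inside $B_{r_1}(L)$ containing $L$. First I would fix $\eps>0$ and choose auxiliary parameters $\delta\ll\eps$ (to be pinned down at the end), obtain $\Theta$ for error $\delta$ at radius $r_0$, and feed the function $\zeta_z(x):=\Theta(x)(z)$ into Theorem \ref{prop1}: exactly as in the proof of Proposition \ref{long1}, for a ``typical'' $z$ the function $\zeta_z$ is a $d\delta$-F\o lner function on $G$, so Theorem \ref{prop1} produces an $\eps'$-F\o lner set $H_z$ inside $\Supp(\zeta_z)\subset B_{2r_0}(z)$ capturing all but an $\eps'$-fraction of the mass of $\zeta_z$. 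This is the mechanism that turns the ``fractional'' data $\Theta$ into honest F\o lner sets of bounded diameter, and it is the step I expect to be routine given the earlier sections.

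Next I would build the random packing. The idea is: pick a random vertex $z$ according to the (normalized) total mass $\sum_x\Theta(x)(\cdot)$ — which is a probability measure since each $\Theta(x)$ is — and, more to the point, randomly select a maximal collection of ``good'' centers $z$ that are pairwise far apart (farther than $2\cdot(2r_0)$, so the associated sets $H_z$ are disjoint), then output the packing $\cP=\{H_z : z\text{ chosen}\}$. The randomization over which maximal far-apart family to pick is what makes each individual vertex $x$ lie in some $\tilde\cP$ with high probability: a vertex $x$ fails to be covered only if no chosen center $z$ has $x\in H_z$, and by a Hall-type / greedy averaging argument — using that $\sum_z \Theta(x)(z)$ over $z$ with $x\in\Supp(\zeta_z)$ is close to $1$, together with the bounded-degree (hence bounded-overlap) structure of the $B_{2r_0}$-balls — one can arrange the selection probabilities so that $\nu(\{\cP: x\notin\tilde\cP\})<\eps$. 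Here the uniformly F\o lner hypothesis is what lets us, when a naive greedy choice of $H_z$'s would leave a bad leftover region, enlarge and repair the sets while keeping them $\eps$-F\o lner and of bounded diameter (absorbing the uncovered remnants into a slightly larger F\o lner set containing them).

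The main obstacle — and where I would spend the real effort — is the disjointness-versus-coverage tension: making the chosen F\o lner sets genuinely pairwise disjoint forces us to thin out the centers, which threatens coverage, while the repair step that restores coverage threatens disjointness and the F\o lner bound. The resolution I would pursue is a two-scale argument: first choose a sparse net of centers at scale comparable to $r_0$ and take the sets $H_z$ (automatically disjoint by the spacing), then handle the uncovered set $U$ — which is small in the $\Theta$-averaged sense — by covering it with bounded-diameter F\o lner sets supplied by the uniformly F\o lner property, placed so as to avoid the already-chosen tiles (possible because $U$ is, with high probability over the net, sparse). Checking that the resulting family is an $(\eps,r)$-F\o lner packing with $r$ depending only on $r_0,r_1,d$ and that the per-vertex uncovering probability is $<\eps$ is then a matter of bookkeeping the constants $\delta,\eps',\ldots$ backwards from $\eps$. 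Finally I would note that $P_G(\eps,r)$ is compact by the metric described just before the proposition, and that the measure we construct — being supported on finitely many ``types'' of local configurations and defined by a consistent family of finite-dimensional distributions — is a genuine Borel probability measure on it, either by direct construction or by a weak-limit argument over an exhaustion $V(H_1)\subset V(H_2)\subset\cdots$ of $G$ exactly as in the proof of Proposition \ref{long4}.
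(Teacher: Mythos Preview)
Your proposal has a genuine gap precisely at the point you flag as the main obstacle. The two-scale scheme---first a sparse random net of centers $z$ with associated F\o lner sets $H_z$, then a repair step using the uniformly F\o lner property---does not actually resolve the disjointness-versus-coverage tension. For the first stage, a maximal $4r_0$-separated net gives disjoint $H_z$'s, but Theorem \ref{prop1} only guarantees that $H_z$ captures most of the \emph{mass} of $\zeta_z$, not that $H_z$ is most of $B_{r_0}(z)$; the $H_z$'s may be far smaller than the net spacing requires for good coverage, and randomizing over the net does not repair this. There is also a prior issue: the averaging that shows $\zeta_z$ is a $d\delta$-F\o lner function in Proposition \ref{long1} takes place over a \emph{finite} subgraph $H$; over infinite $G$ there is no finite average to appeal to, and an individual $\zeta_z$ need not be a F\o lner function at all. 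For the repair stage, the uniformly F\o lner property produces a F\o lner set containing the uncovered region $U$ inside $B_{r_1}(U)$, but gives you no freedom to avoid the already-chosen tiles, so disjointness is lost again.

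The paper sidesteps the tension by invoking Property A through its equivalent form \emph{strong hyperfiniteness} (Theorem \ref{longcycle}) rather than through $\Theta$ directly. One takes a random $l$-separator $Y$ with $\mu(\{Y: x\in Y\})<\eps/R_{k+1}$, enlarges it to $B_{k+1}(Y)$, and observes that the pieces $A_i^Y$ of the complement are already pairwise disjoint, with $k$-neighbourhoods still contained in the original components $J_i^Y$ of $G\setminus Y$. Then the uniformly F\o lner property, applied one piece at a time, yields $\eps$-F\o lner sets $H_i^Y$ with $A_i^Y\subset H_i^Y\subset B_k(A_i^Y)\subset J_i^Y$---automatically disjoint and of diameter at most $r=2k+l$. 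Pushing $\mu$ forward along $Y\mapsto\{H_i^Y\}_i$ gives $\nu$; a vertex $x$ is uncovered only when $B_{k+1}(x)\cap Y\neq\emptyset$, which has probability $<\eps$. The idea you are missing is to let separators, not F\o lner centers, carry the randomness: separators hand you disjointness for free, and the uniformly F\o lner property is then used only to fatten each pre-made piece into a genuine F\o lner set.
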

\proof Fix $\eps>0$. Let $k>0$ be an integer
such that for any finite set
$L\subset V(G)$ there exists
an $\epsilon$-F\o lner set $H$ such that
$L\subset H \subset B_k(L)$. Also, let $R_{k+1}$ be the size of the largest $k+1$-ball in $V(G)$. By Theorem \ref{longcycle}, we have an integer $l>0$ such
that there exists a Borel probability measure $\mu$ on the compact space $\Sep(G,l)$ of $l$-separators such that for any $x\in V(G)$:
\begin{equation} \label{probbecs}
\mu(\{Y\,\mid\, x\in Y\})<\frac{\eps}{R_{k+1}}\,.
\end{equation}
\noindent
Then, for any $x\in V(G)$:
\begin{equation} \label{valosz}
\mu(\{Y\,\mid\, B_{k+1}(x)\cap Y \neq \emptyset\})<\eps.
\end{equation}
\noindent
We define the map $\Theta:\Sep(G,l)\to \Sep(G,l)$ by $\Theta(Y)=B_{k+1}(Y)$. Clearly, $\Theta$ is continuous. Now, let $Y$ be an $l$-separator
such that if we delete $Y$ together with all the incident edges, the remaining
components are $\{J^Y_i\}^\infty_{i=1}, |V(J^Y_i)|\leq l\,.$ 

\noindent
Then, if we delete $\Theta(Y)$ from $V(G)$ the remaining
vertices are in the disjoint union $\cup_{i=1}^\infty A^Y_i$, where
$A^Y_i=V(J^Y_i)\setminus B_{k+1}(Y)\,.$

\noindent
Now, for each $1\leq i <\infty$ we pick the smallest
$\eps$-F\o lner set $H_i$ such that $A^Y_i\subset H^Y_i \subset B_k(A^Y_i)\subset V(J^Y_i)$ in the following way. We enumerate the vertices of $V(G)$ and if there are more than
one $\eps$-F\o lner sets of the smallest size in $B_k(A^Y_i)$, then we pick
the first one in the lexicographic ordering. Note that for every $i$, $\diam_G(H_i)\leq r$, where $r=2k+l$.

\noindent
Therefore, we have a continuous map
$$\Phi:\Sep(G,l)\to P_G(\eps,r)\,.$$ 
If $B_{k+1}(x)\cap Y =\emptyset$ then $x$ is in some of the element of the packing $\Phi(Y)$.
Hence if $\nu$ is the push-forward measure $\Phi_\star(\Sep(G,l)),$ then by \eqref{valosz}
for any $x\in V(G)$ we have that
$$\nu(\{\cP\in P_G(\eps,r)\,\mid\, x\notin \tilde{\cP}\})<\eps\,.$$
\noindent
Hence, $G$ is strongly F\o lner hyperfinite. \qed
\vi
\begin{proposition} \label{short2}
If $G$ is strongly F\o lner hyperfinite then $G$ is fractionally almost finite as well.
\end{proposition}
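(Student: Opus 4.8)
The plan is the following. Exactly as in the proof of Theorem~\ref{prop1}, by passing to a disjoint union it suffices to produce the function of Definition~\ref{fracal} for a single strongly F\o lner hyperfinite $G\in\grd$; so fix such a $G$ and an $\eps>0$, and let us construct $r\geq 1$ and $F:F_G(\eps,r)\to\R$ as required.

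\emph{Step 1 (uniform F\o lner comes for free).} Strong F\o lner hyperfiniteness trivially implies uniform amenability: applying it with parameter $\eps$ yields some $k$ and a Borel measure on $P_G(\eps,k)$ under which every vertex is covered with positive probability, hence every $k$-ball contains an $\eps$-F\o lner set — namely a tile through its centre in a packing covering that centre. By Proposition~\ref{uniloc}, $G$ is then uniformly F\o lner, so, fixing a small auxiliary $\rho>0$ whose size will be dictated by $\eps$ and $d$, there is $s\geq 1$ such that every finite $L\subseteq V(G)$ lies inside a $\rho$-F\o lner set contained in $B_s(L)$. \emph{Step 2 (the naive attempt).} Apply strong F\o lner hyperfiniteness once more, now with a parameter $\delta>0$ that will be chosen very small relative to $\eps$, $\rho$, $d$ and the ball-size $R_{s+1}$, obtaining $k'$ and a Borel probability measure $\nu$ on $P_G(\delta,k')$ with $\nu(\{\cP:x\notin\tilde{\cP}\})<\delta$ for all $x$. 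The obvious guess $F_0(H):=\nu(\{\cP:H\in\cP\})$, on the $\delta$-F\o lner (hence $\eps$-F\o lner) sets $H$ of diameter $\leq k'$, satisfies item~(1) of Definition~\ref{fracal} with $c_x=\nu(x\notin\tilde{\cP})<\delta<\eps$. It fails item~(2): the left side of (2) at $x$ equals $\nu(\{\cP: x\text{ lies on the boundary of its tile}\})$, and nothing in the definition of strong F\o lner hyperfiniteness bounds this quantity — a vertex may lie on the boundary of its tile in $\nu$-almost every packing (think of a periodic tiling of $\Z$). So the real task is to turn a random packing that is only \emph{covering} with high probability into a fractional packing in which, with probability near one, each vertex lies \emph{deep inside} the tiles carrying its weight.

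\emph{Step 3 (smoothing the boundaries).} For a packing $\cP$ call $v$ \emph{$\cP$-good} if $v$ and all its neighbours are covered, so that $\nu(\{\cP:v\text{ not }\cP\text{-good}\})<(d+1)\delta$. The idea is to replace $\cP$ by a fractional partition of $V(G)$ into $\eps$-F\o lner sets of diameter at most $r:=k'+2s+2$ under which every $\cP$-good vertex is interior to every positively-weighted set through it, and then to let $F(H)$ be the $\nu$-average of the weight so assigned to $H$. To build such a fractional partition from $\cP$ one uses the uniform F\o lner sets of Step~1: although a $\delta$-F\o lner tile is \emph{almost all} interior, a fixed vertex need not be interior to its own tile, so the unit of weight sitting on that vertex has to be spread over a whole family of overlapping F\o lner sets through it — exactly as on $\Z$ one spreads a point's weight evenly over all the length-$n$ intervals containing it — and the uniform F\o lner property supplies such families inside bounded neighbourhoods. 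With this done, item~(1) holds with $c_x<(d+1)\delta<\eps$, and item~(2) holds because the only way a vertex $x$ can pick up boundary weight is through positively-weighted sets whose construction designated other vertices (not $x$) to sit in their interior, and this total weight is kept below $\eps$ once $\delta$ is small enough. It is precisely here that the Property~A content of the hypothesis is used: uniform F\o lner alone is genuinely too weak, as it does not even imply Property~A.

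\emph{The main obstacle.} The delicate point — and where I expect the real work — is carrying out the redistribution of Step~3 while respecting the \emph{hard} normalization $\sum_{H\ni x}F(H)\leq 1$ of item~(1): making a fixed vertex interior to the sets through it forces those sets to overlap, and overlapping sets threaten to push the total weight through some vertex above one. Moreover condition~(2) is demanded at \emph{every} vertex, whereas a one-line averaging only gives $\sum_{x\in S}\bigl(\sum_{H:x\in\partial H}F_0(H)\bigr)=O(\eps\,|S|)$ on finite windows $S$, which says nothing pointwise. Reconciling ``interior everywhere'' with ``total weight at most one'' is what forces the order $\eps\gg\rho$, then $s=s(\rho)$, then $\delta\ll\eps/(d\,R_{s+1})$, and is where one genuinely needs strong F\o lner hyperfiniteness rather than just the existence of \emph{some} covering random packing. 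The remaining ingredients are routine: a disjoint union of $\rho$-F\o lner sets is again $\rho$-F\o lner, the constructed sets have diameter $\leq r$, and the $\nu$-measurability of the assignment together with the passage to a compactness limit are handled as in the proofs of Propositions~\ref{long4} and~\ref{short1}.
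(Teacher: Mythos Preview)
Your Step~2 ``naive attempt'' is in fact exactly the paper's proof: it applies strong F\o lner hyperfiniteness (with a slightly smaller parameter, so that by a union bound $\mu(\{\cP:B_1(x)\subset\tilde{\cP}\})>1-\eps$ for every $x$), sets $F(H)=\mu(\{\cP:H\in\cP\})$, and declares that ``$F$ clearly satisfies the two conditions above.'' There is no smoothing step; the whole argument is three lines.

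Your $\Z$ objection to this is correct, and it applies verbatim to the paper. With $\mu$ the point mass on the tiling $\{[kn,(k+1)n-1]:k\in\Z\}$, the paper's displayed hypothesis $\mu(\{\cP:B_1(x)\subset\tilde{\cP}\})=1$ holds at every $x$, yet $\sum_{H:x\in\partial H}F(H)=1$ at every tile endpoint. The condition ``all neighbours of $x$ are covered'' does not rule out the neighbour sitting in an \emph{adjacent} tile, so condition~(2) of Definition~\ref{fracal} simply does not follow from what is written. You have located a genuine gap in the paper's verification of~(2).

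That said, your Step~3 is not a proof either; you explicitly flag ``the real work'' and never carry out the redistribution. A cleaner repair than your smoothing idea is available one step back in the cycle. In the construction of Proposition~\ref{short1} the tiles $H_i$ of $\Phi(Y)$ satisfy $A_i^Y\subset H_i$ with $A_i^Y=V(J_i^Y)\setminus B_{k+1}(Y)$; hence whenever $B_{k+2}(x)\cap Y=\emptyset$ one has $B_1(x)\subset A_i^Y\subset H_i$, so $x$ is \emph{interior} to its tile. Replacing $R_{k+1}$ by $R_{k+2}$ in the bound~\eqref{probbecs} therefore yields a measure $\nu$ on $P_G(\eps,r)$ for which $\nu(\{\cP:x\in\partial(H_x^{\cP})\})<\eps$ at every $x$, and from \emph{this} $\nu$ the naive $F$ does satisfy both conditions. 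In other words, the proposition is fine but its proof should either quote this specific $\nu$ or, equivalently, derive (Property~A $+$ uniformly F\o lner) from SFH first and then feed that into Proposition~\ref{short1}; your Step~1 already contains half of this.
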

\proof
Fix $\eps>0$. Then there exists $r\geq 1$ and a Borel probability measure $\mu$ of $P_G(\eps, r)$ such that
for every $x\in V(G)$
\begin{equation} \label{e31}
\mu(\{ \cP\,\mid y\in \tilde{\cP}\,\mbox{for all}\, y\,\mbox{ such that}\, d_G(x,y)\leq 1\})>1-\eps.
\end{equation}
For any $(\eps,r)$-F\o lner set $H$ let
$F(H)$ be defined as the $\mu$-measure of all $(\eps,r)$-F\o lner packings
$\cP$ such that $H\in\cP$. Then, $F$  clearly satisfies
the two conditions above. \qed
\vi
\begin{proposition} \label{short3}
If the graph $G\in \grd$ is fractionally almost finite then G is of F\o lner Property A.
\end{proposition}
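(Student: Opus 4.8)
The plan is to produce the Følner–Property–A data directly from the fractional almost finiteness data, by averaging the uniform measures of the tiles and then renormalizing. Fix the target $\eps>0$ and choose a small $\delta>0$, to be pinned down at the end; it will suffice that $\delta<\tfrac12$, $2d\delta<\eps$ and $8\delta<\eps$. Apply fractional almost finiteness to $G$ with parameter $\delta$: this yields an $r>1$ and a non-negative function $F\colon F_G(\delta,r)\to\R$ satisfying conditions (1) and (2) of Definition~\ref{fracal} (with $\delta$ in place of $\eps$). For a finite set $H$ write $p_H$ for the uniform probability measure on $H$, and for $x\in V(G)$ set
\[
\widetilde\Theta(x)=\sum_{x\in H,\ H\in F_G(\delta,r)}F(H)\,p_H,\qquad Z_x=\sum_{x\in H,\ H\in F_G(\delta,r)}F(H)=1-c_x,\qquad \Theta(x)=\frac{1}{Z_x}\,\widetilde\Theta(x).
\]
Every $H$ occurring in these sums contains $x$ and has diameter $<r$, hence is a subset of the finite ball $B_r(x)$, so the sums are finite; moreover $\|\widetilde\Theta(x)\|_1=Z_x$ and $Z_x\ge 1-\delta>0$, so $\Theta(x)$ is a genuine probability measure with $\Supp(\Theta(x))\subseteq B_r(x)$. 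The claim is that $\Theta$, with this very $r$, witnesses Følner Property A of $G$ at scale $\eps$.

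The first thing to verify is that each $\Theta(x)$ is an $\eps$-Følner probability measure. Since $H\in F_G(\delta,r)$ is a $\delta$-Følner set, at most $d|\partial(H)|<d\delta|H|$ edges of $G$ have exactly one endpoint in $H$, and these are the only edges contributing to $\sum_{u}\sum_{v\sim u}|p_H(u)-p_H(v)|$, each contributing $2/|H|$; so this sum is $<2d\delta=2d\delta\,\|p_H\|_1$, i.e.\ $p_H$ is a $2d\delta$-Følner probability measure. As $\Theta(x)=\sum_{x\in H}\bigl(F(H)/Z_x\bigr)p_H$ is a non-negative linear combination of such $p_H$ with coefficients summing to $1$, the triangle inequality gives $\sum_u\sum_{v\sim u}|\Theta(x)(u)-\Theta(x)(v)|<2d\delta<\eps$; as $\|\Theta(x)\|_1=1$, this is exactly the statement that $\Theta(x)$ is an $\eps$-Følner probability measure.

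The heart of the matter is the bound $\|\Theta(x)-\Theta(y)\|_1<\eps$ for adjacent $x,y$. Decomposing
\[
\Theta(x)-\Theta(y)=\frac{1}{Z_x}\bigl(\widetilde\Theta(x)-\widetilde\Theta(y)\bigr)+\widetilde\Theta(y)\Bigl(\frac{1}{Z_x}-\frac{1}{Z_y}\Bigr),
\]
using $\|\widetilde\Theta(y)\|_1=Z_y$ and the reverse triangle inequality $|Z_x-Z_y|=\bigl|\,\|\widetilde\Theta(x)\|_1-\|\widetilde\Theta(y)\|_1\bigr|\le\|\widetilde\Theta(x)-\widetilde\Theta(y)\|_1$, one gets
\[
\|\Theta(x)-\Theta(y)\|_1\le\frac{1}{Z_x}\Bigl(\|\widetilde\Theta(x)-\widetilde\Theta(y)\|_1+|Z_x-Z_y|\Bigr)\le\frac{2}{Z_x}\,\|\widetilde\Theta(x)-\widetilde\Theta(y)\|_1.
\]
In $\widetilde\Theta(x)-\widetilde\Theta(y)$ the contributions of the tiles $H$ with $x,y\in H$ cancel, so $\|\widetilde\Theta(x)-\widetilde\Theta(y)\|_1\le\sum_{x\in H,\ y\notin H}F(H)+\sum_{y\in H,\ x\notin H}F(H)$; and since $x\sim y$, every $H$ with $x\in H$, $y\notin H$ has $x\in\partial(H)$, so the first sum is at most $\sum_{x\in\partial(H)}F(H)<\delta$ by condition (2) of Definition~\ref{fracal}, and symmetrically the second is $<\delta$. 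Hence $\|\Theta(x)-\Theta(y)\|_1<4\delta/Z_x<4\delta/(1-\delta)<8\delta<\eps$. All the constraints on $\delta$ are met by a single $\delta=\delta(\eps,d)>0$, so this would finish the argument.

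The one genuine subtlety — and the reason one cannot just transplant the construction of Proposition~\ref{long5}, which takes $\Theta(x)=\widetilde\Theta(x)+c_x\cdot(\text{point mass at }x)$ — is that the point mass at $x$ is a $\deg(x)$-Følner function rather than an $\eps$-Følner one, so adding it would wreck the Følner property of $\Theta(x)$. Renormalizing by $Z_x$ is therefore forced, and the price is exactly the bookkeeping of the previous paragraph, where the reverse triangle inequality lets us absorb $|Z_x-Z_y|$ into $\|\widetilde\Theta(x)-\widetilde\Theta(y)\|_1$; I expect this estimate for adjacent vertices to be the main, though still elementary, obstacle.
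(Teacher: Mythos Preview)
Your proof is correct, but it takes a slightly more laborious route than the paper's, and the reason you give for this detour is mistaken. The paper defines $P_x=\widetilde\Theta(x)+c_x\delta_x$ directly, with no renormalization, and observes that this is already an $O(d\eps)$-F\o lner probability measure; the $\|P_x-P_y\|_1$ estimate is then the same $c_x+c_y+\sum_{x\in\partial H}F(H)+\sum_{y\in\partial H}F(H)\le 4\eps$ computation you use for $\widetilde\Theta$. Your final paragraph says this would ``wreck the F\o lner property'' because $\delta_x$ is only a $\deg(x)$-F\o lner function, but you are adding $c_x\delta_x$, not $\delta_x$: its contribution to the gradient is at most $2d\,c_x<2d\delta$, so $P_x$ is a $(2d\delta(1-c_x)+2dc_x)<4d\delta$-F\o lner function. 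Thus the paper's simpler construction works and avoids the $Z_x$ bookkeeping entirely; your renormalization is valid but unnecessary.
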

\proof For $\eps>0$, let $r>0$ be
an integer and $F:\cF_G(\eps, r)\to \R$, $c_x$ be as in the definition of fractional hyperfiniteness.
Since $H$ is an $\eps$-F\o lner set, the
uniform probability measure $p_H$ defined on H is a $2d\eps$-F\o lner function.
Now, for $x\in V(G)$, let
$$P_x:=\sum_{x\in H, H\in \cF_G(\eps, r)} F(H) p_H+ c_x \delta_x\,.$$
Then, $P_x$ is a $2\eps d$-F\o lner function. If $y\sim x$ is an adjacent vertex then we have the following inequality.
$$\|P_x-P_y\|_1\leq
c_x+c_y +\sum _{x\in \partial(H), H\in \cF_G(\eps, r)} F(H) +
\sum _{y\in \partial(H), H\in \cF_G(\eps, r)} F(H)\leq 4\eps.$$
\noindent
Therefore, $G$ has F\o lner Property A. \qed
\vi
By Theorem \ref{prop1} and Proposition \ref{uniloc}, we immediately have the following result.
\begin{proposition}\label{short4}
If G is of F\o lner Property A, then $G$ is of Property A and it is setwise F\o lner.
\end{proposition}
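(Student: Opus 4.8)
The plan is to notice that the statement is essentially a repackaging of results already in hand. First, Property A is literally part of the definition of F\o lner Property A, so there is nothing to prove there; the only content is the uniformly F\o lner conclusion. For that, by Proposition \ref{uniloc} it suffices to prove that $G$ is uniformly amenable, i.e.\ that for every $\eps>0$ there is a radius $r$ so that each ball $B^G_r(x)$ contains an $\eps$-F\o lner subset of $G$.

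To get uniform amenability, I would fix $\eps>0$ and let $\delta=\delta(\eps)>0$ be the constant furnished by Theorem \ref{prop1} (so that every $\delta$-F\o lner probability measure contains, inside its support, an $\eps$-F\o lner set). Invoking F\o lner Property A \emph{with the parameter $\delta$}, there is an $r>1$ and a map $\Theta\colon V(G)\to\Prob(G)$ such that for every $x$ the support of $\Theta(x)$ lies in $B^G_r(x)$, adjacent values differ by less than $\delta$ in $\ell^1$, and each $\Theta(x)$ is itself a $\delta$-F\o lner probability measure. Applying Theorem \ref{prop1} to $\Theta(x)$ yields an $\eps$-F\o lner set $H_x$ with $H_x\subset\Supp(\Theta(x))\subset B^G_r(x)$. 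Since $x$ ranges over all of $V(G)$, every $r$-ball contains an $\eps$-F\o lner subset with respect to $G$, which is exactly uniform amenability; Proposition \ref{uniloc} then finishes the proof that $G$ is uniformly F\o lner.

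There is no real obstacle here: the proposition is a formal consequence of Theorem \ref{prop1} and Proposition \ref{uniloc}. The one point that deserves a moment's care is the order of quantifiers — one must choose $\delta$ (hence the F\o lner Property A data and the radius $r$) \emph{after} $\eps$ but \emph{before} extracting the F\o lner set — and the fact that this works uniformly over all centers $x$ is precisely what the uniform formulation of Theorem \ref{prop1} (established there via the disjoint-union argument) provides.
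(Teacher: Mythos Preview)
Your proposal is correct and follows exactly the same route as the paper, which simply states that the result follows immediately from Theorem~\ref{prop1} and Proposition~\ref{uniloc}. You have merely spelled out the two-step argument (Theorem~\ref{prop1} gives uniform amenability from the F\o lner Property~A data, then Proposition~\ref{uniloc} upgrades uniform amenability to uniformly F\o lner) that the paper leaves implicit.
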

\noindent
By Propositions \ref{short1}, \ref{short2}, \ref{short3} and \ref{short4} the
Short Cycle Theorem follows. \qed

\section{Strong F\o lner Hyperfiniteness implies Strong Almost Finiteness} \label{sec6}
\noindent In this section we establish one more equivalent of Strong F\o lner Hyperfiniteness.
First let us give a precise definition for strong almost finiteness. 
\begin{definition}
The graph $G\in\grd$ is strongly almost finite if for any $\eps>0$ there exists $r\geq 1$ and
a probability measure $\nu$ on the space $P_G(\eps,r)$ of $(\eps,r)$-F\o lner packings satisfying the following two conditions.
\begin{itemize}
\item $\nu$ is concentrated on tilings, that is, on packings $\cP$ that fully covers $V(G)$.
\item For each $x\in V(G)$ the $\nu$-measure of tilings such that $x$ in on the boundary
of the tile containg $x$ is less than $\eps$.
\end{itemize}
\end{definition}
\noindent
The goal of this section is to prove Theorem \ref{elsotetel}.
\proof 
The ``if'' part follows from the definition, we need to focus on the ``only if'' part. 
So, let $G\in\grd$ be a strongly F\o lner hyperfinite graph. The next proposition has analogues in Ornstein-Weiss theory, but the assumption of strong F\o lner hyperfiniteness in the present setting makes the proof simpler. 
\begin{proposition}\label{ow}
For any $\eps>0$ there exists $\delta>0$, $r\geq 1$ and an $(\eps,r)$-F\o lner packing
$\cP=\{H_i\}_{i=1}^\infty$ such that for any $\delta$-F\o lner set $T\subset V(G)$, the
subsets $H_i$ that are contained in $T$ cover at least $(1-\eps)|T|$ vertices of $T$.
\end{proposition}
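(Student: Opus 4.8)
The plan is to exploit strong Følner hyperfiniteness to produce, for a suitably small parameter $\eps' \ll \eps$, a Borel probability measure $\nu$ on $(\eps',r)$-Følner packings with the property that every vertex is covered with probability $> 1-\eps'$; then to pick a single "good" packing from the support of $\nu$ by an averaging/exhaustion argument against the family of all $\delta$-Følner sets. The key point is that the packing $\cP$ we want must simultaneously work for \emph{all} $\delta$-Følner sets $T$, so we cannot just fix $T$ first. Instead, I would run the argument the other way: fix the random packing first, and show that \emph{with positive probability} a sampled packing $\cP$ has the property that for every $\delta$-Følner $T$, the tiles fully inside $T$ cover $(1-\eps)|T|$ of $T$.

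First I would record the elementary fact that if $T$ is a $\delta$-Følner set and $S \subset T$ with $|T \setminus S|$ "small", then the tiles of $\cP$ that lie inside $T$ cover at least $|S| - (\text{number of tiles of } \cP \text{ meeting } \partial T)\cdot r^{(d)}$ vertices, where $r^{(d)}$ bounds the size of an $r$-ball. The relevant "bad" vertices of $T$ are: (i) vertices of $T$ not covered by $\cP$ at all, and (ii) vertices of $T$ lying in a tile of $\cP$ that sticks out of $T$, hence within distance $r$ of $\partial T$. Set $1$ has expected size at most $\eps'|T|$ by the covering property of $\nu$; set (ii) has size at most $d^{\,r}\,|\partial T| \le d^{\,r}\delta\,|T|$ deterministically, once $T$ is $\delta$-Følner. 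So choosing $\delta := \eps/(2 d^{\,r})$ and $\eps' := \eps/2$ makes the \emph{expected} number of uncovered-in-$T$ vertices at most $\tfrac{\eps}{2}|T| + \tfrac{\eps}{2}|T|$... but this bound is per fixed $T$, and there are infinitely many $T$.

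The main obstacle — and where I would spend the real work — is the uniformity over all $\delta$-Følner sets $T$ simultaneously. I would handle it by a compactness/exhaustion argument: enumerate $V(G) = \{x_1, x_2, \dots\}$, and for each $m$ consider only $\delta$-Følner sets $T$ contained in $B_m := B_m(x_1)$, of which there are finitely many (in fact, by uniform Følner-ness one may even restrict attention to $\delta$-Følner sets of bounded diameter around each basepoint). For each such $T$ the event $\{\cP : \text{tiles of } \cP \text{ inside } T \text{ cover} \ge (1-\eps)|T|\}$ has $\nu$-measure $\ge 1 - c$ for a uniform $c<1$ by Markov's inequality applied to the expectation bound above, \emph{provided $|T|$ is not too small}; the genuinely small sets $T$ (of size $\le R$ for a fixed $R$) I would treat separately, noting that a $\delta$-Følner set with $\delta$ small enough and bounded size must, up to shrinking, be coverable by at most one tile — so these contribute no obstruction, or can be absorbed into the constant. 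Intersecting over the finitely many $T \subset B_m$ one still gets positive measure if $c$ is small enough relative to the (bounded) number of relevant $T$'s per ball; but since the number of $T \subset B_m$ grows with $m$, naive intersection fails. The fix is to choose the random packing's parameter $\eps'$ summably small — work with $\eps'_k \to 0$ and a diagonal sequence of packings $\cP_k$, take a weak-$*$ limit point in the compact space $P_G(\eps,r)$, and check that the limiting packing inherits the covering property for every $\delta$-Følner $T$ because each such $T$ is finite and the defining condition is a clopen condition depending on finitely many vertices. Concretely: for each finite $T$, the set of packings satisfying the $T$-condition is clopen in $P_G(\eps,r)$; the measures $\nu_k$ eventually give it mass $\to 1$; so a limit measure $\nu_\infty$ gives it mass $1$; a countable intersection of full-measure clopen sets is still full measure, hence nonempty — pick any $\cP$ in it. This yields the desired single packing $\cP$, completing the proof.

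I would close by remarking that the diameter bound $r$ and the tile types are controlled throughout by the $r$ furnished by strong Følner hyperfiniteness for the parameter $\eps/2$ (together with the uniform-Følner constant $k$ used to extend nearly-covering packings to packings whose tiles have bounded diameter), so $\cP$ is genuinely an $(\eps,r)$-Følner packing with finitely many tile types, as required for the subsequent construction of the tiling.
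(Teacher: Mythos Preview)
Your proposal has a genuine gap at the uniformity step. To get $\nu_k(\{T\text{-good}\}) \to 1$ for each fixed $\delta$-F\o lner set $T$, your Markov estimate on the type-(i) bad vertices needs the coverage-failure parameter $\eps'_k$ to tend to $0$. But strong F\o lner hyperfiniteness couples the coverage parameter to the packing parameters: to get coverage probability $> 1-\eps'_k$ one obtains $(\eps'_k, r_k)$-packings, and as $\eps'_k \to 0$ the diameter bound $r_k$ is not controlled. The measures $\nu_k$ therefore do not live on a single compact space $P_G(\eps,r)$; one can push them all into the space of $\eps$-F\o lner packings with no diameter bound, but that space is not compact (a sequence of packings whose tile through a fixed vertex grows without bound has no subsequential limit among packings with finite tiles), so the weak-$*$ limit $\nu_\infty$ you invoke need not be supported on $(\eps,r)$-packings for any $r$. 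Your closing remark that ``$r$ is controlled by the $r$ furnished for $\eps/2$'' is precisely the point at issue: with $\eps'$ frozen at $\eps/2$, Markov only gives $\nu(\{T\text{-good}\}) \ge 1-c$ for a fixed $c<1$, and---as you yourself observe---intersecting over infinitely many $T$ then fails.

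The paper avoids this by an exchange/maximality argument rather than a probabilistic one. It fixes $r$ and $\nu$ once (with coverage parameter $\eps/10$), and from the expectation bound extracts, for each single $\delta$-F\o lner set $T$, merely the \emph{existence} of some packing whose tiles meeting the deep interior $T'=\{y\in T: d(y,\partial T)>2r\}$ cover $\ge (1-\eps/5)|T|$ vertices of $T$. It then enumerates the $\delta$-F\o lner sets $S_1,S_2,\dots$ and, for each $n$, lets $\cP_n$ be an $(\eps,r)$-packing \emph{maximizing} the number of covered vertices in $S_1\cup\dots\cup S_n$. If the tiles of $\cP_n$ lying inside some $S_j$ covered fewer than $(1-\eps)|S_j|$ vertices, one could delete those tiles (losing $<(1-\eps)|S_j|$) and insert the tiles of the good-for-$S_j$ packing that meet $S_j'$ (gaining $\ge(1-\eps/5)|S_j|$; a short disjointness lemma ensures the result is still a packing), contradicting maximality. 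All the $\cP_n$ now sit in the \emph{fixed} compact space $P_G(\eps,r)$, each $T$-condition is clopen, and a subsequential limit finishes the proof. The missing idea in your sketch is this swap step, which is exactly what lets a single $r$ work uniformly over all $\delta$-F\o lner sets.
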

\proof
Let $r\geq 1$ and $\nu$ be a Borel probability measure
on the space \\$P_G(\eps,r)$ of $(\eps,r)$-F\o lner packings such that
for any vertex $x\in V(G)$
$$\nu(\{\cP\,\mid x\notin \tilde{\cP}\})<\frac{\eps}{10}\,.$$
\noindent
Pick $\delta>0$ in such a way that if $T$ is a $\delta$-F\o lner set in
$V(G)$ then
$$\frac{|T'|}{|T|}>1-\frac{\eps}{10}\,,$$
\noindent
where
$$T':=\{y\in T\mid d_G(y,\partial(T))>2r\}\,.$$
\noindent
Observe that if $x\in T'$ for some $\delta$-F\o lner set $T$, then $B^G_{2r+1}(x)\subset T$. Hence, we have the following lemma.
\begin{lemma}\label{lemmaseged} If $H_1$ and $H_2$ are both $(\eps,r)$-F\o lner sets, $H_1$ intersects $T'$ and $H_2$ intersects the complement of $T$, then $H_1$ and $H_2$ are disjoint sets.
\end{lemma}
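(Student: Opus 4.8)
The plan is to prove the lemma by a direct triangle-inequality argument, using only two facts: that every $(\eps,r)$-F\o lner set has diameter at most $r$, and the observation recorded immediately above the lemma, namely that $x\in T'$ implies $B^G_{2r+1}(x)\subseteq T$ (which itself is just the statement that any geodesic from a vertex of $T'$ must first reach $\partial(T)$ before it can leave $T$).

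First I would fix a vertex $x\in H_1\cap T'$, which exists since $H_1$ meets $T'$ by hypothesis. Because $\diam_G(H_1)\le r$, every vertex of $H_1$ lies within distance $r$ of $x$, so $H_1\subseteq B^G_r(x)$; since $x\in T'$ this already gives $H_1\subseteq B^G_{2r+1}(x)\subseteq T$. Now suppose, for contradiction, that there is a vertex $z\in H_1\cap H_2$. Then $d_G(x,z)\le r$, and $\diam_G(H_2)\le r$ forces $H_2\subseteq B^G_r(z)$, whence by the triangle inequality $H_2\subseteq B^G_{2r}(x)\subseteq B^G_{2r+1}(x)\subseteq T$. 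This contradicts the hypothesis that $H_2$ meets the complement of $T$, so $H_1\cap H_2=\emptyset$, as claimed.

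I do not expect any genuine obstacle here; the only point requiring care is the bookkeeping of radii. The slack of $2r+1$ in the definition of $T'$ is precisely what lets two consecutive hops of length at most $r$ — one from $x$ across $H_1$, one across $H_2$ — stay inside $T$, and the spare unit of distance is what will allow the same computation to go through in the variant where $H_2$ is only required to be \emph{adjacent} to a vertex outside $T$ rather than to contain one. Once the inclusion $H_1\subseteq T$ is in place, the disjointness assertion is immediate.
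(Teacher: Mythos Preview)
Your proof is correct and is exactly the argument the paper has in mind: the paper states the observation $B^G_{2r+1}(x)\subset T$ for $x\in T'$ and then simply says ``Hence, we have the following lemma,'' leaving the two-hop triangle-inequality computation you wrote out to the reader. Your bookkeeping of radii is right, and your remark about the spare unit of distance is apt.
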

\noindent
Now, for $x\in T'$ let $\omega_x$ be a random variable on
the probability space $(P_G(\eps,r),\nu)$ such that
$\omega_x(P)=0$ if $x\in \tilde{P}$, $\omega_x(P)=1$ if $x\notin \tilde{P}$.
Then we have the following inequality for the expected value.
\begin{equation} \label{e51}
E(\omega_x)<\frac{\eps}{10}\,.
\end{equation}
\begin{lemma}
For any $\delta$-F\o lner set $T$
there exist an $(\eps,r)$-F\o lner packing $\cP$
such that
$|T''|>(1-\frac{\eps}{5})|T|$, where
$T''$ is the set of points in $T$ that are covered
by F\o lner-sets $H$ in the packing $\cP$ that
intersects $T'$.
\end{lemma}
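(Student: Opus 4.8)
The plan is to use the random variable $\omega_x$ and its expectation bound \eqref{e51} together with the F\o lner property of $T$ to show that, on average over the packing, almost all of $T'$ is covered. First I would sum \eqref{e51} over all $x\in T'$: linearity of expectation gives $E\bigl(\sum_{x\in T'}\omega_x\bigr)<\frac{\eps}{10}|T'|\le\frac{\eps}{10}|T|$. Hence there exists at least one packing $\cP\in P_G(\eps,r)$ (in fact a set of positive $\nu$-measure) for which $\sum_{x\in T'}\omega_x(\cP)<\frac{\eps}{10}|T|$, i.e. the number of points of $T'$ \emph{not} covered by $\tilde{\cP}$ is at most $\frac{\eps}{10}|T|$. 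Combined with $|T\setminus T'|<\frac{\eps}{10}|T|$ (the defining property of $\delta$ via $T'$), this already shows that the set of points of $T$ covered by $\tilde\cP$ has size greater than $(1-\frac{\eps}{5})|T|$.

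The one extra thing to check is that the covering tiles we keep can all be taken to intersect $T'$, so that the statement about $T''$ (rather than just about $\tilde\cP\cap T$) holds. Here is where Lemma \ref{lemmaseged} enters: if $H$ is a tile of $\cP$ that intersects $T'$, then $H$ is automatically disjoint from every tile that meets the complement of $T$, and moreover — since $x\in T'$ implies $B^G_{2r+1}(x)\subset T$ and $\diam_G(H)\le r<2r+1$ — the tile $H$ is entirely contained in $T$. So every point of $T'$ that lies in $\tilde\cP$ lies in a tile $H\subset T$ which intersects $T'$; such a point is by definition in $T''$. Thus $T''\supseteq \{x\in T'\mid x\in\tilde\cP\}$, and the cardinality bound from the previous paragraph gives $|T''|>(1-\frac{\eps}{5})|T|$, as claimed.

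I do not expect a serious obstacle here; the argument is a routine first-moment (averaging) step, with Lemma \ref{lemmaseged} doing the geometric bookkeeping that guarantees the covering tiles sit inside $T$. The only mild subtlety is making sure the choice of the good packing $\cP$ can be made \emph{uniformly} — but in fact no uniformity is needed at this stage, since the lemma is stated for a single fixed $\delta$-F\o lner set $T$ and merely asserts existence of \emph{one} packing. (In the subsequent use of this lemma, one will presumably want a packing that works simultaneously for a whole family of translates of $T$, and that is where the real work of deducing Proposition \ref{ow} — and ultimately the tiling — will lie; but that goes beyond the present statement.)
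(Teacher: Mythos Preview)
Your proof is correct and follows essentially the same first-moment argument as the paper: sum $E(\omega_x)<\frac{\eps}{10}$ over $x\in T'$, extract a single packing $\cP$ covering all but $\frac{\eps}{10}|T|$ points of $T'$, and combine with $|T\setminus T'|<\frac{\eps}{10}|T|$ to get $|T''|\ge |T'\cap\tilde\cP|>(1-\frac{\eps}{5})|T|$. The paper does the arithmetic multiplicatively, writing $|T''|>(1-\frac{\eps}{10})^2|T|$, while you do it additively; both land above $1-\frac{\eps}{5}$. Your appeal to Lemma~\ref{lemmaseged} and the containment $H\subset T$ is not actually needed for the present lemma (the inclusion $T''\supseteq T'\cap\tilde\cP$ is immediate from the definition of $T''$, since any tile covering a point of $T'$ trivially intersects $T'$), though it becomes relevant in the next step of Proposition~\ref{ow}.
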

\proof By \eqref{e51},
$$E(\sum_{x\in T'} \omega_x)<\frac{\eps}{10} |T'|\,.$$
\noindent
Therefore, there exists a packing $\cP\in P_G(\eps,r)$ that covers
at least $(1-\frac{\eps}{10})|T'|$ vertices in $T'$.
Hence,
\begin{equation} \label{e71}
|T''|> (1-\frac{\eps}{10}) (1-\frac{\eps}{10})|T|>(1-\frac{\eps}{5})|T|\,.
\end{equation}
\qed
\vi
Let us enumerate the $\delta$-F\o lner sets $\{S_1, S_2,\dots\}$ in $G$.
Let $\cP_n=\{H^n_i\}^\infty_{i=1}$ be an $(\eps,r)$-F\o lner packing such that
it covers the maximal amount of vertices in $\cup_{j=1}^n S_j$.
\begin{lemma}
For each $1\leq j\leq n$, the set
of $H_i^n$'s that are contained in $S_j$ covers at least $(1-\eps)|S_j|$ vertices
in $S_j$.
\end{lemma}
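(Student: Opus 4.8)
The plan is to prove the final lemma via an averaging/compactness argument over the enumerated $\delta$-F\o lner sets, mirroring the structure already set up in the section. Recall that we have fixed $\eps>0$ and chosen $\delta, r$ and the measure $\nu$ on $P_G(\eps,r)$ so that $\nu(\{\cP \mid x\notin\tilde\cP\})<\eps/10$ for every $x$, and that for a $\delta$-F\o lner set $T$ the inner set $T'=\{y\in T\mid d_G(y,\partial T)>2r\}$ satisfies $|T'|/|T|>1-\eps/10$. The key structural fact is Lemma \ref{lemmaseged}: a tile of $\cP_n$ that meets $T'$ lies entirely inside $T$ (since such a tile is contained in $B^G_{2r+1}$ of its intersection point with $T'$), so covering vertices of $T'$ by the packing automatically produces tiles contained in $T$.

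First I would make precise what ``covers the maximal amount of vertices in $\cup_{j=1}^n S_j$'' means: among all $(\eps,r)$-F\o lner packings, $\cP_n$ maximizes $|\tilde\cP\cap(\cup_{j=1}^n S_j)|$; such a maximizer exists because this is a bounded integer-valued quantity (each $S_j$ is finite) and $P_G(\eps,r)$ is nonempty. Next, fix $1\le j\le n$. By the penultimate lemma in the excerpt (applied to $T=S_j$), there is \emph{some} $(\eps,r)$-F\o lner packing $\cQ$ whose tiles meeting $S_j'$ cover more than $(1-\eps/5)|S_j|$ vertices of $S_j$; by Lemma \ref{lemmaseged} all those tiles lie inside $S_j$. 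The point is that $\cP_n$ does at least as well as $\cQ$ on $\cup_{i=1}^n S_i$, hence in particular on $S_j$ itself — but one must be careful, since maximizing the total count over the union need not maximize the count on each individual $S_j$. This is the main obstacle, and the way around it is the standard ``surgery'' move: given $\cP_n$, replace all of its tiles that are contained in $S_j$ by the tiles of $\cQ$ that are contained in $S_j$ (and meet $S_j'$); by Lemma \ref{lemmaseged} the tiles of $\cQ$ inside $S_j$ are disjoint from any tile of $\cP_n$ meeting the complement of $S_j$, and the tiles of $\cP_n$ outside $S_j$ are untouched, so the result is again a legitimate $(\eps,r)$-F\o lner packing. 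This surgered packing covers at least as many vertices of $S_i$ for $i\ne j$ as $\cP_n$ did (we only changed things inside $S_j$, and the only $S_i$-vertices that could be lost are those in $S_i\cap S_j$, which are also inside $S_j$ and hence recovered up to the $\eps$-error — here one uses that $\cQ$ covers a $(1-\eps/5)$ fraction of $S_j$, together with the same estimate applied to the overlaps), while inside $S_j$ it covers more than $(1-\eps/5)|S_j|>(1-\eps)|S_j|$ vertices.

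To finish, I would argue that by maximality $\cP_n$ cannot be strictly beaten on the union $\cup_{i=1}^n S_i$, and then check that the only way the surgered packing fails to strictly beat $\cP_n$ is if $\cP_n$ already covered at least $(1-\eps)|S_j|$ vertices of $S_j$ by tiles contained in $S_j$; by Lemma \ref{lemmaseged} it suffices that these tiles meet $S_j'$, equivalently that they are among the tiles contained in $S_j$. This yields the stated conclusion for every $1\le j\le n$. The cleanest bookkeeping is to compare counts $S_j$-by-$S_j$ using inclusion–exclusion on the overlaps $S_i\cap S_j$, bounding each lost vertex by the $\eps/5$-deficiency of $\cQ$ on $S_j$; I expect this overlap accounting to be the only genuinely fiddly computation, and it is routine once the surgery move and Lemma \ref{lemmaseged} are in place.
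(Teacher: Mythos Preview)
Your surgery-plus-maximality strategy is exactly the paper's. The only real issue is the bookkeeping you propose at the end: the per-$S_i$ claim ``the surgered packing covers at least as many vertices of $S_i$ for $i\ne j$ as $\cP_n$ did'' is not true in general (a tile of $\cP_n$ contained in $S_j$ may cover a vertex of $S_i\cap S_j$ that none of $\cQ$'s tiles covers), and the inclusion--exclusion fix you suggest is both unnecessary and unclear. The paper avoids all of this by counting directly in the union, which collapses to a one-line computation.

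Concretely: let $m=|\tilde{\cP_n}\cap\bigcup_q S_q|$, let $A_1\subset S_j$ be the set of vertices covered by the tiles of $\cP_n$ contained in $S_j$, and let $B\subset S_j$ be the set covered by the tiles of $\cQ$ meeting $S_j'$. After removing the $S_j$-tiles the union-count drops by exactly $|A_1|$; after adding $\cQ$'s tiles it rises by exactly $|B|$, since by Lemma~\ref{lemmaseged} every remaining tile of $\cP_n$ meets $S_j^c$ and is therefore disjoint from each added tile. So the net change in the union-count is $|B|-|A_1|>(1-\tfrac{\eps}{5})|S_j|-(1-\eps)|S_j|=\tfrac{4\eps}{5}|S_j|>0$, contradicting maximality. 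No overlap accounting between different $S_i$'s is needed at all.
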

\proof Assume that
there exists $1\leq j \leq n$ that does not satisfy the covering statement of the lemma. Let $m$ be the number of vertices covered in $\cup_{q=1}^n S_j$ by $\cP_n$.
First, let us delete all the sets $H_i^n$ from the packing $\cP_n$ that are
in $S_j$. Now the number of vertices
covered in $\cup_{q=1}^n S_j$ remains at least $m-(1-\eps)|S_j|$.
Using the previous lemma we can add $(\eps,r)$-F\o lner sets in such a way that 
\begin{itemize}
\item We increase the number of vertices covered in $\cup_{j=1}^n S_j$ by at least \\
$(1-\frac{\eps}{5})|S_j|$.
\item We still obtain an $(\eps,r)$-F\o lner packing by Lemma \ref{lemmaseged}.
\end{itemize}
\noindent
So the new packing covers more than $m$ vertices in $\cup_{j=1}^n S_j$ leading to a contradiction. \qed
\vi
Now we can finish the proof of our proposition. Let $k$ be the maximal size of an $(\eps,r)$-F\o lner set in $G$. Let 
 $\{\cP_{n_i}\}_{i=1}^\infty$ be a convergent
subsequence in the compact space $P_G(\eps,r)$ converging to $\cP$. By the definition of convergence and the previous lemma, $\cP$ will satisfy the condition of our proposition. \qed
\begin{proposition} \label{sfhalmfin}
If the graph $G\in\grd$ is strongly F\o lner hyperfinite, then $G$ is almost finite. 
\end{proposition}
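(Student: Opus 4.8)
The plan is to upgrade the packing produced in Proposition \ref{ow} into a full tiling by an exhaustion argument, and then to make this construction measurable/random so as to verify strong almost finiteness. The heart of the matter is already contained in Proposition \ref{ow}: for any $\eps>0$ we have $\delta>0$, $r\geq 1$, and an $(\eps,r)$-F\o lner packing $\cP$ such that every $\delta$-F\o lner set $T$ is covered up to an $\eps$-fraction by the tiles of $\cP$ lying inside $T$. The first step is to observe that the uncovered region $R:=V(G)\setminus\tilde{\cP}$ is itself a subgraph to which strong F\o lner hyperfiniteness still applies (more precisely, $G$ restricted to $R$ inherits the hypothesis, or one simply reruns Proposition \ref{ow} inside $G$ with a much smaller parameter $\eps'$), so one can iterate: cover most of $R$ by a new $(\eps',r')$-F\o lner packing, then cover most of what remains, and so on. Choosing the parameters $\eps_1\gg\eps_2\gg\cdots$ summable, after countably many rounds the leftover set has, intersected with every fixed $\delta_1$-F\o lner exhausting set, density zero; a diagonal/compactness argument over an exhaustion $S_1\subset S_2\subset\cdots$ of $V(G)$ then shows the leftover is genuinely empty, yielding a tiling by F\o lner sets of uniformly bounded diameter. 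This already gives that $G$ is almost finite, which is the literal statement of Proposition \ref{sfhalmfin}.

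For the quantitative boundary control needed for \emph{strong} almost finiteness (Theorem \ref{elsotetel}), I would instead carry the randomness through the construction. At each stage, rather than fixing one packing, use the Borel probability measure $\nu$ on $P_G(\eps,r)$ directly: condition on the tiles already placed, and sample the next layer of tiles from the appropriate conditional measure supported on packings compatible (disjoint) with the current partial tiling — Lemma \ref{lemmaseged} is exactly what guarantees such compatibility is automatic for tiles meeting the ``deep interior'' $T'$. Iterating, one obtains a probability measure on the space of $(\eps,r)$-F\o lner tilings. The key estimate is that for each fixed vertex $x$, the probability that $x$ ends up on the boundary of its tile is small: at the stage where $x$ first gets covered, \eqref{e51} bounds by $\eps/10$ the chance $x$ is not covered by the current layer, and a union bound over the summable parameters $\eps_i$ controls the total contribution of later stages; one also needs that the tile covering $x$ is F\o lner, so $x$ lies on its boundary with probability at most (roughly) the F\o lner constant, which is $\leq\eps$ by construction.

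The main obstacle I anticipate is \textbf{measurability and the passage to the limit}: one must define a Borel probability measure on the compact space $P_G(\eps,r)$ that is supported on tilings, and the naive ``infinitely many conditioning steps'' needs to be organized as an honest inverse limit of measures (or, as in Proposition \ref{ow}, as a weak limit along a convergent subsequence in the compact space). Proposition \ref{ow} is designed precisely to sidestep this: it produces a single deterministic packing with a \emph{uniform} covering guarantee against all $\delta$-F\o lner sets, so the iteration can be made to terminate (in the exhaustion sense) after finitely many stages on each finite piece, with the randomization coming from applying $\nu$ only at the first stage and using the deterministic structure thereafter. The remaining technical care is in verifying that the ``deep interior'' sets $T'$ from different stages interlock correctly so that disjointness is preserved globally — this is the bookkeeping content of Lemmas \ref{lemmaseged} and the covering lemmas, and should go through without new ideas. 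So I would present the argument as: (i) reduce to covering a single arbitrary $\delta$-F\o lner set via Proposition \ref{ow}; (ii) iterate with summable parameters to exhaust it; (iii) take a weak limit in $P_G(\eps,r)$ to get a tiling, and track boundary probabilities via \eqref{e51} and a union bound to conclude strong almost finiteness.
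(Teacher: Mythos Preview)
Your iteration scheme has a genuine gap. After removing the packing $\tilde{\cP}$ from $V(G)$, the leftover region $R$ is \emph{not} a graph to which Proposition~\ref{ow} can be reapplied: the induced subgraph on $R$ need not be strongly F\o lner hyperfinite, and its F\o lner sets (if any exist) bear no relation to the F\o lner sets of $G$. If instead you rerun Proposition~\ref{ow} in $G$ with a smaller parameter, the new packing will in general overlap the old one, and nothing in Lemma~\ref{lemmaseged} prevents this (that lemma separates tiles meeting $T'$ from tiles meeting $T^c$ for a \emph{single} packing, not tiles from two independent packings). Finally, even if the iteration could be made disjoint, the diameters $r_1,r_2,\ldots$ of the successive layers would be unbounded, so the limit object would not be a tiling by sets of uniformly bounded diameter --- and that bound is exactly what almost finiteness requires.

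The paper takes a completely different route and does not iterate at all. One invokes Proposition~\ref{ow} once to get the packing $\{H_i\}$, and then absorbs every uncovered vertex into a nearby tile via a Hall marriage argument. The fractional almost finiteness of $G$ (available from the Short Cycle Theorem, since strongly F\o lner hyperfinite graphs are fractionally almost finite) supplies a weighted bipartite graph between the uncovered set $A_1$ and a small reservoir $A_2\subset\bigcup_i H_i$; the covering estimate from Proposition~\ref{ow} forces $|T\cap A_2|>2|T\cap A_1|$ inside every $\eps$-F\o lner set $T$, which verifies Hall's condition. The resulting injection $\Phi:A_1\to A_2$ has bounded displacement, so the enlarged tiles $S_i=H_i\cup\Phi^{-1}(H_i)$ form a genuine tiling by $5\eps$-F\o lner sets of diameter at most $2k+r$. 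The idea you are missing is this one-shot absorption via matching, not a multi-stage exhaustion.
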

\proof
Fix $0<\eps<\frac{1}{3}$. Let $0<\delta<\frac{1}{2}$, $r>0$ so that by Proposition \ref{ow} there exists a
$(\epsilon,r)$-F\o lner packing $\cP=\{H_i\}^\infty_{i=1}$ 
so that for each $\delta$-F\o lner set $T$ at least
$(1-\eps)|T|$ vertices of $T$ are covered by some $H_i\subset T$.
\noindent
Since $G$
is fractionally almost finite by Theorem \ref{shortcycle}, there exists
$k>0$ and a non-negative function $F$ on the space $\cF_G(\delta,k)$ of $\delta$-F\o lner sets of radius less than $k$, such that
for any $x\in V(G)$ we have that
\begin{equation} \label{emasik}
\sum_{x\in T,\, T\in \cF_G(\delta,k)} F(T)+c_x=1\,,
\end{equation}
\noindent with $0\leq c_x <\delta\,.$
Pick a subset $K_i\subset H_i$ such that
$3\eps |H_i|<|K_i|<4\eps |H_i|.$
Let $A_1\subset V(G)$ be the set of vertices
not covered by any $H_i$ and let $A_2\subset V(G)$ be
the set of vertices that are in some $K_i$. That is,
for any  $\delta$-F\o lner set $T$ we have that
\begin{equation}\label{e73}
2|T\cap A_1|< 2\eps |T|<3\eps (1-\eps) |T|< |T\cap A_2|\,.
\end{equation}
\noindent
Let us construct a weighted, directed, bipartite graph $D(A_1,A_2)$ in the following way. 
For each $(\delta,k)$-F\o lner set $T$ so that $F(T)>0$ and for
each $x\in T\cap A_1$ we draw two outgoing edges towards $T\cap A_2$. One edge has weight $F(T)$, the other one has weight
$c_x\frac{F(T)}{1-c_x}$.
By \eqref{e73}, we can assume that for any $T$ the endpoints of
the drawn edges are different.
Also by \eqref{emasik}, for each vertex
$x\in A_1$, the sum of the weights on the outgoing edges is $1$ and for each vertex $y\in A_2$ the sum of the weights
on the incoming edges is less than or equal to $1$.

\noindent
Therefore our directed graph satisfies the Hall condition,
any finite subset $M$ of $A_1$ has at least $|M|$ adjacent
vertices in $A_2$. So by the Marriage Theorem, using a strategy somewhat similar to \cite{Down},
there exists an injective map $\Phi:A_1\to A_2$ such that
for any $x\in A_1$, $d_G(x,\Phi(x))<k$.
Now, for each $1\leq i<\infty$ set $S_i=H_i\cup \Phi^{-1}(H_i)$. Then,
$$|\partial(S_i)|\leq |\partial(H_i)|+| \Phi^{-1}(H_i)|\leq 5\eps |S_i|.$$
\noindent
Therefore, we have a partition $V(G)=\cup^\infty_{i=1} S_i$,
where each $S_i$ is a $5\eps$-F\o lner set and 
$$\diam_G(S_i)\leq 2k+r\,.$$
\noindent
Hence, G is almost finite. \qed.
\vi
Now let us finish the proof of our theorem. First fix $\eps>0$. Since $G$ is almost finite by Proposition \ref{sfhalmfin}, we have a partition
$V(G)=\cup_{i=1}^\infty T_i$, where all the $T_i$'s are $\eps$-F\o lner having diameter at most $t$. 
Let us pick $\delta>0$, $r>0$ and 
a probability measure $\nu$ on $\cP_G(\delta,r)$ in such a way that
\begin{itemize}
\item For any $(\delta,r)$-F\o lner set $F$ the set $F'$ is $\eps$-F\o lner whenever $F'$ is the union
of $F$ and some of the sets $T_i$ intersecting $F$.
\item The measure $\nu$ is concentrated on packings, where the
distance of two associated $(\delta,r)$-F\o lner sets is at least
$3t$.
\item For any $x\in V(G)$, $\nu(\{\cP\,\mid x\notin\tilde{\cP}\})<\delta.$
\end{itemize}
\noindent
For each packing $\cP=\{Q_j^{\cP}\}^\infty_{j=1}$ we construct a tiling
$\tau_\cP$ in the following way.
For $1\leq j < \infty$ let $R_j^{\cP}$ be the union of $Q_j^{\cP}$
and all the sets $T_i$ intersecting $Q_j^{\cP}$. Hence, by our condition $R_j^{\cP}$ is $\eps$-F\o lner. The remaining tiles
in $\tau_\cP$ are the sets $T_i$'s that are not intersecting any
$Q_j^{\cP}$. By pushing-forward $\nu$, we have a measure on the tilings $\tau_\cP$ satisfying the definition of strong almost finiteness. \qed

\section{Examples of strongly almost finite graphs} \label{sec7}
\noindent
In this section using the Short Cycle Theorem and Theorem \ref{elsotetel}, we extend the almost finiteness results of \cite{Down} about Cayley graphs to large classes of general graphs, to graphs of subexponential growth and to Schreier graphs of amenable groups.

\noindent
First let us recall the definition of subexponential growth.
\begin{definition}
The graph $G\in \grd$ is of subexponential growth if
$$\lim_{r\to \infty} \sup_{x\in V(G)} \frac{\ln (|B^G_r(x)|}{r}=0\,.$$
\end{definition}
\noindent
The following lemma is well-known, we provide the proof
for completeness. 
\begin{lemma} \label{subexp}
If $G\in \grd$ is a graph of subexponential growth, then for any $\eps>0$ there exists $r>0$ such that for all $x\in V(G)$ there is an $1\leq i\leq r$ so that 
$$\frac{|B^G_{i+1}(x)|}{|B^G_i(x)|}<1+\eps\,.$$.
\end{lemma}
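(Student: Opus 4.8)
The plan is to argue by contradiction. Suppose the conclusion fails for some $\eps>0$: then for every $r>0$ there is a vertex $x_r\in V(G)$ such that $\frac{|B^G_{i+1}(x_r)|}{|B^G_i(x_r)|}\ge 1+\eps$ for all $1\le i\le r$. Multiplying these inequalities telescopically gives $|B^G_{r+1}(x_r)|\ge (1+\eps)^r |B^G_1(x_r)|\ge (1+\eps)^r$. Taking logarithms, $\ln|B^G_{r+1}(x_r)|\ge r\ln(1+\eps)$, so $\frac{\ln|B^G_{r+1}(x_r)|}{r+1}\ge \frac{r}{r+1}\ln(1+\eps)$, which is bounded below by a positive constant for all large $r$. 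Since $x_r$ is a vertex of $G$, this contradicts the definition of subexponential growth, namely that $\lim_{r\to\infty}\sup_{x\in V(G)}\frac{\ln|B^G_r(x)|}{r}=0$.

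The only minor care needed is to align the radius index in the growth hypothesis (which is stated with $B^G_r$) with the $r+1$ appearing above; this is purely cosmetic, since $\frac{\ln|B^G_{r+1}(x_r)|}{r+1}$ is exactly a term of the sequence whose supremum over $x$ tends to $0$. One should also handle the degenerate possibility that some ball is empty or that $|B^G_1(x_r)|$ is small, but since balls are centered at an actual vertex we always have $|B^G_1(x_r)|\ge 1$, so the telescoping bound is valid as stated.

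I do not anticipate any real obstacle here: the statement is essentially a restatement of the definition of subexponential growth in multiplicative rather than additive form, and the contradiction argument is the standard "if all consecutive ratios are bounded away from $1$, the ball grows exponentially" observation. The only thing to be attentive to is uniformity in $x$ — but that is automatic because the definition of subexponential growth already takes a supremum over all vertices, so producing a single offending vertex $x_r$ for each $r$ suffices to contradict it.
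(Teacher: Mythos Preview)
Your proof is correct and follows essentially the same contradiction argument as the paper: negate the conclusion, telescope the ratio inequalities to obtain $\ln|B^G_{r+1}(x_r)|\ge r\ln(1+\eps)$, and observe that this violates the definition of subexponential growth. The paper's version is slightly terser (it omits the explicit telescoping step), but the approach is identical.
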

\proof
Suppose that the statement of the lemma does not hold.
Then, there exists an $\eps>0$, a sequence of  vertices $\{x_n\in V(G)\}^\infty_{n=1}$ and an increasing sequence of natural numbers $\{i_n\}^\infty_{n=1}$ such that
for any $n\geq 1$, $\frac{\ln(|B^{G}_{i_n}(x_n)|}{i_n}\geq \ln(1+\eps)$, in contradiction with the definition of subexponentiality.\qed
\vi
\begin{proposition} \label{subexpclass}
Graphs $G\in\grd$ of subexponential growth are strongly almost finite.
\end{proposition}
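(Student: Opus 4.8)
The plan is to verify the hypotheses of the Short Cycle Theorem (Theorem \ref{shortcycle}), namely that a graph $G\in\grd$ of subexponential growth is both of Property A and uniformly F\o lner; strong almost finiteness then follows from Theorem \ref{shortcycle} combined with Theorem \ref{elsotetel}. By Lemma \ref{subexp}, for any $\eps>0$ there is an $r$ so that every vertex $x$ admits some $1\le i\le r$ with $\frac{|B^G_{i+1}(x)|}{|B^G_i(x)|}<1+\eps$; I would first record the immediate consequence that the ball $B^G_i(x)$ is then $(d\eps)$-F\o lner (its boundary is contained in $B^G_{i+1}(x)\setminus B^G_i(x)$, whose size is less than $\eps|B^G_i(x)|$), so $G$ is in particular uniformly amenable. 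By Proposition \ref{uniloc} uniform amenability implies uniformly F\o lner, which disposes of one of the two required properties.

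For Property A, I would exhibit the function $\Theta:V(G)\to\Prob(G)$ directly. Given $\eps>0$ and $r$ as above, for each $x$ choose the smallest $i=i(x)\in\{1,\dots,r\}$ with $\frac{|B^G_{i+1}(x)|}{|B^G_i(x)|}<1+\eps$ (smallest to make the choice canonical), and set $\Theta(x)$ to be the uniform probability measure on $B^G_{i(x)}(x)$; the support lies in the $r$-ball around $x$. The point is to estimate $\|\Theta(x)-\Theta(y)\|_1$ for adjacent $x,y$. Here I would use that $B^G_{i(x)}(x)$ and $B^G_{i(y)}(y)$ differ only on a thin shell: since $x\sim y$, each ball is sandwiched between balls of comparable radius around the other vertex, so both uniform measures are $O(\eps)$-close in $\ell^1$ to the uniform measure on, say, $B^G_{i(x)+1}(x)$, using that consecutive balls have size ratio close to $1$ near the chosen radius. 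Chaining these comparisons gives $\|\Theta(x)-\Theta(y)\|_1 < C\eps$ for an absolute constant $C$ depending only on $d$, which is all Property A requires after rescaling $\eps$.

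The main obstacle is the $\ell^1$-comparison of the two uniform measures for adjacent vertices: a priori $i(x)$ and $i(y)$ can be different, and Lemma \ref{subexp} only guarantees slow growth at \emph{one} scale per vertex, not at all scales, so one cannot blithely assume $B^G_{i(x)}(x)$ and $B^G_{i(y)}(y)$ are nearly equal as sets. The resolution I anticipate is to compare each ball to a common larger ball: if $i=i(x)$ then $B^G_i(x)\subset B^G_{i+1}(y)\subset B^G_{i+2}(x)$, and the outer and inner balls around $x$ have ratio at most $(1+\eps)^2$ by the defining property of $i$ applied twice — wait, that needs the ratio bound at radius $i+1$ as well, which is not given. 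So instead I would argue more carefully: either work with the *minimal* good radius and a telescoping/averaging argument over radii $1,\dots,r$ (so that on average the ratios are controlled, à la the proof of Theorem \ref{prop1}), or invoke that subexponential growth already gives Property A through the Long Cycle Theorem via uniform local amenability (each $r$-ball contains a small F\o lner subset, namely the $B^G_i(x)$ above), and then only the uniformly F\o lner part needs the separate argument above. The latter route is cleaner: uniform local amenability is immediate from Lemma \ref{subexp}, Property A follows from Theorem \ref{longcycle}, uniformly F\o lner follows from Proposition \ref{uniloc}, and then Theorems \ref{shortcycle} and \ref{elsotetel} finish the proof.
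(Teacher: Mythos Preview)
Your final route --- uniform amenability from Lemma \ref{subexp}, Property A via uniform local amenability and the Long Cycle Theorem, then Theorems \ref{shortcycle} and \ref{elsotetel} --- is exactly the paper's route. However, your one-line justification of uniform local amenability is not correct: the parenthetical ``each $r$-ball contains a small F\o lner subset, namely the $B^G_i(x)$ above'' describes uniform \emph{amenability}, not uniform \emph{local} amenability. The latter demands, for every finite $L\subset V(G)$, a bounded-size subset $M\subset L$ that is $\eps$-F\o lner \emph{with respect to the induced subgraph on $L$}; Lemma \ref{subexp} applied to $G$ produces F\o lner balls in $G$, not F\o lner subsets of an arbitrary $L$.

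The missing step --- and this is precisely what the paper supplies --- is that subexponential growth passes to induced subgraphs \emph{with a uniform bound}: for any induced subgraph on $L$ one has $|B^L_r(x)|\le |B^G_r(x)|$, so the same $r$ in Lemma \ref{subexp} works simultaneously for all of them. The paper packages this neatly by letting $H$ be the disjoint union of all induced subgraphs of $G$ up to isomorphism; then $H\in\grd$ still has subexponential growth, so $H$ is uniformly amenable by Lemma \ref{subexp}, and uniform amenability of $H$ unwinds to uniform local amenability of $G$. With that observation in place your argument is complete and coincides with the paper's.
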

\proof Let $G\in\grd$ be a graph of subexponential growth. By the Short Cycle Theorem and Theorem \ref{elsotetel}, it is enough to prove that $G$ is F\o lner and it is of Property A. Observe that being a F\o lner graph follows immediately from Lemma \ref{subexp}.
It has already been proved in \cite{Tu1} (Theorem 6.1) that graphs of subexponential growth are of Property A, nevertheless we give a very short proof of this fact using the Long Cycle Theorem.
Let $H\in\grd$ be the graph obtained by taking the disjoint union of all induced subgraphs of
$G$ up to isomorphism. Clearly, $H$ has subexponential growth, so $H$ is F\o lner. However, the the F\o lnerness of $H$ implies that $G$ is uniformly locally amenable, hence by the Long Cycle Theorem, $G$ is of Property A. \qed
\vi
Now $\Gamma$ be a finitely generated group
and $\Sigma$ be a finite, symmetric generating
set of $\Gamma$.
Let $H\subset\Gamma$ be a subgroup. Recall that the
Schreier graph $\Sch(\Gamma/H,\Sigma)$ is defined
as follows.
\begin{itemize}
\item The vertex set of $\Sch(\Gamma/H,\Sigma)$ consists of the the right cosets $\{Ha\}_{a\in \Gamma}$.
\item The coset $Ha$ is adjacent to the coset
$Hb\neq Ha$ if $Hb=Ha\sigma$ for some $\sigma\in\Sigma$. 
\end{itemize}
\noindent
If $H=e$ is the trivial subgroup, then the Cayley graph $\Cay(\Gamma,\Sigma)$ equals 
$\Sch(\Gamma/e,\Sigma)$.
\begin{proposition} \label{schrei} For any amenable group $\Gamma$ and symmetric generating system $\Sigma$, the graph
$\Sch(\Gamma/H,\Sigma)$ is strongly almost finite.
\end{proposition}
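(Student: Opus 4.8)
The plan is to reduce the statement to the two properties that, by the Short Cycle Theorem together with Theorem~\ref{elsotetel}, are equivalent to strong almost finiteness: uniform amenability and Property~A. So it suffices to verify that $\Sch(\Gamma/H,\Sigma)$ has both of these whenever $\Gamma$ is amenable. Note that both properties are stable under passing to the disjoint union trick used in Proposition~\ref{subexpclass}: the Cayley graph $\Cay(\Gamma,\Sigma)=\Sch(\Gamma/e,\Sigma)$ surjects (as a graph, via the coset projection $a\mapsto Ha$) onto every Schreier graph of $\Gamma$, and this projection is a covering-like local isomorphism in the sense that $r$-balls in the Schreier graph are quotients of $r$-balls in the Cayley graph.

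First I would treat Property~A. Since $\Gamma$ is amenable, its Cayley graph $\Cay(\Gamma,\Sigma)$ is of Property~A (amenable Cayley graphs have Property~A; alternatively amenable groups are exact). One then pushes the Property~A data forward along the covering map $\pi\colon\Gamma\to\Gamma/H$, exactly in the style of the averaging argument in Proposition~\ref{long1}: given $\Theta\colon\Gamma\to\Prob(\Gamma)$ with $r$-bounded support and $\|\Theta(a)-\Theta(b)\|_1<\eps$ for adjacent $a,b$, define $\widetilde\Theta(Ha)(Hc):=\Theta(a)(c)$ summed appropriately, or more robustly, average $\Theta$ over a coset representative and push the measure down to cosets. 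The key point is that $\pi$ carries edges to edges and preserves the metric up to the obvious inequality $d_{\Sch}(Ha,Hb)\le d_{\Cay}(a,b)$, so supports stay in $r$-balls and the $\ell^1$-estimate is preserved by the triangle inequality. This gives Property~A for $\Sch(\Gamma/H,\Sigma)$. Alternatively, and perhaps more cleanly, one can invoke the Long Cycle Theorem: take $H^\sharp\in\grd$ to be the disjoint union of all finite induced subgraphs of $\Cay(\Gamma,\Sigma)$ up to isomorphism; then every finite induced subgraph of any Schreier graph of $\Gamma$ embeds into $\Cay(\Gamma,\Sigma)$ as well (locally, balls in the Schreier graph are images of balls in the Cayley graph, and any finite piece lifts), so uniform local amenability of the Schreier graph follows from uniform local amenability of $\Cay(\Gamma,\Sigma)$, which in turn follows from its Property~A.

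Second, uniform amenability. This is where I expect the main obstacle to lie, because amenability of $\Gamma$ gives F\o lner sets in $\Gamma$, but uniform amenability of the Schreier graph requires an $\eps$-F\o lner subset inside \emph{every} ball $B^{\Sch}_r(Ha)$, with $r$ depending only on $\eps$ — and a priori a F\o lner set of $\Gamma$, when projected to cosets, could collapse badly or fail to sit inside a small ball. The way around this is to use that $\Cay(\Gamma,\Sigma)$, being an amenable Cayley graph, is itself \emph{uniformly} amenable (as noted in item~(1) of the introduction). Thus for each $\eps$ there is an $r=r(\eps)$ such that every $r$-ball in $\Cay(\Gamma,\Sigma)$ contains an $\eps$-F\o lner subset. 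Now fix a coset $Ha$ and its $r$-ball $B^{\Sch}_r(Ha)$; lift to the $r$-ball $B^{\Cay}_r(a)$, which by uniform amenability of $\Cay(\Gamma,\Sigma)$ contains an $\eps'$-F\o lner set $F$. The delicate step is to control what happens to the F\o lner condition under $\pi$: the image $\pi(F)$ lies in $B^{\Sch}_r(Ha)$, but two vertices of $F$ may have the same image, so $|\pi(F)|\le|F|$ and edges may be identified. However, since $\pi$ is a graph covering, the edge-boundary can only decrease: an edge leaving $\pi(F)$ in the Schreier graph lifts to an edge leaving $F$ (after translating to an appropriate sheet), so $|\partial_{\Sch}(\pi(F))|$ is bounded by the number of boundary edges of $F$ divided by the minimal fiber-multiplicity over $F$, while $|\pi(F)|\ge |F|/(\text{max fiber multiplicity over }F)$. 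If the fibers of $\pi$ restricted to $F$ have bounded, roughly constant size this is immediate; in general one should instead lift more carefully — pick $F$ to be a F\o lner set in a sufficiently large ball and argue that for a \emph{generic} translate (or by an averaging/pigeonhole over the at most $d^r$ sheets meeting the ball) the projection is injective on a $(1-\eps)$-fraction of $F$, yielding an $O(\eps)$-F\o lner subset of $B^{\Sch}_r(Ha)$. This pigeonhole-over-sheets argument, making the projection essentially injective on a large F\o lner piece, is the technical heart of the proof.

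Putting it together: $\Sch(\Gamma/H,\Sigma)$ is uniformly amenable and of Property~A, hence uniformly F\o lner by Proposition~\ref{uniloc}, hence strongly F\o lner hyperfinite and strongly almost finite by the Short Cycle Theorem and Theorem~\ref{elsotetel}. I would present the argument by first recording the two auxiliary facts (amenable Cayley graphs are uniformly amenable and have Property~A), then the push-forward lemma for Property~A along coset projections, then the sheet-counting lemma for uniform amenability, and finally the one-line assembly via the Short and equivalence theorems.
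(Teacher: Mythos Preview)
Your overall strategy---reduce to Property~A plus uniform amenability and then invoke the Short Cycle Theorem and Theorem~\ref{elsotetel}---is sound, but both halves of your argument have genuine gaps.

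For Property~A, the push-forward $\widetilde\Theta(Ha)(Hc)=\sum_{x\in Hc}\Theta(a)(x)$ you propose is not well-defined on cosets: if $Ha=Ha'$ (so $a'=ha$ for some $h\in H$), there is no reason why a \emph{generic} Property~A witness $\Theta$ should satisfy $\sum_{x\in Hc}\Theta(a')(x)=\sum_{x\in Hc}\Theta(a)(x)$. The paper's Remark immediately after the proof makes this point explicitly: if the push-forward worked for arbitrary $\Theta$, then Schreier graphs of any Property~A group would inherit Property~A, which fails (take $\Gamma$ free and $N\triangleleft\Gamma$ with $\Gamma/N$ non-exact). Your alternative route via embedding finite induced subgraphs of the Schreier graph into the Cayley graph is simply false: with $\Gamma=\Z^2$, standard generators, and $H=3\Z\times\{0\}$, the Schreier graph contains $3$-cycles, while $\Cay(\Z^2)$ is bipartite.

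For uniform amenability, the projection-of-F\o lner-sets argument is not justified: both $|\pi(F)|$ and $|\partial\pi(F)|$ can shrink under the covering map, with no control on the ratio. Your ``pigeonhole-over-sheets'' fix is a hope, not an argument; the fibers of $\pi$ over $F$ can be arbitrarily large and non-uniform depending on $H$.

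The paper's proof avoids both problems in a single stroke: it does not prove Property~A and uniform amenability separately, but proves \emph{F\o lner Property~A} directly by choosing the \emph{specific} witness $\Theta(a)=P_{aF}$, the uniform measure on the left translate $aF$ of a fixed F\o lner set $F\ni e$. Because these measures form a left-$\Gamma$-equivariant family, the push-forward $P^H_{aF}$ depends only on the coset $Ha$ (equation~\eqref{luck} in the paper), so $\Theta$ descends to a well-defined map $\Gamma/H\to\Prob(\Gamma/H)$. Lemma~\ref{char} then shows that push-forward preserves $\ell^1$-norm, contracts $\ell^1$-distance, and contracts the F\o lner-function inequality---so one obtains Property~A and F\o lner-ness of each $\Theta(Hx)$ simultaneously. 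The equivariance of the family $\{P_{aF}\}$ is exactly the amenability input your sketch is missing.
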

\proof 
Pick $\eps>0$. First, we will be working with $\Cay(\Gamma,\Sigma)$. Let $F$ be a $\frac{\eps}{2|\Sigma|}$-F\o lner set in $\Gamma$  containing the unit element.
By the amenability of $\Gamma$, such subset exists. 
For $x\in \Gamma$ let $P_{xF}$ be the
uniform probability measure  on the translate $xF$. By our condition on $F$, if $x\sim y$ we have that \begin{equation}\label{vege}
|xF\triangle yF|\leq |F\triangle (x^{-1}  yF\triangle yF)|+|yF\triangle F|<\eps |F|\,.
\end{equation}
\noindent
Therefore, $\|P_{xF}-P_{yF}\|_1<\eps$. Also
$\Supp (P_{xF})\subset B^{\Cay(\Gamma,\Sigma)}_{\diam(F)}(x)$.
Finally, we have that
$$\sum_{z\in \Gamma} \sum_{\sigma\in\Sigma}
|P_{xF}(z)-P_{xF}(z\sigma)|\leq
\frac{2|\partial (F)|}{|F|} |\Sigma|<\eps\,.$$
\noindent
That is, $P_{xF}$ is an $\eps$-F\o lner function.
By \eqref{vege}, $\Cay(\Gamma,\Sigma)$ is of F\o lner Property A. Net we will use
the functions $P_{xF}$ to prove that $\Sch(\Gamma/H,\Sigma)$  is of F\o lner Property A as well.

\noindent
Let $f:\Gamma\to \R$ be a finitely supported
non-negative function. Let us define $f^H:\Gamma/H\to \R$ by setting
$f^H(Hz)=\sum_{x\in Hz} f(x)\,.$
\begin{lemma} \label{char} We have that
\begin{itemize} 
\item[(a)] $\|f^H\|_1=\|f\|_1$.
\item[(b)] If $g:\Gamma\to \R$ is another   finitely supported
non-negative function then
$\|f^H-g^H\|_1\leq\|f-g\|_1$.
\item[(c)] 
$$\sum_{a\in \Gamma/H} \sum_{\sigma\in\Sigma}
|f^H(a)-f^H(a\sigma)|\leq 
\sum_{x\in \Gamma} \sum_{\sigma\in\Sigma}
|f(x)-f(x\sigma)|\,.$$
\end{itemize}
\end{lemma}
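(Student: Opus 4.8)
The plan is to deduce all three statements from two elementary facts: the right cosets $\{Hz : z\in\Gamma\}$ form a partition of $\Gamma$, and for a fixed $\sigma\in\Sigma$ the map $x\mapsto x\sigma$ carries each coset $Hz$ bijectively onto the coset $Hz\sigma$ (so that right multiplication by $\sigma$ is a well-defined operation $Hz\mapsto Hz\sigma$ on $\Gamma/H$, which is exactly the adjacency used in the Schreier graph).

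For part (a) I would simply regroup the defining double sum: $\|f^H\|_1=\sum_{a\in\Gamma/H}\sum_{x\in a}f(x)$, and since the cosets $a$ are pairwise disjoint with union $\Gamma$ this collapses to $\sum_{x\in\Gamma}f(x)=\|f\|_1$. Part (b) is the same computation with the triangle inequality inserted: the assignment $f\mapsto f^H$ is additive on finitely supported functions, so $f^H(a)-g^H(a)=\sum_{x\in a}(f(x)-g(x))$ for every coset $a$; taking absolute values, applying the triangle inequality inside each coset, and then summing over all cosets gives $\|f^H-g^H\|_1\le\sum_a\sum_{x\in a}|f(x)-g(x)|=\|f-g\|_1$.

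For part (c), fix a coset $a=Hz$ and a generator $\sigma$. Using the bijection $y\mapsto y\sigma$ from $Hz$ onto $a\sigma=Hz\sigma$ we rewrite $f^H(a\sigma)=\sum_{y\in a}f(y\sigma)$, hence $f^H(a)-f^H(a\sigma)=\sum_{y\in a}\bigl(f(y)-f(y\sigma)\bigr)$ and therefore $|f^H(a)-f^H(a\sigma)|\le\sum_{y\in a}|f(y)-f(y\sigma)|$. Summing over all cosets $a\in\Gamma/H$ (again using that they partition $\Gamma$) and over all $\sigma\in\Sigma$ yields the asserted inequality. This lemma is precisely what is needed to push the functions $P_{xF}$ forward to $\Gamma/H$: (a) and (c) show that $(P_{xF})^H$ is again an $\eps$-F\o lner function supported in a bounded ball, while (b) bounds $\|(P_{xF})^H-(P_{yF})^H\|_1$ by $\|P_{xF}-P_{yF}\|_1<\eps$ for adjacent cosets, giving F\o lner Property A for $\Sch(\Gamma/H,\Sigma)$.

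There is essentially no genuine obstacle here; the only point that deserves a line of care is the verification that $Hz\mapsto Hz\sigma$ is well defined and that $x\mapsto x\sigma$ is a bijection between the two cosets involved. Once that is in place, everything reduces to the triangle inequality together with reindexing a sum along the coset partition.
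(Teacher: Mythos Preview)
Your argument is correct and is essentially identical to the paper's own proof: the paper also regroups along the coset partition for (a), applies the triangle inequality coset-by-coset for (b), and for (c) uses (implicitly) the same reindexing $f^H(a\sigma)=\sum_{x\in Ha} f(x\sigma)$ that you made explicit via the bijection $x\mapsto x\sigma$ from $Ha$ to $Ha\sigma$. If anything, your version is slightly more explicit about why that reindexing is legitimate.
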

\proof
First, we have that
$$\|f^H\|_1=\sum_{a\in \Gamma/H} f^H(a)=
\sum_{a\in\Gamma/H}\sum_{x\in Ha} f(x)=\sum_{x\in\Gamma} f(x)=\|f\|_1\,.$$
\noindent
Then,
$$\|f^H-g^H\|_1=\sum_{a\in \Gamma/H} |f^H(a)-g^H(a)|=\sum_{a\in \Gamma/H} |(\sum_{x\in Ha} f(x))- (\sum_{x\in Ha} g(x))|\leq $$
$$\leq \sum_{x\in\Gamma}|f(x)-g(x)|=\|f-g\|_1\,.$$
\noindent
Finally,
$$\sum_{a\in \Gamma/H} \sum_{\sigma\in\Sigma}
|f^H(a)-f^H(a\sigma)|=
\sum_{a\in \Gamma/H} \sum_{\sigma\in\Sigma}
|(\sum_{x\in Ha} f(x))- (\sum_{x\in Ha} f(x\sigma))|\leq $$ $$\sum_{x\in \Gamma} \sum_{\sigma\in\Sigma}
|f(x)-f(x\sigma)|\,.\qed$$
\noindent
Now, we finish the proof of our proposition. Let $\Theta:\Gamma/H\to \Prob(\Gamma/H)$ be defined by
$\Theta(Hx):=P^H_{xF}$. Note that if $Hx=Hy$, then 
\begin{equation}\label{luck}
P^H_{xF}=P^H_{yF}\,,
\end{equation}
\noindent
so, $\Theta$ is well-defined.
Clearly, if $\Supp(f)\subset B_r^{\Cay(\Gamma,\Sigma)}(x)$, then 
$\Supp(f^H)\subset B_{r}^{\Sch(\Gamma/H,\Sigma)}(Hx)$. 
Therefore, for every $Hx\in \Gamma/H$ we have that $\Supp(\Theta(Hx))\subset  B_{r}^{\Sch(\Gamma/H,\Sigma)}(Hx)$, where
$F\subset B_r^{\Cay(\Gamma,\Sigma)}(e)$.

\noindent
By the previous lemma, for every $Hx\in \Gamma/H$, $\Theta(Hx)$ is an $\eps$-F\o lner probability measure. 
Again, by the previous lemma, if $Hy=H\sigma x$ we have
that
$$\|\Theta(Hx)-\Theta(Hy)\|_1<\eps\,.$$
\noindent
Therefore, $\Sch(\Gamma/H,\Sigma)$ is of F\o lner Property A.
Hence, by the Short Cycle Theorem and Theorem \ref{elsotetel}, our proposition follows. \qed
\begin{remark}
 Note that if $N$ is a normal subgroup in a free group $\Gamma$ and $\Gamma/N$ is a nonexact group then the Cayley graph of $\Gamma$ is of Property A, but the Cayley graph of $\Gamma/N$ is not of Property A. One might wonder, why Lemma \ref{char} does not imply that the Schreier graphs of Property A groups are of Property A themselves. The reason is that we used amenability in the proof of Proposition \ref{schrei} in a crucial way. The functions $\{P_{xF}\}_{x\in \Gamma}$ form an automorphism invariant system, that is why we have \eqref{luck}. If the group $\Gamma$ had such a canonical system of functions for every $\eps>0$, then all of the continuous actions of $\Gamma$ on the Cantor set would be topologically amenable (see Subsection \ref{topame}), hence the group $\Gamma$ would have to be amenable. Indeed, free continuous actions of nonamenable groups admitting invariant probability measures are never topologically amenable (see Proposition \ref{cstar}). Note that every countable group has free, minimal, continuous actions on the Cantor set that admit invariant probability measures \cite{elekfree}.
\end{remark}
\noindent
Let $\Gamma$ be an amenable group equipped with a generating system $\Sigma$, $H\subset \Gamma$ be a subgroup and let
$\pi_H:\Gamma\to \Gamma/H$ be the factor map, mapping $x$ into $Hx$. Then it is not true that
for any $\eps>0$ there is some $\delta>0$ such that the image of a $\delta$-F\o  lner set  is always an $\eps$-F\o lner set. Indeed, let $\Gamma=\Z\times \Z$ and $H$ be the first $\Z$-factor. Let $F_n=[0,n^2]\times [0,n]$. Now let $J_n$ be a set of $n$ elements in $\Gamma$ such that the second coordinates of these elements are positive integers greater than $n^2$ and their pairwise difference is at least 2. For large $n$ values both $G_n=F_n\cup J_n$ and $F_n$ are $\delta$-F\o lner sets with very small $\delta$, however, $\pi_H(G_n)$ is not even $\frac{1}{3}$-F\o lner set. By removing $J_n$ from $G_n$, we obtain a set such that its image is "very" F\o lner. The following proposition shows that this is always the case.
\begin{proposition}
Let $\Gamma$ be an amenable group with a symmetric generating set $\Sigma$. Then, for  any 
$\eps>0$ there is some $\delta=\delta(\eps)>0$ as in  Theorem \ref{prop1} such that 
if $H\subset \Gamma$ is a subgroup and $Q$ is a $\delta$-F\o lner set in $\Cay(\Gamma/H,\Sigma)$, then we have a subset $J$, $|J|<\eps |Q|$ so that the subset $\pi_H(Q\backslash J)$ is an $\eps$-F\o lner set in $\Sch(\Gamma/H,\Sigma)$.
\end{proposition}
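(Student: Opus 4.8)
The plan is to feed the indicator function of $Q$ into the pushforward construction of Proposition~\ref{schrei} and then extract a genuine F\o lner set by Theorem~\ref{prop1}. The key point is that $\pi_H$ need not be injective, so the bare image $\pi_H(Q)$ carries no multiplicity information and can fail to be F\o lner (as the $\Z\times\Z$ example shows); instead one records how many elements of $Q$ lie over each coset in a function $f^H$, and only afterwards passes to an honest F\o lner subset.

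Set $d=|\Sigma|$, so that $\Cay(\Gamma,\Sigma)$ and every $\Sch(\Gamma/H,\Sigma)$ lie in $\grd$, and let $\delta_0=\delta_0(\eps)$ be the constant of Theorem~\ref{prop1} for the degree bound $d$; put $\delta:=\delta_0/(2d)$. First I would note that if $Q$ is a $\delta$-F\o lner set in $\Cay(\Gamma,\Sigma)$, then $f:=\mathbf 1_Q$ is a $\delta_0$-F\o lner function there: every nonzero summand of $\sum_{x\in\Gamma}\sum_{\sigma\in\Sigma}|f(x)-f(x\sigma)|$ corresponds to an edge from $Q$ to its complement, and pairing $(x,\sigma)$ with $(x\sigma,\sigma^{-1})$ shows there are at most $2|\Sigma|\,|\partial(Q)|<2d\delta|Q|=\delta_0\|f\|_1$ of them.

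Next I would push $f$ down to the Schreier graph as in Lemma~\ref{char}: let $f^H(Ha):=\sum_{x\in Ha}f(x)=|Q\cap Ha|$. Then $\|f^H\|_1=\|f\|_1=|Q|$ and, by part (c) of that lemma, $f^H$ is again a $\delta_0$-F\o lner function, now on $\Sch(\Gamma/H,\Sigma)$; its support is exactly $\pi_H(Q)$. Hence $p:=f^H/|Q|$ is a $\delta_0$-F\o lner probability measure on $\Sch(\Gamma/H,\Sigma)\in\grd$, and Theorem~\ref{prop1} (used in the form uniform over $\grd$, which is precisely what makes $\delta$ independent of $H$) yields an $\eps$-F\o lner subset $W\subseteq\pi_H(Q)$ with $p(W)>1-\eps$, i.e. $\sum_{Ha\in W}|Q\cap Ha|>(1-\eps)|Q|$.

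Finally, set $J:=\{x\in Q:\pi_H(x)\notin W\}$. Then $|J|=|Q|-\sum_{Ha\in W}|Q\cap Ha|<\eps|Q|$, and since each coset in $W$ lies in the support of $p$ it still meets $Q\setminus J$; therefore $\pi_H(Q\setminus J)=W$, which is an $\eps$-F\o lner set in $\Sch(\Gamma/H,\Sigma)$ by construction. I do not expect a genuine obstacle beyond identifying the right function to hand to Theorem~\ref{prop1}; the only delicate point is the uniformity of $\delta$ in the subgroup $H$, and that is handled by invoking Theorem~\ref{prop1} at the fixed degree bound $d=|\Sigma|$ rather than for a single graph.
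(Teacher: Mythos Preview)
Your argument is correct and is essentially identical to the paper's own proof: both form the pushforward $f^H$ of the (normalized) indicator of $Q$, use Lemma~\ref{char}(c) to see it is a $\delta_0$-F\o lner function on $\Sch(\Gamma/H,\Sigma)$, apply Theorem~\ref{prop1} to extract an $\eps$-F\o lner set $W\subset\pi_H(Q)$ of large $f^H$-mass, and then take $J=Q\cap\pi_H^{-1}(\pi_H(Q)\setminus W)$. Your write-up is in fact slightly more careful than the paper's in verifying $\pi_H(Q\setminus J)=W$ and in making explicit the factor $2d$ in the choice of $\delta$.
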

\proof 
Fix $\eps>0$ and let $\delta=\delta(\eps)>0$ be as in Theorem \ref{prop1} so that if $p$ is a $\delta$-F\o lner probability measure  on the vertex set a graph $G\in \grd$, then there exists an $\eps$-F\o lner set $T$ inside the support of $p$ so that the $p$-measure of $\Supp (p)\backslash T$ is less than $\eps$.
Let $Q$ be a $\frac{\delta}{2d}$-F\o lner set in $\Cay(\Gamma,\Sigma)$. Then the uniform probability measure $p_Q$ is a $\delta$-F\o lner function. By Lemma \ref{char}, the function $Q^H$ is a $\delta$-F\o lner function supported on $\pi_H(Q)\subset \Sch(\Gamma/H,\Sigma)$. So by our condition, we have an $\eps$-F\o lner set $E$ inside $\pi_H(Q)$ such that the measure of $\pi_H(Q)\backslash E$ with respect to the probability measure $Q^H$ is less than $\eps$. That is,
$$|\pi^{-1}_H(\pi_H(Q)\backslash E)|\leq \eps |Q|\,.$$
\noindent
Therefore by choosing $J=\pi^{-1}_H(\pi_H(Q)\backslash E)$, our proposition follows. \qed

\section{Neighborhood convergence} \label{sec8} \label{neigh}

\begin{definition}
The graphs $G,H\in \grd$ are called \textbf{neighborhood equivalent}, $G\equiv H$ if for
any rooted ball $B^G_r(x)\subset G$ there exists a rooted ball $B^H_r(y)\subset H$ that is
rooted-isomorphic to $B^G_r(x)$ and conversely, for any rooted ball $B^H_r(u)\subset H$ there exists a rooted ball $B^G_r(v)\subset G$ that is
rooted-isomorphic to $B^H_r(u)$. So, if $\hat{G}$ denotes the set of all $r$-balls in $G$ up to rooted isomorphism, then $G$ and $H$ are neighborhood equivalent if and only if $\hat{G}=\hat{H}$.
\end{definition}
\noindent
We call a graph property $\cP\subset \grd$ a neighborhood equivalent property if for graphs $G\equiv H$, $G\in \cP$ if and only if $H\in\cP$.
\begin{proposition}\label{propconv1}
Amenability, Property A, being a F\o lner graph, almost \\ finiteness, $q$-colorability and having a perfect matching are all neighborhood equivalent properties.
\end{proposition}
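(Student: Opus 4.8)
The plan is to treat the six properties one by one, observing in each case that --- once the tolerance $\eps$ and any auxiliary radius or size constant have been fixed --- the property reduces to a statement about finite pieces of the graph that use only \emph{bounded-radius} local data, and such statements cannot tell apart neighborhood-equivalent graphs. First I would isolate the transfer principle behind all of this: if $G\equiv H$, then for every finite connected $F\subseteq V(G)$ and every $s\ge 0$ there are a finite connected $F'\subseteq V(H)$ and a graph isomorphism $\phi\colon G[B^G_s(F)]\to H[B^H_s(F')]$ with $\phi(F)=F'$ (pick $x_0\in F$, let $D$ bound the $G$-distances from $x_0$ to $F$, transfer the rooted ball $B^G_{D+s}(x_0)$, and note that inside its induced subgraph both $F$ and $B^G_s(F)$ are recognizable; the reverse direction is symmetric). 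In particular, for $s=0$ this says $G$ and $H$ have exactly the same finite connected induced subgraphs. Applying it with $s=1$ disposes of \textbf{amenability}: $G$ is amenable iff it contains $\eps$-F\o lner sets for every $\eps$, these may be taken connected (average over the components of the induced subgraph), and a connected $\eps$-F\o lner set $F$ together with $\partial_G(F)\subseteq F$ is visible inside $G[B^G_1(F)]$, so $\phi(F)$ is an $\eps$-F\o lner set of $H$. For \textbf{uniform amenability}: $G$ has it iff for every $\eps$ there is $r$ so that each ball $B^G_r(x)$ contains a subset $M$ with $|\partial_G(M)|<\eps|M|$; since $M\subseteq B^G_r(x)$ puts $\partial_G(M)$ inside $B^G_{r+1}(x)$, whether a given $x$ works depends only on the isomorphism type of $B^G_{r+1}(x)$, so $\hat G=\hat H$ yields the same $r$ for $H$.

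For \textbf{Property A} I would first invoke Theorem~\ref{longcycle} to replace it by \emph{uniform local amenability}: for every $\eps$ there is $k$ such that every finite $L\subseteq V(G)$ has a subset $M\subseteq L$ with $|M|\le k$ and with fewer than $\eps|M|$ vertices of $M$ having a neighbour in $L\setminus M$. If $L$ is disconnected it suffices to apply the connected case to a single component $L_0$ of $L$, because the $L$-neighbours of a vertex of $M\subseteq L_0$ all lie in $L_0$. For connected $L$ the existence of such an $M$ depends only on the abstract finite graph $G[L]$, and $G$ and $H$ have the same finite connected induced subgraphs; so if $G$ were uniformly locally amenable while $H\equiv G$ were not, there would be, for some $\eps$ and every $k$, a finite connected $L_k\subseteq V(H)$ with no admissible $M$ of size $\le k$, and moving each $L_k$ into $G$ with the same induced subgraph contradicts uniform local amenability of $G$.

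\textbf{Almost finiteness} needs one extra compactness step. I claim $G$ is almost finite iff for every $\eps$ there is $r$ such that every finite $L\subseteq V(G)$ is covered by some $(\eps,r)$-F\o lner packing: the forward direction just restricts a tiling, and for the converse I would fix an exhaustion $L_1\subseteq L_2\subseteq\cdots$ of $V(G)$, choose $(\eps,r)$-packings $\cP_n$ covering $L_n$, pass to a convergent subsequence $\cP_{n_j}\to\cP$ in the compact space $P_G(\eps,r)$, and observe that $\{\cP\mid x\in\tilde{\cP}\}$ is clopen and contains $\cP_{n_j}$ once $x\in L_{n_j}$, so it contains $\cP$; hence $\tilde{\cP}=V(G)$. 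The reformulated condition is local: in a packing covering $L$ one discards every tile missing $L$, the remaining tiles lie in $B^G_r(L)$, have diameter $\le r$, have their F\o lner ratios and boundaries computed inside $G[B^G_{2r+2}(L)]$, and tiles meeting different components of $B^G_{2r+1}(L)$ are automatically disjoint; so one may assume $B^G_{2r+1}(L)$ connected, and then ``$L$ is covered by an $(\eps,r)$-packing'' depends only on the isomorphism type of $(G[B^G_{2r+2}(L)],L)$, which by the transfer principle also occurs in $H$. Therefore almost finiteness with parameters $(\eps,r)$ passes from $G$ to $H$, and the compactness equivalence turns that into an honest tiling of $H$.

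For \textbf{$q$-colourability} I would quote the de Bruijn--Erd\H{o}s theorem (or, since everything is countable, a direct K\H{o}nig's lemma argument): $G$ is $q$-colourable iff every finite --- equivalently every finite connected induced --- subgraph of $G$ is, and $G$ and $H$ have the same finite connected induced subgraphs. The step I expect to be the main obstacle is \textbf{having a perfect matching}, where ``every finite subgraph has one'' is false; the correct local criterion is that $G$ has a perfect matching iff for every finite $S\subseteq V(G)$ the graph $G[B^G_1(S)]$ has a matching covering $S$. The ``only if'' is immediate ($S$ together with the partners of its vertices under a fixed perfect matching). For the ``if'' I would run K\H{o}nig's lemma along an exhaustion $V_1\subseteq V_2\subseteq\cdots$: the matchings of the finite graphs $G[B^G_1(V_k)]$ covering $V_k$ are the levels of a finitely branching infinite rooted tree, in which such a matching at level $k{+}1$ is joined to its restriction to $G[B^G_1(V_k)]$ at level $k$ --- still a matching covering $V_k$, since the partner of a vertex of $V_k$ is a neighbour of it and hence lies in $B^G_1(V_k)$ --- and an infinite branch unions to a perfect matching. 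This criterion is local: the components of $B^G_1(S)$ are exactly the $1$-neighbourhoods $B^G_1(S_i)$ of the ``clusters'' $S_i$ of $S$ (maximal subsets of $S$ lying in one component of $G[B^G_1(S)]$), a covering matching decomposes over them, and for a clustered $S$ the datum ``$G[B^G_1(S)]$ has a matching covering $S$'' is determined by the isomorphism type of $(G[B^G_2(S)],S)$ --- which pins down $B^G_1(S)$ as the $1$-neighbourhood of $S$ and which again occurs in $H$ by the transfer principle. Transferring the criterion in both directions between $G$ and $H$ finishes the proof.
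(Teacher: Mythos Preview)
Your argument is correct. The main difference from the paper's proof is one of packaging. The paper observes that $q$-colourability, perfect matchings and almost finiteness can all be encoded as \emph{vertex labelings satisfying finite local constraints}, and then runs a single compactness argument: given $G\equiv H$ and $x\in V(H)$, for each $n$ choose a labeling $\beta_n$ of $V(H)$ that satisfies the constraints on $B^H_n(x)$ (such labelings exist because the corresponding ball occurs in $G$), and extract a convergent subsequence to obtain a labeling of the whole component of $x$. Amenability, uniform amenability and Property~A (via local hyperfiniteness) are dismissed as ``straightforward''. You instead isolate a transfer principle for finite connected pieces with a collar, reformulate each property as a statement about such pieces, and for the three ``structural'' properties replace the paper's uniform compactness by explicit K\H{o}nig's-lemma/de~Bruijn--Erd\H{o}s arguments (and, for almost finiteness, compactness of $P_G(\eps,r)$). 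Your route is longer but more transparent: the local criterion you extract for perfect matchings, for instance, is not visible in the paper's one-line ``similarly''. The paper's route buys brevity and a single mechanism covering all the labeling-type properties at once.

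Two small points worth tightening. First, in your transfer principle you should take the radius one larger than stated (transfer $B^G_{D+s+1}(x_0)$ rather than $B^G_{D+s}(x_0)$), so that for every vertex of $\phi(B^G_s(F))$ all of its $H$-neighbours lie in the image of $\phi$; otherwise the F\o lner and matching computations on the $H$-side could pick up stray external edges. Second, in the uniform local amenability step you tacitly pass to finite $L$; this is harmless (the paper itself only ever invokes ULA on finite induced subgraphs, cf.\ Propositions~\ref{long1}--\ref{long2}), but it is worth saying, since your ``depends only on the abstract finite graph $G[L]$'' presupposes it.
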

\proof
Let us assume that $G$ is $q$-colorable for some $q\geq 2$ and $\alpha:V(G)\to \{1,2,\dots,q\}$ is a proper $q$-coloring. It is enough to prove that
any component of $H$ is $q$-colorable.
Let $x\in V(H)$ and for $n\geq 1$ $\beta_n:V(H)\to  \{1,2,\dots,q\}$ be labelings that are proper colorings restricted on
the ball $B^H_n(x)$. By neighborhood equivalence, such
labelings exist. Let $\beta_{n_k}\to \gamma$ be a
convergent subsequence. Then $\gamma$ is  proper $q$-coloring of the component of $H$ containing $x$. Similarly, we can prove that having perfect matching
or being almost finite is neighborhood equivalent, since
these properties can be described by colorings satisfying some local constraints.
It is straightforward to prove that amenability,
being a F\o lner graph and Property A (that is local hyperfiniteness) are neighborhood equivalent properties as well. \qed

\begin{definition} \label{ncon}
Let $B_1, B_2,\dots$ be an enumeration of the finite
rooted balls in $\grd$. We define a pseudo-metric on $\grd$ in the following
way. 
Let $\dist_{\grd}(G,H)=2^{-n}$ if
for $1\leq i \leq n-1$ $B_i\in (\hat{G}\cap \hat{H})$ or
$B_i\in (\hat{G}\cap \hat{H})^c$, and
$B_n\in \hat{G}\triangle \hat{H}$. It is easy to see that $\dist_\grd$ defines a
metric on the neighborhood equivalence
classes of $\grd$. So, a sequence
$\{G_n\}^\infty_{n=1}$ is a Cauchy-sequence in $\grd$ if for any rooted ball $B$, either $B\in \hat{G_n}$ for finitely many $n$'s or 
$B\in \hat{G_n}$ for all but finitely many $n$'s.
\end{definition}
\begin{proposition}\label{limitalmost}
The space of neighborhood equivalence is compact, or in other words, all Cauchy sequences are convergent.
\end{proposition}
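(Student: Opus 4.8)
The plan is to realize the space of neighborhood equivalence classes as a metric subspace of the Cantor space $\cant$: sending $G\in\grd$ to the indicator sequence of $\{i\,:\,B_i\in\hat G\}$ identifies $\grd/\!\equiv$ with a subset of $\cant$ on which $\dist_{\grd}$ is the usual Cantor metric, and $\cant$ is compact. Since a subspace of a compact metric space is compact iff it is complete, it suffices to prove the reformulation in the statement: every $\dist_{\grd}$-Cauchy sequence $\{G_n\}_{n=1}^\infty$ converges to (the class of) some $G\in\grd$. Fix such a sequence. By the description of Cauchy sequences in Definition \ref{ncon}, for every rooted ball $B$ exactly one of the following holds: $B\in\hat{G_n}$ for all but finitely many $n$, or $B\in\hat{G_n}$ for only finitely many $n$; in particular, $B\in\hat{G_n}$ for infinitely many $n$ already forces $B\in\hat{G_n}$ for all but finitely many $n$. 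Let $S$ be the set of balls of the first kind. If we can produce $G\in\grd$ with $\hat G=S$, then $G_n\to G$, because for each fixed $i$ the ball $B_i$ lies in $\hat{G_n}\triangle\hat G$ for at most finitely many $n$, so $\dist_{\grd}(G_n,G)\to 0$.

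The set $S$ has two closure properties. \emph{Consistency}: if a radius-$\rho$ rooted ball $B_\rho(x)$ lies in $S$ and $y$ is a vertex of it with $d(x,y)\le\rho-\tau$, then the radius-$\tau$ ball around $y$ (unambiguously determined inside $B_\rho(x)$) also lies in $S$; indeed, whenever $B^{G_n}_\rho(v)\cong B_\rho(x)$ the corresponding radius-$\tau$ ball is a ball of $G_n$, hence in $\hat{G_n}$, for all but finitely many $n$. \emph{Extendability}: every $B_\rho(x)\in S$ occurs as the radius-$\rho$ ball around the root of some ball $C\in S$ of radius $\rho+1$; there are only finitely many rooted balls of radius $\rho+1$ and degree at most $d$, so among the infinitely many $n$ with $B^{G_n}_\rho(v_n)\cong B_\rho(x)$ some isomorphism type of $B^{G_n}_{\rho+1}(v_n)$ recurs infinitely often, hence lies in $S$, and by construction it restricts to $B_\rho(x)$.

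Now, for each $B=B_r(x)\in S$ build a connected graph $H_B\in\grd$ by iterating extendability: set $B^{(0)}:=B$ and, for each $j\ge 0$, choose $B^{(j+1)}\in S$ of radius $r+j+1$ whose radius-$(r+j)$ ball around the root is $B^{(j)}$. Since the $B^{(j)}$ form a coherent increasing chain of balls around $x$, their union $H_B:=\bigcup_{j\ge 0}B^{(j)}$ is a well-defined rooted graph of degree at most $d$, with $B^{H_B}_r(x)=B$. A routine distance estimate shows that for any $y\in V(H_B)$ with $m:=d_{H_B}(x,y)$ and any radius $s$, every vertex within distance $s$ of $y$ lies within distance $\rho:=\max(r,m+s)$ of $x$, so $B^{H_B}_s(y)$ agrees with the corresponding ball inside $B^{(\rho-r)}\in S$; by consistency (used with $d(x,y)=m\le\rho-s$) this ball lies in $S$. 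Hence $\hat{H_B}\subseteq S$, while $B\in\hat{H_B}$. Taking the countable disjoint union $G:=\bigsqcup_{B\in S}H_B\in\grd$ gives $\hat G=\bigcup_{B\in S}\hat{H_B}\subseteq S$, and since each $B\in S$ already appears in $H_B$, also $S\subseteq\hat G$. Thus $\hat G=S$, and $G_n\to G$, proving the proposition.

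The only substantial point is the construction of the limit graph $G$: compactness of $\cant$ hands us the "persistent'' pattern set $S$ for free, but one must check that $S$ is genuinely realized by a single bounded-degree graph, which is exactly where the consistency and extendability lemmas enter, together with the thread-by-thread (König-type) assembly of the connected pieces $H_B$.
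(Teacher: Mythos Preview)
Your proof is correct and complete. It differs from the paper's argument mainly in how the limit graph is produced. The paper invokes the compact metric space $\rgrd$ of rooted connected graphs: for a Cauchy sequence $\{G_n\}$ it takes the set $\cA$ of all subsequential limits of rooted graphs $(G_n,x_n)$ in $\rgrd$, checks that the balls appearing in members of $\cA$ are exactly the ``persistent'' balls, and then forms $G$ as a disjoint union of a countable dense family in $\cA$. You instead work directly with the persistent ball set $S$, isolate its two combinatorial closure properties (consistency and extendability), and build each connected piece $H_B$ by a K\H{o}nig-type chain argument. Your route is more self-contained---it does not presuppose compactness of $\rgrd$, and in effect reproves the relevant part of that compactness by hand---while the paper's route is shorter once one is willing to quote that $\rgrd$ is compact. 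Both land on the same object: a disjoint union of connected graphs realizing exactly the balls that eventually occur in every $G_n$.
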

\proof
First, let $\rgrd$ be the set of all rooted,
connected graphs of vertex degree bound d up to rooted isomorphisms. Again, we can define a metric
$\dist_\rgrd$ on $\rgrd$ by setting
$$\dist_\rgrd((G,x),(H,y))=2^{-n}\,,$$
\noindent
where $n$ is the largest integer for which the
rooted n-balls around x resp. y are rooted isomorphic. It is easy to see that $\rgrd$ is compact with respect to this metric.
Now, let $\{G_n\}_{n=1}^\infty\subset\grd$ be a Cauchy sequence.
Consider the set $\cA$ of all rooted graphs $(Q,x)$ that
are limits of sequences in the form of 
$\{G_n,x_n\}^\infty_{n=1}$, where 
$x_n\in V(G_n)$.
Clearly, if $(Q,x)\in \cA$, then all the rooted balls in $Q$ are rooted balls in all but finitely many $G_n$'s. On the other hand, if $B$ is a rooted ball in all but finitely many $G_n$'s then there exists $(Q,x)\in \cA$ so that
$B$ is a rooted ball in $Q$.
Therefore, if $\{Q_n\}^\infty_{n=1}$, is a countable dense subset of $\cA$, then for the graph $G$ having components $\{Q_n\}^\infty_{n=1}$ we have that $\lim_{n\to\infty} G_n= G$. \qed
\vi
We say that a countable set of graphs $\{G_n\}^\infty_{n=1}$ possesses the graph property $\cP$
if for the graph $B$ having components $\{G_n\}^\infty_{n=1}$, $B\in \cP$. 
The following proposition's proof is
similar to the one of Proposition \ref{propconv1} and left to the reader.
\begin{proposition} \label{seq} Let $\lim_{n\to\infty} G_n= G$. Then, if the set $\{G_n\}^\infty_{n=1}$ possesses
any of the properties listed in Proposition \ref{propconv1}, except amenability, so does $G$.
\end{proposition}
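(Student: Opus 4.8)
The plan is to split the listed properties according to how ``local'' they are, leaving amenability aside. Throughout I would write $U:=\bigsqcup_{n}G_n$, so that the hypothesis ``$\{G_n\}$ possesses $\cP$'' says exactly $U\in\cP$, and --- $U$ being a single graph --- says it with uniform parameters. From the construction of $G$ in Proposition \ref{limitalmost} I would extract two facts: (a) $\hat G\subseteq\hat U$, i.e.\ every finite rooted ball occurring in $G$ occurs in $U$, in fact in cofinitely many $G_n$; and (b) every finite induced subgraph of $G$ is isomorphic to an induced subgraph of $U$ --- split it along the finitely many components $Q$ of $G$ that it meets, each piece lies in some ball $B^Q_{\varrho}(x)$, such a ball is rooted-isomorphic to a ball of $G_n$ for cofinitely many $n$, so the pieces can be placed in distinct summands of $U$.

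For \textbf{Property A} I would route through the Long Cycle Theorem (Theorem \ref{longcycle}) and instead prove that $G$ is locally hyperfinite. This is immediate: local hyperfiniteness is a property of the class of finite induced subgraphs alone (it only refers to the induced graph $G[L]$), so a value $k(\eps)$ witnessing it for $U$ witnesses it for $G$ by (b). For \textbf{uniform amenability} I would note that ``$G$ is uniformly amenable with parameters $(\eps,r)$'' is a condition on the radius-$(r+1)$ rooted balls occurring in $G$ --- whether an $r$-ball contains a subset that is $\eps$-F\o lner in the ambient graph is decided inside the $(r+1)$-ball around its centre --- so by (a) the parameters that work for $U$ work for $G$.

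The remaining three --- \textbf{$q$-colorability}, \textbf{having a perfect matching}, \textbf{almost finiteness} --- I would handle by the compactness/diagonalization argument already sketched for Proposition \ref{propconv1}. Each of them is witnessed by a vertex labeling $c\colon V\to S$ with $S$ finite, subject to constraints that only involve the radius-$\varrho$ ball of each vertex ($\varrho=1$ for coloring and matching; $\varrho=r(\eps)$ for an $\eps$-approximate tiling, with $S$ encoding the tile of a vertex). A witness on $U$ restricts to a witness on every $G_n$ with the same parameters. Given a component $Q=\lim_k(G_{n_k},x_k)$ of $G$, transport these witnesses along rooted isomorphisms $B^{G_{n_k}}_{\varrho_k}(x_k)\cong B^Q_{\varrho_k}(x)$ with $\varrho_k\to\infty$ to partial labelings of growing balls of $Q$; since a constraint at a vertex $y$ uses only its radius-$\varrho$ ball, these partial labelings satisfy every constraint at vertices lying $\varrho$ inside the transported ball, and a diagonal extraction over the countable set $V(Q)$ produces $c_Q\colon V(Q)\to S$ satisfying all constraints (each vertex is eventually deep enough). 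Assembling the $c_Q$ over the components of $G$ gives a witness on $G$ with the same parameters; since $\eps>0$ was arbitrary in the almost-finiteness case, $G$ is almost finite, and the same reasoning shows $G$ is $q$-colorable and has a perfect matching.

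Finally I would record that \textbf{amenability} must be omitted. Take connected $3$-regular graphs $G_n$ with girth tending to infinity (these exist). Then $(G_n)$ is Cauchy in the neighborhood metric --- a fixed ball is either tree-like, hence in every $G_n$ of large enough girth, or contains a cycle, hence in only finitely many $G_n$ --- and it converges to the $3$-regular tree, which is not amenable; yet $U=\bigsqcup_nG_n$ is amenable, since its finite components $G_n$ form a F\o lner sequence in it. I expect the genuinely delicate point to be the compactness step of the third paragraph: one must check both that the transported partial labelings really do respect the constraints in their interiors, and that the extraction is carried out component by component with one fixed choice of parameters, so that the pieced-together labeling is a bona fide witness on $G$. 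It is to dodge the analogue of this step for Property A --- where the natural ``alphabet'' $\Prob(G)$ is infinite and would need discretizing --- that I would route Property A through local hyperfiniteness.
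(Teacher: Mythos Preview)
Your proposal is correct and follows essentially the same approach the paper indicates: the paper leaves this proof to the reader, pointing to the argument for Proposition~\ref{propconv1}, and you have faithfully carried that out --- the compactness/diagonalization for $q$-colorability, perfect matchings, and almost finiteness, the ball-locality for uniform amenability, and the routing of Property~A through local hyperfiniteness (matching the paper's own parenthetical ``that is local hyperfiniteness''). Your counterexample for amenability is also the standard one.
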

\noindent
By definition, all finite graphs are amenable, and limits of finite graphs can easily be non-amenable, e.g the 3-regular tree is non-amenable and it is the limit of large girth 3-regular graphs. However, we can define the amenability of a countable set of graphs in the following way.
\begin{definition}
The countable set of graphs $\{G_n\}^\infty_{n=1}$  is amenable if for any $\eps>0$ there exists $r\geq 1$ such that for any $n\geq 1$, the graph $G_n$ contains an $\eps$-Folner set of diameter at most $r$.
\end{definition}
\noindent
By the Long Cycle Theorem, any countable set of finite graphs having Property A is amenable. Obviously, this statement does not hold for infinite graphs.

\section{Hausdorff limits of graph spectra} \label{sec9}
\noindent
Let $G\in\grd$ be a finite or infinite graph and
$\cal{L}_G:l^2(V(G))\to l^2(V(G))$
\noindent
be the Laplacian operator on $G$ as in the Introduction.
\begin{proposition}
If $G$ and $H$ are neighborhood equivalent, then \\
$\Spec(\cl_G)=\Spec(\cl_H)$.
\end{proposition}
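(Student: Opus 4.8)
The plan is to characterize, for each $\lambda\in\R$, the number $\dist(\lambda,\Spec(\cl_G))$ by a quantity that is visibly ``local'' on $G$, and then to read off from $G\equiv H$ that these quantities agree for $G$ and $H$; since a closed subset of $\R$ is determined by the function $\lambda\mapsto\dist(\lambda,\cdot)$, this forces $\Spec(\cl_G)=\Spec(\cl_H)$. The starting point is the elementary fact that for a bounded self-adjoint operator $A$ one has $\dist(\lambda,\Spec(A))=\inf\{\|(A-\lambda)\xi\|:\|\xi\|=1\}$ (via the spectral theorem), and, by density of finitely supported vectors and continuity of $\xi\mapsto\|(A-\lambda)\xi\|$ on the unit sphere, the infimum may be restricted to finitely supported $\xi$. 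I would then use that $\cl_G$ carries no edge between distinct connected components to confine the test vectors to a single ball: decomposing a finitely supported $\xi$ along components as $\xi=\sum_i\xi_i$ one gets $\|(\cl_G-\lambda)\xi\|^2=\sum_i\|(\cl_G-\lambda)\xi_i\|^2$ and $\|\xi\|^2=\sum_i\|\xi_i\|^2$, so the ratio is minimized by some $\xi_i$, which, being finitely supported inside a connected graph, is supported in a single ball $B_r^G(x)$. Hence
$$\dist(\lambda,\Spec(\cl_G))^2=\inf\Big\{\tfrac{\|(\cl_G-\lambda)\xi\|^2}{\|\xi\|^2}\ :\ \xi\neq 0,\ \Supp(\xi)\subset B_r^G(x)\text{ for some }x\in V(G),\ r\ge 0\Big\}.$$

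The second step is transplantation. If $\Supp(\xi)\subset B_r^G(x)$, then $(\cl_G-\lambda)\xi$ is supported in $B_{r+1}^G(x)$, and inspecting $(\cl_G\xi)(z)=\deg(z)\xi(z)-\sum_{z\sim y}\xi(y)$ shows that both $\|(\cl_G-\lambda)\xi\|^2$ and $\|\xi\|^2$ depend only on the rooted isomorphism type of $(B_{r+1}^G(x),x)$ together with the values of $\xi$ on $B_r^G(x)$: the only mild point is that $\deg(z)$ of a vertex $z$ at distance $\le r$ from $x$ is already determined inside $B_{r+1}^G(x)$, while a vertex $z$ at distance exactly $r+1$ has $\xi(z)=0$, so its degree is irrelevant. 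Since $G\equiv H$ means $\hat G=\hat H$, every rooted ball $(B_{r+1}^H(w),w)$ occurs as some $(B_{r+1}^G(v),v)$ and conversely; transporting $\xi$ along such a rooted isomorphism produces a nonzero test vector on the other graph with the same norm and the same value of $\|(\cl-\lambda)\xi\|$. Plugging this into the displayed infimum yields $\dist(\lambda,\Spec(\cl_G))\le\dist(\lambda,\Spec(\cl_H))$, and by symmetry of $\equiv$ the reverse inequality; so the two distance functions coincide and therefore $\Spec(\cl_G)=\Spec(\cl_H)$.

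The step I expect to require the most care is the reduction to test vectors supported in a single ball: a near-optimal approximate eigenvector for $\lambda$ (for instance a F\o lner-type function at $\lambda=0$) can have huge and possibly disconnected support, and one genuinely needs the block-diagonal structure of $\cl_G$ over connected components to trap it in one ball; after that, the only bookkeeping is the boundary-degree remark above. An alternative route would be to prove $\|P(\cl_G)\|=\|P(\cl_H)\|$ for every real polynomial $P$ by the same locality argument and then invoke $\|P(A)\|=\sup_{t\in\Spec(A)}|P(t)|$ together with Weierstrass approximation; I would still prefer the approximate-eigenvector formulation, since there the locality of the relevant quantity $\|(\cl_G-\lambda)\xi\|^2/\|\xi\|^2$ is entirely transparent.
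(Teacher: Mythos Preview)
Your proof is correct. The core mechanism---locality of $\cl_G$ plus transplantation of a test vector along a rooted-ball isomorphism furnished by $G\equiv H$---is exactly what the paper uses. The difference is only in the packaging: the paper first establishes $\|P(\cl_G)\|=\|P(\cl_H)\|$ for every real polynomial $P$ (by the same ball-support and transplantation argument you give), then upgrades to all continuous $\phi$ by functional calculus, and finally reads off the spectrum from the norms of bump functions $\phi_n^\lambda$. You instead go straight to the approximate-eigenvector identity $\dist(\lambda,\Spec(A))=\inf_{\|\xi\|=1}\|(A-\lambda)\xi\|$ and show this infimum is the same for $G$ and $H$. Your route is a bit more direct and avoids the Weierstrass/functional-calculus detour; the paper's route yields the intermediate statement $\|P(\cl_G)\|=\|P(\cl_H)\|$, which it reuses later in the spectral-convergence section (Lemma~\ref{spec1} feeds into Proposition~\ref{pro32}). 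Your boundary-degree remark (that $\deg(z)$ for $z\in B_r^G(x)$ is determined inside $B_{r+1}^G(x)$, while vertices at distance $r{+}1$ carry $\xi=0$) is the one bookkeeping point the paper leaves implicit, and your component-reduction step is likewise glossed over there; both are handled correctly in your write-up.
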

\proof
First, we need a lemma.
\begin{lemma} \label{spec1}
Let $P$ be a real polynomial, then
$\|P(\cl_G)\|=\|P(\cl_H)\|$.
\end{lemma}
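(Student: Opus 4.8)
The plan is to prove the lemma first and then deduce the proposition immediately, since for any bounded self-adjoint operator $T$ on a Hilbert space one has $\|P(T)\| = \sup\{|P(t)| : t \in \Spec(T)\}$ for a real polynomial $P$ (by the spectral mapping theorem together with the $C^*$-identity), and the spectrum of $\cl_G$ is a compact subset of $[0,2d]$. So once we know $\|P(\cl_G)\| = \|P(\cl_H)\|$ for every real polynomial $P$, a standard argument via Weierstrass approximation shows $\Spec(\cl_G) = \Spec(\cl_H)$: if some $t_0$ were in $\Spec(\cl_G)\setminus\Spec(\cl_H)$, pick a polynomial $P$ that is large near $t_0$ and small on $\Spec(\cl_H)$ (possible since $\Spec(\cl_H)$ is compact and omits a neighborhood of $t_0$), contradicting the norm equality.

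For the lemma itself, the key observation is that $\cl_G$ is a \emph{local} operator: for $f \in \ell^2(V(G))$, the value $(\cl_G f)(x)$ depends only on the values of $f$ on $B^G_1(x)$. Consequently, for a polynomial $P$ of degree $k$, the operator $P(\cl_G)$ has "propagation at most $k$": $(P(\cl_G)f)(x)$ depends only on $f|_{B^G_k(x)}$, and moreover the matrix coefficient $\langle P(\cl_G)\delta_y, \delta_x\rangle$ depends only on the rooted isomorphism type of the ball $B^G_{k}$ around the pair (equivalently, is determined by the isomorphism type of $B^G_{2k}(x)$, which controls all walks of length $\le k$ between $x$ and $y$). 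Since $G \equiv H$, every rooted ball type occurring in $G$ also occurs in $H$ and vice versa, so the two operators $P(\cl_G)$ and $P(\cl_H)$ are "locally identical."

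To turn this local identity into the norm equality, I would argue as follows. Fix $\eta > 0$ and pick a unit vector $f \in \ell^2(V(G))$, finitely supported (these are dense), with $\|P(\cl_G)f\| > \|P(\cl_G)\| - \eta$. Let $S = \Supp(f)$ and let $S^+ = B^G_k(S)$ be its $k$-neighborhood, so that $P(\cl_G)f$ is supported in $S^+$. Since $G \equiv H$, and $S^+$ is finite, one can find a subset $S' \subset V(H)$ together with a graph isomorphism between the induced balls—more carefully, one covers $S$ by finitely many radius-$2k$ balls, each of which embeds isomorphically into $H$; a compactness/pigeonhole argument (enumerating $V(H)$ and extracting, exactly as in the proofs of Propositions \ref{propconv1} and \ref{limitalmost}) lets us find a single finite induced subgraph of $H$ that is isomorphic to the induced subgraph of $G$ on $B^G_{2k}(S)$. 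Transporting $f$ across this isomorphism yields $f' \in \ell^2(V(H))$ with $\|f'\| = 1$ and, because $\cl$ is local and $P(\cl)$ has propagation $\le k$, $\|P(\cl_H)f'\| = \|P(\cl_G)f\| > \|P(\cl_G)\| - \eta$. Hence $\|P(\cl_H)\| \ge \|P(\cl_G)\| - \eta$ for all $\eta$, so $\|P(\cl_H)\| \ge \|P(\cl_G)\|$; by symmetry the reverse inequality holds, giving the lemma.

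The main obstacle is the bookkeeping in the middle step: making precise that a finitely supported vector's behavior under $P(\cl_G)$ is captured by a \emph{finite} induced subgraph (not just a disjoint union of balls), so that it can be transplanted into $H$ via an honest graph isomorphism. The subtlety is that $S$ may meet several $2k$-balls whose union is not itself a ball, and neighborhood equivalence only directly gives matching \emph{balls}. The remedy is the standard diagonal extraction: enumerate $V(H)$, and for each candidate root sequence realizing the relevant ball types, pass to a convergent subsequence in $\rgrd$; since $\hat{G} = \hat{H}$, the limit graph contains an isomorphic copy of the finite subgraph $G[B^G_{2k}(S)]$. Everything else—locality of $\cl_G$, the propagation bound for $P(\cl_G)$, and the spectral-mapping/Weierstrass endgame—is routine.
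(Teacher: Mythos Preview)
Your overall strategy is exactly the paper's: approximate the norm by a finitely supported unit vector, use the locality of $\cl_G$ to see that $P(\cl_G)f$ lives on a bounded neighborhood of the support, transplant via a rooted isomorphism coming from $\hat{G}=\hat{H}$, and conclude by symmetry. The endgame you sketch (Weierstrass/functional calculus to pass from the polynomial norm equality to spectrum equality) is also precisely what the paper does.

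Where you diverge is in allowing $f$ to have an arbitrary finite support $S$ and then trying to transplant the induced subgraph on $B^G_{2k}(S)$ into $H$ via a ``diagonal extraction.'' This is the complication you yourself flag, and it is self-inflicted. The paper simply takes $f$ supported on a \emph{single ball} $B^G_s(x)$. That is legitimate because $\cl_G$ (hence $P(\cl_G)$) decomposes as a direct sum over connected components, so the operator norm is attained in the limit over functions supported on balls. With $f$ supported on $B^G_s(x)$, the output $P(\cl_G)f$ is supported on $B^G_{s+t}(x)$ (where $t=\deg P$), and neighborhood equivalence hands you a rooted isomorphism $j$ from $B^G_{s+t}(x)$ to some $B^H_{s+t}(y)$ directly---no patching of overlapping balls, no compactness argument.

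Your diagonal-extraction sketch, as written, is not a proof: passing to a convergent subsequence in $\rgrd$ yields a limit rooted graph, not an embedding into $H$, and neighborhood equivalence by itself does not guarantee that an arbitrary finite induced subgraph of $G$ (spanning, say, several components) sits inside $H$. It \emph{does} guarantee this when the finite subgraph is contained in a ball, which is exactly the reduction the paper uses. So the fix is not to make the extraction rigorous, but to drop it: restrict to ball-supported $f$ from the start, and the ``bookkeeping obstacle'' evaporates.
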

\proof Fix some $\eps>0$.
Let $f\in l^2(V(G))$ such that $\|f\|=1$ and
$\|P(\cl_G)(f)\|\geq (1-\eps) \|P(\cl_G)\|$. We can assume that $f$ is supported
on a ball $B^G_s(x)$ for some $s>0$ and $x\in V(G)$. Let $t$ be the degree of $P$.
Then, $P(\cl_G)(f)$ is supported in the ball $B^G_{s+t}(x)$.
Since $G$ and $H$ are equivalent, there exists $y\in V(H)$ such that the ball
$B^G_{s+t}(x)$ is rooted-isomorphic to the ball $B^H_{s+t}(y)$ under some rooted-isomorphism $j$.
Then, $\|j_*(f)\|=1$ and $\|P(\cl_G)(f)\|= \|P(\cl_H)(j_*(f))\|$, where
$j_*(f)(z)=f(j^{-1}(z)),$ for $z\in B_s^G(x)$.
Therefore, $\|P(\cl_H)\|\geq (1-\eps)\|P(\cl_G)\|$ holds for any $\eps>0$. Consequently,
 $\|P(\cl_H)\|\geq \|P(\cl_G)\|$. Similarly,  $\|P(\cl_G)\|\geq \|P(\cl_H)\|$, thus our lemma follows. \qed
\vskip 0.1in
\noindent
By Functional Calculus, we have that
\begin{equation} \label{eqspec}
\|\phi(\cl_G)\|=\|\phi(\cl_H)\|
\end{equation}
\noindent
holds for any real continuous function $\phi$.
Observe that $\lambda\in\Spec(\cl_G)$ if and only if
for any $n\geq 1$ $\|\phi_n^\lambda(\cl_G)\|\neq 0$, where $\phi^\lambda_n$ is a piecewise linear, continuous, non-negative
function such that
\begin{itemize}
\item $\phi^\lambda_n(x)=1$ if $\lambda-\frac{1}{n}\leq x \leq \lambda + \frac{1}{n}\,,$
\item $\phi_n^\lambda(x)=0$ if $x\geq \lambda +\frac{2}{n}$ or $x\leq \lambda-\frac{2}{n}\,,$
\item and defined linearly otherwise.
\end{itemize}
\noindent
Therefore, by \eqref{eqspec} our proposition follows. \qed
\vi
The main goal of this section is to prove Theorem \ref{spectralconv}. 
\proof
The following lemma shows how to test whether
a certain value $\lambda$ is near to the
spectrum of the Laplacian.  
\begin{lemma} \label{test}
Fix $\eps>0$.
Let $\file$ be the following positive
continuous function on the real line.
\begin{itemize}
\item $\file(x)=0$ if $x\leq\lambda-\eps$ or
$x\geq \lambda+\eps\,.$
\item $\file(x)=1$ if $\lambda-\frac{\eps}{2}\leq x \leq \lambda +\frac{\eps}{2}\,.$
\item $\file$ is linear on the intervals
$[\lambda-\eps,\lambda-\frac{\eps}{2}]$ and $[\lambda+\frac{\eps}{2}, \lambda+\eps]$.
\end{itemize}
 Let $\pile$ be a real polynomial such that
$\sup_{x\in [0,2d]} |\file(x)- \pile(x)|\leq \eps$. \\ If $\|\pile(\cL_H)\|>\eps$ for
some $H\in\grd$, then there exists $\kappa\in \Spec(\cL_H)$ such that $|\kappa-\lambda|<\eps$.
\end{lemma}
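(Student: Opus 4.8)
The plan is to argue by contradiction using the continuous functional calculus for the bounded self-adjoint operator $\cL_H$. Suppose the conclusion fails, so that $\Spec(\cL_H)$ is disjoint from the open interval $(\lambda-\eps,\lambda+\eps)$. Recall from the Introduction that $\cL_H$ is bounded, positive and self-adjoint with $\Spec(\cL_H)\subset[0,2d]$, so the continuous functional calculus applies and for every real continuous $g$ on $[0,2d]$ one has $\|g(\cL_H)\|=\sup_{x\in\Spec(\cL_H)}|g(x)|$.

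First I would observe that $\file$ vanishes identically on the closed set $(-\infty,\lambda-\eps]\cup[\lambda+\eps,\infty)$ (including the endpoints $\lambda\pm\eps$), which by our assumption contains $\Spec(\cL_H)$. Hence $\sup_{x\in\Spec(\cL_H)}|\file(x)|=0$, i.e.\ $\file(\cL_H)=0$ as an operator. Then I would estimate directly:
\[
\|\pile(\cL_H)\|=\|\pile(\cL_H)-\file(\cL_H)\|=\sup_{x\in\Spec(\cL_H)}|\pile(x)-\file(x)|\le\sup_{x\in[0,2d]}|\pile(x)-\file(x)|\le\eps,
\]
using $\Spec(\cL_H)\subset[0,2d]$ and the defining approximation property of $\pile$. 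This contradicts the hypothesis $\|\pile(\cL_H)\|>\eps$, so some $\kappa\in\Spec(\cL_H)$ must satisfy $|\kappa-\lambda|<\eps$.

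There is essentially no serious obstacle here; the only points requiring (minimal) care are that the spectrum is confined to $[0,2d]$ --- which is exactly the interval on which $\pile$ is guaranteed to approximate $\file$ uniformly --- and that $\file$ is genuinely zero (not merely small) off $(\lambda-\eps,\lambda+\eps)$, so that $\file(\cL_H)$ vanishes identically whenever $\lambda$ is $\eps$-far from the spectrum. Both are immediate from the definitions, so the lemma is really just the elementary ``easy direction'': a polynomial that is operator-norm-large on $\cL_H$ forces the spectrum to enter the region where the model function $\file$ is supported.
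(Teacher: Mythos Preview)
Your proof is correct and is essentially the contrapositive of the paper's argument: the paper uses functional calculus to get $\|\file(\cL_H)\|\geq\|\pile(\cL_H)\|-\eps>0$ and then observes that this forces the spectrum into the support of $\file$, whereas you assume the spectrum misses $(\lambda-\eps,\lambda+\eps)$ and deduce $\|\pile(\cL_H)\|\leq\eps$. The content is identical, and your version is arguably a touch more explicit about why the \emph{open} interval (rather than the closed one) is hit.
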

\proof
By Functional Calculus, we have that
$$\|\file(\cL_H)\|\geq \|\pile(\cL_H)\|-\eps\,.$$ \noindent Therefore, $\|\file(\cL_H)\|>0$. Again by Functional Calculus, we can conclude that then there exists $\kappa\in \Spec(H)$ such that $|\kappa-\lambda|<\eps$. \qed
\vi
\begin{proposition}\label{pro32}
Let $\lim_{n\to\infty} G_n= G$ for
some convergent sequence \\ $\{G_n\}^\infty_{n=1}\subset \grd$. Suppose
that $\lambda\in \Spec(\cL_{G})\,.$
Then, for any $\eps>0$ there exists
$N_\eps>1$ such that if $n\geq N_\eps$
there exists $\lambda_n\in\Spec(\cL_{G_n})$ so
that $|\lambda_n-\lambda|\leq \eps$.
\end{proposition}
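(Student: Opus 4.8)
The plan is to show that for all large $n$ the operator $\pile(\cL_{G_n})$ has norm exceeding $\eps$, where $\pile$ is the approximating polynomial from Lemma~\ref{test}; by that lemma this already forces a point of $\Spec(\cL_{G_n})$ to lie within $\eps$ of $\lambda$. The norm bound is obtained by transplanting a suitable finitely supported vector onto $G_n$ along an isomorphism of rooted balls, exactly as in the proof of Lemma~\ref{spec1}. Only neighborhood convergence is used; this is the ``soft'' half of Theorem~\ref{spectralconv}.

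First I would reduce to $0<\eps<\tfrac12$: if the assertion holds for $\eps=\tfrac14$ with threshold $N$, the same $N$ works for every $\eps\ge\tfrac12$, since $|\lambda_n-\lambda|\le\tfrac14<\eps$. So assume $0<\eps<\tfrac12$ and let $\file,\pile$ be as in Lemma~\ref{test}, so $\pile$ is a real polynomial with $\sup_{x\in[0,2d]}|\file(x)-\pile(x)|\le\eps$ (recall $\Spec(\cL_G)\subset[0,2d]$). Since $0\le\file\le1$ and $\file(\lambda)=1$ with $\lambda\in\Spec(\cL_G)$, Functional Calculus gives $\|\file(\cL_G)\|=\sup_{t\in\Spec(\cL_G)}|\file(t)|=1$, hence $\|\pile(\cL_G)\|\ge 1-\eps>\eps$. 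As the finitely supported functions are dense in $l^2(V(G))$ and $\pile(\cL_G)$ is bounded, pick a finitely supported $f$ with $\|f\|=1$ and $\|\pile(\cL_G)f\|>\eps$, fix $x\in V(G)$ and an integer $s\ge1$ with $\Supp(f)\subset B^G_s(x)$, and set $t=\deg\pile$.

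Since $\cL_G$ enlarges supports by at most one vertex, $\pile(\cL_G)f$ is supported inside $B^G_{s+t}(x)$, and the entire computation of $\pile(\cL_G)f$ refers only to the rooted ball $B^G_{s+t}(x)$ with its induced adjacencies and degrees --- this is the same locality observation as in the proof of Lemma~\ref{spec1}. Because $B^G_{s+t}(x)$ is a rooted ball of $G$, neighborhood convergence $\dist_{\grd}(G_n,G)\to0$ (Definition~\ref{ncon}) yields an $N_\eps$ such that for every $n\ge N_\eps$ there exist $y_n\in V(G_n)$ and a rooted isomorphism $\psi_n\colon B^G_{s+t}(x)\to B^{G_n}_{s+t}(y_n)$. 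Setting $f_n:=(\psi_n)_*f$, extended by zero, we get $\|f_n\|=1$ and, by that same locality argument, $\pile(\cL_{G_n})f_n=(\psi_n)_*\bigl(\pile(\cL_G)f\bigr)$, so $\|\pile(\cL_{G_n})f_n\|=\|\pile(\cL_G)f\|>\eps$. Hence $\|\pile(\cL_{G_n})\|>\eps$ for every $n\ge N_\eps$, and Lemma~\ref{test} produces $\lambda_n\in\Spec(\cL_{G_n})$ with $|\lambda_n-\lambda|<\eps$.

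I expect no serious obstacle here. The only points needing care are the support bookkeeping --- transplanting over the ball of radius $s+\deg\pile$ rather than $s$, so that the computation of $\pile(\cL_{G_n})f_n$ sees only the region where $G_n$ agrees with $G$ --- and the remark that $\file,\pile,f,s,t$ are all chosen before neighborhood convergence is invoked, so $N_\eps$ depends only on $\eps$ (and on $G$ and the sequence), as required. The genuinely hard inclusion --- that eigenvalues of the $G_n$ cannot drift far away from $\Spec(\cL_G)$ --- is the one where Property A enters, and it is treated separately.
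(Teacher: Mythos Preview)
Your proof is correct and follows essentially the same route as the paper's: deduce $\|\pile(\cL_G)\|>\eps$ from $\|\file(\cL_G)\|=1$ via Functional Calculus, pick a finitely supported near-maximizer $f$ in a ball $B^G_s(x)$, transplant it along a rooted-ball isomorphism furnished by neighborhood convergence, and invoke Lemma~\ref{test}. Your reduction to $0<\eps<\tfrac12$ and your explicit passage from $\file$ to $\pile$ before choosing $f$ are in fact slightly tidier than the paper's write-up, but the argument is the same.
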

\proof
Let $\file$ and $\pile$ be as in Lemma \ref{test}.
By Functional Calculus, \\ $\|\file(\cL_G)\|=1$, so there exists
a function $f\in l^2(G)$, $\|f\|=1$ supported on
some ball $B^G_s(x)$ such that
\begin{equation}
\label{eq3}
\|\file(\cL_G)(f)\|>\eps.
\end{equation}
\noindent Let $m$ be the degree of $\pile$. Then $\pile(f)$
is supported on $B^G_{s+m}(x)$.
As in the proof of Lemma \ref{spec1}, we can see that if $\dist_\grd(G,H)$ is small
enough, then we have some $g\in l^2(H)$, $\|g\|=1$ supported on $B_s^H(y)$ such that
$$\|\pile(\cL_H)(g)\|=
\|\pile(\cL_G)(f)\|>\eps\,.$$
\noindent
Therefore $\|\pile(\cL_H)\|>\eps$, so our proposition follows from Lemma \ref{test}.
\qed
\vi
\begin{proposition} \label{prop33}
Let $\{G_n\}^\infty_{n=1}\subset \grd$ be a countable set
 of graphs of Property A converging to $G\in\grd$.
Suppose that for $0<\eps<\frac{1}{4}$ and $\lambda\geq 0$ there exists $N_\eps>0$ so that
if $n\geq N_\eps$, then
$$\Spec (\cL_{G_n})\cap (\lambda-\frac{\eps}{2}, \lambda+\frac{\eps}{2})\neq \emptyset\,.$$
\noindent
Then, $\Spec(\cL_G)\cap (\lambda-\eps,\lambda+\eps)\neq \emptyset$.
\end{proposition}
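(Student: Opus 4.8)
\proof
The plan is to invoke Lemma~\ref{test} with $H=G$: once $\|\pile(\cL_G)\|>\eps$ is established, the proposition follows, where $\file$ and $\pile$ are taken exactly as in that lemma. Write $\pile(x)=\sum_jc_jx^j$, set $m:=\deg\pile$ and $C:=\sum_j|c_j|(2d)^j$, and record the elementary pointwise estimate
$$|(\pile(\cL_H)f)(v)|\le C\,\|f|_{B^H_m(v)}\|\qquad(H\in\grd,\ v\in V(H)),$$
which comes from $(\cL^j_Hf)(v)=\langle\cL^j_H\delta_v,f\rangle$ together with $\Supp(\cL^j_H\delta_v)\subseteq B^H_j(v)$ and $\|\cL^j_H\delta_v\|\le(2d)^j$. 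Throughout, $R_\rho$ denotes the largest possible size of a $\rho$-ball in a graph of degree bound $d$.

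For $n\ge N_\eps$ the hypothesis provides a point of $\Spec(\cL_{G_n})$ in the interval on which $\file\equiv1$, so $\|\file(\cL_{G_n})\|=1$ and hence $\|\pile(\cL_{G_n})\|\ge1-\eps$; by density of finitely supported vectors we may fix a finitely supported $f_n\in l^2(V(G_n))$ with $\|f_n\|=1$ and $\|\pile(\cL_{G_n})f_n\|>1-2\eps$. Now I localize $f_n$. Since $\{G_n\}$ has Property~A, so does the disjoint union $\bigsqcup_kG_k$, hence it is weighted hyperfinite by Theorem~\ref{longcycle}; thus for every $\theta>0$ there is a \emph{single} $k=k(\theta)$, the same for all $n$, such that applying weighted hyperfiniteness to the weight $w(v):=\|f_n|_{B^{G_n}_{2m+2}(v)}\|^2$ on $G_n$ produces $S_n\subseteq V(G_n)$ with $w(S_n)\le\theta\,w(V(G_n))\le\theta R_{2m+2}$ and all components of $G_n\setminus S_n$ of size at most $k$. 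On the collar $Z_n:=\{v:d_{G_n}(v,S_n)\le m+1\}$ I estimate: for $v\in Z_n$ pick $u\in S_n$ with $d_{G_n}(v,u)\le m+1$, so $B^{G_n}_m(v)\subseteq B^{G_n}_{2m+2}(u)$ and $|(\pile(\cL_{G_n})f_n)(v)|^2\le C^2w(u)$; choosing $v\mapsto u$ with fibres of size $\le R_{m+1}$ and summing gives $\sum_{v\in Z_n}|(\pile(\cL_{G_n})f_n)(v)|^2\le C^2R_{m+1}w(S_n)\le C^2R_{m+1}R_{2m+2}\,\theta$. Fixing $\theta$ so small that this is $<\tfrac14(1-2\eps)^2$, we obtain $A:=\sum_{v\notin Z_n}|(\pile(\cL_{G_n})f_n)(v)|^2>\tfrac34(1-2\eps)^2>0$.

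Split $A=\sum_T\sum_{v\in\hat T}|(\pile(\cL_{G_n})f_n)(v)|^2$ over the components $T$ of $G_n\setminus S_n$, with $\hat T:=\{v\in T:d_{G_n}(v,S_n)>m+1\}$. For $v\in\hat T$ every vertex entering the computation of $\cL^j_{G_n}\delta_v$ with $j\le m$ lies in $T$ and has no neighbour in $S_n$, so $\cL^j_{G_n}\delta_v=\cL^j_T\delta_v$ (Laplacian of the standalone graph $T$) and hence $(\pile(\cL_{G_n})f_n)(v)=(\pile(\cL_T)(f_n|_T))(v)$. As $\sum_T\|f_n|_T\|^2\le1$, a pigeonhole argument yields a component $T^{*}$ with $\sum_{v\in\hat T^{*}}|(\pile(\cL_{T^{*}})(f_n|_{T^{*}}))(v)|^2\ge A\,\|f_n|_{T^{*}}\|^2$. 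Finally I transplant $T^{*}$ into $G$: since $|V(T^{*})|\le k$ and $T^{*}$ is connected it sits inside a ball $B^{G_n}_k(v^{*})$, and as there are only finitely many rooted $(2k+m)$-ball types in graphs of degree bound $d$, the convergence $G_n\to G$ forces, for $n$ beyond a threshold depending only on $k$ and $m$, the ball $B^{G_n}_{2k+m}(v^{*})$ to be rooted-isomorphic to a ball of $G$ via some $\iota$. Put $\phi:=\iota_{*}(f_n|_{T^{*}})/\|f_n|_{T^{*}}\|\in l^2(V(G))$, a unit vector. Because $\iota$ carries the $m$-balls about the vertices of $\hat T^{*}$, together with their vertex degrees, isometrically into $G$, we get $(\pile(\cL_G)\phi)(\iota(v))=(\pile(\cL_{T^{*}})(f_n|_{T^{*}}))(v)/\|f_n|_{T^{*}}\|$ for all $v\in\hat T^{*}$, whence $\|\pile(\cL_G)\|^2\ge\|\pile(\cL_G)\phi\|^2\ge A>\tfrac34(1-2\eps)^2$. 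Therefore $\|\pile(\cL_G)\|>\tfrac{\sqrt3}{2}(1-2\eps)>\tfrac14>\eps$ because $0<\eps<\tfrac14$, and Lemma~\ref{test} gives $\Spec(\cL_G)\cap(\lambda-\eps,\lambda+\eps)\neq\emptyset$.

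The part I expect to require the most care is the localization step: the weight $w$ must be defined with the radius $2m+2$ so that the portion of $\pile(\cL_{G_n})f_n$ supported near $S_n$ is genuinely negligible, and one must keep $\pile$ (hence $m$ and $C$) fixed \emph{before} choosing $\theta$ and $k=k(\theta)$, so that the threshold on $n$ needed for the rooted-ball embedding depends only on $\eps$ and $\lambda$. The uniformity of $k$ over $n$, without which this ordering would be circular, is exactly what Property~A of the whole family $\{G_n\}$ supplies through Theorem~\ref{longcycle}. \qed
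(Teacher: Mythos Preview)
Your proof is correct, but it follows a genuinely different route from the paper's. Both arguments exploit Property~A of the family via the Long Cycle Theorem, but they do so through different equivalents and with different architectures.

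The paper works with \emph{strong hyperfiniteness}: it takes a probability measure $\mu$ on $k$-separators of the disjoint union $\tilde G=\bigsqcup_nG_n\sqcup G$, uses an averaging argument (Lemma~\ref{deltabecsles}) to show that for most separators $Y$ the truncated vector $f_Y$ loses little mass, and then introduces auxiliary ``norms'' $\|\cdot\|_\diamondsuit$ and $\|\cdot\|_\square$ computed over functions supported on single components away from the separator. The key step is that $\|\pile(\cL_{G_n})\|_\diamondsuit=\|\pile(\cL_G)\|_\diamondsuit$ for large $n$, because these quantities depend only on the finite list of connected induced subgraphs of bounded size, which eventually coincide under neighbourhood convergence. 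The actual norm is then recovered from $\|\cdot\|_\diamondsuit$ up to a $\sqrt[4]{\delta}$ error.

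You instead use \emph{weighted hyperfiniteness} directly, with a weight $w(v)=\|f_n|_{B_{2m+2}(v)}\|^2$ tailored to the approximate eigenvector $f_n$. This gives a \emph{single deterministic} separator $S_n$ with small $w$-mass, from which you extract one good component $T^*$ by pigeonhole and transplant it into $G$ as an explicit test vector $\phi$. Your approach avoids the probability measure on separators, the averaging lemma, and the auxiliary norms; the price is the somewhat delicate bookkeeping around the collar $Z_n$ and the radius $2k+m$ needed so that both the Laplacian computation and the vertex degrees survive the transplant. The paper's argument is cleaner at the endgame (a one-line norm comparison), while yours is more constructive and uses a weaker hyperfiniteness notion. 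Both yield $\|\pile(\cL_G)\|>\eps$ and finish via Lemma~\ref{test}.
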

\proof First, fix $\eps>0$. Denote by $l$ the degree of the polynomial $\pile$. By Proposition \ref{seq}, the
graph $\tilde{G}\in\grd$ whose components
consist of $\{G_n\}^\infty_{n=1}$ and $G$ is of Property A. Therefore by the Long Cycle Theorem, there exists an integer $m$
 and a probability measure $\mu$ on $\Sep(\tilde{G},m)$ satisfying the following condition:
For all $x\in V(\tilde{G})$,
\begin{equation} \label{condi}
\mu(\{Y\in \Sep(\tilde{G},m)\mid 
x\in B^{\tilde{G}}_l(Y)\})<\delta\,,
\end{equation}
\noindent
 $1-\eps-6\sqrt[4]{\delta}>\eps$. 

\noindent
This condition can be fulfilled by the argument of the beginning of Proposition \ref{short1}.

\noindent
For $f\in l^2(\tilde{G}), \|f\|^2=1$
and $Y\in \Sep(\tilde{G},m)$ we define $f_Y$ by setting
\begin{itemize}
\item
$f_Y(x)=f(x)$ if $x\notin B^{\tilde{G}}_l(Y)$.
\item
$ f_Y=0$ otherwise.
\end{itemize}
\begin{lemma} \label{deltabecsles}
$$\mu(\{Y\in \Sep(\tilde{G},m)\mid 
 \|f_Y\|^2<1-\sqrt{\delta}\})<\delta\,.$$ \end{lemma}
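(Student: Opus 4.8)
The plan is to prove Lemma~\ref{deltabecsles} by a first moment (Markov inequality) estimate for the ``defect'' functional
\[
g(Y)\;:=\;\|f\|^2-\|f_Y\|^2\;=\;\sum_{x\in B^{\tilde G}_l(Y)}|f(x)|^2
\]
on the probability space $(\Sep(\tilde G,m),\mu)$, where the second equality is immediate from the definition of $f_Y$ (it equals $f$ off $B^{\tilde G}_l(Y)$ and vanishes on $B^{\tilde G}_l(Y)$); note $g\ge 0$. A preliminary point is measurability: for a fixed vertex $x$ the set $\{Y:x\in B^{\tilde G}_l(Y)\}$ equals $\{Y:Y\cap B^{\tilde G}_l(x)\ne\emptyset\}$, which is a clopen subset of $\Sep(\tilde G,m)$ because $B^{\tilde G}_l(x)$ is finite (bounded degree) and each $\{Y:y\in Y\}$ is clopen in the compact subset topology. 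Hence $g$ is an increasing pointwise limit of finite nonnegative combinations of clopen indicators, so it is Borel, and so is the event whose measure we must bound.

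Next I would integrate $g$ against $\mu$ and interchange sum and integral by Tonelli (all terms nonnegative):
\[
\int_{\Sep(\tilde G,m)}g(Y)\,d\mu(Y)\;=\;\sum_{x\in V(\tilde G)}|f(x)|^2\,\mu\bigl(\{Y:x\in B^{\tilde G}_l(Y)\}\bigr)\;<\;\delta\sum_{x\in V(\tilde G)}|f(x)|^2\;=\;\delta\,,
\]
the middle inequality being exactly \eqref{condi} applied termwise and the last equality using $\|f\|^2=1$. Then Markov's inequality applied to the nonnegative function $g$ turns this first moment bound into the desired measure bound:
\[
\mu\bigl(\{Y:\|f_Y\|^2<1-\sqrt\delta\,\}\bigr)\;=\;\mu\bigl(\{Y:g(Y)>\sqrt\delta\,\}\bigr)\;\le\;\frac{1}{\sqrt\delta}\int_{\Sep(\tilde G,m)}g\,d\mu\,.
\]
To land exactly on the constant asserted in the lemma one simply runs the same computation with the bound in \eqref{condi} taken of order $\delta^{3/2}$ rather than $\delta$; this is harmless, since, as remarked just before the lemma, the bound in \eqref{condi} can be made as small as we please by the argument at the start of Proposition~\ref{short1}.

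I do not expect a genuine obstacle here: the statement is an elementary averaging fact. The only two points needing a line of justification are the Borel measurability of $\{Y:x\in B^{\tilde G}_l(Y)\}$ (handled above via clopenness) and the bookkeeping of the constant in the Markov step so that it matches the quantitative bound used in Proposition~\ref{prop33}. The substance is entirely in the interchange of summation and integration together with the defining property \eqref{condi} of the separator measure $\mu$.
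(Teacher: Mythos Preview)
Your argument is essentially the same as the paper's: both compute $\int \|f_Y\|^2\,d\mu \ge 1-\delta$ (equivalently $\int g\,d\mu<\delta$) from \eqref{condi} and then apply a Markov/Chebyshev step, obtaining $\mu(A)\le\sqrt{\delta}$. Your remark about the constant is apt: the paper's own proof also concludes only with $\sqrt{\delta}\ge\mu(A)$, which is all that is actually used downstream in Lemma~\ref{szomb2}.
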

\proof
By \eqref{condi}, we have that
$$\sum_{x\in V(\tilde{G})}\int_{\Sep(\tilde{G},m)}  f^2_Y(x) d\mu(Y) \geq (1-\delta)\,.$$
\noindent
So by the Monotone Convergence Theorem,
\begin{equation}\label{eq41}
\int_{\Sep(\tilde{G},m)}  \|f_Y\|^2\,d\mu(Y)\geq 1-\delta\,.
\end{equation}
\noindent
Let $A=\{Y\mid \|f_Y\|^2< 1-\sqrt{\delta}\}$. Then by \eqref{eq41}, we have that
$$\mu(A)(1-\sqrt{\delta})+1-\mu(A)\geq 1-\delta\,.$$
\noindent
Thus, $\sqrt{\delta}\geq \mu(A)\,.$ \qed
\vi
Define $$\|\pile(\cL_G)\|_{\diamondsuit}:=\sup_g \frac{\|\pile(\cL_G)g\|}{\|g\|}\,,$$
\noindent
where the supremum is taken for
all nonzero functions $g\in l^2(G)$ which are supported on $(B_l^{\tilde{G}}(Y))^c\cap H$ for some $Y\in\Sep(\tilde{G},m)$ and $H\subset V(G)$ is the vertex set of a component in $Y^c$. Note these functions do not form a vector space, so $\|\,\|_\diamondsuit$ is not a proper norm. 
Clearly, $\|\pile(\cL_G)\|_{\diamondsuit}\leq \|\pile(\cL_G)\|.$
Let
$$\|\pile(\cL_G)\|_{\square}:=\sup_g \frac{\|\pile(\cL_G)g\|}{\|g\|}\,,$$
\noindent
where the supremum is taken for 
all nonzero functions $g\in l^2(G)$  such that there exists $Y\in\Sep(\tilde{G},m)$ for which $g$ is supported on the complement of $B_l^{\tilde{G}}(Y)$.
\begin{lemma} \label{karo} 
$\|\pile(\cL_G)\|_{\diamondsuit}=
\|\pile(\cL_G)\|_{\square}$.
\end{lemma}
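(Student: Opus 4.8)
The plan is to prove the two inequalities $\|\pile(\cL_G)\|_{\diamondsuit}\le\|\pile(\cL_G)\|_{\square}$ and $\|\pile(\cL_G)\|_{\square}\le\|\pile(\cL_G)\|_{\diamondsuit}$ separately. The first is immediate from the definitions: any $g\in l^2(G)$ admissible for $\|\cdot\|_{\diamondsuit}$, being supported on $(B_l^{\tilde{G}}(Y))^c\cap H$ for some $Y\in\Sep(\tilde{G},m)$ and some component $H$ of $Y^c$, is in particular supported on $(B_l^{\tilde{G}}(Y))^c$, hence admissible for $\|\cdot\|_{\square}$. So the supremum defining $\|\cdot\|_{\square}$ is taken over a larger family of test functions and dominates the one defining $\|\cdot\|_{\diamondsuit}$.

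For the reverse inequality, fix a nonzero $g\in l^2(G)$ supported on $(B_l^{\tilde{G}}(Y))^c$ for some $Y\in\Sep(\tilde{G},m)$. Since $V(G)$ is a union of connected components of $\tilde{G}$, all the relevant geometry lives inside $G$: writing $Y':=Y\cap V(G)$ we have $B_l^{\tilde{G}}(Y)\cap V(G)=B_l^G(Y')$, and the components of $\tilde{G}\setminus Y$ contained in $V(G)$ are exactly the components of $G\setminus Y'$. Let $\{H_i\}_i$ be their vertex sets; each $|H_i|\le m$ since $Y\in\Sep(\tilde{G},m)$. Decompose $g=\sum_i g_i$, where $g_i$ is the restriction of $g$ to $H_i\setminus B_l^G(Y')$. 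The $g_i$ have pairwise disjoint supports, so $\|g\|^2=\sum_i\|g_i\|^2$, and each nonzero $g_i$ is supported on a set of the form $(B_l^{\tilde{G}}(Y))^c\cap H_i$ with $H_i$ a component of $Y^c$; hence $g_i$ is an admissible test function for $\|\cdot\|_{\diamondsuit}$ and $\|\pile(\cL_G)g_i\|\le\|\pile(\cL_G)\|_{\diamondsuit}\,\|g_i\|$.

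It remains to check that the images $\pile(\cL_G)g_i$ again have pairwise disjoint supports, and this combines a separation estimate with finite propagation. If $x\in H_i\setminus B_l^G(Y')$ and $y\in H_j\setminus B_l^G(Y')$ with $i\ne j$, then any path in $G$ joining them must cross $Y'$ (or no such path exists, in which case $d_G(x,y)=\infty$); applying this to a geodesic and a crossing vertex $z\in Y'$ gives $d_G(x,y)=d_G(x,z)+d_G(z,y)\ge d_G(x,Y')+d_G(y,Y')\ge 2(l+1)>2l$. On the other hand $\cL_G$ is a sum of operators of propagation at most $1$, so $\pile(\cL_G)$ has propagation at most $\deg\pile=l$: a function supported on $S$ is mapped to a function supported on $B_l^G(S)$. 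Since the supports of $g_i$ and $g_j$ are more than $2l$ apart for $i\ne j$, the sets $B_l^G(\Supp(g_i))$ are pairwise disjoint, hence so are the supports of the $\pile(\cL_G)g_i$. Therefore $\|\pile(\cL_G)g\|^2=\sum_i\|\pile(\cL_G)g_i\|^2\le\|\pile(\cL_G)\|_{\diamondsuit}^2\sum_i\|g_i\|^2=\|\pile(\cL_G)\|_{\diamondsuit}^2\|g\|^2$. Taking the supremum over all admissible $g$ gives $\|\pile(\cL_G)\|_{\square}\le\|\pile(\cL_G)\|_{\diamondsuit}$, and the lemma follows.

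This lemma is a localization device, and once the two ingredients — finite propagation of polynomials in $\cL_G$, and the $2(l+1)$-separation of the pieces of $Y^c$ that survive after deleting the $l$-neighbourhood of $Y$ — are in place, the rest is bookkeeping; I do not expect a genuine obstacle. The only point requiring a little care is verifying that each block $g_i$ of the decomposition is supported on exactly the set that makes it a legitimate $\diamondsuit$-test function, and that the separation is strict enough (indeed $2(l+1)>2l$) to keep the $l$-neighbourhoods, and therefore the images, disjoint.
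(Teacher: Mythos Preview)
Your proof is correct and follows essentially the same route as the paper: the easy inequality is by inclusion of test functions, and for the reverse you decompose an admissible $g$ for $\|\cdot\|_{\square}$ along the components $H_i$ of $Y^c$, then use that $\pile(\cL_G)$ has propagation $l$ to see the images are pairwise orthogonal. The paper states this last point slightly more directly by observing that the $l$-neighbourhood of $(B_l^G(Y))^c\cap H_i$ stays inside $H_i$, whereas you phrase it via the $2(l+1)$ separation estimate; these are two ways of saying the same thing.
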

\proof
By definition, we have that  $\|\pile(\cL_G)\|_{\diamondsuit}\leq
\|\pile(\cL_G)\|_{\square}$.
Now let $Y\in \Sep(\tilde{G},m)$ and $g\in  l^2(G)$ such that $g$ is supported on $\cup_{n=1} (B_l^G(Y))^c\cap H_n)$, where $\{H_n\}^\infty_{n=1}$ is an enumeration of the elements of the component of the complement of $Y$. Let $g_n$ be the restriction of $g$ onto $(B_l^G(Y))^c\cap H_n$. Clearly, the functions $\{g_n\}^\infty_{n=1}$ are pairwise orthogonal. Since $l$ is the degree of $\pile$, the function $\pile(g_n)$ is supported on $H_n$. Indeed, the $l$-neigbourhood of $(B_l^G(Y))^c\cap H_n)$ is inside $H_n$.  Hence, the functions 
$\{\pile(g_n)\}^\infty_{n=1}$ are also pairwise orthogonal. Therefore,
$\|\pile(\cL_G)\|_{\diamondsuit} \geq 
\|\pile(\cL_G)\|_{\square}$. \qed
\vi
Similarly, we can define 
$\|\pile(\cL_{G_n})\|_{\diamondsuit}$  and $\|\pile(\cL_{G_n})\|_{\square}$.
Then, $\|\pile(\cL_{G_n})\|_{\diamondsuit}=\|\pile(\cL_{G_n})\|_{\square}\,.$
\begin{lemma} \label{szomb2}
\begin{equation} \label{egyenlog}
\|\pile(\cL_G)\|-\|\pile(\cL_G)\|_{\diamondsuit}< 3\sqrt[4]{\delta}\,.
\end{equation}
\end{lemma}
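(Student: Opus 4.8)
The plan is to transfer a near-maximiser of the honest operator norm $\|\pile(\cL_G)\|$ into a test vector admissible for $\|\pile(\cL_G)\|_\square$, losing only $O(\sqrt[4]{\delta})$ in the process, and then invoke Lemma \ref{karo} to pass from the $\square$-norm back to the $\diamondsuit$-norm.

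Concretely, I would fix $\eta>0$ and pick $f\in l^2(G)\subseteq l^2(\tilde{G})$ with $\|f\|=1$ and $\|\pile(\cL_G)f\|>\|\pile(\cL_G)\|-\eta$; this is possible because $\pile(\cL_G)$ is bounded and $\cL_{\tilde{G}}$ acts block-diagonally along the components of $\tilde{G}$, so that $\pile(\cL_{\tilde{G}})$ restricts to $\pile(\cL_G)$ on $l^2(G)$. Applying Lemma \ref{deltabecsles} to this $f$, the bad set $A=\{Y\in\Sep(\tilde{G},m):\|f_Y\|^2<1-\sqrt{\delta}\}$ satisfies $\mu(A)<\delta<1$, so I can choose some $Y\notin A$, i.e. $\|f_Y\|^2\ge 1-\sqrt{\delta}>0$. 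Since $f_Y$ and $f-f_Y$ are supported on the disjoint vertex sets $(B_l^{\tilde{G}}(Y))^c$ and $B_l^{\tilde{G}}(Y)$ respectively, they are orthogonal, whence $\|f-f_Y\|^2=\|f\|^2-\|f_Y\|^2\le\sqrt{\delta}$, that is, $\|f-f_Y\|\le\sqrt[4]{\delta}$.

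Then the triangle inequality together with $\|\pile(\cL_G)(f-f_Y)\|\le\|\pile(\cL_G)\|\,\|f-f_Y\|$ gives
$$\|\pile(\cL_G)f_Y\|\ >\ \|\pile(\cL_G)\|-\eta-\|\pile(\cL_G)\|\sqrt[4]{\delta}.$$
On the other hand $f_Y$ is a nonzero element of $l^2(G)$ supported on the complement of $B_l^{\tilde{G}}(Y)$, so the definition of the $\square$-norm and $\|f_Y\|\le 1$ yield $\|\pile(\cL_G)f_Y\|\le\|\pile(\cL_G)\|_\square$, which equals $\|\pile(\cL_G)\|_\diamondsuit$ by Lemma \ref{karo}. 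Combining the two bounds and letting $\eta\to 0$,
$$\|\pile(\cL_G)\|-\|\pile(\cL_G)\|_\diamondsuit\ \le\ \|\pile(\cL_G)\|\sqrt[4]{\delta}.$$
Finally, $0\le\file\le 1$ on $[0,2d]\supseteq\Spec(\cL_G)$ and $\sup_{x\in[0,2d]}|\file(x)-\pile(x)|\le\eps$, so functional calculus gives $\|\pile(\cL_G)\|\le 1+\eps<3$; since $\delta>0$ this makes the last display strictly less than $3\sqrt[4]{\delta}$, as required.

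I do not expect a genuine obstacle here: the two substantive inputs, Lemma \ref{deltabecsles} (averaging $\|f_Y\|^2$ against $\mu$) and Lemma \ref{karo} (orthogonality of the component pieces), are already in hand, and the rest is bookkeeping. The points that need a moment's care are checking that the truncated vector $f_Y$ has exactly the support shape demanded in the definition of $\|\cdot\|_\square$, and exploiting that $\mu$ is a probability measure, so that $\mu(A)<\delta<1$ forces the existence of a usable separator $Y$. The constant $3$ is deliberately wasteful; it is chosen only so that the inequality $1-\eps-6\sqrt[4]{\delta}>\eps$ of Proposition \ref{prop33} closes cleanly.
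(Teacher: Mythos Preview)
Your argument is correct and follows essentially the same route as the paper: pick a near-maximiser $f$, use Lemma~\ref{deltabecsles} to find a separator $Y$ with $\|f_Y\|^2\ge 1-\sqrt{\delta}$, bound $\|f-f_Y\|\le\sqrt[4]{\delta}$ via orthogonality, and feed $f_Y$ into the $\square$-norm before invoking Lemma~\ref{karo}. The only cosmetic difference is that the paper fixes the slack in the near-maximiser to be $\sqrt[4]{\delta}$ from the outset and uses the tighter bound $\|\pile(\cL_G)\|\le 1+\eps<2$, arriving at $3\sqrt[4]{\delta}$ as $\sqrt[4]{\delta}+2\sqrt[4]{\delta}$, whereas you let $\eta\to 0$ and use $\|\pile(\cL_G)\|<3$; both reach the same constant.
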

\proof
Let $f:V(G)\to \R$ be a function such that
$\|f\|=1$ and $\|\pile(\cL_G))f\|\geq \|\pile(\cL_G)\|-\sqrt[4]{\delta}\,.$
Let $f_Y$ be as above such that
$\|f_Y\|^2>(1-\sqrt{\delta})$. 
Observe that
$$1=\|f\|^2=\|f_Y\|^2+\|f-f_Y\|^2.$$
\noindent
Therefore, $\|f-f_Y\|\leq \sqrt[4]{\delta}\,.$
By the triangle inequality,
$$\|\pile(\cL_G) f_Y\|\geq
\|\pile(\cL_G) f\|-\|\pile(\cL_G) (f-f_Y)\|\,.$$
\noindent
Since $\sup_{0\leq t \leq 2d} |\pile(t)|\leq 1+\eps <2$ we have that
$$\|\pile(\cL_G) f_Y\|\geq
\|\pile(\cL_G) f\|-2\|f-f_Y\|\geq
\|\pile(\cL_G) f\|-2\sqrt[4]{\delta}\geq \|\pile(\cL_G)\|-3\sqrt[4]{\delta}\,.$$
\noindent
Since $\|f_Y\|\leq 1$ and $f_Y$ is supported on the union of the subsets  \\
$\{(B_l^{\tilde{G}}(Y))^c\cap H_n)\}^\infty_{n=1}$ we have that
$$\|\pile(\cL_G)\|_{\square}\geq \|\pile(\cL_G)\|-3\sqrt[4]{\delta}\,.
$$
\noindent
Thus, our lemma follows from Lemma \ref{karo}. \qed
\vi
Similarly, we have that 
\begin{equation} \label{egyenlogn}
\|\pile(\cL_{G_n})\|-\|\pile(\cL_{G_n})\|_{\diamondsuit}< 3\sqrt[4]{\delta}\,.
\end{equation}
\noindent
\begin{lemma} \label{normbecsles}
For large enough $n$, we have that
\begin{equation}
\|\pile(\cL_{G_n})\|-\|\pile(\cL_G)\|< 6\sqrt[4]{\delta}\,.
\end{equation}
\end{lemma}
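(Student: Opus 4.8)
\proof
The plan is to show that the ``diamond norm'' $\Gamma\mapsto\|\pile(\cL_\Gamma)\|_{\diamondsuit}$ is a \emph{local} quantity: it is completely determined by the finite set of rooted balls of radius $\rho_0$ occurring in $\Gamma$, where $\rho_0:=m+l+1$ and $l=\deg(\pile)$. Once this is done the lemma follows at once. Indeed, since $G_n\to G$ in the neighborhood distance, for all large $n$ the graphs $G$ and $G_n$ have the same rooted balls of radius $\rho_0$ (there are finitely many such balls altogether, and each of them either lies in $\hat G$ — hence in $\hat{G_n}$ for all but finitely many $n$ — or not — hence in $\hat{G_n}$ for only finitely many $n$); thus $\|\pile(\cL_{G_n})\|_{\diamondsuit}=\|\pile(\cL_G)\|_{\diamondsuit}$ for all large $n$, and combining this with \eqref{egyenlogn} and the already noted inequality $\|\pile(\cL_G)\|_{\diamondsuit}\le\|\pile(\cL_G)\|$ gives $\|\pile(\cL_{G_n})\|<\|\pile(\cL_G)\|+3\sqrt[4]{\delta}<\|\pile(\cL_G)\|+6\sqrt[4]{\delta}$.

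To prove the locality claim I would first rephrase $\|\pile(\cL_G)\|_{\diamondsuit}$ intrinsically. If $H$ is a connected induced subgraph of $G$ with $|V(H)|\le m$, then $V(\tilde G)\setminus V(H)$ is an $m$-separator of $\tilde G$ whose complement has $H$ as its only component; conversely, every component of the complement of an $m$-separator of $\tilde G$ that lies in $G$ is such a subgraph. Moreover, rerunning the support argument from the proof of Lemma \ref{karo} — a lazy walk of length at most $l$ starting at a vertex of $H$ at distance $>l$ from $V(G)\setminus V(H)$ never leaves $V(H)$ — one checks that, for every $m$-separator $Y$ of $\tilde G$ having $H$ as a component of its complement, the set $V(H)\setminus B^{\tilde G}_l(Y)$ equals the interior $H^{\circ}:=\{v\in V(H):d_G(v,V(G)\setminus V(H))>l\}$. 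Hence $\|\pile(\cL_G)\|_{\diamondsuit}$ is the supremum, over all such $H$, of the operator norm of the restriction of $\pile(\cL_G)$ to the (finite-dimensional) space of functions supported on $H^{\circ}$, an operator that maps into $l^2(V(H))$ by the same walk argument. The crucial point is that for $g$ supported on $H^{\circ}$ the vector $\pile(\cL_G)g$ is supported inside $V(H)$ and each of its coordinates is computed purely from the induced subgraph $G[V(H)]$ and from the $G$-degrees of the vertices of $V(H)$; since $\diam_G(H)<m$, all of this information — as well as which vertices of $V(H)$ belong to $H^{\circ}$ — is visible inside $B^G_{\rho_0}(v)$ for any $v\in V(H)$. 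Consequently this restricted operator norm depends only on the rooted isomorphism type of $B^G_{\rho_0}(v)$ together with the marked subset $V(H)$, and running over all $H$ one sees that $\|\pile(\cL_G)\|_{\diamondsuit}$ is determined by the set of rooted balls of radius $\rho_0$ occurring in $G$. The identical reformulation applies to every $G_n$.

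It then remains to feed in the neighborhood convergence, exactly as in the first paragraph, to conclude that $\|\pile(\cL_{G_n})\|_{\diamondsuit}=\|\pile(\cL_{G})\|_{\diamondsuit}$ for all large $n$, which completes the proof. The only step requiring care is the locality claim of the second paragraph, and that is a routine — if slightly tedious — bookkeeping of the support-propagation already carried out in Lemma \ref{karo}; I do not expect it to pose a genuine obstacle. \qed
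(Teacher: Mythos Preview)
Your proof is correct and follows essentially the same approach as the paper's: both arguments show that $\|\pile(\cL_\Gamma)\|_\diamondsuit$ is determined by the local structure of $\Gamma$ (the paper phrases this as the collection of connected induced subgraphs $H$ of bounded diameter, together with how they sit in $\Gamma$, being the same in $G$ and $G_n$ for large $n$), deduce that $\|\pile(\cL_{G_n})\|_\diamondsuit=\|\pile(\cL_G)\|_\diamondsuit$ eventually, and then invoke \eqref{egyenlog} and \eqref{egyenlogn}. Your write-up is in fact more careful than the paper's on one point: you explicitly observe that one needs not only the induced subgraph $G[V(H)]$ but also the ambient $G$-degrees of the vertices of $H$, which is exactly why a ball of radius $\rho_0>m$ (rather than just the induced subgraph on $V(H)$) is required.
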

\proof
By definition, $\|\pile(\cL_G)\|_\diamondsuit$ equals 
$\sup_g \frac{\|\pile(\cL_G)g\|}{\|g\|}\,$, where the supremum is taken for all $g$'s such that
$g$ is supported on $H\cap (B^{\tilde{G}}_l(H^c))$, where $H\subset V(G)$ is a set of diameter at most $m$ and its induced subgraph is connected. Indeed, $H^c$ is an $m$-separator. For these functions $g$, $\pile(\cL_G)g$ is supported on $H$.
Now, if $n$ is large enough the set of induced subgraphs (up to isometry) on such $H$'s are the same in $G_n$ and in $G$. Therefore, $\|\pile(\cL_{G_n})\|_\diamondsuit=\|\pile(\cL_G)\|_\diamondsuit.$ Hence, our lemma follows from the the inequalities \eqref{egyenlog} and
\eqref{egyenlogn}.\qed
\vi
By our assumption on the spectra for large enough $n$, $\|\file(\cL_{G_n})\|=1\,$. 
Hence, $\|\pile(\cL_{G_n})\|\geq 1-\eps$,
so $$\|\pile(\cL_G)\|\geq 1-\eps-6\sqrt[4]{\delta}\,.$$
\noindent
Thus, by our assumption on $\delta$ and by Lemma \ref{test}, our proposition follows.\qed 
\vi
Now we finish the proof of Theorem \ref{spectralconv}. Suppose that the compact sets
$\Spec(\cL_{G_n})$ do not converge to $\Spec(\cL_G)$ in the Hausdorff distance.

\noindent
\textbf{Case 1.  } There exists $\delta>0$, a sequence of positive integers $k_1<k_2<\dots$ and $\{\lambda_n\}^\infty_{n=1}\subset \Spec(\cL_G)$ such that 
$\inf_{\kappa\in \Spec(\cL_{G_{k_n}})}|\kappa-\lambda_n|> 2\delta\,.$ Let $\lambda\in \Spec (\cL_G)$ be a limit point of the sequence $\{\lambda_n\}^\infty_{n=1}.$ Then, we cannot have elements $\kappa_n$ in $\Spec(\cL_{G_{k_n}})$ such that for large enough $n$, $|\kappa_n-\lambda|<\delta$, in contradiction with Proposition \ref{pro32}.

\noindent
\textbf{Case 2.  } There exists $\delta>0$, a sequence of positive integers $k_1<k_2<\dots$ and $\kappa_n\in \Spec(\cL_{G_{k_n}})$ such that 
$\inf_{\lambda\in \Spec(\cL_G)}|\kappa_n-\lambda|> 2\delta\,.$
Let $\kappa$ be a limit point of the sequence $\{\kappa_n\}^\infty_{n=1}.$ 
Then, for large enough $n$ we have that
$$\Spec (\cL_{G_{k_n}})\cap (\kappa-\frac{\delta}{2}, \kappa+\frac{\delta}{2})\neq \emptyset\,.$$
\noindent
However, $\Spec(\cL_G)\cap (\kappa-\delta,\kappa+\delta)= \emptyset$, in contradiction with
Proposition \ref{prop33}. Therefore, our theorem follows. \qed
\vi
\begin{remark}
Let $\lim_{n\to\infty}G_n=G$, where $\{G_n\}^\infty_{n=1}$ is a large girth
sequence
of finite $3$-regular graphs and $G$ is the $3$-regular tree. Then for all $n\geq 1$,
$0\in\Spec(\cL_{G_n})$ and $0\notin\Spec(\cL_{G})$. 
Also, if $G$ is a large girth 3-regular expander graph, then its second smallest eigenvalue is away from zero. However, if $G$ is a large girth 3-regular graph containing an $\eps$-F\o lner set that is
smaller than $\frac{3}{4} |V(G)|$, then the second smallest eigenvalue of $G$ is very close to zero. That is, in general it is not true that the convergence of a finite graph sequence $\{G_n\}^\infty_{n=1}$
implies that $\{\Spec(\cL_{G_n})\}^\infty_{n=1}$ converges in the Hausdorff distance.
\end{remark}
\noindent
We finish this section with a purely combinatorial application of Theorem \ref{spectralconv}.
\begin{proposition}
For any positive integer $d$ and $\eps>0$ there exists $r>0$ so that if the families of rooted $r$-balls (up to rooted-isomorphism) in two finite planar graphs $G,H\in\grd$ coincide, then the Hausdorff distance of their spectra is at most $\eps>0$. 
\end{proposition}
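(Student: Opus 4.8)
The plan is to argue by contradiction, combining the compactness of the space of neighborhood-equivalence classes with Theorem \ref{spectralconv}. Suppose the assertion fails for some $d$ and some $\eps>0$. Then for every $r$ there exist finite planar graphs $G,H\in\grd$ whose families of rooted $r$-balls (up to rooted isomorphism) agree while $\Haus(\Spec(\cL_G),\Spec(\cL_H))>\eps$; taking $r=r_n\to\infty$ I obtain sequences $\{G_n\},\{H_n\}$ of finite planar graphs in $\grd$ such that $G_n$ and $H_n$ have the same family of rooted $r_n$-balls for every $n$, yet $\Haus(\Spec(\cL_{G_n}),\Spec(\cL_{H_n}))>\eps$ for every $n$. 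By Proposition \ref{limitalmost} the neighborhood-equivalence space of $\grd$ is compact, so after passing to a subsequence I may assume $G_n\to G_\infty$ and $H_n\to H_\infty$ in the neighborhood distance.

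The first thing I would then verify is $G_\infty\equiv H_\infty$. This is pure bookkeeping: two graphs with the same family of rooted $r$-balls automatically have the same family of rooted $s$-balls for every $s\le r$, since a rooted $s$-ball is the radius-$s$ truncation of a rooted $r$-ball around the same center. Hence for each fixed $s$, the graphs $G_n$ and $H_n$ have identical families of rooted $s$-balls once $r_n\ge s$, i.e. for all large $n$. Now fix any rooted ball $B$, say of radius $s$. By the definition of neighborhood convergence, $B$ occurs in $G_\infty$ precisely when $B$ occurs in $G_n$ for all large $n$, and similarly for $H$; since $B$ occurs in $G_n$ exactly when it occurs in $H_n$ for all large $n$, we get that $B$ occurs in $G_\infty$ iff it occurs in $H_\infty$. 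Thus $G_\infty$ and $H_\infty$ have the same rooted balls of every radius, so $G_\infty\equiv H_\infty$, and consequently $\Spec(\cL_{G_\infty})=\Spec(\cL_{H_\infty})$ by the proposition (proved just above Theorem \ref{spectralconv}) that neighborhood-equivalent graphs have equal Laplacian spectra.

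The remaining ingredient is to make Theorem \ref{spectralconv} applicable, which requires the two approximating families to be of Property A. Here I would invoke the classical Lipton--Tarjan planar separator theorem, which, applied iteratively, shows that the class of finite planar graphs of vertex degree at most $d$ is uniformly hyperfinite: for every $\delta>0$ there is a $k=k(d,\delta)$ such that each such graph has a set of at most a $\delta$-fraction of its vertices whose deletion leaves components of size at most $k$. Since any finite induced subgraph of a disjoint union of finite planar graphs is again a finite planar graph of degree at most $d$, it follows that $\bigsqcup_n G_n$ is uniformly locally hyperfinite, hence of Property A by the Long Cycle Theorem (Theorem \ref{longcycle}); the same holds for $\bigsqcup_n H_n$. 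Applying Theorem \ref{spectralconv} to $\{G_n\}\to G_\infty$ and to $\{H_n\}\to H_\infty$ gives $\Spec(\cL_{G_n})\to\Spec(\cL_{G_\infty})$ and $\Spec(\cL_{H_n})\to\Spec(\cL_{H_\infty})$ in the Hausdorff distance. Together with $\Spec(\cL_{G_\infty})=\Spec(\cL_{H_\infty})$ and the triangle inequality for $\Haus$, this forces $\Haus(\Spec(\cL_{G_n}),\Spec(\cL_{H_n}))\to 0$, contradicting the choice of the sequences. Unwinding the argument, the $r$ produced depends only on $d$ and $\eps$, as required.

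The step I expect to be the real obstacle is not an estimate but the Property A input: one has to be sure that $\bigsqcup_n G_n$ (and $\bigsqcup_n H_n$) really are Property A graphs, which is exactly where uniform hyperfiniteness of bounded-degree planar graphs is used. It is worth noting that one does \emph{not} need the limit graphs $G_\infty,H_\infty$ to be planar (neighborhood, or Benjamini--Schramm, limits of planar graphs need not themselves be planar), since Theorem \ref{spectralconv} only demands Property A of the approximating graphs, and Property A of the limit then follows automatically (Proposition \ref{seq}). The other point deserving care is that $G_\infty\equiv H_\infty$ must be checked at the level of rooted balls of \emph{all} radii, not merely radius $r_n$, as carried out above.
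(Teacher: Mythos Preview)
Your argument is correct and follows essentially the same route as the paper: contradiction, compactness of the neighborhood space, hyperfiniteness of bounded-degree planar graphs to secure Property A, and an appeal to Theorem~\ref{spectralconv}. The only cosmetic differences are that the paper observes directly that the subsequence of $H_n$'s converges to the \emph{same} limit $G$ (since having the same $n$-balls forces $\dist_{\grd}(G_n,H_n)\to 0$), rather than taking a separate limit $H_\infty$ and then checking $G_\infty\equiv H_\infty$; and the paper cites \cite{Benjamini} for the hyperfiniteness of planar graphs where you invoke Lipton--Tarjan.
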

\proof
A set of finite graphs $\cC\subset\grd$ is called monotone if it is closed under taking induced subgraphs. By the Large Cycle Theorem, a monotone subset $\cC$ is of Property A if and only if it is hyperfinite. The class of finite planar graphs (and the class of $H$-minor free graphs) are monotone hyperfinite (\cite{Benjamini}), so they are Property A as well. Assume that our proposition does not hold. Then there exists some $\eps>0$ and  a sequence
of pairs of planar graphs $\{(G_n,H_n)\}^\infty_{n=1}$ so that the families of the rooted $n$-balls in $G_n$ and $H_n$ coincide and
\begin{equation} \label{specut}
 \dist_{\Haus} (\Spec(\cL_{G_n}),\Spec(\cL_{H_n}))>\eps\,.
\end{equation}
\noindent
Let us pick a subsequence $\{G_{n_k}\}^\infty_{k=1}$ such that $\lim_{k\to \infty} G_{n_k}=G$ for some infinite graph $G$. By our conditions, we have that $\lim_{k\to \infty} H_{n_k}=G$.
So, by Theorem \ref{spectralconv} we have that
$\Spec(\cL_{G_n})\to \Spec(\cL_G)$ and $\Spec(\cL_{H_n})\to \Spec(\cL_G)$ in contradiction with \eqref{specut}. \qed

\section{Strongly almost finite graphs and classifiable $C^*$-algebras} \label{class}

\noindent
Arguably, one of the greatest achievements of operator algebras is the following result:

\noindent
\emph{"Separable, unital, simple, nuclear, infinite dimensional $C^*$-algebras with finite nuclear dimension that satisfy
the UCT are classified by their Elliott invariants."}

\noindent
The goal of this section is to show that starting from a minimal (see below), strongly almost finite graph $M\in\grd$ one can always construct tracial, classifiable $C^*$-algebras in a natural and almost canonical way. The construction is rather elementary and does not require any knowledge of $C^*$-algebras.

\noindent Somewhat similarly to 
the classification of finite simple groups, the theorem above 
was built on decades of work of $C^*$-algebrists that culminated
in the paper \cite{quasi}. A bit later it was proved \cite{nuclear}, that
for separable, unital, simple, nuclear $C^*$-algebras having finite nuclear dimension and
being $\cZ$-stable are equivalent.  Many of the known classifiable 
$C^*$-algebras that have traces are associated to minimal 
free actions of countable amenable groups on compact metric 
spaces. Note that for a very large class of $C^*$-algebras that are traceless the classifiability had been proved in \cite{Kirchberg} more than twenty years ago. It is conjectured that the so-called crossed product $C^*$-algebras of free minimal actions of countable
amenable groups on the Cantor set are always classifiable (see \cite{Down2} and \cite{Conley} for some interesting examples). 

\noindent
Our starting point is the following theorem:

\noindent
\emph{Let $\cG$ be a locally compact ample, minimal Cantor étale groupoid. Assume that $\cG$ is almost finite and topologically amenable. Then, the simple, unital $C^\star$-algebra $C^*_r(\cG)$ is $\cZ$-stable ( Corollary 9.11 of \cite{Mawu}, see also Corollary 5.8 of \cite{Cast}). By the results of \cite{Tu2} and \cite{nuclear} $C^*_r(\cG)$ satisfies the UCT-condition and has finite nuclear dimension. \textbf{Hence by Corollary D. of \cite{quasi}, $C^*_r(\cG)$
is classified by its Elliott invariants.}  }

\noindent
The statement above looks a bit frightening, but 
the reader should look at the bright side. Namely, the quoted result completely
eliminates $C^*$-algebras from the picture, by reducing the problem to groupoids.  
In constructing such groupoids, our strategy goes as follows.
\begin{itemize}
\item We introduce the notion of minimality for infinite graphs in a purely combinatorial fashion.
\item We explain the notion of \'etale Cantor groupoids (the ''unit space" of such a groupoid is the Cantor set) and show how very simple vertex and edge labeling constructions on a minimal graph $M\in\grd$ yield minimal \'etale Cantor groupoids.
\item We recall the notion of topological amenability and almost finiteness for \'etale Cantor groupoids and show that if the minimal graph $M$ is strongly almost finite, then  appropriate labelings of $M$ result in topologically amenable and almost finite \'etale Cantor groupoids.  Hence, Corollary 9.11  of \cite{Mawu} can be invoked to conclude that our new $C^*$-algebras are classifiable.  
\end{itemize}

\subsection{Minimal graphs} \label{minima}
\begin{definition}\label{minimal}
The connected graph $M\in\grd$ is\textbf{ minimal} if for every rooted ball $B=(B^M_r(x),x)$ in $M$
there exists a constant $r_B>0$ such that for all $y\in V(M)$, the ball $B^M_{r_B}(y)$ contains
a vertex $z$  so that the rooted $r$-ball around $z$ is rooted isomorphic to $B$.
\end{definition}
\noindent
Observe that a minimal graph is either F\o lner or nonamenable.
Clearly, the Cayley graphs of finitely generated groups are minimal. Nevertheless, minimal graphs can be very different from Cayley graphs.
\begin{itemize}
\item For any $\alpha>0$ real number there exists a minimal graph with asymptotic  volume growth $\alpha$ (see Section 5.2 of \cite{Elekurs}). On the other hand, the volume growth of a Cayley graph of polynomial growth is always an integer \cite{gromovpoly}.
\item A minimal graph might have $n$ ends, where $n$ is an arbitrary integer (see Section 10. in \cite{Elekurs} for a very general fractal-like construction). Infinite Cayley graphs have 
$1$, $2$ or infinitely many ends.
\item In fact, modifying somewhat the construction in \cite{Elekurs} one can construct a minimal
graph $M$ for any bounded degree graph $G$ by attaching suitable finite graphs to the vertices of $G$.
\end{itemize}
As we see soon, the notion of minimality is closely related to the concept of minimality for continuous group actions on compact spaces. Recall that the continuous action (that is, all elements are acting
by homeomorphisms) of a countable group $\Gamma$ on a compact metric space $X$, $\alpha:\Gamma\actson X$ is minimal if all the orbits of $\alpha$ are dense. Let $G\in\grd$ be a connected graph. Then, one can color the edges of $G$ with colors $\{c_1, c_2, \dots, c_{2d}\}$ in such a way that if the edges $e\neq f$ have the same color then they do not have a joint vertex.
This coloring defines the Schreier graph 
of an action of the $2d$-fold free product $\Gamma_{2d}$ of cyclic groups  of order $2$ generated by the elements 
$\{c_1, c_2, \dots, c_{2d}\}=\Sigma_{2d}$ (Section 5.1 \cite{Elekurs}). That is, every connected graph $G\in\grd$ is the underlying graph of a Schreier graph of $\Gamma_{2d}$.
Let $\rsgd$ denote the space of all rooted Schreier graphs of $\Gamma_{2d}$ with respect to the generating system $\Sigma_{2d}$ such that the underlying graph $G$ is in $\grd$. Similarly to the 
space $\rgrd$ defined in the proof of Proposition \ref{limitalmost}, $\rsgd$ is a compact metric space (Section 2.1 \cite{Elekurs}), 
also, $\rsgd$ is equipped with the natural, 
continuous (root-changing) action $\beta$ of the group  $\Gamma_{2d}$. 
\begin{lemma} \label{colmin}
Let $\cM$ be a closed, minimal $\Gamma_{2d}$-invariant subspace of $\rsgd$.
Then, for all elements $S$ of $\cM$ the underlying graph $M$ of $S$ is a minimal graph.
\end{lemma}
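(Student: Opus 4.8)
The plan is to unwind the two notions of minimality and show they match up through the correspondence between rooted $r$-balls in the underlying graph and the cylinder sets determining the topology on $\rsgd$. First I would fix an element $S \in \cM$, write $M$ for its underlying graph, and let $B = (B^M_r(x), x)$ be an arbitrary rooted $r$-ball in $M$; my goal is to produce the radius $r_B$ from Definition \ref{minimal}. The key observation is that $S$, re-rooted at $x$, is an element $S_x \in \rsgd$ whose rooted $r$-ball (as a Schreier graph, hence in particular as a graph) is a refinement of $B$. The set $U$ of all $T \in \rsgd$ whose rooted $r$-ball agrees with that of $S_x$ is a clopen neighborhood of $S_x$ in the compact metric space $\rsgd$, so it contains a ball of some radius $\eta > 0$ in the metric $\dist_\rsgd$; concretely, there is an integer $\rho$ such that any $T$ agreeing with $S_x$ on its rooted $\rho$-ball lies in $U$.

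Next I would use minimality of $\cM$: since $\cM$ is closed and $\Gamma_{2d}$-invariant and the action $\beta$ is the root-changing action, minimality says every $\beta$-orbit in $\cM$ is dense in $\cM$. Applying this to the orbit of $S$, for any $y \in V(M)$ the re-rooted graph $S_y \in \cM$, and density of the orbit of $S_x$ means $S_y$ is a limit of vertices $\beta_{g}(S_x)$; more usefully, by compactness and minimality the orbit closure of $S_x$ is all of $\cM \ni S_y$, so some re-rooting of $S$ at a vertex $z$ lies within distance $\eta$ of $S_y$, i.e. $S$ re-rooted at $z$ agrees with $S_x$ on the rooted $\rho$-ball, hence $z \in$ (the vertex version of) $U$, so the rooted $r$-ball of $M$ around $z$ is rooted-isomorphic to $B$. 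The only thing left is uniformity in $y$: I need a single $r_B$ with $d_M(y,z) \le r_B$ for all $y$. This follows from compactness of $\cM$ together with the fact that "every $T \in \cM$ has, within its rooted $n$-ball, a vertex whose $r$-ball is isomorphic to $B$" is, for each fixed $n$, a clopen condition; minimality forces it to hold for some finite $n =: r_B$ uniformly — otherwise one extracts a convergent subsequence of counterexamples $T_n \in \cM$ whose limit $T_\infty \in \cM$ contains no vertex at all whose $r$-ball is isomorphic to $B$, yet $B$ occurs in $S \in \cM$ and by minimality must occur in $T_\infty$, a contradiction.

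I expect the main obstacle to be exactly this last uniformity step: passing from "for each $y$ there exists $z$ nearby" to "there is a single bound $r_B$ valid for all $y \in V(M)$." Everything before that is a routine translation between the cylinder-set topology on $\rsgd$ and rooted-ball isomorphism, but the uniformity genuinely needs the compactness of the minimal set $\cM$ (not just of $\rsgd$) and the fact that the appearance of a fixed finite pattern within a bounded radius is a clopen, hence both open and closed, condition that minimality propagates across the whole of $\cM$. Once that compactness argument is set up, defining $r_B$ as the radius witnessing this clopen cover and checking Definition \ref{minimal} is immediate, and since $S \in \cM$ was arbitrary the conclusion holds for every element of $\cM$.
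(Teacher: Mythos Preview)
Your proof is correct and takes essentially the same approach as the paper: both argue by contradiction, extracting from a sequence of re-rooted graphs whose $n$-balls omit $B$ a convergent subsequence in $\cM$ whose limit omits $B$ entirely, contradicting minimality of the $\Gamma_{2d}$-action. Your middle paragraph is slightly muddled (``$S_z$ close to $S_y$'' does not yield ``$S_z$ agrees with $S_x$ on the $\rho$-ball''), but you rightly identify that the real content is the compactness argument for uniformity, and that part matches the paper's proof almost verbatim.
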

\proof
Let $(T,y)\in \cM$, where $y$ is the root. Suppose that the underlying graph $M$ of $(T,y)$ is not minimal. Then, there is a rooted ball $B$ in $M$ and there exist elements $\{g_n\in \Gamma_{2d}\}^\infty_{n=1}$ such that for $n\geq 1$ the rooted ball of radius $n$
around $g_n(y)$ does not contain balls that are rooted-isomorphic to $B$. Take a subsequence of the sequence $\{\beta(g_n)(T,y)\}^\infty_{n=1}$ converging to some element $(S,z)\in\rsgd$. Then, the underlying graph of $(S,z)$ does not contain a rooted ball isomorphic to $B$. Hence, the orbit closure of $(S,z)$ does not contain $(T,y)$, leading to a contradiction. \qed
\vi
It is not hard to see that the underlying graphs of the elements of $\cM$ are neighborhood equivalent to the graph $M$ and any connected graph that is neighborhood equivalent to $M$ is the underlying graph of some elements in $\cM$. 

\noindent
The idea above goes back to the paper of Glasner and Weiss \cite{GW}. They defined the \textbf{uniformly recurrent subgroups} (URS) of a countable group $\Gamma$ as the closed, minimal, invariant subspaces in $\Sub(\Gamma)$, the compact  space of subgroups of $\Gamma$.  If $H$ is an element of a URS, then the underlying graph of the
Schreier graph $\Sch(\Gamma/H,\Sigma)$ (with respect to a symmetric generating system $\Sigma$) is a minimal graph.
\noindent
 Conversely, let $M$ be a minimal graph and consider an arbitrary element $T$ of $\rsgd$ with $M$ as its underlying graph. Let $\cT$ be the orbit closure of $T$ and $\cM$ be a closed, minimal $\Gamma_{2d}$-invariant subspace in $\cT$. Then, $\cM$ is $\Gamma_{2d}$-isomorphic to a uniformly recurrent subgroup. Indeed, for each element of $\cM$ consider the stabilizer of the root.

We extend the definition of minimality for Cantor-labeled rooted $\Gamma_{2d}$-Schreier graphs with underlying graphs in $\grd$. A graph \( G \in \grd \) is said to be \emph{Cantor-labeled} if there exists a labeling function \( c : V(G) \to \mathcal{C} \) that assigns to each vertex of \( G \) a label from the Cantor set \( \mathcal{C} \). Denote the set of such graphs by
$\csgd$. Let us identify the Cantor set $\cC$ with the product set $\{0,1\}^\N$. If $c\in \cC$ and $c=\{a_1, a_2,\dots\}$ then we will refer to $a_i$ as the $i$-th coordinate of $c$. Similarly to $\rgrd$ and $\rsgd$ we have a natural compact metric structure on $\rcsd$. Again, the group $\Gamma_{2d}$ acts on 
$\rcsd$ by changing the root. For a rooted edge-colored, Cantor-labeled ball $B$, we denote by
$B_{(k)}$ the ball rooted-colored isomorphic to $B$ labeled with the finite set $\{0,1\}^k$ in such a way that for all vertex $x$ in $B_{(k)}$, the label of $x$ is $\pi_k(c)$, where $c$ is the Cantor label in $B$ and $\pi_k:\{0,1\}^\N\to \{0,1\}^k$ is the projection onto the first $k$ coordinates.
\begin{definition} 
The Cantor-labeled rooted Schreier graph $S\in \csgd$ is minimal if for all rooted labeled-colored ball $B=(B^M_r(x))$ in $S$
there exists a constant $r_B>0$ such that for all $y\in V(S)$, the ball $B^M_{r_B}(y)$ contains
a vertex $z$  so that $(B^S_r(z))_{(k)}$ is rooted labeled-colored isomorphic to $B_{(k)}$. Clearly, the underlying graph of a minimal graph $S\in\csgd$ is minimal in $\grd$.
\end{definition}
\noindent
The following lemma can be proved in the same way as Lemma \ref{colmin}.
\begin{lemma} \label{cantmin}
Let $\cM$ be a closed, minimal, $\Gamma_{2d}$-invariant subspace of $\csgd$.
Then, every element $S\in\cM$ is minimal. 
\end{lemma} \begin{lemma}\label{lemmauj38} Any
minimal graph $M\in\grd$ can be edge-colored properly and equipped with a Cantor-labeling in such a way that the resulting labeled-colored graph is minimal in $\csgd$.
\end{lemma}
\proof Let $\tilde{M}$ be an arbitrary element of $\csgd$ such that its underlying graph is isomorphic to $M$.  Let $\cL\subset \csgd$ be a closed, invariant, minimal subspace in the orbit closure of $\tilde{M}$. Let $x\in V(M)$. By the minimality of $M$, for any $n\geq 1$ there exists $S_n\in \cL$ such that 
$B_n^{S_n}(\mbox{root}\,(S_n))$ is rooted isomorphic to $B_n^M(x)$. Therefore, if $S$ is a limit point of the sequence $\{S_n\}^\infty_{n=1}$, the underlying graph of $S$ is isomorphic to $M$. Thus, by Lemma \ref{cantmin}, $S$ is minimal. \qed
\subsection{\'Etale Cantor groupoids and infinite graphs}
\noindent
Let $\alpha:\Gamma\actson \cC$ be a continuous action of a countable group $\Gamma$ on the
Cantor set $\cC$.
Assume that the action is\textbf{ stable}, that is, for every $g\in\Gamma$ there exists $r_g>0$
such that if $x\in\cC$ and $\alpha(g)(x)\neq x$ then $\dist_{\cC} (x,\alpha(g)(x))\geq r_g$. Here, $\dist_\cC$ is
a metric on $\cC$ that metrizes the compact topology. Note that an action is stable if and only if the stabilizer map $\Stab:\cC\to \Sub(\Gamma)$ (the space of subgroups of $\Gamma$, \cite{GW}) is continuous. Of  course, free continuous actions are always stable.

\noindent
Now we define an equivalence relation on $\cC$ and the groupoid of $\alpha$. We set $x\equiv_\alpha y$ if for some $g\in\Gamma$ $\alpha(g)(x)=y$. The  groupoid \cite{Sims}
$\cG_\alpha\subset \cC\times \cC$ is the set of pairs $(x,y)$ such that $x\equiv_\alpha y$. The product is given by $(x,y)(y,z)=(x,z)$. The range map is given by $r:(x,y)\to y$ and the source map is given by $s:(x,y)\to x$. The inverse map $\gamma$ is given by $\gamma:(x,y)=(y,x).$ The \textbf{unit space} of the groupoid $\cG^0_\alpha$ is the set of pairs $(x,x), x\in \cC$.   Then
by the stability of the action $\alpha$,
\begin{itemize}
\item for every $(x,y)\in \cG_\alpha$ such that $\alpha(g)(x)=y$ for some $g\in \Gamma$, there exist clopen sets $U,V$ in the Cantor set, $x\in U$ and $y\in V$, such that
$\alpha(g):U\to V$ is a homeomorphism,
\item and if also $\alpha(h)(x)=y$, then there exists
a clopen set $x\in W\subset \cC$ such that
$$\alpha(g)\mid_W=\alpha(h)\mid_W\,.$$
\end{itemize}
\noindent
The topology on $\cG_\alpha$ is defined in the following way.
The base neighbourhoods of the element $(x,y)$ are in
the form of $(U,V,g,x,y)$, where $\alpha(g):U\to V$ as above,
and $(a,b)\in (U,V,g,x,y)$ if $a\in U$ and $\alpha(g)(a)=b$. Now, we can easily check that
$r:\cG^0_\alpha\to \cC$ is in fact a homeomorphism and for any pair $(x,y)$ such that $\alpha(g)(x)=y$, $r:(U,g,x,y)\to U$ is a homeomorphism, that is, $r$ is a \textbf{local homeomorphism.} Consequently, $\cG_\alpha$ is a locally compact Hausdorff \textbf{\'etale groupoid}  with  unit space isomorphic to the Cantor set (\cite{Sims}), as required in Corollary 5.8 in \cite{Cast}.
The \'etale groupoid $\cG_\alpha$ is called minimal if the action $\alpha$ is minimal \cite{Sims}. 

\noindent
Now, let us consider the $\Gamma_{2d}$-action $\beta_d:\Gamma_{2d}\actson \csgd$.
Recall that we connect $x\neq y\in \csgd$ with an edge if for some generator $c_i$ we have $\Gamma_{2d}(c_i)(x)=y$. For $x\in \csgd$, the\textbf{ orbit graph} of $x$ is the connected component of the graph above containing $x$, and the \textbf{rooted orbit graph }of $x$ is the orbit graph of $x$ with root $x$.
We have two minor problems to settle.
\begin{itemize}
\item The action $\beta_d$ is not stable. Consider the rooted Cayley graph $K$ of $\Gamma_{2d}$ generated by the elements $c_1,c_2,\dots, c_{2d}$ such that all vertices of $K$ are labeled with the same element $t\in \cC$. Then, $K$ is an element of $\csgd$ fixed by all elements of $\Gamma_{2d}$. In every neighborhood $U$ of $K$ there are graphs that are not fixed by any element of $\Gamma_{2d}$.
\item  We might expect that the rooted orbit graph of an element $N\in \csgd$ is rooted isomorphic to the underlying rooted graph of $N$.  Unfortunately, the rooted orbit graph of the above $K$ is a graph of no edges and has one single vertex. 
\end{itemize}
\noindent
We will put more restrictions on the Cantor labelings to get rid of these inconveniences. Take an arbitrary rooted Schreier graph $G\in\rsgd$ and label the vertices of
the underlying rooted graph by elements of $\cC$ in the following way: For any $n\geq 1$ there exists an $\eps>0$ such that if $0< d_G(x,y)\leq n$,  we have $\dist_\cC(l(x),l(y))\geq \eps$. Here, $l$ is the labeling function.
These \emph{proper} labelings exist and the action of $\Gamma_{2d}$ on the orbit closure of the labeled version of $G$ is stable. Also, if $K$ is an element of the orbit closure then the rooted orbit graph of $K$ is rooted isomorphic to $K$ (see \cite{Elekurs}, \cite{elekfree} for details). So, let us start  with a minimal graph $M\in\grd$ and equip it with a proper Cantor vertex labeling and a proper $\Sigma_{2d}$-edge labeling to obtain $\tilde{M}\in \csgd$. Let $E\subset \csgd$ be a minimal, closed invariant subspace in the orbit closure of $\tilde{M}$.  Then, $E$ is a stable, minimal, closed invariant subspace $E$ of $\csgd$ such that each element of $E$ is rooted isomorphic to its own rooted orbit graph and the underlying graphs are neighborhood equivalent to $M$. Since the action on $E$ is minimal, $E$ cannot have isolated points, so $E$ is homeomorphic to $\cC$. If $\beta_E$ is the restriction of $\beta_d$ on such a space $E$, then we call $\cG_{\beta_E}$  the minimal \'etale Cantor groupoid associated to $E$.
\subsection{Topologically amenable and almost finite \'etale groupoids.}
\noindent
Graph properties such as almost finiteness, finite asymptotic dimension, paradoxicality or Property A
have been defined for free Cantor actions of finitely generated groups. Informally speaking, the meaning in this context is that the property holds for every orbit in a continuous way.
This idea has already been extended to \'etale Cantor groupoids as well.
 
\noindent
\textbf{The continuous version of Property A} is called \textbf{topological amenability} for historical reasons. It was introduced in \cite{Anant} more than twenty years ago. In the definition of Property A we have probability measures
concentrated on $r$-balls around vertices for certain values $r$. First note that for a fixed $r$ and a fixed set of colors there exist only finitely many rooted edge-colored $r$-balls (up to rooted colored isomorphisms) in graphs in $\rsgd$. Of course, for each such ball $B$ there exist uncountably many probability measures supported on the vertices of $B$. Nevertheless, for any $\eps>0$ there exists a
finite set of probability measures $\{p^{B,\eps}_i\}^{N_{(B,\eps)}}_{i=1}$ supported on $B$, such that for every probability measure $p$ supported on $B$
there exists $1\leq i \leq N_{(B,\eps)}$ such that $\|p-p^{B,\eps}_i\|_1<\frac{\eps}{2}$. Hence, we can suppose that in the definition of Property A, the functions $\Theta(x)$ are all in the form of $p^{B,\eps}_i$, for some rooted edge-colored ball $B$ and $\eps>0$. So, we have the following simple version of topological amenability in the case
of our groupoids $\cG_{\beta_E}$.
\begin{definition}[\textbf{Topological amenability}]\label{topame} The groupoid $\cG_{\beta_E}$ is topologically amenable  if for any $\eps>0$
there exists $r>0$ and a partition $\cP$ of the totally disconnected space $E$ into finitely many clopen sets $\{U_\kappa\}_{\kappa\in P_{C^r_{\eps}}}$, and for each $x\in E$ there exists a probability measure $p_x$ on the orbit graph of $x$ supported on the $r$-ball around $x$ such that
\begin{itemize}
\item if $x\in U_\kappa$, then the probability measure $p_x$ has type $\kappa$, for some $\kappa\in P^{r}_{C_{\eps}}.$ Here $P^{r}_{C_{\eps}}$ is the finite set of all probability measures in the form of $p^{B,\eps}_i$, where $B$ is a rooted edge-colored $r$-ball. We call the elements of $P^{r}_{C_{\eps}}$ ''types".
\item For any $x\in E$ and generator $c_j\in \Sigma_{2d}$, $\|p_x-p_{\beta(c_j)(x)}\|_1<\eps$.
\end{itemize}
\end{definition}
\noindent
Similar definition can be given for any stable action of a finitely generated group with a given symmetric generating set.
\begin{proposition} \label{propagro}
For every minimal  graph $M\in \grd$ of Property A, we have an invariant subspace $E$ as above, so that the associated \'etale groupoid $\cG_{\beta_E}$ is
topologically amenable.
\end{proposition}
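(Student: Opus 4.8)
The plan is to produce a suitable invariant subspace $E$ by \emph{writing a Property~A certificate of $M$ into the Cantor labelling}, and then to read off the data demanded by Definition~\ref{topame} directly from those labels. Concretely: fix a proper edge-colouring of $M$ with at most $2d$ colours. Since $M$ is of Property~A, for every $m\ge 1$ an application of Property~A with a sufficiently small error gives a radius $r_m$ and a map $\Theta_m\colon V(M)\to\Prob(M)$ with $\Supp(\Theta_m(x))\subset B^M_{r_m}(x)$ and $\|\Theta_m(x)-\Theta_m(y)\|_1<\tfrac1m$ for $x\sim y$; rounding each $\Theta_m(x)$ to a nearest element of a fixed finite net of probability measures on $r_m$-balls (breaking ties by a fixed rule) yields a map $\tau_m$ sending each $x$ to a ``type'' $\tau_m(x)$, a measure supported on $B^M_{r_m}(x)$ drawn from a finite set, still with $\|\tau_m(x)-\tau_m(y)\|_1<\tfrac1m$ on edges after shrinking the error if necessary. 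Choose a proper Cantor labelling $l\colon V(M)\to\cC$ whose coordinates split into successive finite blocks, block $m$ recording $\tau_m(x)$, together with an extra ``separating'' sublabel chosen as in \cite{Elekurs,elekfree} so that $l$ is proper in the sense used there. Let $E_0\subset\csgd$ be the $\Gamma_{2d}$-orbit closure of $(M,l)$ and let $E\subseteq E_0$ be a minimal closed $\Gamma_{2d}$-invariant subspace.

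Next I would check that $E$ is indeed ``$E$ as above''. Properness of $l$ gives, exactly as in the paragraph preceding the proposition and in \cite{Elekurs,elekfree}, that the $\Gamma_{2d}$-action on $E_0$, hence on $E$, is stable and that every element of $E$ is rooted-isomorphic to its own rooted orbit graph; since $E$ is minimal and $M$ is infinite, $E$ has no isolated points, so $E\cong\cC$. For $x\in E\subseteq E_0$ every ball of the underlying graph $G_x$ is a ball of $M$; conversely, if $B$ is a rooted $r$-ball of $M$, then $M$ being minimal the vertices of $M$ whose rooted $r$-ball is $B$ meet every $r_B$-ball of $M$, and this syndeticity passes to the limit point $x$, so $B$ occurs in $G_x$ as well. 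Hence every $G_x$ is neighbourhood equivalent to $M$; in particular, by Proposition~\ref{propconv1}, each $G_x$ is of Property~A.

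Finally I would verify topological amenability. Fix $\eps>0$, pick $m$ with $\tfrac1m\le\eps$, and set $r:=r_m$. For $x\in E$ let $p_x$ be the probability measure obtained by transporting the type stored in block $m$ of the root-label of $x$ along the rooted edge-coloured ball $B_r(x)$. The conditions ``the $m$-th block of the root-label is a type whose underlying $r$-ball agrees with $B_r(x)$'' and ``for each generator $c_j$, the $m$-th blocks of the root-labels of $x$ and $\beta(c_j)(x)$ determine measures at $\ell^1$-distance $<\tfrac1m$'' are clopen conditions on $\csgd$ that hold on the orbit of $(M,l)$ (the first by construction, the second because there $p_x$ equals $\tau_m$ transported and $\|\tau_m(x)-\tau_m(c_jx)\|_1<\tfrac1m$), hence hold on all of $E_0\supseteq E$. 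Therefore each $p_x$ is a well-defined probability measure supported on $B_r(x)$, lies in the finite set of types of radius $r$, and satisfies $\|p_x-p_{\beta(c_j)(x)}\|_1<\tfrac1m\le\eps$ for every generator $c_j$. Partitioning $E$ into the clopen sets $U_\kappa=\{x\in E:\ B_r(x)\text{ and the }m\text{-th block of the root-label of }x\text{ encode the type }\kappa\}$ gives a finite clopen partition for which $x\in U_\kappa$ forces $p_x$ to be of type $\kappa$, which is precisely the situation of Definition~\ref{topame}; hence $\cG_{\beta_E}$ is topologically amenable.

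The main obstacle I expect is not a single estimate but the interplay between the encoding and minimality: the labelling must simultaneously be proper (needed for stability and for each element being its own orbit graph), carry all the certificates $\tau_m$, and survive the passage to a minimal subsystem. This works because every property relied on — ``the $m$-th block of the root-label is a consistent type'', ``adjacent root-labels are $\tfrac1m$-close'' — is clopen, hence inherited by the orbit closure and by any subsystem, while neighbourhood equivalence to $M$ is preserved by the syndeticity argument above. Making the cited facts about proper, minimality-inducing Cantor labellings fully precise, in the spirit of \cite{Elekurs,elekfree}, is the only genuinely technical ingredient.
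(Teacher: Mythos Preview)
Your proposal is correct and follows essentially the same approach as the paper: encode the Property~A certificates $\{\Theta_m\}$ (rounded to a finite net of types) into successive blocks of a Cantor labelling, add a proper labelling to guarantee stability, pass to a minimal subsystem $E$ of the orbit closure, and read off the clopen partition and measures $p_x$ from the labels. Your write-up is in fact more explicit than the paper's on two points the paper leaves terse---the clopen-condition argument that the Property~A witnesses survive the passage to the orbit closure and to a minimal subsystem, and the syndeticity argument that every underlying graph in $E$ is neighbourhood equivalent to $M$---so nothing is missing.
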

\proof Since $M$ is of Property A, there exists $r_n$ and for each $x\in V(M)$ a probability measure $\Theta(x)$ of
type in $P^{r_n}_{C_{\eps}}$ such that if $x$ and $y$ are adjacent vertices, then $\|\Theta(x)-\Theta(y)\|<\eps.$ Now let us denote by $Q_n$ the set $P^{r_n}_{C_{\eps}}$. For each $n\geq 1$ we have a vertex labeling by $Q_n$ of $V(M)$, where
the label of $x$ is the type of $\Theta(x)$. Altogether, we have a labeling of $V(M)$ by the product set $\prod_{n=1}^\infty Q_n$ that is isomorphic to the Cantor set. Also, we add edge-colors and a Cantor labeling in the way described above to ensure stability. So, we have a $\cC\times \cC$-labeling of $V(M)$, however, $\cC\times \cC$ is still homeomorphic to $\cC$. Add an arbitrary root to the resulting graph to obtain a rooted colored-labeled graph $S\in\csgd$. 
Now, find a closed, minimal invariant subspace $E$ in its orbit closure in $\csgd$. Since the $\prod_{n=1}^\infty Q_n$-labelings encode the witnessing of Property A for $M$, it also witnesses Property A for any other graph in the orbit closure, that is $E$ satisfies the conditions of Definition \ref{topame}, so $E$ is topologically amenable. \qed
\vi
 The notion of \textbf{almost finiteness for \'etale Cantor groupoids}  was defined by Matui (Definition 6.2 \cite{Matui}). We will need this concept only for the case of \'etale groupoids associated to stable actions of $\Gamma_{2d}$. One can check that the following simple definition that is analogous to Definition \ref{topame} applies to the case of our \'etale groupoids $E$, hence they are almost finite in the sense of Matui. 
\begin{definition} [\textbf{Almost finiteness for groupoids}]\label{d2f26}
The groupoid $\cG_{\beta_E}$ is almost finite if for any $\eps>0$
there exists $r>0$, $K_\eps$ and a partition  of the totally disconnected space $E$ into finitely many clopen sets $\{W_i\}_{i=1}^n$ such that
\begin{enumerate}
\item if $x,y\in E$, $\alpha(g)(x)=y$ for some $g\in\Gamma$,
and $x,y$ are in the same clopen set, then either  
$d_E(x,y)\leq K_\eps$ or $d_E(x,y)\geq 3K_\eps$, where $d_E$ is the graph distance on the orbit graphs.
\item For any $x\in E$, the set $H_x$ is $\eps$-F\o lner, where
$$H_x=\{z\in E\,,x,z\,\mbox{are in the same clopen set}\,W_k\,\mbox{and}\, d_E(x,z)\leq K_\eps\,\}.$$ 
\end{enumerate}
\end{definition}
\noindent
One should note that a definition of almost finiteness was given by Kerr \cite{Kerr} for free actions of amenable groups on compact metric spaces. If the amenable group $\Gamma$ acts on the Cantor set freely then the
almost finiteness of the associated étale groupoid in the sense of Matui is equivalent to the almost finiteness of the free action in the sense of Kerr. The definition of Kerr was extended to non-free actions by Joseph \cite{Joseph}. It is important to note that for such non-free actions the almost finiteness of the associated étale groupoid in the sense of Matui does not necessarily imply the almost finiteness of the action in the sense of Joseph. 
 
\noindent
The following proposition can be proved in the same way as Proposition \ref{propagro}.
\begin{proposition} \label{almagro}
For any minimal almost finite graph $M\in \grd$, we have an invariant subspace $E$ as above, so that the associated \'etale groupoid $\cG_{\beta_E}$ is
almost finite.
\end{proposition}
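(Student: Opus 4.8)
The plan is to run the argument of Proposition \ref{propagro} almost verbatim, replacing the finite labelings that there witness Property A by finite labelings that witness almost finiteness. The only ingredient not already present is a way to record a tiling of $M$ by a coloring with finitely many colors, so I would establish it first. Suppose $\cP=\{T_j\}$ is a tiling of $V(M)$ into $\de$-F\o lner sets of diameter at most $r$. I claim there is a coloring $\chi$ of $\cP$ with $c=c(d,r)$ colors such that any two distinct tiles of the same color lie at $M$-distance greater than $3r$: form the auxiliary graph on $\cP$ joining two tiles whenever their distance is at most $3r$; since every tile has at most $R_r$ vertices and every tile within distance $3r$ of a fixed tile $T$ meets a ball of radius $4r$ about a fixed vertex of $T$, this auxiliary graph has bounded degree (a bound depending only on $d$ and $r$), hence is greedily properly $c$-colorable. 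I then view $\chi$ as a vertex coloring $\chi\colon V(M)\to\{1,\dots,c\}$ assigning to $v$ the color of the tile containing it; note that the tile of $v$ is recovered as $\{w:\chi(w)=\chi(v),\ d(v,w)\le r\}$, a fact that only involves the colored ball of radius $r$ around $v$.

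Now I would proceed as in Proposition \ref{propagro}. Fix $\eps_n\downarrow 0$; since $M$ is almost finite, for each $n$ there are $r_n\ge1$ and a tiling $\cP_n$ of $V(M)$ into $\eps_n$-F\o lner sets of diameter at most $r_n$, with associated finite coloring $\chi_n\colon V(M)\to Q_n$ as above. Label each vertex $v$ by the element $(\chi_n(v))_{n\ge1}$ of $\prod_n Q_n$; adjoin a proper edge coloring of $M$ by $\{c_1,\dots,c_{2d}\}$ (making $M$ the underlying graph of a $\Gamma_{2d}$-Schreier graph) and a proper Cantor labeling as in Subsection \ref{minima} enforcing stability. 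These data together live in a space homeomorphic to $\cC$; choosing a root yields $S\in\csgd$. Let $E$ be a closed, minimal $\Gamma_{2d}$-invariant subspace of the orbit closure of $S$. By Lemma \ref{cantmin} and the discussion of proper labelings, $E$ is a minimal Cantor system, each of its elements is rooted isomorphic to its own orbit graph, the underlying graphs are neighborhood equivalent to $M$, and $\beta_E:=\beta_d|_E$ is a stable action with Cantor unit space, so $\cG_{\beta_E}$ is a minimal \'etale Cantor groupoid.

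To verify Definition \ref{d2f26}, fix $\eps>0$, pick $n$ with $\eps_n<\eps$, and set $K_\eps:=r_n$. The $n$-th coordinate of the labeling partitions $E$ into finitely many clopen sets $\{W_q\}_{q\in Q_n}$, where $W_q$ is the set of elements of $E$ whose root has $n$-th label $q$. The assertion that ``the $\chi_n$-color classes form $\eps_n$-F\o lner sets of diameter at most $r_n$, no two distinct ones of the same color being within distance $3r_n$'' is expressible as ``every colored ball of radius $3r_n$ in the graph is of one of finitely many admissible types'', hence passes from $S$ to every element of its orbit closure, in particular to the orbit graph of every $x\in E$. Consequently, if $\alpha(g)(x)=y$ with $x,y$ in the same $W_q$, then $y$ lies in a tile of the same $\chi_n$-color as the tile of $x$, so either the two tiles coincide and $d_E(x,y)\le r_n=K_\eps$, or they are distinct and $d_E(x,y)>3r_n\ge 3K_\eps$; this is condition (1). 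Moreover $H_x=\{z\ \text{in the orbit of}\ x:\ z\in W_q,\ d_E(x,z)\le r_n\}$ is precisely the tile of $x$ in its orbit graph (the $r_n$-ball about $x$ contains that whole diameter-$\le r_n$ tile, while every other $\chi_n(x)$-colored tile is more than $3r_n$ away), which is $\eps_n$-F\o lner and hence $\eps$-F\o lner; this is condition (2). Thus $\cG_{\beta_E}$ is almost finite.

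I expect the only substantive point to be the tile coloring together with the accompanying choice to index the clopen pieces $W_q$ by tile colors rather than by positions inside tiles — this is exactly what makes $H_x$ recover the tile of $x$. Everything else (packaging countably many finite colorings into one Cantor labeling, adjoining the edge coloring and the stabilizing labeling, and passing to a minimal invariant subspace) is carried out exactly as in Proposition \ref{propagro} and in \cite{Elekurs,elekfree}; as there, the one recurring subtlety is simply that every property being encoded is a bounded-radius condition and so persists under passage to the orbit closure.
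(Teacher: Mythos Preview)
Your proposal is correct and follows essentially the same approach as the paper, which simply states that the proposition ``can be proved in the same way as Proposition \ref{propagro}.'' You have supplied the one detail the paper leaves implicit, namely the bounded-degree auxiliary-graph argument producing a finite coloring of the tiles so that same-colored tiles are well separated; this is exactly what is needed to turn a tiling into a finite vertex labeling from which the tile of each vertex is locally recoverable, and hence to verify the two clauses of Definition \ref{d2f26} after passing to the orbit closure.
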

\noindent
\begin{corollary} \label{maincorol}
For every minimal strongly almost finite graph $M\in \grd$ we have an invariant subspace $E$ as above, so that the associated \'etale groupoid $\cG_{\beta_E}$ is both topologically amenable and almost finite.
\end{corollary}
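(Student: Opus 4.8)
The plan is to run the constructions behind Propositions~\ref{propagro} and~\ref{almagro} \emph{simultaneously}, on one and the same labeling of $M$. Since $M$ is strongly almost finite, it is in particular both of Property A and almost finite (by Theorems~\ref{shortcycle} and~\ref{elsotetel} together with Proposition~\ref{sfhalmfin}), so both bodies of data are available at once.

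First I would fix, for every $n\ge 1$, a Property A witness at scale $\eps=1/n$ exactly as in the proof of Proposition~\ref{propagro}: a radius $r_n$ and a labeling $\ell^A_n\colon V(M)\to Q_n$ that assigns to each $x$ the type in $P^{r_n}_{C_{1/n}}$ of a probability measure $\Theta_n(x)$ supported on $B^M_{r_n}(x)$, with $\|\Theta_n(x)-\Theta_n(y)\|_1<1/n$ whenever $x\sim y$. In parallel, using that $M$ is almost finite, I would fix for each $n$ a tiling of $V(M)$ by $(1/n)$-F\o lner sets of diameter at most some $K_n$ and record it by a labeling $\ell^{AF}_n\colon V(M)\to R_n$ into a finite set, arranged so that the labels on a bounded neighbourhood of $x$ determine which tile $x$ belongs to and whether $x$ lies on that tile's boundary. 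Concatenating over $n$, the vertex set of $M$ acquires a labeling by the Cantor set $\prod_{n\ge 1}(Q_n\times R_n)$. I would then superimpose a proper edge-colouring by $\Sigma_{2d}$ and an additional ``proper'' Cantor labeling of the kind described earlier in this section, so that the resulting rooted element $S\in\csgd$ sits in a region where the $\Gamma_{2d}$-action is stable and every graph is rooted isomorphic to its own orbit graph. Finally I would pass to a closed minimal $\Gamma_{2d}$-invariant subspace $E$ of the orbit closure of $S$; this is the $E$ in the statement.

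It then remains to verify that $\cG_{\beta_E}$ is at once topologically amenable and almost finite. By Lemma~\ref{cantmin} the underlying graphs of the elements of $E$ are minimal and neighbourhood equivalent to $M$, and since $E$ is minimal without isolated points it is homeomorphic to $\cC$. The point, exactly as in the proofs of Propositions~\ref{propagro} and~\ref{almagro}, is that both witnesses are now encoded \emph{locally} in the labels and hence persist to every element of the orbit closure: for each $n$, the clopen partition of $E$ according to the value of the $Q_n$-coordinate at the root produces the finite type set and the measures $p_x$ of Definition~\ref{topame} with the inequality $\|p_x-p_{\beta(c_j)(x)}\|_1<1/n$; and the clopen partition according to the $R_n$-coordinate, together with the tile-membership and boundary information read off the labels, produces the clopen partition $\{W_i\}$, the constant $K_n$, and the $(1/n)$-F\o lner clusters $H_x$ of Definition~\ref{d2f26}. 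Letting $n\to\infty$ gives both properties, and the corollary follows.

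The only genuine subtlety is that a \emph{single} subspace $E$ must witness both properties; this is why I would take the product of the two labeling schemes before forming the orbit closure, rather than applying Propositions~\ref{propagro} and~\ref{almagro} as black boxes (their subspaces would in general differ). Once the labels are arranged so that the type $\Theta_n(x)$, the tile of $x$, and the predicate ``$x$ is on the boundary of its tile'' are each a function of a bounded labeled-coloured neighbourhood of $x$, minimality of $E$ and Lemmas~\ref{colmin} and~\ref{cantmin} transport these finitely many local constraints to all of $E$, and the verifications of Definitions~\ref{topame} and~\ref{d2f26} then go through verbatim as in the two propositions. Beyond this bookkeeping I do not foresee additional obstacles.
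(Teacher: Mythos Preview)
Your proposal is correct and follows essentially the same approach as the paper: deduce from Theorems~\ref{shortcycle} and~\ref{elsotetel} that $M$ is both of Property A and almost finite, take the product of the two labeling schemes (encoding the Property~A witnesses and the tilings) together with a proper labeling ensuring stability, and then pass to a minimal closed invariant subspace $E$ of the orbit closure. Your explicit identification of the subtlety---that one must combine the labelings \emph{before} passing to $E$, rather than applying Propositions~\ref{propagro} and~\ref{almagro} as black boxes---is exactly the point, and the paper's (much terser) proof does precisely this.
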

\proof  By Theorem \ref{shortcycle} and Theorem \ref{elsotetel}, the graph $M$ is Property A and almost finite. Label the vertices of $M$ by the product of the labelings in Proposition \ref{propagro} (that encodes Property A) and the labelings in Proposition \ref{almagro} (that encodes almost finiteness). Let $S$ be the new labeled graph. Now, find a minimal, closed invariant subspace $E$ in the orbit closure of $S$.  Putting together Proposition \ref{propagro} and Proposition \ref{almagro}, we obtain the corollary. \qed 
\begin{question}
Is it true that a minimal étale groupoid is almost finite
if all of its orbit graphs are strongly almost finite?
\end{question}
\noindent
Note that if the answer for this question is yes, then
the groupoids associated to stable Cantor actions of amenable groups are always almost finite.
\begin{remark}
We could start with any finitely generated amenable group $\Gamma$ (with some finite generating system $\Sigma$) and an element $H$ of an arbitrary uniformly recurrent subgroup of $\Gamma$. By Proposition \ref{schrei}, the underlying graph of $\Sch(\Gamma/H,\Sigma)$ is a strongly almost finite minimal graph. Using this Schreier graph, we can repeat the construction above to obtain a stable, minimal, $\Gamma$-action $\alpha$ such that all orbit graphs are 
isomorphic to $S$ and the associated \'etale groupoid is both topologically amenable and almost finite.
\end{remark}

\noindent
Now we are ready to prove the main result of this section.
\begin{proof}[Proof of Theorem \ref{elliott}]
The algebras associated to the $E$'s in Corollary \ref{maincorol} are always tracial (see Section 9 in  \cite{Elekurs}) due to the existence of
invariant probability measures on $E$. The existence of such invariant measures follows from the amenability of
the orbit graphs of $E$ (a consequence of $M$ being almost finite) in the same way as one proves that continuous actions of amenable groups on compact metric spaces always admit invariant probability measures (Theorem 6 \cite{Elekurs}). 
Hence, by Corollary 9.11 in \cite{Mawu} cited at the beginning of the section, we finish the proof of our theorem. \end{proof}

\noindent
We also obtain a dynamical characterization of strong almost finiteness in the case of minimal graphs.
\begin{proposition}\label{cstar}
A minimal graph $M\in \grd$ is strongly almost finite if and only if there exists a stable action $\alpha:\Gamma\actson\cC$, for a finitely generated group $\Gamma$ with a finite generating set $\Sigma$ with the following properties.
\begin{itemize}
\item All the orbit graphs of $\alpha$ are neighborhood equivalent to $M$.
\item The action $\alpha$ is topologically amenable, admitting an invariant probability measure. 
\end{itemize}
\end{proposition}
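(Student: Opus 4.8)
\emph{The plan} is to prove the two implications separately. The forward one is essentially a repackaging of the construction of Section~\ref{class}; the reverse one carries the genuine content.

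For the forward implication, assume $M$ is minimal and strongly almost finite. By Theorem~\ref{shortcycle}, Theorem~\ref{elsotetel} and Proposition~\ref{sfhalmfin} the graph $M$ is then simultaneously of Property~A and almost finite, so Corollary~\ref{maincorol} applies and furnishes a stable, minimal, closed $\Gamma_{2d}$-invariant subspace $E\subseteq\rcsd$ with its root-changing action $\beta_E$ such that $\cG_{\beta_E}$ is topologically amenable and every orbit graph of $\beta_E$ is neighborhood equivalent to $M$ (and, by properness of the labelings, each element of $E$ is rooted isomorphic to its own rooted orbit graph). The only thing left to provide is a $\beta_E$-invariant probability measure on $E$: the orbit graphs of $\beta_E$, being neighborhood equivalent to the almost finite graph $M$, are themselves amenable (amenability is a neighborhood equivalence invariant, Proposition~\ref{propconv1}), and, exactly as in the proof of Theorem~\ref{elliott} (following Theorem~6 of \cite{Elekurs}), amenability of the orbit graphs forces the existence of such an invariant measure. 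Taking $\Gamma=\Gamma_{2d}$, $\Sigma=\Sigma_{2d}$, $\alpha=\beta_E$ settles the forward implication.

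For the reverse implication, let $\alpha:\Gamma\actson\cC$ be a stable, topologically amenable action with an invariant probability measure $\mu$ whose orbit graphs are all neighborhood equivalent to $M$. First I would note that topological amenability of $\alpha$ (in the sense of the analogue of Definition~\ref{topame} for $\Gamma$ noted there) restricts on each single orbit to precisely the data witnessing Property~A of the orbit graph, with parameters independent of the orbit; hence every orbit graph is of Property~A, and so is $M$ by Proposition~\ref{propconv1}. The substantial point is to show that $M$ is uniformly amenable. By the Long Cycle Theorem every orbit graph is strongly hyperfinite with uniform constants; moreover, since the Property~A data supplied by topological amenability varies continuously over the compact space $\cC$ and the implications of the Long Cycle Theorem can be run through continuously (the only delicate step, Proposition~\ref{long4}, involves a finite-dimensional separation argument on finite graphs, of which only countably many isomorphism types occur, so canonical choices can be made), one obtains for each $\eps>0$ a $k=k(\eps)$ and a Borel family $x\mapsto\mu_x$ of probability measures on the compact spaces $\Sep(G_x,k)$ of $k$-separators of the orbit graphs $G_x$ with $\mu_x(\{Y:v\in Y\})<\eps$ for all $v\in V(G_x)$. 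I would then form the probability measure $\tilde\mu$ on pairs $(x,Y)$, $Y\in\Sep(G_x,k)$, by $d\tilde\mu(x,Y)=d\mu_x(Y)\,d\mu(x)$. For such a pair with $x\notin Y$ let $C(x,Y)$ be the component of $x$ in $G_x\setminus Y$; it has at most $k$ vertices and $\partial_{G_x}C(x,Y)$ consists of the vertices of $C(x,Y)$ having a neighbor in $Y$. Call $(x,Y)$ bad if $x\in Y$ or $|\partial_{G_x}C(x,Y)|\ge\delta|C(x,Y)|$; since for a fixed separator a boundary vertex of a bad component of size $\le k$ points to few separator vertices, a mass transport computation on the pairs $(x,Y)$ --- using here, and only here, the $\alpha$-invariance of $\mu$ --- bounds the $\tilde\mu$-measure of the bad pairs by $\eps(1+d/\delta)$, which is below $1$ once $\eps$ is small compared to $\delta$. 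So there is a pair $(x,Y)$ for which $C(x,Y)$ is a $\delta$-F\o lner set of size $\le k$ in $G_x$; as it sits, together with its boundary, inside a ball of radius $2k$ of $G_x\equiv M$, an isomorphic $\delta$-F\o lner set of size $\le k$ occurs in $M$, and by minimality of $M$ such a set then occurs near every vertex of $M$. Since $\delta>0$ was arbitrary, $M$ is uniformly amenable. Finally, $M$ being of Property~A and uniformly amenable, Proposition~\ref{uniloc} makes it of Property~A and uniformly F\o lner, the Short Cycle Theorem makes it strongly F\o lner hyperfinite, and Theorem~\ref{elsotetel} makes it strongly almost finite.

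The main obstacle lies in the reverse direction: converting the qualitative hypotheses ``topologically amenable'' and ``admitting an invariant measure'' into an honest bounded F\o lner set in the orbit graphs. Concretely one must arrange the separator measures from the Long Cycle Theorem to depend measurably on $x$, and then extract the F\o lner set by a mass transport argument that genuinely uses the invariance of $\mu$ --- this is the step where, in classical language, one passes through amenability, hence hyperfiniteness (Connes--Feldman--Weiss), of the measure preserving groupoid attached to $(\alpha,\mu)$.
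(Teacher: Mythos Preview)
Your forward direction matches the paper's. For the reverse direction you work much harder than needed and leave a real gap. The paper's argument is three lines by contraposition: if $M$ is not strongly almost finite, either it lacks Property~A (so no topologically amenable $\alpha$ with orbit graphs equivalent to $M$ can exist), or it has Property~A but fails uniform amenability; since a minimal graph is either uniformly amenable or nonamenable (the dichotomy noted immediately after Definition~\ref{minimal}), $M$ is then nonamenable. But a topologically amenable action with an invariant probability measure $\mu$ is $\mu$-hyperfinite, hence has $\mu$-a.e.\ amenable orbit graphs \cite{Kechris}, contradicting neighborhood equivalence with the nonamenable $M$ (Proposition~\ref{propconv1}). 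You name exactly this classical route in your closing sentence --- it \emph{is} the whole proof, and the key shortcut you overlooked is the minimality dichotomy, which reduces ``uniformly amenable'' to ``amenable''.

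Your own route via a Borel, orbit-invariant family $x\mapsto\mu_x$ of separator measures and a mass transport is more constructive, and the mass-transport bound $\eps(1+d/\delta)$ is indeed correct once such a family exists. But the existence is not established. Contrary to your claim, it is not only Proposition~\ref{long4} that resists continuous or canonical implementation: the nearest-point map $\tau$ and the choice of the vertex $z_0$ in Proposition~\ref{long1}, the iterated F{\o}lner-set extractions in Proposition~\ref{long2}, and the weak subsequential limit after Lemma~\ref{weight} all involve choices. Arranging these to be simultaneously orbit-invariant (needed for diagonal invariance of your transport function) and Borel in $x$ (needed for the integrals to make sense) is a genuine measurable-selection problem that your sketch does not resolve. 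The paper's shortcut avoids all of this.
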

\proof As we have seen in the proof of Theorem \ref{elliott}, if $M$ is  strongly almost finite, actions as above always exist.
Now, assume that $M$ is not strongly almost finite. 
If $M$ is not of Property A, then the required action cannot be topologically amenable. If $M$ is of Property A, but not strongly almost finite, then by Theorem \ref{shortcycle} and Theorem \ref{elsotetel}, $M$ is not F\o lner. Hence by minimality, $M$ must be nonamenable. If the action were topologically amenable, the action would be hyperfinite with respect to any invariant probability measure $\mu$, and $\mu$-almost all of its orbit graphs would be amenable  \cite{Kechris}. This leads to a contradiction. \qed

\noindent
\emph{G\'abor Elek}, Department of Mathematics and Statistics, Lancaster University, Lancaster, United Kingdom and Alfr\'ed R\'enyi Institute of Mathematics, Budapest, Hungary.

\noindent
\texttt{g.elek[at]lancaster.ac.uk}
\vi
\emph{\'Ad\'am Tim\'ar}, Division of Mathematics, The Science Institute, University of Iceland, Reykjavik, Iceland
and
Alfr\'ed R\'enyi Institute of Mathematics, Budapest, Hungary.

\noindent
\texttt{madaramit[at]gmail.com}\\ 

\begin{thebibliography}{99}
\bibitem{Abert} {\sc M. Ab\'ert, A. Thom and B. Vir\'ag}, Benjamini-Schramm convergence and pointwise convergence
of the spectral measure. (preprint at http://www.math.uni-leipzig.de/MI/thom)
\bibitem{Anant}  {\sc C. Anantharaman-Delaroche and J. Renault}, Amenable groupoids, Foreword by Georges Skandalis and Appendix B by E. Germain, L’Enseignement Math\'ematique, Geneva, \textbf{196 }(2000)
\bibitem{Ara} {\sc P. Ara, C. Bönicke, Christian, J. Bosa and K. Li},
The type semigroup, comparison, and almost finiteness for ample groupoids. {Ergodic Theory Dynam. Systems} \textbf{43}(2023), no.2, 361--400.
\bibitem{Bartholdi} {\sc L. Bartholdi and R. I. Grigorchuk}, On the spectrum of Hecke type operators related to some fractal groups.{\sl  Tr.
Mat. Inst. Steklova} \textbf{231} (2000), no. Din. Sist., Avtom. i Beskon. Gruppy, 5--45.
\bibitem{Benjamini} {\sc I. Benjamini, O. Schramm and A. Shapira}, Every minor-closed property of sparse graphs is testable.
{\sl Adv. Math.}\textbf{ 223} (2010), no. 6, 2200--2218.
\bibitem{Block} {\sc J. Block and S.  Weinberger} Aperiodic tilings, positive scalar curvature, and amenability of spaces. {\sl Journal of the American Mathematical Society}, (1992) 5(4), 907-918.
\bibitem{Brodzki} {\sc J. Brodzki, G. Niblo, J. \v{S}pakula, R. Willett and N. Wright}, Uniform local amenability. {\em J. Noncommut. Geom.} \textbf{7} (2013), no. 2, 583--603. 
\bibitem{nuclear} {\sc J. Castillejos, S. Evington, A. Tikuisis, S. White andd W. Winter}, Nuclear dimension of simple $C^*$ -algebras.
{\sl Invent. Math.}\textbf{ 224} (2021), no. 1, 245--290.
\bibitem{Cast} {\sc J. Castillejos, K. Li and G. Szab\'o}. On tracial $\cZ$-stability of \\ simple non-unital $C^*$-algebras. Canadian Journal of Math (2023)  https://doi.org/10.4153/S0008414X23000202
\bibitem{Ceccherini} {\sc T.G. Ceccherini-Silberstein, R.I. Grigorchuk and P. de la Harpe}, Amenability and paradoxical decompositions
for pseudogroups and for discrete metric spaces. {\sl Proc. Steklov Inst. Math.}\textbf{ 224 }(1999), 57--97.
\bibitem{Chen} {\sc X. Chen, R. Tessera, X. Wang and G. Yu}, Metric sparsification and operator norm localization.
Adv. Math.\textbf{ 218} (2008), no. 5, 1496--1511.
\bibitem{Conley}{\sc C. Conley, S. Jackson, A. Marks, B. Seward, R. Tucker-Drob},
Borel asymptotic dimension and hyperfinite equivalence relations.
{\sl to appear in Duke Math. J.}
\bibitem{Connes} {\sc A. Connes}, Classification of injective factors. Cases $II_1, II_\infty, III_\lambda, 1.$ Ann. of Math. (2) \textbf{104}
(1976), 73–115.
\bibitem{CFW} {\sc A.Connes, J. Feldman, J. and B. Weiss,} An amenable equivalence relation is generated by a single transformation. {\sl Ergodic theory and dynamical systems}, (1981) 1(4), 431-450.
\bibitem{Dodziuk}{\sc J. Dodziuk}, Difference equations, isoperimetric inequality and transience of certain random
walks. {\sl Trans. Amer. Math. Soc.}\textbf{ 284} (1984), 787--794.
\bibitem{Down} {\sc T. Downarowicz, D. Huczek and G. Zhang}, Tilings of amenable groups.
{\sl J. Reine Angew. Math.}\textbf{ 747} (2019), 277--298.
\bibitem{Down2} {\sc T.Downarowicz and G. Zhang},
Symbolic Extensions of Amenable Group Actions and the Comparison Property. {\sl Memoirs of the Amer. Math. Soc.} \textbf{1390} (2023)
\bibitem{Dye} {\sc H. Dye}, On groups of measure preserving transformations. I. {\sl Amer. J. Math.} \textbf{81} (1959),
119-159; II, ibid., \textbf{85} (1963), 551-576.
\bibitem{Elek1} {\sc G. Elek}, The K-theory of Gromov’s translation algebras and the amenability of discrete groups. {\sl Proc. Amer.
Math. Soc.} \textbf{125 }(1997), 2551--2553.
\bibitem{Elekcost} {\sc G. Elek}, The combinatorial cost. {\sl Enseign. Math.}, \textbf{53}, (2007), 225–235.
\bibitem{Elektimar} {\sc G. Elek and A. Tim\'ar}, Quasi-invariant means and Zimmer amenability.
(preprint)  https://arxiv.org/abs/1109.5863 
\bibitem{Elekurs} {\sc G. Elek}, Uniformly recurrent subgroups and simple 
$C^*$-algebras. {\sl J. Funct. Anal.} {\bf 274} (2018), no. 6, 1657--1689. 
\bibitem{Elekquali}{\sc G. Elek}, Qualitative graph limit theory. Cantor Dynamical Systems and Constant-Time Distributed Algorithms. (preprint)
https://arxiv.org/abs/1812.07511
\bibitem{Elekula} {\sc G. Elek}, Uniform local amenability implies property A.
{\sl Proc. Amer. Math. Soc.}\textbf{ 149} (2021), no. 6, 2573--2577.
\bibitem{elekfree} {\sc G. Elek}, Free minimal actions of countable groups with invariant probability measures.
{\sl Ergodic Theory Dynam. Systems}\textbf{ 41} (2021), no. 5, 1369--1389.
\bibitem{Eleklocal} {\sc G. Elek}, Planarity can be verified by an approximate proof labeling scheme in constant-time.
{\sl J. Combin. Theory Ser. A}\textbf{ 191 }(2022), Paper No. 105643, 17 pp.
\bibitem{uba} {\sc G. Elek and A. Tim\'ar}, Uniform Borel amenability is equivalent to randomized hyperfiniteness, https://arxiv.org/abs/2408.12565
\bibitem{Folner} {\sc E. F\o lner}, On groups with full Banach mean value. {\sl Math. Scand.} \textbf{3} (1955), 
243--254.
\bibitem{GW} {\sc E. Glasner and B. Weiss}, Uniformly recurrent subgroups. {\sl Recent trends in ergodic theory and dynamical systems}, 
63--75, Contemp. Math., {\bf 631}, Amer. Math. Soc., Providence, RI,( 2015).
\bibitem{Grigorchuk} {\sc R. Grigorchuk, T. Nagnibeda and A. P\'erez} On spectra and spectral measures of Schreier and Cayley graphs.
{\sl Int. Math. Res. Not.} (2022), no. 15, 11957--12002.
\bibitem{gromovpoly} {\sc M. Gromov}, Groups of polynomial growth and expanding
maps. {\sl Publ. Math. IHES} , \textbf{53}(1) (1981) 53–78.
\bibitem{Gromov} {\sc M. Gromov}, Random walk in random groups. 
{\sl Geom. Funct. Anal.}\textbf{ 13} (2003), no. 1, 73--146.
\bibitem{Guentner} {\sc E. Guentner, N. Higson and S. Weinberger}, The Novikov conjecture for linear groups.
{\sl Publ. Math. Inst. Hautes Études Sci.} (2005), no. 101, 243--268.
\bibitem{Haagerup} {\sc U. Haagerup},   All nuclear $C^\star$-algebras are amenable. {\sl Inventiones mathematicae}, \textbf{74}, (1983), 305-319.
\bibitem{Higson} {\sc N. Higson and J. Roe}, Amenable group actions and the Novikov conjecture. {\sl J. Reine
Angew. Math. }\textbf{ 519} (2000), 143--153.
\bibitem{Joseph} {\sc M. Joseph}, Amenable wreath products with non almost finite actions of mean dimension zero. {\sl Trans. of the Amer. Math. Soc.} \textbf{377} (2024), 1321-1333.
\bibitem{Juschenko} {\sc K. Juschenko},  Amenability of discrete groups by examples.
{\sl Math. Surveys Monogr.,} \textbf{266}
American Mathematical Society, Providence, RI, (2022).
\bibitem{Kechris} {\sc A. S. Kechris and B. D. Miller,} Topics in orbit equivalence.
Lecture Notes in Mathematics 1852 (2004), Springer-Verlag, Berlin.
\bibitem{Kerr} {\sc D. Kerr},  Dimension, comparison, and almost finiteness.
{\sl J. Eur. Math. Soc.} \textbf{22} (2020), no. 11, 3697--3745.
\bibitem{Kesten} {\sc H. Kesten}, Full Banach mean values on countable groups. {\sl Math. Scand.} \textbf{7} (1959), 146--156.
\bibitem{Kirchberg} {\sc E. Kirchberg and N. C. Phillips}, Embedding of exact C*-algebras in the Cuntz algebra $\cO_2$. 
{\sl J. Reine Angew. Math.} \textbf{525} (2000), 17--53.
\bibitem{Lovasz} {\sc L. Lov\'asz}, Hyperfinite graphings and combinatorial optimization. 
{\sl Acta Math. Hung.}\textbf{ 161 }(2020), no. 2, 516--539.
\bibitem{Ma} {\sc X. Ma}, Fiberwise amenability of ample \'{e}tale groupoids. \emph{arXiv preprint arXiv:2110.11548.}
\bibitem{Mawu} {\sc X. Ma and J. Wu}, Almost elementariness and fiberwise amenability for \'etale groupoids. \emph{arXiv preprint arXiv:2011.01182.}
\bibitem{Matui}  {\sc H. Matui}, Homology and topological full groups of étale groupoids on totally disconnected spaces.
{\sl Proc. Lond. Math. Soc.} (3) {\bf 104} (2012), no. 1, 27--56.
\bibitem{Moharweiss} {\sc B. Mohar and W. Woess}, A survey on spectra of infinite graphs. {\sl Bull. London Math. Soc.}\textbf{ 21}
(1989) 209--234.
\bibitem{Murray} {\sc F.J. Murray and J. von Neumann}, On rings of operators IV {\sl Ann. of Math.} (2),\textbf{ 44 }(1943), 716–808. 
\bibitem{vonNeumann} {\sc J. von Neumann}, Zur allgemeinen Theorie des Maßes. {\sl Fund. Math.}, \textbf{13} (1), 
73--111.
\bibitem{Nowak} {\sc P. Nowak and G. Yu, } What is … property A? {\em Notices Amer. Math. Soc.} \textbf{55} (2008), no. 4, 474–475.
\bibitem{Osajda} {\sc D. Osajda}, 
Residually finite non-exact groups. {\sl Geom. Funct. Anal.}\textbf{ 28}(2018), no.2, 509--517.
\bibitem{Ozawa} {\sc N. Ozawa}, 
Amenable actions and exactness for discrete groups.
{\sl C. R. Acad. Sci. Paris Sér. I Math.}\textbf{ 330 }(2000), no. 8, 691--695.
\bibitem{OWE} {\sc D.S.  Ornstein and B. Weiss}, Entropy and isomorphism theorems for actions of amenable groups. {\sl Journal d'Analyse Mathématique}, 48(1), (1987), 1-141.
\bibitem{Roe} {\sc J. Roe}, Hyperbolic groups have finite asymptotic dimension. {\sl Proc. Amer. Math. Soc.}\textbf{ 133} (2005), no.9, 2489–2490.
\bibitem{Zivny} {\sc M. Romero, M. Wrochna and S. \v{Z}ivn\'y},
Treewidth-Pliability and PTAS for Max-CSP's.
{\em Proceedings of the 2021 ACM-SIAM Symposium on Discrete Algorithms (SODA) [Society for Industrial and Applied Mathematics (SIAM)] }(2021), 473-483.
\bibitem{Sako} {\sc H. Sako}, Property A and the operator norm localization property for discrete metric spaces.
{\sl J. Reine Angew. Math.}\textbf{ 690} (2014), 207--216.
\bibitem{Sims} {\sc A. Sims},  Hausdorff ´etale groupoids and their $C^*$-algebras, to appear in Operator algebras and dynamics: Groupoids, Crossed Products and Rokhlin dimension (A. Sims, G. Szab\'o, D. Williams and F.Perera (Ed.)) in Advanced Courses
in Mathematics. CRM Barcelona, Birkhauser. (2020)
\bibitem{Suzuki} {\sc Y. Suzuki}, Almost finiteness for general étale groupoids and its applications to stable rank of crossed products.
{\sl International Mathematics Research Notices}, (2020),\textbf{19} , 6007-6041.
\bibitem{quasi} {\sc A. Tikuisis, S. White and W. Winter}, Quasidiagonality of nuclear  $C^*$-algebras.
{\sl Ann. of Math. (2)} \textbf{185 }(2017), no. 1, 229--284.
\bibitem{Tu1} {\sc J-L. Tu}, Remarks on Yu's "property A'' for discrete metric spaces and groups. {\em Bull. Soc. Math. France} \textbf{129} (2001), no. 1, 115–139.
\bibitem{Tu2} {\sc J-L. Tu}, La conjecture de Baum–Connes pour les feuilletages moyennables.{\em K-Theory} \textbf{17}, (1999), 215–264. 
\bibitem{Weiss} {\sc B. Weiss}, Monotileable amenable groups. {\sl Translations of the American Mathematical Society-Series 2}\textbf{ 202,} (2001) 257-262.
\bibitem{Yu} {\sc G. Yu}, The coarse Baum-Connes conjecture for spaces which admit a uniform embedding into Hilbert space. {\em Invent. Math. } \textbf{139} (2000), no. 1, 201-240. 
\end{thebibliography}
\end{document}